\documentclass[11pt]{amsart}
\usepackage[utf8]{inputenc}

\usepackage[plainpages=false]{hyperref}
\usepackage{amsfonts,latexsym,rawfonts,amsmath,amssymb,amsthm, mathrsfs, lscape}
\usepackage{verbatim}

\usepackage[all]{xy}
\usepackage{graphicx,psfrag}



\usepackage{array,tabularx}

\usepackage{setspace}
\usepackage{anysize}

\newtheorem{thm}{Theorem}
\newtheorem{cor}{Corollary}[section]
\newtheorem{clm}{Claim}[section]
\newtheorem{prop}{Proposition}[section]

\theoremstyle{remark}
\newtheorem{rmk}{Remark}[section]

\theoremstyle{definition}
\newtheorem{defn}{Definition}[section]

\newtheorem{theorem}{Theorem}[section]

\newtheorem{lemma}[theorem]{Lemma}
\marginsize{3cm}{3cm}{2.5cm}{2.5cm}
\def\p{\partial}
\def\R{\mathbb{R}}
\def\C{\mathbb{C}}
\def\N{\mathbb{N}}

\def\l{\lambda}

\def\i{\sqrt{-1}}

\def\I {\mathbb{I}}

\def\cC{{\mathcal C}}

\def\cD{\mathcal D}
\def\cE{{\mathcal E}}
\def\cF{{\mathcal F}}

\def\cH{{\mathcal H}}

\def\cK{{\mathcal K}}
\def\cL{{\mathcal L}}

\def\cW{{\mathcal W}}

\def \bp {\overline{\partial}}

\def \Ker {\text{Ker}}
\def\Aut{\text{Aut}}

\begin{document}

\title{Geometric pluripotential theory on Sasaki manifolds}
\author{Weiyong He; Jun Li}
\address{Department of Mathematics, University of Oregon, Eugene, OR 97403. }
\email{whe@uoregon.edu;}
\address{College of Mathematics and Econometrics, Hunan University, Hunan, 410082, P. R. China}
\email{lijun1985@hnu.edu.cn}

\begin{abstract}
We extend profound results in pluripotential theory on K\"ahler manifolds \cite{D4} to Sasaki setting via its transverse K\"ahler structure. As in K\"ahler case, these results form a very important piece to solve the existence of Sasaki metrics with constant scalar curvature (cscs) in terms of properness of $\cK$-energy, considered by the first named author in \cite{he182}. One main result is to generalize T. Darvas' theory on the geometric structure of the space of K\"ahler potentials in Sasaki setting. Along the way we  extend most of corresponding results in pluripotential theory to Sasaki setting via its transverse K\"ahler structure. \end{abstract}

\maketitle

\section{Introduction}
Sasaki manifolds have gained their prominence in physics and in algebraic geometry and Riemannian geometry \cite{BG}. 
There are tremendous work in the last two decades in Sasaki geometry, in particular on Sasaki-Einstein manifolds, see \cite{BG, GMSW, BGK, FOW, MSY, HeSun, CS} and reference therein. 
On the other hand, Sasaki geometry is an odd dimensional analogue of K\"ahler geometry and almost all results in K\"ahler geometry have their counterparts in Sasaki geometry.  Calabi's extremal metric \cite{Ca1, Ca2} (and csck) has played a very important role in K\"ahler geometry and  it has a direct adaption in Sasaki setting \cite{BGS1}.  
In 1997, S. K. Donaldson \cite{Don97} proposed an extremely fruitful program to approach existence of csck (extremal metrics) on a compact K\"ahler manifold with a fixed K\"ahler class. 
Donaldson's program has also been extended to Sasaki setting, see \cite{GZ1, he14} for example. 

A major problem in K\"ahler geometry is to characterize exactly when a K\"ahler class contains a csck (extremal). The analytic part for existence of csck is to solve a fourth order highly nonlinear elliptic equation, the scalar curvature type equation. This problem is regarded as a very hard problem in the field. Recently Chen and Cheng \cite{CC1, CC2, CC3} have solved  a major conjecture that existence of csck is equivalent to well studied conditions such as properness of Mabuchi's $K$-energy, or geodesic stability. The first named author \cite{he182} proved the following counterpart in Sasaki setting,

\begin{thm}[\cite{he182}]\label{cscs}There exists a Sasaki metric with constant scalar curvature if and only if the $\cK$-energy is reduced proper with respect to $\text{Aut}_0(\xi, J)$, the identity component of automorphism group which preserves the Reeb vector field and transverse complex structure. 
\end{thm}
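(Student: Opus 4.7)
The plan is to adapt the strategy of Chen--Cheng \cite{CC1,CC2,CC3} to the Sasaki setting, exploiting the fact that a Sasaki structure on $(M,\xi,J)$ with fixed Reeb field and transverse complex structure is governed by its transverse K\"ahler structure on basic $(1,1)$-forms. In this framework the relevant space of potentials is the space of basic functions $\phi$ such that the deformed transverse K\"ahler form is positive, and all of the K\"ahler pluripotential machinery of \cite{D4} has to be transplanted to this basic/foliated setting before one can mimic Chen--Cheng.

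For the easy direction (existence of a cscs metric implies reduced properness of $\cK$), I would first extend the $\cK$-energy to the finite-energy class $\cE^1$ of transverse potentials built in this paper and establish convexity along the $C^{1,1}$-geodesics arising from the transverse Dirichlet problem, in the spirit of Berman--Berndtsson. Combined with the fact that a cscs metric is a minimizer of $\cK$ in its $\text{Aut}_0(\xi,J)$-orbit, plus uniqueness of cscs metrics modulo $\text{Aut}_0(\xi,J)$, this convexity forces $\cK$ to grow at infinity along the $d_1$-distance modulo the group action, which is exactly reduced properness.

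For the hard direction (reduced properness implies existence), I would run a Chen--Cheng style continuity path in the transverse category: deform the cscs equation $R_T(\phi)=\underline{R}$ to a twisted transverse scalar curvature equation parametrized by $t\in[0,1]$, show openness by transverse elliptic theory, and close the path via uniform a priori estimates. The estimates to be established are, in order, (i) a uniform $d_1$-bound on the transverse potential along the path, obtained from the properness hypothesis after fixing a gauge to kill the $\text{Aut}_0(\xi,J)$-action; (ii) a uniform $L^\infty$-entropy bound and $C^0$ bound on $\phi$ via the transverse pluripotential comparison results of this paper; (iii) a uniform bound on the transverse scalar curvature and on $|\n^T\phi|$; and (iv) higher order estimates for the coupled complex Monge--Amp\`ere type system on basic forms, yielding compactness and a cscs limit.

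The main obstacle is precisely the foliated character of the analysis: the leaf space is in general only an orbifold (and sometimes not even that), so every Moser iteration, integration by parts, capacity estimate, and energy comparison from \cite{D4} must be verified to respect the basic function space with respect to the Reeb foliation, and the uniform constants must depend only on the transverse K\"ahler data. Equally delicate is the control of the automorphism group $\text{Aut}_0(\xi,J)$: one needs a Sasaki analogue of Darvas' orbit-minimization lemma so that the reduced properness can be upgraded to a genuine coercivity estimate along the chosen continuity path. Once the foliated pluripotential theory developed in the body of this paper is in hand, the proof of Theorem \ref{cscs} reduces to transplanting the Chen--Cheng a priori estimates to the transverse K\"ahler setting and applying them along this continuity path.
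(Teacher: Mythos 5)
Your outline coincides with the paper's own strategy: Theorem \ref{cscs} is proved in \cite{he182} by running Chen's continuity path for the (twisted) transverse scalar curvature equation, with openness by transverse elliptic theory and closedness via the a priori estimates of \cite{he182} fed by exactly the properness-to-$d_1$-bound-to-entropy-bound chain you describe, while the present paper supplies the foliated pluripotential ingredients (the $d_1$ metric geometry, the $I_1$ comparison, and the extension and convexity of $\cK$ on $\cE_1$); the gauge-fixing by $\text{Aut}_0(\xi,J)$ in step (i) is likewise how the paper handles the reduced properness, as sketched for the extremal case in Section 5. No essential divergence from the paper's route.
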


The proof of Theorem \ref{cscs} is an adaption of recent breakthrough of Chen-Cheng \cite{CC3} on the existence of csck in K\"ahler setting to Sasaki setting. Technically the arguments consist of two major parts: a priori estimates of nonlinear PDE and pluripotential theory.  
Building up on previous development of pluripotential theory, T. Darvas \cite{D1, D2} has developed profound theory to study the geometric structure of space of K\"ahler potentials. Among others, he introduced a Finsler metric $d_1$, and proved very effective estimates of distance function $d_1$ in terms of well studied energy functionals such as Aubin's $I$-functional.  Darvas's results turn out to be very useful to understand the geometric structure of space of K\"ahler potentials, in particular in the study of csck \cite{DR, BDL2, CC3}. 
In this paper we extend many results in pluripotential theory on K\"ahler manifolds, notably in \cite{GZ, D1, D2} to Sasaki setting. These results play an important role in the proof of Theorem \ref{cscs}. To prove these results, we need to explore the geometric structures of Sasaki manifolds, in particular the K\"ahler cone structure and transverse K\"ahler structure. 
 
Let $(M, g)$ be a compact Riemannian manifold of dimension $2n+1$, with a Riemannian metric $g$. Sasaki manifolds have very rich geometric structures and have many equivalent descriptions. A probably most straightforward formulation is as follows: its metric cone \[X=\R_{+}\times M, \bar g_X=dr^2+r^2 g.\] is a K\"ahler cone. Hence  there exists a complex structure $J$ on $X$ such that $(g_X, J)$ defines a K\"ahler structure.  We identify $M$ with its natural embedding $M\rightarrow \{r=1\}\subset X$. 
 The $1$-form $\eta$ is given by $\eta=J(r^{-1}dr)$ and it defines a contact structure on $M$. The vector field $\xi:=J(r\p_r)$ is a nowhere vanishing, holomorphic Killing vector field and it is called the \emph{Reeb vector field} when it is restricted on $M$. The integral curves of $\xi$ are geodesics, and give rise  to a foliation on $M$, called the \emph{Reeb foliation}. Then there is a K\"ahler structure on the local leaf space of the Reeb foliation, called the \emph{transverse K\"ahler structure}.  A standard example of a Sasaki manifold is the odd dimensional round sphere $S^{2n+1}$. The corresponding K\"ahler cone is $\C^{n+1}\backslash\{0\}$ with the flat metric and its transverse K\"ahler structure descends to $\mathbb{CP}^n$ with the Fubini-Study metric. 
 
We can also formulate Sasaki geometry, in particular the transverse K\"ahler structure via its contact bundle $\cD=\text{Ker}(\eta)\subset TM$. The complex structure $J$ on the cone descends to the contact bundle via $\Phi:=J|_\cD$. The Sasaki metric can be written as follows,
 \begin{equation*}
 g=\eta\otimes\eta+g^T,
 \end{equation*}
 where $g^T$ is the transverse K\"ahler metric, given by $g^T:=2^{-1}d\eta(\Phi\otimes \mathbb{I})$. The transverse K\"ahler form is denoted by $\omega^T=2^{-1}d\eta$. 
We shall study the transverse K\"ahler geometry of Sasaki metrics, with the Reeb vector field $\xi$ and transverse complex structure (equivalently the complex structure $J$ on the cone) both fixed.
This means that we fix the basic K\"ahler class $[\omega^T]$ with $\omega^T=2^{-1}d\eta$ and study the Sasaki structures induced by the space of transverse K\"ahler potentials,
\begin{equation*}
\cH=\{\phi\in C_B^\infty(M): \omega_\phi=\omega^T+dd^c_B \phi>0\},
\end{equation*}
where $C_B^\infty(M)$ is the space of smooth \emph{basic functions}. The main result in the paper is,

\begin{thm}\label{pluri01}$(\cE_p(M, \xi, \omega^T), d_p)$ is a complete geodesic metric space for $p\in [1, \infty)$, which is the metric completion of $(\cH, d_p)$. For any $u, v\in \cE_p$, 
$d_p(u, v)$ is realized by a unique finite energy geodesic $\cE_p$ connecting $u$ and $v$. 
There exists  a uniform constant $C=C(n, p)>1$ such that
\begin{equation*}\label{d01}
C^{-1} I_p(u, v)\leq d_p(u, v)\leq CI_p(u, v),
\end{equation*}
where the energy functional $I_p$ is given by
\[
I_p(u, v)=\|u-v\|_{p, u}+\|u-v\|_{p, v}
\] 
Moreover, we have
\begin{equation*}\label{d02}
d_p(u, \frac{u+v}{2})\leq C d_p(u, v). 
\end{equation*}
\end{thm}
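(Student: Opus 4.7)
The plan is to follow Darvas's program in the K\"ahler case, carefully transplanting each step to the transverse K\"ahler setting of a compact Sasaki manifold with fixed Reeb vector field $\xi$ and transverse complex structure. First I would set up the Finsler length on the smooth space $\cH$: for a smooth path $\phi_t$ in $\cH$, the $L^p$-norm of $\dot\phi_t$ with respect to $(\omega^T_{\phi_t})^n\wedge\eta$ defines a length functional, and $d_p(u,v)$ is the infimum of lengths over paths connecting $u$ and $v$. To show this descends to a genuine metric on $\cH$, one needs a transverse analogue of Chen's $C^{1,1}$-geodesic theorem: one solves the transverse homogeneous Monge-Amp\`ere equation on $M\times[0,1]\times S^1$, either by lifting to the K\"ahler cone $X$ and working with $r$-homogeneous objects or by solving in foliation charts where the equation reduces to the standard degenerate complex Monge-Amp\`ere equation, obtaining weak geodesics with bounded transverse mixed derivatives.

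Next, I would work within the transverse pluripotential framework. Using basic $\omega^T$-plurisubharmonic functions, the transverse Bedford-Taylor and Guedj-Zeriahi theory (already developed earlier in the paper) provides the non-pluripolar Monge-Amp\`ere measure $(\omega^T+dd^c_B u)^n\wedge\eta$ and the associated $p$-energy that defines the finite energy class $\cE_p$. I would define $d_p$ on $\cE_p$ by decreasing approximation from $\cH$ and verify well-definedness via monotonicity of the $p$-energy under decreasing limits and weak continuity of transverse Monge-Amp\`ere products.

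The heart of the theorem is the quasi-isometry $C^{-1}I_p(u,v)\leq d_p(u,v)\leq CI_p(u,v)$. In the smooth case this reduces to analysis along the $C^{1,1}$ geodesic joining $u$ and $v$: convexity of $t\mapsto \int_M|\dot\phi_t|^p(\omega^T_{\phi_t})^n\wedge\eta$, coupled with Pythagorean-type identities of Darvas type (where one splits $\dot\phi_t$ into positive and negative parts along the geodesic), yields the lower bound; the upper bound follows by testing against an explicit piecewise-affine path through $\max(u,v)$ of comparable length. Once established on $\cH$, the estimate extends to $\cE_p$ by monotone approximation, using lower semicontinuity of $d_p$ and continuity of $I_p$ along decreasing sequences. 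The midpoint estimate $d_p(u,\frac{u+v}{2})\leq C d_p(u,v)$ is then an immediate consequence of the $I_p$-equivalence together with the elementary quasi-triangle inequality for $I_p$.

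Finally, to establish completeness, the realization of $d_p$ by a unique finite energy geodesic, and the identification of $\cE_p$ as the metric completion of $(\cH, d_p)$, I would (i) show that $d_p$-Cauchy sequences in $\cH$ admit monotone subsequences, after upper semicontinuous regularization, which converge in $\cE_p$ by compactness of $p$-energy sublevel sets; (ii) construct the finite energy geodesic as the decreasing limit of $C^{1,1}$-geodesics between smooth decreasing approximants of $u$ and $v$, invoking a Kiselman-type minimum principle in the transverse/basic setting to preserve subharmonicity; and (iii) prove uniqueness by strict convexity of the $p$-energy along finite energy geodesics. The principal obstacle I anticipate is the transverse $C^{1,1}$-regularity and the weak continuity of Bedford-Taylor products: the Reeb foliation prevents a naive pullback from a K\"ahler manifold, so these properties must be obtained either on the K\"ahler cone $X$ via $r$-homogeneous constructions or within foliation charts where the theory is genuinely K\"ahler, followed by a careful patching argument ensuring basic regularity.
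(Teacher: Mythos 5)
Your overall architecture is the same as the paper's: Guan--Zhang's $\epsilon$-geodesics and $C^{1,\bar 1}_B$ regularity for the smooth space, the transverse Bedford--Taylor/Guedj--Zeriahi theory for $\cE_p$, definition of $d_p$ by decreasing approximation, a Pythagorean decomposition through the rooftop envelope $P(u_0,u_1)$ to get both the two-sided $I_p$ bound and (via the contraction property of $v\mapsto P(v,w)$) completeness, and the weak geodesic as an envelope of subgeodesics controlled by a Kiselman-type minimum principle. On all of these points your plan tracks the paper.

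The genuine gap is in the step you compress into ``Pythagorean-type identities of Darvas type'' together with ``a careful patching argument'' in foliation charts. The Pythagorean formula requires the partition formula $\omega^n_{P(u_0,u_1)}\wedge\eta=\chi_{\{P=u_0\}}\omega^n_{u_0}\wedge\eta+\chi_{\{P=u_1\}\setminus\{P=u_0\}}\omega^n_{u_1}\wedge\eta$, which in turn rests on the orthogonality relation $\omega^n_{P(u_0,u_1)}\wedge\eta=0$ on the non-contact set $\{P(u_0,u_1)<\min(u_0,u_1)\}$. This relation is \emph{not} local in the sense needed for a foliation-chart patching argument: $P(u_0,u_1)$ is a global envelope over $\text{PSH}(M,\xi,\omega^T)$, and its restriction to a chart is not the local envelope on that chart, so Bedford--Taylor's balayage result cannot simply be invoked chart by chart. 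When $\xi$ is quasi-regular one passes to the quotient orbifold and uses the K\"ahler result; when $\xi$ is irregular there is no quotient, and the paper's resolution is a Type-I deformation $\xi_i=\xi+\rho_i$ to nearby quasi-regular Reeb fields, together with uniform $C^{1,\bar 1}$ bounds on the deformed envelopes and a stability statement for the Monge--Amp\`ere measures under $\rho_i\to 0$. Your proposal never confronts the irregular case, and neither of your two proposed devices (homogeneous lifts to the cone, or local patching) supplies this step; without it the Pythagorean formula, hence the quasi-isometry with $I_p$ and the completeness argument, do not close.

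A smaller but related point: the global smooth decreasing approximation of an arbitrary element of $\text{PSH}(M,\xi,\omega^T)$ by elements of $\cH$ — which you use implicitly throughout — also fails to localize when $\xi$ is irregular, because local regularizations in foliation charts cannot be glued into basic functions. The paper obtains it by extending $u$ trivially in the cone direction, applying Blocki--Kolodziej on the K\"ahler cone, and averaging over the torus closure of the Reeb flow; some such global mechanism must be specified for your step (ii) and for the well-definedness of $d_p$ on $\cE_p$.
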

We refer to Section 3 for notions such as $\cE_p, d_p$. Theorem \ref{pluri01} is the counterpart of main results in \cite{D2} in Sasaki setting. 
An important notion in the study of csck is the convexity of $\cK$-energy along $C^{1, \bar 1}$ geodesics \cite{BB} (see also \cite{CLP}), which was generalized to Sasaki setting by \cite{JZ, VC}. Given the results above, one can then extend $\cK$-energy to $\cE_1$-class and keep its convexity along finite energy geodesics as in \cite{BDL}. Moreover, this allows to define precisely the properness of $\cK$-energy in terms of the distance $d_1$.  One can then prove Theorem \ref{cscs} using a priori estimates of scalar curvature type equation together with properness assumption, where the effective estimates of $d_1$ in Theorem \ref{pluri01} play an important role; for details see \cite{he182}. 

Along the way to prove Theorem \ref{pluri01}, it is necessary to extend results as in \cite{GZ, D4} to Sasaki setting. Certainly the essential ideas lie in results in K\"ahler setting and T. Darvas' lecture notes \cite{D4} is an excellent reference. On the other hand, we should emphasize that in Sasaki setting, there are many new difficulties when the Reeb foliation is irregular. We have to utilize the K\"ahler cone structure and transverse K\"ahler structure in an effective way. For example, one can use Type-I deformation to approximate irregular structure by quasiregular structure. Such an approximation is very useful at times for extension to Sasaki setting.  We also construct explicit holomorphic charts on the K\"ahler cone out of its transverse K\"ahler structure, see Lemma \ref{chart}. This very explicit relation between the holomorphic charts and foliations charts of transverse K\"ahler structure seems to appear in literature for first time, to the authors' knowledge. This explicit construction of holomorphic charts builds a very straightforward relation between plurisubharmonic functions on cone and (transverse) plurisubharmonic functions via transverse K\"ahler structure. This construction plays an important role in our arguments.  \\

We organize the paper as follows. In Section 2 we introduce basic notations and concepts of Sasaki geometry. We study the geometric structure of the space of transverse K\"ahler potentials using geodesic equation and pluripotential theory in Section 3. In Section 4 we prove the main theorem. We include a brief discussion of Sasaki-extremal metric in Section 5.  Appendix (Section 6) contains various topics in pluripotential theory, including complex Monge-Ampere operator and various energy functionals on $\cE_1$; we prove various results which are stated in \cite{he182}[Section 2.2].\\

{\bf Acknowledgement:} The first named author wants to thank Prof. Xiuxiong Chen for encouragements. The first named author is also grateful for T. Darvas for his enlightening influence in pluripotential theory, which makes it possible for us to extend relevant results in pluripotential theory to Sasaki setting. The first named author is supported in part by an NSF grant, award no. 1611797. The second named author wants to thank Prof. Xiangyu Zhou and Prof. Yueping Jiang for help and encouragements. He is partially supported by NSFC 11701164.

\numberwithin{equation}{section}
\numberwithin{thm}{section}

\section{Preliminary on Sasaki geometry}

A good reference on Sasaki geometry can be found in the monograph \cite{BG} by Boyer-Galicki. 
Let $M$ be a compact differentiable manifold of dimension $2n+1 (n\geq 1)$. A Sasaki structure on $M$ is defined to be a K\"ahler cone structure on $X=M\times \R_{+}$, i.e. a K\"ahler metric $(g_X, J)$ on $X$  of the form $$g_X=dr^2+r^2g,$$ where $r>0$ is a coordinate on $\R_{+}$, and $g$ is a Riemannian metric on $M$. We call $(X, g_X, J)$ the \emph{K\"ahler cone} of $M$. We also identify $M$ with the link $\{r=1\}$ in $X$ if there is no ambiguity.  Because of the cone structure, the K\"ahler form on $X$ can be expressed as 
$$\omega_X=\frac{1}{2}\sqrt{-1}\p\bp r^2=\frac{1}{2}dd^c r^2.$$
We denote by $r\p_r$ the homothetic vector field on the cone, which is easily seen to be a real holomorphic vector field. 
A tensor $\alpha$ on $X$ is said to be of homothetic degree $k$ if 
$$\cL_{r\p_r} \alpha=k\alpha.$$
In particular, $\omega$ and $g$ have homothetic degree two, while $J$ and $r\p_r$ has homothetic degree zero.
We define the \emph{Reeb vector field} $$\xi=J(r\p_r).$$
Then $\xi$ is a holomorphic Killing field on $X$ with homothetic degree zero. Let $\eta$ be the dual one-form to $\xi$:
 \[\eta(\cdot)=r^{-2}g_X(\xi, \cdot)=2d^c \log r=\i (\bp-\p)\log r\ .\] 
We also use $(\xi, \eta)$ to denote the restriction of them on $(M, g)$.  Then we have 
\begin{itemize}
\item $\eta$ is a contact form on $M$, and $\xi$ is a Killing vector field on $M$ which we also call the Reeb vector field;
\item $\eta(\xi)=1, \iota_{\xi} d\eta(\cdot)=d\eta (\xi, \cdot)=0$;
\item the integral curves of $\xi$ are geodesics.  
\end{itemize}

The Reeb vector field $\xi$ defines a foliation $\cF_\xi$ of $M$ by geodesics. There is a classification of Sasaki structures according to the global property of the leaves. If all the leaves are compact, then $\xi$ generates a circle action on $M$, and the Sasaki structure is called {\it quasi-regular}. In general this action is only locally free, and we get a polarized orbifold structure on the leaf space. If the circle action is globally free, then the Sasaki structure is called {\it regular}, and the leaf space is a polarized K\"ahler manifold. If $\xi$ has a non-compact leaf the Sasaki structure is called {\it irregular}. 

One can also understand Sasaki structure through contact metric structure. 
There is an orthogonal decomposition of the tangent bundle \[TM=L\xi\oplus \cD,\] where $L\xi$ is the trivial  bundle generalized by $\xi$, and $\cD=\Ker (\eta)$.
The metric $g$ and the contact form $\eta$ determine a $(1,1)$ tensor field $\Phi$ on $M$ by
\[
g(Y, Z)=\frac{1}{2} d\eta(Y, \Phi Z), Y, Z\in \Gamma(\cD). 
\]
$\Phi$ restricts to an almost complex structure on $\cD$: \[\Phi^2=-\mathbb{I}+\eta\otimes \xi. \] 

Since both $g$ and $\eta$ are invariant under $\xi$, there is a well-defined K\"ahler structure $(g^T, \omega^T, J^T)$ on the local leaf space of the Reeb foliation. We call this a \emph{transverse K\"ahler structure}. In the quasi-regular case, this is the same as the K\"ahler structure on the quotient. Clearly 
$\omega^T=2^{-1}d\eta. $
The upper script $T$ is used to denote both the transverse geometric quantity, and the corresponding quantity on the bundle $\cD$. For example we have on $M$
$$g=\eta\otimes  \eta+g^T.$$
From the above discussion it is not hard to see that there is an intrinsic formulation of a Sasaki structure as a compatible integrable pair $(\eta, \Phi)$, where $\eta$ is a contact one form and $\Phi$ is a almost CR structure on $\mathcal D=\Ker \eta$. Here ``compatible" means  first that 
$d\eta(\Phi U, \Phi V)=d\eta(U, V)$ for any $U, V\in \mathcal D$, and $d\eta(U, \Phi U)>0$ for any non zero $U\in \mathcal D$. Further we require $\mathcal L_{\xi}\Phi=0$, where $\xi$ is the unique vector field with $\eta(\xi)=1$, and $d\eta(\xi, \cdot)=0$. 
$\Phi$ induces a splitting $$\mathcal  D\otimes \C=\mathcal D^{1,0}\oplus \mathcal D^{0,1}, $$
 with $\overline{\mathcal D^{1,0}}=\mathcal D^{0,1}$. 
``Integrable" means that $[\mathcal D^{0,1}, \mathcal D^{0,1}]\subset \mathcal D^{0,1}$. This is equivalent to that the induced almost complex structure on the local leaf space of the foliation by $\xi$ is integrable. For more discussions on this, see \cite{BG} Chapter 6. 

\begin{defn} A $p$-form $\theta$ on $M$ is called basic if
\[
\iota_\xi \theta=0, L_\xi \theta=0.
\]
Let $\Lambda^p_B$ be the  bundle of basic $p$-forms and $\Omega^p_B=\Gamma(S, \Lambda^p_B)$ the set of sections of $\Lambda^p_B$.  
\end{defn}
The exterior differential preserves basic forms. We set $d_B=d|_{\Omega^p_B}$. 
Thus the subalgebra $\Omega_{B}(\cF_\xi)$ forms a subcomplex of the de Rham complex, and its cohomology ring $H^{*}_{B}(\cF_\xi)$  is called the {\it basic cohomology ring}. When $(M, \xi, \eta, g)$ is a Sasaki structure, there is a natural splitting of $\Lambda^p_B\otimes \C$ such that
\[
\Lambda^p_B\otimes \C=\oplus \Lambda^{i, j}_B,
\]
where $\Lambda^{i, j}_B$ is the bundle of type $(i, j)$ basic forms. We thus have the well defined operators
\[
\begin{split}
\p_B: \Omega^{i, j}_B\rightarrow \Omega^{i+1, j}_B,\\
\bar\p_B: \Omega^{i, j}_B\rightarrow \Omega^{i, j+1}_B.
\end{split}
\]
Then we have $d_B=\p_B+\bar \p_B$. 
Set $d^c_B=\frac{1}{2}\i\left(\bar \p_B-\p_B\right).$ It is clear that
\[
d_Bd_B^c=\i\p_B\bar\p_B, d_B^2=(d_B^c)^2=0.
\]
We shall  recall the transverse complex (K\"ahler) structure on local coordinates.  Let $U_\alpha$ be an open covering of $M$ and $\pi_\alpha: U_\alpha\rightarrow V_\alpha\subset \C^n$ submersions
such that 
\[
\pi_\alpha\circ \pi^{-1}_\beta: \pi_\beta(U_\alpha\cap U_\beta)\rightarrow \pi_\alpha (U_\alpha\cap U_\beta)
\]
is biholomorphic when $U_\alpha\cap U_\beta$ is not empty. One can choose  local coordinate charts $(z_1, \cdots, z_n)$ on $V_\alpha$ and local coordinate charts $(x, z_1, \cdots, z_n)$ on  $U_\alpha\subset M$  such that $\xi=\p_x$, where we use the notations
\[
\p_x=\frac{\p}{\p x}, \p_i=\frac{\p}{\p z_i}, \bar \p_{ j}=\p_{\bar j}=\frac{\p}{\p \bar z_{ j}}=\frac{\p}{\p z_{\bar j}}. 
\]
The map $\pi_\alpha: (x, z_1, \cdots, z_n)\rightarrow (z_1, \cdots, z_n)$ is then the natural projection. There is an isomorphism, for any $p\in U_\alpha$,
\[
d\pi_\alpha: \cD_p\rightarrow T_{\pi_\alpha(p)}V_\alpha. 
\]
Hence the restriction of $g$ on $\cD$ gives an Hermitian metric $g^T_\alpha$ on $V_\alpha$ since $\xi$ generates isometries of $g$. 
One can verify that there is a well defined K\"ahler metric $g_\alpha^T$ on each $V_\alpha$ and 
\[
\pi_\alpha\circ \pi^{-1}_\beta: \pi_\beta(U_\alpha\cap U_\beta)\rightarrow \pi_\alpha (U_\alpha\cap U_\beta)
\]
gives an isometry of K\"ahler manifolds $(V_\alpha, g^T_\alpha)$.  The collection of K\"ahler metrics $\{g^T_\alpha\}$ on $\{V_\alpha\}$ can be used as an alternative definition of the transverse K\"ahler metric. The (local) transverse holomorphic (K\"ahler) structure is essential for us and we shall use these these charts enormously. We summarize as follows, 

\begin{defn}[Local foliation charts]
We can choose the open covering $\{U_\alpha\}$ of $M$ such that it a local product structure for each $\alpha$, determined by its foliation structure and transverse complex structure. That is, there are charts
\begin{equation*}
\Psi_\alpha: U_\alpha\rightarrow W_\alpha\subset \R\times \C^n,
\end{equation*}
where $W_\alpha= (-\delta, \delta) \times V_\alpha.$ For a point $p\in W_\alpha$, we write $p=(x, z_1, \cdots, z_n)$ with $\xi=\p_x$ and $V_\alpha= B_r(0)\subset \C^n$ for $0<r$ . We assume that $\delta, r$ are sufficiently small depending only on $(M, \xi, \eta, g)$, and $\omega^T_\alpha$ is uniformly equivalent to an Euclidean metric on each $V_\alpha=B_r\subset \C^n$,
\[
\frac{1}{2}\delta_{i\bar j}\leq \omega^T_\alpha\leq 2\delta_{i\bar j}
\] 
\end{defn}

In Sasaki geometry, it is often mostly convenient to work with these  charts when we need to consider the Sasaki structure locally. 
For each $U_\alpha$, we assume it is contained in  the geodesic normal neighborhood of its ``center", $\Psi_\alpha^{-1}(0, 0, \cdots, 0)$, by choosing $\delta, r$ small enough. We call these charts \emph{foliation charts}.  
The existence of foliation charts is well-known in the subject, see \cite{GKN}; in particular, any Sasaki metric $g$ can be locally expressed in terms of a real function  of $2n$ variables. Given a foliation chart $W_\alpha=(-\delta, \delta)\times V_\alpha$,  for $(x, z_1, \cdots, z_n)\in U_\alpha$, locally there exists a strictly plurisubharmonic function $h: V_\alpha\rightarrow \R$, and  the Sasaki structure reads
\begin{equation}\label{foliation02}
\begin{split}
&\xi=\p_x; \; \eta=dx-\sqrt{-1} \sum_i(h_i dz^i-h_{\bar i} dz^{\bar i})\\
&\omega^T=\sqrt{-1} h_{i\bar j} dz^i\wedge dz^{\bar j};\; g=\eta\otimes \eta+2h_{i\bar j} dz^i\otimes dz^{\bar j}
\end{split}
\end{equation}
If we consider a Sasaki structure induced by a transverse K\"ahler potential $\phi$, then locally we have $h\rightarrow h+\phi$. In particular, we have
 \[\eta_\phi=\eta+\sqrt{-1}(\bar\p-\p) \phi, \omega_\phi=\omega^T+\sqrt{-1}\p\bar\p\phi.\]

We shall also use holomorphic charts on its K\"ahler cone $X$. There exist indeed  holomorphic charts on the K\"ahler cone $X$ which are closely related to foliation charts on $M$. This seems to be much less well-known and we shall describe them now.

\begin{lemma}[Holomorphic coordinates on the K\"ahler cone]\label{chart}For a Sasaki structure locally generated by a plurisubharmonic function $h: V_\alpha\rightarrow \R$ in foliations charts on $M$, then the following gives a local holomorphic structure on its K\"ahler cone $X$, for $w=(w_0, \cdots, w_n)\in \tilde U_\alpha\subset \C\times V_\alpha$, 
\begin{equation}\label{holo01}
w_0=\log r-h(z, \bar z)+\sqrt{-1}x, w_i=z_i, i=1, \cdots, n, z=(z_1, \cdots, z_n)
\end{equation}
The holomorphic structure $J$ is given by the holomorphic coordinates $w=(w_0, \cdots, w_n)$,
\begin{equation}\label{holo02}
J\frac{\p}{\p w_i}=\sqrt{-1}\frac{\p }{\p w_i}, i=0, \cdots, n.
\end{equation}
\end{lemma}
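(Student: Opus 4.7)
The plan is to verify directly that the complex-valued functions $(w_0, w_1, \ldots, w_n)$ defined by \eqref{holo01} form a local holomorphic coordinate system on $\tilde U_\alpha \subset X$ with respect to the complex structure $J$ on the K\"ahler cone; once this is established, \eqref{holo02} is simply the tautological expression of $J$ in those coordinates. That the map $(r, x, z_1, \ldots, z_n) \mapsto (w_0, w_1, \ldots, w_n)$ is a diffeomorphism onto its image in $\C^{n+1}$ is immediate from a Jacobian computation, since $\mathrm{Re}\,w_0 = \log r - h$ is strictly monotone in $r$ while $\mathrm{Im}\,w_0 = x$ and $w_i = z_i$ for $i\geq 1$. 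Hence only the holomorphicity condition $\bp_X w_i = 0$ requires real verification.

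The heart of the argument is to produce a convenient local frame for the $(1,0)$-tangent bundle of $X$ in the foliation coordinates. From $\xi = J(r\p_r)$ one immediately reads off that $W_0 := r\p_r - \i\,\p_x$ spans the $(1,0)$-part of $\mathrm{span}(r\p_r, \xi)$. For the remaining $n$ directions I would use that $\pi_{\alpha,*}\colon \cD \to TV_\alpha$ is a $\C$-linear isomorphism intertwining $\Phi$ with the standard complex structure on $V_\alpha$, together with $\eta|_\cD = 0$. Using the explicit form of $\eta$ in \eqref{foliation02}, one finds that the unique lift of $\p_{z_j}$ into $\cD \otimes \C$ is $W_j := \p_{z_j} + \i\,h_j\,\p_x$, and these span $\cD^{1,0}$. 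Since $J$ has homothetic degree zero and $\cD$ is preserved by the flow of $r\p_r$, the frame $(W_0, W_1, \ldots, W_n)$ is a $(1,0)$-frame at every point of $\tilde U_\alpha$, not merely on the slice $\{r=1\}$.

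With this frame in hand, $\bp_X w_i = 0$ reduces to checking $\overline{W_k}(w_i) = 0$ for all $i,k \in \{0,1,\ldots,n\}$. For $w_i = z_i$ with $i \geq 1$ every case is trivial. For $w_0 = \log r - h + \i\,x$ the two nontrivial identities are $\overline{W_0}(w_0) = (r\p_r + \i\,\p_x)(w_0) = 1 - 1 = 0$ and $\overline{W_j}(w_0) = (\p_{\bar z_j} - \i\,h_{\bar j}\,\p_x)(w_0) = -h_{\bar j} + h_{\bar j} = 0$; the latter cancellation is precisely what forces the $-h$ and $\i\,x$ summands to appear together in \eqref{holo01}.

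The main technical point is the correct identification of $\cD^{1,0}$ in foliation coordinates, specifically the twist $\i\,h_j\,\p_x$ in $W_j$; once that formula is on the table the verification is routine algebra, and the structure of \eqref{holo01} becomes transparent: $\log r$ ensures correct pairing with the radial $(0,1)$-direction $\overline{W_0}$, while $-h + \i\,x$ together ensure $dw_0$ annihilates $\cD^{0,1}$. A secondary, purely notational, point is to check that the extension of the description of $\cD^{1,0}$ from $M=\{r=1\}$ to all of $\tilde U_\alpha$ is legitimate — but this follows tautologically from the cone structure $g_X = dr^2 + r^2 g$ and the $r\p_r$-invariance of $J$.
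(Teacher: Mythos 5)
Your proposal is correct and follows the same route the paper takes: the paper's proof is the one-line assertion that, given \eqref{foliation02}, the claim "is straightforward to check," and your argument is exactly that check carried out in detail (the frame $W_0=r\p_r-\i\,\p_x$, $W_j=\p_{z_j}+\i\,h_j\,\p_x$ you construct is the same $(1,0)$-frame the paper later writes down explicitly in the proof of Lemma \ref{BK}). All the computations, including the cancellations $\overline{W_0}(w_0)=1-1=0$ and $\overline{W_j}(w_0)=-h_{\bar j}+h_{\bar j}=0$ and the invertibility of the Jacobian, are correct.
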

\begin{proof}Given \eqref{foliation02}, it is straightforward to check that \eqref{holo01} gives a holomorphic chart satisfying \eqref{holo02}. 
\end{proof}

\begin{rmk}These holomorphic charts would be very useful for us later, in particular  when we consider plurisubharmonic functions on $X$ and transverse plurisubharmonic functions on $M$.
The explicit holomorphic charts given above seem to appear in literature first time to our knowledge, while the foliation charts are well-known. 
\end{rmk}

When the Reeb vector field $\xi$ is irregular, the local foliation charts satisfy cocycle condition but they do not give a manifold (or orbifold) structure of the quotient $M/\cF_\xi$. 
We shall recall \emph{Type-I deformation} defined in \cite{BGM}. Let $(M, \xi_0, \eta_0, g_0)$ be a compact Sasaki manifold, denote its automorphism group by  $\text{Aut}(M, \xi_0, \eta_0, g_0)$. We fix a torus \[T\subset  \Aut(M, \xi_0, \eta_0, g_0)\; \text{such that}\; \xi_0\in \mathfrak{t}=\text{Lie algebra}(T).\]

\begin{defn}[Type-I deformation]\label{type-01}
Let $(M, \xi_0, \eta_0, g_0)$ be a $T$-invariant Sasaki structure. For any $\xi\in \mathfrak{t}$ such that $\eta_0(\xi)>0$. We define a 
 new Sasaki structure on $M$ explicitly as 
\begin{equation*}\label{e-2-7}
\eta=\frac{\eta_0}{\eta_0(\xi)}, \Phi=\Phi_0-\Phi_0\xi\otimes \eta, g=\eta\otimes \eta+\frac{1}{2}d\eta(\I\otimes \Phi).
\end{equation*}
\end{defn}
Note that under Type-I deformation, the essential change is the Reeb vector field $\xi_0 \leftrightarrow \xi$ and this construction can be done vice versa.

\section{The space of transverse K\"ahler potentials}

In this section we consider the space of transverse K\"ahler potentials on a compact Sasaki manifold through its transverse K\"ahler structure.
It turns out to be necessary to consider these objects not only from point of view of PDE, but also from the point of view of pluripotential theory. 
Geometric pluripotential theory on K\"ahler manifolds turns out to be one crucial piece in the proof of properness conjecture \cite{BDL2, CC3}. We refer \cite{GZ, D4} and references therein for details of pluripotential theory. We need to extend these results to Sasaki manifolds.  This would form a crucial piece for existence of cscs on Sasaki manifolds as well, see \cite{he182} for details. We start with the basic notion of quasiplurisubharmonic functions on Sasaki manifolds.

\subsection{The quasiplurisubharmonic functions on Sasaki manifolds}

Denote $\cH=\{\phi \in C^\infty_B(M): \omega_\phi=\omega^T+\sqrt{-1}\p_B\bar\p _B \phi>0\}$,
the space of transverse K\"ahler potentials on a Sasaki manifold $(M, \xi, \eta, g)$.  Given $\phi\in \cH$, it defines a new Sasaki structure, $(M, \xi, \eta_\phi, g_{\eta_\phi})$ as follows,
\begin{equation*}
\eta_\phi=\eta+2d^c_B\phi, \omega_\phi=\omega^T+\sqrt{-1}\p_B\bar\p_B \phi, g_{\eta_\phi}=\eta_\phi\otimes \eta_\phi+\omega_\phi
\end{equation*}

The most relevant results in pluripotential theory for us lie in  in \cite{GZ}, \cite{BBEGZ}[Section 2] and \cite{D4}. Part of them has been done by van Covering \cite{VC}[Section 2], including the Monge-Ampere operator and weak convergence, with main focus on $L^\infty$ and $C^0$ potentials. We shall need most of results on the energy classes $\cE$ and $\cE_p$ (defined below).  \\

Given a Sasaki structure $(M, \xi, \eta, g)$, we recall the following definition,
\begin{defn}An $L^1$, upper semicontinuous (usc) function $u: M\rightarrow \R\cup \{-\infty\}$ is called a transverse $\omega^T$-plurisubharmonic (TPSH for short) if $u$ is invariant under the Reeb flow, and $u$ is $\omega^T$-plurisubharmonic on each local foliation chart $V_\alpha$, that is $\omega^T_\alpha+\sqrt{-1}\p_B\bar\p_B u\geq 0$ as a positive closed $(1, 1)$-current on $V_\alpha$.  \end{defn}
It is apparent that the definition above does not depend on the choice of foliation charts. Indeed, $u$ is invariant along the flow of $\xi$ and we extend $u$ trivially in the cone direction to a function on cone.  Using the holomorphic structure on the cone (see Lemma \ref{chart}), $u$ is a TPSH if and only if $\omega^T+\sqrt{-1}\p\bar \p u\geq 0$ is a closed, positive $(1, 1)$ current $X$.
We use the notation,
\begin{equation*}
\text{PSH}(M, \xi, \omega^T)=\{u\in L^1(M), u\; \text{is usc and invariant under the Reeb flow}; \omega_u\geq 0\}
\end{equation*}

One of the cornerstones of Bedford-Taylor theory \cite{BT1} is to associate a complex Monge-Ampere measure to a bounded psh function. Their construction generalizes to bounded K\"ahler potentials in a straightforward manner \cite{GZ} and it has direct adaption to Sasaki setting. We refer to \cite{VC}[Section 2] and Section \ref{CMA001} for definition of complex Monge-Ampere measures $\omega_u^n\wedge \eta$ for $u\in \text{PSH}(M,\xi,\omega^T)\cap L^{\infty}$ on Sasaki manifolds, which is a direct adaption of Bedford-Taylor theory \cite{BT1}. 

\begin{prop}\label{weakcon0}
Suppose that the sequence $u_j \in \text{PSH}(M,\xi,\omega^T)\cap L^{\infty}$ decreases to $u \in \text{PSH}(M,\xi,\omega^T)\cap L^{\infty}$. Then for $k=1, \cdots, n$, we have the following weak convergences of complex Monge-Ampere measures, \begin{equation}\label{measure001}\omega_{u_j}^k\wedge (\omega^T)^{n-k} \wedge \eta \rightarrow \omega_u^k\wedge(\omega^T)^{n-k} \wedge \eta\end{equation}
\end{prop}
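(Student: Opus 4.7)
The plan is to reduce the statement on the Sasaki manifold $M$ to the classical Bedford--Taylor monotone convergence theorem on open subsets of $\C^n$, applied chart-by-chart in foliation charts, and then to glue via a partition of unity. The key local fact I would exploit is that a basic function is the pullback of a function on the transverse leaf space $V_\alpha\subset\C^n$, so the transverse Monge--Amp\`ere current $\omega_u^k\wedge(\omega^T)^{n-k}$ coincides, chart by chart, with the classical Bedford--Taylor mixed current on $V_\alpha$; the wedge factor $\eta$ then merely supplies the missing Reeb direction $dx$.

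First I would cover $M$ by finitely many foliation charts $W_\alpha=(-\delta,\delta)\times V_\alpha$ with $\xi=\partial_x$, and pick a subordinate partition of unity $\{\chi_\alpha\}$. Using the explicit form of $\eta$ from \eqref{foliation02}, the horizontal one-form $\sqrt{-1}(h_{\bar i}\,dz^{\bar i}-h_i\,dz^i)$ drops out when wedged against the basic $(n,n)$-form $\omega_u^k\wedge(\omega^T)^{n-k}$, since the latter already saturates the $dz,d\bar z$ directions. Hence on $W_\alpha$,
\begin{equation*}
\omega_u^k\wedge(\omega^T)^{n-k}\wedge \eta \;=\; \omega_u^k\wedge(\omega^T)^{n-k}\wedge dx,
\end{equation*}
the product of the classical transverse Bedford--Taylor mixed current on $V_\alpha$ with Lebesgue measure $dx$ on $(-\delta,\delta)$. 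Consistency of this local description across overlaps is built into the global construction of the complex Monge--Amp\`ere measure already recalled in Section \ref{CMA001}, following \cite{VC} and \cite{BT1}.

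Next, on each $V_\alpha$, after subtracting the local transverse K\"ahler potential $h$ in \eqref{foliation02}, the restrictions $u_j|_{V_\alpha}$ form a decreasing sequence of bounded plurisubharmonic functions converging pointwise to $u|_{V_\alpha}$, so the classical Bedford--Taylor monotone convergence theorem yields the weak convergence of measures
\begin{equation*}
\omega_{u_j}^k\wedge(\omega^T)^{n-k}\;\longrightarrow\;\omega_u^k\wedge(\omega^T)^{n-k}
\end{equation*}
on $V_\alpha$, together with a Chern--Levine--Nirenberg-type uniform bound on the total mass over any compact subset of $V_\alpha$ in terms of $\sup_j\|u_j\|_{L^\infty(M)}$. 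Pairing $f\in C^0(M)$ with $\omega_{u_j}^k\wedge(\omega^T)^{n-k}\wedge\eta$ and splitting by $\{\chi_\alpha\}$ reduces the desired global convergence, chart by chart, to
\begin{equation*}
\int_{-\delta}^{\delta}\!\!\int_{V_\alpha}(\chi_\alpha f)(x,z)\,\omega_{u_j}^k\wedge(\omega^T)^{n-k}\,dx \;\longrightarrow\; \int_{-\delta}^{\delta}\!\!\int_{V_\alpha}(\chi_\alpha f)(x,z)\,\omega_u^k\wedge(\omega^T)^{n-k}\,dx,
\end{equation*}
which follows slice-by-slice in $x$ from Bedford--Taylor and then by dominated convergence in $x$.

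The only real obstacle I anticipate is this last swap of the $z$-weak limit with the $x$-integration; it is handled by the uniform Chern--Levine--Nirenberg mass bound together with the $L^\infty$ hypothesis on $(u_j)$, plus the fact that $x\mapsto(\chi_\alpha f)(x,\cdot)$ varies continuously in $C^0_c(V_\alpha)$. Beyond this, the argument is essentially a transcription of the classical K\"ahler theory through the foliation-chart machinery and the product structure of $\omega_u^k\wedge(\omega^T)^{n-k}\wedge\eta$. As an alternative, one could instead lift $u_j$ and $u$ trivially along the cone direction to PSH functions on the K\"ahler cone $X$ via the holomorphic coordinates \eqref{holo01} and apply Bedford--Taylor directly on $X$; by Lemma \ref{chart} the two approaches are equivalent.
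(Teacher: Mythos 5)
Your proposal is correct and follows essentially the same route as the paper's proof: a partition of unity subordinated to foliation charts, the identification $\omega_u^k\wedge(\omega^T)^{n-k}\wedge\eta=\omega_u^k\wedge(\omega^T)^{n-k}\wedge dx$ as a product measure on each chart, slice-wise application of the classical Bedford--Taylor convergence on $V_\alpha$, and integration in $x$. Your explicit justification of the interchange of the $z$-weak limit with the $x$-integration via a Chern--Levine--Nirenberg mass bound and dominated convergence is a small refinement of a step the paper passes over, but it is the same argument.
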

\begin{proof}By applying a partition of unity subordinated to covering by foliation charts, we need to show that for $f\in C^\infty$, supported on a foliation chart $W_\alpha=(-\delta, \delta)\times V_\alpha$
\begin{equation}\label{measure002}
\int_M f \omega_{u_j}^k\wedge (\omega^T)^{n-k} \wedge \eta\rightarrow \int_M f \omega_{u}^k\wedge (\omega^T)^{n-k} \wedge \eta
\end{equation}
We should emphasize that $f$ is not a basic function in general. The weak convergence in K\"ahler setting implies that for each $x\in (-\delta, \delta)$
\[
\int_{V_\alpha} f(x, z, \bar z) \omega_{u_j}^k\wedge (\omega^T)^{n-k}\rightarrow \int_{V_\alpha} f(x, z, \bar z) \omega_{u}^k\wedge (\omega^T)^{n-k} .
\]
Note that for each $x$, $f$ is supported on $V_\alpha$. Taking integration with respect to $dx$, this leads to \eqref{measure002}, since on $W_\alpha$, $\omega_{u}^k\wedge (\omega^T)^{n-k} \wedge \eta=\omega_{u}^k\wedge (\omega^T)^{n-k} \wedge dx$ as a product measure. 

\end{proof} 

The following Bedford-Taylor identity in Sasaki setting would be used numerously,

\begin{prop}\label{measure1}For $u, v\in \text{PSH}(M, \xi, \omega^T)\cap L^\infty$, 
\begin{equation}\label{si1}
\chi_{\{u>v\}}\omega^n_{\max(u, v)}\wedge \eta=\chi_{\{u>v\}} \omega^n_u\wedge \eta. 
\end{equation}
\end{prop}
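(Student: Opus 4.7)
The plan is to reduce the claim to the classical Bedford-Taylor identity on each local foliation chart, in the spirit of the proof of Proposition \ref{weakcon0}. First, since $u, v$ are basic, so is $\max(u,v)$ and the difference $u-v$, so the set $\{u > v\}$ is saturated under the Reeb flow and $\chi_{\{u>v\}}$ is a well-defined basic Borel function on $M$. By the Bedford-Taylor type construction for bounded TPSH functions (see Section \ref{CMA001}), both sides of \eqref{si1} are well-defined finite Borel measures on $M$. Applying a partition of unity subordinate to a covering by foliation charts, it suffices to prove that for any smooth compactly supported test function $f$ (not necessarily basic) on a single foliation chart $W_\alpha = (-\delta, \delta) \times V_\alpha$, one has
\[
\int_{W_\alpha} f \, \chi_{\{u > v\}} \, \omega_{\max(u,v)}^n \wedge \eta = \int_{W_\alpha} f \, \chi_{\{u > v\}} \, \omega_u^n \wedge \eta.
\]

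Second, on $W_\alpha$, using the explicit form \eqref{foliation02}, the contact form decomposes as $\eta = dx + \theta$ where $\theta$ is a real one-form of type $dz^i, dz^{\bar j}$. Since $\omega_u^n$ is a transverse top $(n,n)$-form pulled back from $V_\alpha$, we have $\omega_u^n \wedge \theta = 0$, so
\[
\omega_u^n \wedge \eta = \omega_u^n \wedge dx,
\]
exhibiting the measure as a product of the transverse Monge-Ampère measure with Lebesgue measure along the leaf direction, and the same holds for $\omega_{\max(u,v)}^n \wedge \eta$. Moreover, the basic bounded functions $u, v$ descend via $\pi_\alpha$ to bounded $\omega^T_\alpha$-psh functions $\tilde u, \tilde v$ on $V_\alpha$, and the transverse Monge-Ampère $\omega_u^n$ on $W_\alpha$ coincides with the pullback of the classical Bedford-Taylor measure $\omega_{\tilde u}^n$ on $V_\alpha$. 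Introducing the smooth test function $\tilde f(z,\bar z) := \int_{-\delta}^\delta f(x,z,\bar z)\, dx \in C^\infty_0(V_\alpha)$ and integrating out $x$ via Fubini, the displayed identity reduces to
\[
\int_{V_\alpha} \tilde f \, \chi_{\{\tilde u > \tilde v\}} \, \omega_{\max(\tilde u, \tilde v)}^n = \int_{V_\alpha} \tilde f \, \chi_{\{\tilde u > \tilde v\}} \, \omega_{\tilde u}^n,
\]
which is precisely the classical Bedford-Taylor identity for bounded plurisubharmonic functions on $V_\alpha \subset \C^n$.

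The essential analytic input is therefore the classical Bedford-Taylor identity in the Kähler (Euclidean) setting, whose proof proceeds by considering $\max(\tilde u, \tilde v + \varepsilon)$, noting that on the open set $\{\tilde u > \tilde v + \varepsilon\}$ the function $\max(\tilde u, \tilde v + \varepsilon)$ agrees with $\tilde u$ on a neighborhood so the Monge-Ampère currents coincide there, and then passing $\varepsilon \to 0^+$; this is standard and carried out in \cite{D4}. The main work in the Sasaki setting is thus bookkeeping: verifying the product decomposition $\omega_u^n \wedge \eta = \omega_u^n \wedge dx$ in foliation charts and the identification of the transverse Monge-Ampère with the Kähler one on the local leaf space. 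The only mild subtlety I expect is handling the characteristic function $\chi_{\{u>v\}}$, but this is already settled by the classical identity, since $\{\tilde u > \tilde v\}$ is Borel and the relevant currents carry no mass on pluripolar sets; no essentially new Sasaki-specific obstacle arises once the dictionary between foliation charts and transverse Kähler structure is in place.
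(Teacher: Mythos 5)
Your proposal is correct and follows essentially the same route as the paper: localize to foliation charts, observe that basicness of $u,v$ makes $\{u>v\}$ Reeb-saturated and that $\omega_u^n\wedge\eta=\omega_u^n\wedge dx$ is a product measure there, and then reduce to the classical Bedford--Taylor identity on the transverse chart $V_\alpha$. The extra detail you supply (testing against non-basic $f$ and integrating out $x$ by Fubini) is exactly the bookkeeping the paper leaves implicit, so there is nothing to change.
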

\begin{proof}We only need to prove this in foliation charts. Recall for each foliation chart $W_\alpha=(-\delta, \delta)\times V_\alpha$,  $V_\alpha=B_r(0)\subset \C^n$ gives the local transverse complex structure. For a point $p\in W_\alpha$, we write $p=(x, z)$ with $\xi=\p_x$.  Given $u\in \text{PSH}(M, \xi, \omega^T)\cap L^\infty$ it defines a K\"ahler current $\omega_u^n$ on $V_\alpha$.  Since both $u$ and $v$ are basic functions, $u, v$ are independent of $x$ in $W_\alpha$. Hence on $W_\alpha\cap \{u>v\}=(-\delta, \delta)\times \{z\in V_\alpha: u>v\}.$ Note that $\omega^T_u\wedge \eta$ is invariant along the Reeb direction, and it coincides with the product measure $dx\wedge \omega_u^n$ on $W_\alpha=(-\delta, \delta)\times V_\alpha$. 
On each $W_\alpha$, we have
\[
\begin{split}
&\chi_{\{(x, z)\in W_\alpha: u>v\}}\omega^n_{\max(u, v)}\wedge \eta=\chi_{\{z\in V_\alpha: u>v\}}\omega^n_{\max(u, v)}\wedge dx\\
&\chi_{\{(x, z)\in W_\alpha: u>v\}} \omega^n_u\wedge \eta=\chi_{\{z\in V_\alpha: u>v\}}\omega^n_{u}\wedge dx.
\end{split}
\]
To prove \eqref{si1}, it reduces to show that 
\[
\chi_{\{z\in V_\alpha: u>v\}}\omega^n_{\max(u, v)}=\chi_{\{z\in V_\alpha: u>v\}}\omega^n_{u}. 
\]
This is just the Bedford-Taylor identity \cite{BT1}.  
\end{proof}
It is possible to generalize the Bedford-Taylor constructions to a much larger class on a compact K\"ahler manifold, see Guedj-Zeriahi \cite{GZ}. The reference \cite{D4}[Section 2] is sufficient for our purpose. These constructions in K\"ahler setting have a direct extension to Sasaki setting, where Proposition \ref{measure1} plays an important role. First we prove the following well-known result in pluripotential theory. 

\begin{prop}\label{upperbound01}There exists $C=C(M, g)$ such that for any $u\in \text{PSH}(M, \xi, \omega^T)$,
\begin{equation*}
\sup_M u\leq \frac{1}{\text{Vol}(M)}\int_M u d\mu_g+C
\end{equation*}
\end{prop}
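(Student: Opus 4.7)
My plan is to reduce the statement to a standard Riemannian estimate: if $\Delta_g u \geq -C_0$ in the distributional sense on a compact Riemannian manifold, then $\sup u \leq \bar u + C_1$. The first step is to verify that the transverse psh hypothesis yields such a Laplacian bound on $(M, g)$. Since $u$ is basic, the Reeb direction derivatives $\xi^2 u$ vanish, so the Laplace--Beltrami operator $\Delta_g u$ reduces to the transverse Laplacian $\Delta_B u = 2\tr_{\omega^T}(i\partial_B\bar\partial_B u)$ on the foliation charts of Definition 2.2. Taking trace of the current inequality $\omega^T + i\partial_B\bar\partial_B u \geq 0$ against $\omega^T$ then gives $\Delta_g u \geq -c(n)$ as a distribution on $M$, for some constant $c(n)$ depending only on $n$.

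Next I would invoke the Green's function $G(x, y)$ for $\Delta_g$ on $(M, g)$, normalized by $-\Delta_y G(x, \cdot) = \delta_x - V^{-1}$ (with $V = \text{Vol}(M, g)$) and $\int_M G(x, y)\, d\mu_g(y) = 0$. Since $\dim M = 2n+1 \geq 3$, the diagonal singularity of $G$ has a definite sign, so $G \geq -A$ for some $A = A(M, g)$. For smooth $u$ the Green's representation
\[
u(x) - \bar u = \int_M (-\Delta_g u)(y)\, G(x, y)\, d\mu_g(y),
\]
combined with writing $c(n) - (-\Delta_g u) = f \geq 0$ of total mass $c(n)V$ and using $\int G\, d\mu_g = 0$ to absorb the constant part, yields $u(x) - \bar u \leq A\,c(n)\,V$ uniformly in $x$, which is the desired bound with $C = A\,c(n)\,V$ for smooth $u$.

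For general $u \in \text{PSH}(M, \xi, \omega^T)$, I would approximate $u$ by a decreasing sequence of smooth transverse $\omega^T$-psh functions $u_j \searrow u$ and pass to the limit using $\sup u_j \searrow \sup u$ (by upper semicontinuity and compactness of $M$) and $\bar u_j \to \bar u$ (monotone convergence). The main technical obstacle will be constructing such smooth approximations in the Sasaki setting when the Reeb foliation is irregular and there is no global transversal. My plan is to exploit the explicit holomorphic cone chart of Lemma \ref{chart}: extending $u$ trivially in the $r$-direction realizes it as an ambient psh function on $X$, where local mollification is routine, and averaging over the isometric action generated by $\xi$ and $r\partial_r$ then yields a smooth basic $\omega^T$-psh approximation after restriction to $\{r = 1\}$. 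Alternatively, the estimate can be carried out directly at the distributional level: $\nu := \Delta_g u + c(n)$ is a non-negative Borel measure of total mass $c(n)V$, and the Riesz representation combined with $G \geq -A$ gives the bound almost everywhere, which upgrades to a pointwise bound since every $\omega^T$-psh function satisfies the sub-mean inequality (its value at each point agrees with the essential upper limit from nearby points).
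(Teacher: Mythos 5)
Your argument is correct, but it takes a genuinely different route from the paper. You reduce the statement to the linear estimate ``$\Delta_g u \geq -c(n)$ distributionally implies $\sup u \leq \bar u + C$'' and settle that via the Green's function of $(M,g)$ (lower-boundedness of $G$, the representation $u - \bar u = -\int G\, d\nu$ with $\nu = \Delta_g u + c(n) \geq 0$ of mass $c(n)V$, and $\int_M G(x,\cdot)\,d\mu_g = 0$ to kill the constant term). The paper instead only notes the Laplacian bound for $C^2$ potentials and, for general $u$, works entirely locally: it covers $M$ by nested foliation charts, writes $\psi_k + u$ as a genuine psh function on a Euclidean ball, applies the sub-mean value inequality at the point where $\sup u$ is attained, and then chains the resulting lower bound on $\int_{B(0,1)} u$ from chart to overlapping chart in finitely many steps (the Sasaki analogue of Darvas's Lemma 3.45). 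Your approach buys a shorter proof with an explicit constant $A\,c(n)\,\mathrm{Vol}(M)$, at the cost of importing the Green's function machinery and of either a global smooth decreasing approximation or the distributional Riesz argument; note that the smoothing route leans on what is essentially Lemma \ref{BK}, which the paper only establishes afterwards via the K\"ahler cone and Blocki--Kolodziej (it is logically independent of this proposition, so there is no circularity, but it is a nontrivial import), whereas your distributional variant, finished off by the sub-mean value upgrade from an a.e.\ bound to a pointwise one, is self-contained. The paper's covering argument is more elementary in that it uses nothing beyond the local sub-mean value property, but yields a less explicit constant and requires the somewhat fiddly chaining step.
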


\begin{proof}When $u$ is $C^2$ this is obvious by the fact that $\Delta_g u+n\geq 0$. In general we can prove this using sub-mean value property of plurisubharmonic functions, similar as in \cite{D4}[Lemma 3.45]. In this proof we can either use foliation charts on $M$ or K\"ahler cone structure on $X=C(M)$. We use foliation charts in this argument. 

We assume $\sup_M u=0$ and want to show that the integration of $u$ is uniformly bounded below. 
We cover $M$ by nested foliation charts $U_k\subset W_k\subset M$ such that there exist diffeomorphisms $\varphi_k: B(0, 4)\times (-2\delta, 2\delta)\rightarrow W_k$ with $\varphi_k: (B(0, 1)\times (-\delta, \delta))=U_k$, where $\delta$ is a fixed positive constant and $B(0, 1)\subset B(0, 4)\subset \C^n$ are Euclidean balls in $\C^n$. We assume that $(z, x)\in B(0, 4)\times (-2\delta, 2\delta)$ such that $z\in B(0, 4)$ represents transverse holomorphic charts and $x\in (-2\delta, 2\delta)$ represents the Reeb direction (i.e. $\xi=\p_x$). 
On each $W_k$, there exists a function $\psi_k=\psi_k(z)$ such that $\omega^T=\sqrt{-1}\p_z\bar\p_z \psi_k$.  
Note that we only need to show that, there exists a uniformly bounded constant $C>0$, such that
\[
\int_{U_k} u d\mu_g\geq -C, k\in \{1, \cdots, N\}
\]
Note that $u$ is basic, we have
\[
\int_{B(0, 1)\times (-\delta, \delta)} u\circ \varphi_k d\mu_{x, z}=2\delta \int_{B(0, 1)} u\circ \varphi_k(z, x_0) d\mu_z, x_0\in (-\delta, \delta)
\]
where $d\mu_{x, z}$ and $d\mu_z$ are Euclidean measure on $\C^n\times \R$ and $\C^n$ respectively. Hence we only need to show that 
\begin{equation}\label{step0}
\int_{B(0, 1)} u\circ \varphi_k(z, x_0) d\mu_z\geq -C, k\in \{1, \cdots, N\}
\end{equation}
Note that by our construction, $(\psi_k+u)\circ \varphi_k$ is independent of $x$ and is plurisubharmonic on $B(0, 4)$ for each $k$. As $u$ is usc, its supremum is realized at some point $p_1\in M$ such that $u\leq u(p_1)=0$. Since $U_k$ covers $M$, we can assume $p_1\in U_1$ with the coordinate $\varphi_1(z_1, x_1)=p_1$ for some $(z_1, x_1)\in B(0, 1)\times (-\delta, \delta)$. Note that since $u$ is basic, hence it is independent of $x$-coordinate we can also take $x_1=0$.  Since $B(z_1, 2)\subset B(0, 4)$, we have the following sub-mean value property for $(\psi_1+u)\circ \varphi_1$,
\[
\psi_1\circ \varphi_1(z_1, 0)=(\psi_1+u)\circ \varphi_1(z_1, 0)\leq \frac{1}{\mu(B(z_1, 2))}\int_{B(z_1, 2)} (\psi_1+u)\circ \varphi_1(z, 0) d\mu_z
\]
Since $u\leq 0$ and $B(0, 1)\subset B(z_1, 2)$, there exists $C_1>0$, independent of $u$, such that
\begin{equation}\label{step01}
\int_{B(0, 1)} u\circ \varphi_1 d\mu_z\geq -C_1. 
\end{equation}
Since $\{U_k\}_k$ covers $M$, we can assume $U_1$ intersects $U_2$. We can choose $r_2>0$, such that $\varphi_2(B(z_2, r_2)\times (\delta_1, \delta_2))\subset U_1\cap U_2$ for some $B(z_2, r_2)\subset B(0, 4)$ and $-\delta<\delta_1<\delta_2<\delta$. 
Since $u\leq 0$, it follows that there exists $\tilde C_1>0$, independent of $u$ ($\tilde C_1$ depends only on $C_1$, $r_2$ and $\psi_2$), such that
\[
\frac{1}{\mu(B(z_2, r_2))}\int_{B(z_2, r_2)}  (u+\psi_2) \circ \varphi_2 d\mu_z\geq -\tilde C_1. 
\]
Since $(u+\psi_2) \circ \varphi_2$ is plurisubharmonic in $B(0, 4)$, we can obtain that 
\[
\frac{1}{\mu(B(z_2, 2))}\int_{B(z_2, 2)}  (u+\psi_2) \circ \varphi_2 d\mu_z\geq \frac{1}{\mu(B(z_2, r_2))}\int_{B(z_2, r_2)}  (u+\psi_2) \circ \varphi_2 d\mu_z\geq -\tilde C_1. 
\]
Since $u\leq 0$ and $B(0, 1)\subset B(z_2, 2)$, we obtain for some $C_2>0$
\[
\int_{B(0, 1)} u\circ \varphi_2 d\mu_z\geq -C_2
\]
We continue this process to consider that $U_1\cup U_2$ intersects a member, say $U_3$. After at most $N-2$ step we prove \eqref{step0}. 
\end{proof}

As a direct consequence, we know the following (see \cite{Demailly}[Proposition I.5.9]),
\begin{prop}\label{compactness001}
The set $\cC=\{u\in \text{PSH}(M, \xi, \omega^T): \sup_M u\leq C\}$ is bounded in $L^1$ and it is precompact in $L^1(d\mu_g)$ topology. 
\end{prop}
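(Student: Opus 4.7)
The proof should follow the standard pattern for compactness of $\omega$-plurisubharmonic families in K\"ahler geometry, transported to the Sasaki setting through the foliation chart formalism already used in the proof of Proposition \ref{upperbound01}. The two assertions decouple: first an $L^1$ bound, then sequential precompactness.

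For the $L^1$ bound, apply Proposition \ref{upperbound01} to get $\sup_M u \leq \text{Vol}(M)^{-1}\int_M u\, d\mu_g + C_0$, which rearranges to $\int_M u\, d\mu_g \geq (\sup_M u - C_0)\text{Vol}(M)$. With the upper bound $u \leq \sup_M u \leq C$ already at hand (so $\int_M u_+\, d\mu_g \leq C\,\text{Vol}(M)$), the identity $\int u_- = \int u_+ - \int u$ then controls $\int_M u_-\, d\mu_g$ from above, producing a uniform bound on $\|u\|_{L^1(d\mu_g)}$ over $\cC$ (under the usual convention that $\sup_M u$ is also bounded below, as is implicit in the normalized formulation).

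For precompactness, cover $M$ by finitely many foliation charts $\{W_\alpha = (-\delta,\delta)\times V_\alpha\}_{\alpha=1}^N$, with $V_\alpha = B_r(0) \subset \C^n$. On each $V_\alpha$ the transverse K\"ahler form admits a local potential, $\omega^T = \sqrt{-1}\p\bar\p\psi_\alpha$. Any $u \in \cC$, being basic, descends to a function $\tilde u_\alpha$ on $V_\alpha$, and $\psi_\alpha + \tilde u_\alpha$ is plurisubharmonic on $V_\alpha$ in the classical sense, uniformly bounded above by $C + \sup_{V_\alpha}\psi_\alpha$. Given a sequence $\{u_j\}\subset\cC$, the classical compactness theorem for uniformly bounded families of plurisubharmonic functions on Euclidean domains (\cite{Demailly}, Proposition I.5.9) yields a dichotomy: either the lifted sequence tends locally uniformly to $-\infty$, or a subsequence converges in $L^1_{\text{loc}}(V_\alpha)$. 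The first alternative is excluded by the $L^1$ bound. A diagonal extraction over the finite cover yields a single subsequence converging on every $V_\alpha$; since on $W_\alpha$ the Riemannian volume is equivalent to the product $dx \otimes d\mu_{V_\alpha}$ and the $u_j$ are independent of $x$ (being basic), the chartwise $L^1$ convergence combines via a partition of unity subordinate to $\{W_\alpha\}$ into convergence in $L^1(M,d\mu_g)$.

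The only point requiring care is that the limit $u_\infty$ belongs to $\text{PSH}(M,\xi,\omega^T)$. This follows because the local limits $\psi_\alpha + (\tilde u_\infty)_\alpha$ are plurisubharmonic on each $V_\alpha$ (as $L^1_{\text{loc}}$ limits of uniformly-bounded-above plurisubharmonic sequences), they agree on overlaps by uniqueness of $L^1_{\text{loc}}$ limits, and Reeb-invariance is preserved under $L^1$ limits of Reeb-invariant functions. I expect no serious obstacle: the entire content of the statement is Proposition \ref{upperbound01} plus the classical Hartogs/Demailly compactness theorem for plurisubharmonic functions, bridged by the foliation-chart description of TPSH functions.
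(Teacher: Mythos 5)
Your proposal is correct and follows essentially the same route as the paper: the $L^1$ bound is deduced from Proposition \ref{upperbound01}, and precompactness is obtained by applying the classical Montel-type compactness for plurisubharmonic functions (\cite{Demailly}, Proposition I.5.9) locally in finitely many foliation charts and extracting a diagonal subsequence in $L^1(d\mu_g)$. Your explicit caveat that $\sup_M u$ must also be controlled from below for the $L^1$ bound, and your verification that the limit stays in $\text{PSH}(M,\xi,\omega^T)$, merely make explicit points the paper's proof leaves implicit.
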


\begin{proof}By the above we know that $\sup_M u$ bounded above implies that $\int_M |u|d\mu_g$ is uniformly bounded. By the Motel property of subharmonic functions and plurisubharmonic functionals \cite{Demailly}[Proposition I.4.21, Proposition I.5.9] that $\cC$ is precompact with respect to $L^1(d\mu_g)$ topology. Note that in Sasaki setting we apply the compactness of plurisubharmonic functions to nested foliations charts $U_k\subset W_k$ as above for $\omega^T_k$-plurisubharmonic functions locally, that $\cC$ is precompact in $L^1$ topology in each $U_k$. After passing by subsequence if necessary, we can then get weak compactness of $\cC$ with respect to $L^1(d\mu_g)$ topology. 
\end{proof}

Let $v\in \text{PSH}(M, \xi, \omega^T)$. 
For $h\in \R$, we denote $v_h=\max\{v, -h\}$ to be the \emph{canonical cutoffs} of $v$. By Proposition \ref{upperbound01}, $v_h\in L^\infty$. It is evident that $v_h$ is invariant under the Reeb flow and hence $v_h\in \text{PSH}(M, \xi, \omega^T)\cap L^\infty$. If $h_1<h_2$, then Proposition \ref{measure1} implies that
\[
\chi_{\{v>-h_1\}}\omega^n_{v_{h_1}}\wedge \eta=\chi_{\{v>-h_1\}} \omega^n_{v_{h_2}}\wedge \eta\leq \chi_{\{v>-h_2\}} \omega^n_{v_{h_2}}
\]
Hence $\chi_{\{v>-h\}}\omega^n_{v_{h}}\wedge \eta$ is an increasing sequence of Borel measure on $M$ with respect to $h$. This leads to the following definition, 

\begin{defn}
We define
\begin{equation}\label{measure2}
\omega^n_v\wedge \eta:=\lim_{h\rightarrow \infty}\chi_{\{v>-h\}}\omega^n_{v_{h}}\wedge \eta
\end{equation}
\end{defn}

We shall emphasize that by the definition above, we have for any Borel set $B\subset M$, 
\begin{equation}\label{measure21}
\int_B \omega^n_v\wedge \eta=\lim_{h\rightarrow \infty}\int_B \chi_{\{v>-h\}}\omega^n_{v_{h}}\wedge \eta
\end{equation}
Hence the convergence in \eqref{measure2} is a stronger notion than the weak convergence of measures.

To proceed, we need the following approximation of TPSH functions. Our proof uses the K\"ahler cone structure and builds up on Blocki-Kolodziej \cite{BK}. 
\begin{lemma}\label{BK}Given $u\in \text{PSH}(M, \xi, \omega^T)$, there exists a decreasing sequence $\{u_k\}_k\subset \cH$ such that $u_k$ converges to $u$. 
\end{lemma}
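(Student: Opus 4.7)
\textbf{Proof plan for Lemma \ref{BK}.} I would adapt the regularization-plus-gluing scheme of Blocki-Kolodziej \cite{BK} to the foliation charts of $M$, using the transverse holomorphic structure (equivalently, the Kähler cone coordinates of Lemma \ref{chart}) to reduce the problem to a standard regularization in the transverse directions. The overall strategy is: (i) smooth $u$ locally in the transverse directions within each foliation chart, (ii) glue the local smoothings into a global smooth basic function via Demailly's regularized maximum, (iii) perturb to ensure strict positivity so that the result lies in $\cH$, and (iv) control the parameters to maintain monotonicity.

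\smallskip

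\emph{Local transverse smoothing.} Cover $M$ by finitely many foliation charts $W_\alpha = (-\delta, \delta) \times V_\alpha$, $\alpha = 1, \dots, N$, with $V_\alpha \subset \C^n$ carrying the transverse complex coordinates $z$. On each $V_\alpha$ write $\omega^T = \sqrt{-1}\p\bp h_\alpha$. Since $u$ is basic, it is independent of $x$ in $W_\alpha$, and the TPSH condition is precisely that $h_\alpha + u$ is (ordinary) psh on $V_\alpha$. Convolving $h_\alpha + u$ with a standard smoothing kernel $\rho_{\ve}$ on $\C^n$ produces smooth psh approximations on shrunken opens $V'_\alpha \subset\subset V_\alpha$, decreasing in $\ve$ to $h_\alpha + u$. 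Subtracting $h_\alpha$ and pulling back by the foliation projection gives smooth basic approximations $u_\alpha^{(k)}$ (indexed by $\ve_k \downarrow 0$) with $\omega^T + \sqrt{-1}\p_B \bp_B u_\alpha^{(k)} \geq -\nu_k\omega^T$ on $W_\alpha$ and $\nu_k\to 0$. Lemma \ref{chart} makes this transparent: in the cone coordinates $(w_0, w_i)$, $u$ depends only on $w_1, \dots, w_n$, and convolution in $z$ is the standard complex-analytic smoothing in $\C^n$.

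\smallskip

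\emph{Global gluing and strict positivity.} Combine the $u_\alpha^{(k)}$ into a global smooth basic function via Demailly's regularized maximum
\begin{equation*}
\tilde u_k \;:=\; \mathrm{regmax}_{\eta_k}\!\bigl(u_\alpha^{(k)} + c_{\alpha,k}\bigr)_{\alpha = 1, \dots, N},
\end{equation*}
where the constants $c_{\alpha,k}$ are arranged according to the Blocki-Kolodziej scheme so that the regularized max agrees with the relevant $u_\alpha^{(k)}$ in the interior of each $V'_\alpha$ and smoothly transitions across overlaps. Because each $u_\alpha^{(k)}$ is basic and the regularized max with basic weights preserves Reeb invariance, $\tilde u_k$ is a smooth basic function on $M$ with $\omega^T + \sqrt{-1}\p_B\bp_B \tilde u_k \geq -\nu'_k \omega^T$, $\nu'_k \to 0$. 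Finally set $u_k := (1-\nu'_k)\tilde u_k + \nu'_k \psi$ for a fixed $\psi \in \cH$ (or rescale appropriately); for $\nu'_k$ small this places $u_k$ in $\cH$ and, with $\nu'_k \downarrow 0$ chosen carefully, preserves monotonicity in the limit.

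\smallskip

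\textbf{Main obstacle.} The delicate step is ensuring the sequence is simultaneously decreasing, smooth basic, lies in $\cH$, and converges to $u$. Monotonicity is the genuinely technical point and is the heart of Blocki-Kolodziej's argument: one must choose $\ve_k$ decreasing fast enough relative to the gluing scale $\eta_k$ and the fixed geometry of the cover so that the error introduced by the regularized max at step $k+1$ is dominated by the gap between convolutions at scales $\ve_k$ and $\ve_{k+1}$. The Sasaki-specific concern — that the basic property be preserved — is automatic in this framework, since both convolution in the $z$-variables and regularized max with basic cutoff functions commute with the Reeb flow; this is precisely where the explicit cone/foliation chart correspondence of Lemma \ref{chart} is used.
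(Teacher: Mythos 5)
There is a genuine gap, and it is exactly the one the paper flags in the remark following its proof. Your scheme smooths $u$ transversally in each foliation chart and then glues with a regularized maximum, asserting that Reeb-invariance of the result is ``automatic.'' It is not when $\xi$ is irregular. A foliation chart $W_\alpha=(-\delta,\delta)\times V_\alpha$ only captures a short plaque of each leaf; a non-compact leaf re-enters $W_\alpha$ through infinitely many distinct plaques, and the convolution of $h_\alpha+u$ at scale $\ve$ around one plaque is computed from a transverse neighborhood that bears no relation to the neighborhood of another plaque of the same leaf (or of the same point seen in a different chart $W_\beta$). Consequently the locally smoothed functions $u_\alpha^{(k)}$ do not agree, even approximately, on the overlaps in the way the Blocki--Kolodziej gluing requires, and the regularized maximum of them has no reason to be constant along Reeb orbits, i.e.\ no reason to be basic. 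In the regular or quasi-regular case one can run your argument on the quotient manifold or orbifold, but in the irregular case the ``leaf space'' is not Hausdorff and the patching breaks down; this is precisely why the authors write that it ``seems to be hard to patch such a local construction together when the Reeb vector field is irregular.''

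The paper's route avoids this entirely: it extends $u$ trivially in the cone direction, observes that $v=u+\log r$ is an honest psh function on the K\"ahler cone $X$ (an integrable complex manifold), applies Blocki--Kolodziej's decreasing approximation there on $X'=\{2^{-1}\le r\le 2\}$ for $u$ of zero Lelong number, restores Reeb-invariance by averaging over the torus closure of the Reeb flow (which acts by holomorphic isometries, so psh-ness is preserved), restricts to $\{r=1\}$, and uses the explicit holomorphic coordinates of Lemma \ref{chart} to convert the estimate $\sqrt{-1}\p\bp v_k+\omega^T+k^{-1}\omega_X\ge 0$ into $\sqrt{-1}\p_B\bp_B u_k+(1+k^{-1})\omega^T\ge 0$, then rescales by $k(k+2)^{-1}$ to land in $\cH$. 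A second point your proposal omits: the general case (arbitrary Lelong numbers) is handled by first approximating the canonical cutoffs $u_j=\max\{u,-j\}$ and then extracting a single decreasing sequence via a diagonal selection that uses strict monotonicity and compactness of $M$. If you want to salvage a chart-based argument you would have to restrict to the quasi-regular case and combine it with a Type-I deformation argument, but the cleanest fix is to move the regularization to the cone as the paper does.
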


\begin{proof}
First we assume that $u$ has zero Lelong number. 
Recall $X$ is the K\"ahler cone and we identify $M$ with the link $\{r=1\}\subset X$.  For $u\in \text{PSH}(M, \xi, \omega^T)$, we extend $u$ to be a function on $X$ such that $u(r, p)=u(p)$, for any $r>0$.
We recall that $\omega^T=\frac{1}{2}d\eta=dd^c(\log r)=\sqrt{-1}\p\bar\p (\log r)$. Hence for $u\in \text{PSH}(M, \xi, \omega^T)$, we have the following,
\[
\sqrt{-1}\p\bar \p (\log r+u)\geq 0
\]
In other words, $v=u+\log r$ is a plurisubharmonic function on $X$. This is transparent in foliations charts and corresponding holomorphic charts as in Lemma \ref{chart}. 
Let $h_\alpha$ be a local potential for $\omega^T$ in a foliation chart $V_\alpha$, and we write $h=h(w_1, \bar w_1, \cdots, w_n, \bar w_n)$ in the holomorphic chart on cone, then 
 $\log r=h_\alpha+\text{Re}(w_0)$. Denote $\omega_X$ to be the K\"ahler form on $X$. Since $u$ has zero Lelong number,  applying Blocki-Kolodziej \cite{BK}[Theorem 2], we  get a sequence of functions $v_k$ converges to $u$, decreasing in $k$, such that on $X^{'}\subset X$
\begin{equation}\label{approx001}
\sqrt{-1}\p\bar \p (v_k)+\omega^T+k^{-1}\omega_X\geq 0, X^{'}=\left\{2^{-1}\leq r\leq 2\right\}
\end{equation}
We can assume in addition that $v_k$ is invariant under the flow of $\xi$, by taking average with respect to the torus action generated by $\xi\in \text{Aut}(\xi, \eta, g)$. 
We define a basic function $u_k$ on $M$ such that, by taking $r=1$, $u_k=v_k|_{r=1}$. 

Now for any point on $X^{'}$, we choose holomorphic charts $\tilde U_\alpha$ as in Lemma \ref{chart} to cover $X^{'}$. We write the function in a holomorphic chart as \[v_k=v_k(\text{Re}(w_0), x, w_1, \bar w_1 \cdots, w_n, \bar w_n).\]  We recall the relation between the holomorphic charts and  the foliation charts,
\begin{equation}\label{chart02}
w_0=\log(r)+\sqrt{-1}x-h_\alpha(z, \bar z), w_i=z_i, i=1, \cdots, n.
\end{equation}
Note we assume that $v_k$ is invariant under the flow of $\xi$, hence $v_k$ is independent of $x=\text{Im}(w_0)$. 
We write $v_k$ as follows, using \eqref{chart02}, 
\[
v_k(\text{Re}(w_0), w_1, \bar w_1, \cdots, w_n, \bar w_n)=v_k(\log r-h(z, \bar z), z, \bar z)
\]
Locally this gives
\begin{equation}\label{basic01}
u_k(z, \bar z)= v_k(-h_\alpha(z, \bar z), z, \bar z). 
\end{equation}
The tangent space $T_pX$ is given by, in terms of coordinate $(r, x, z_1, \cdots, z_n)$,
\[
T_pX\otimes \C=\text{span}\left\{\frac{\p}{\p r}, r^{-1}\frac{\p }{\p x}, X_i=\frac{\p}{\p z_i}+\sqrt{-1} h_i \frac{\p }{\p x}, \bar X_j=\frac{\p}{\p \bar z_j}-\sqrt{-1} h_{\bar j} \frac{\p }{\p x} \right\}
\]
Note that the contact bundle $\cD_p=\text{span}\{X_i, X_{\bar i}, i=1, \cdots, n\}$.
For $p\in M\subset X$, we can assume that $h(z, \bar z)=\p h=\bar\p h=0, h_{i\bar j}=\delta_{i\bar j}$ at $p$, and hence \[T_pX=T_pM\oplus \left\{\frac{\p}{\p r}\right\}=\text{span}\left\{\frac{\p}{\p z_i}, \frac{\p}{\p \bar z_j}, r^{-1}\frac{\p}{\p x}, \frac{\p }{\p r}\right\}\]
By \eqref{approx001},  we compute (at $p$), 
\begin{equation}
\left(\sqrt{-1}\p\bar\p v_k+\omega^T+k^{-1}\omega_X\right)\left(\frac{\p}{\p z_i}, -\sqrt{-1}\frac{\p}{\p \bar z_i}\right)=-\p_t v_k+1+k^{-1}+(v_k)_{i\bar i}\geq 0,
\end{equation}
where $t$ stands for the first argument of $v_k$. 
This is equivalent to the following, on $M$ we have, 
\[
\sqrt{-1}\p_B\bar \p_B u_k+(1+k^{-1})\omega^T\geq 0.
\]
It is clear that $u_k$ converges to $u$, deceasing in $k$. Without loss of generality, we can assume that $u\leq -1$ and $u_k\leq 0$. It follows that $k(k+2)^{-1}u_k\in \cH$ such that $k(k+2)^{-1}u_k$ converges to $u$, decreasing in $k$. This completes the proof when $u$ has zero Lelong number.

Now suppose $u\in \text{PSH}(M, \xi, \omega^T)$. We consider the canonical cutoffs $u_j=\max\{u, -j\}\in \text{PSH}(M, \xi, \omega^T)\cap L^\infty$. By the above we know that for each $j$, there exists a sequence of smooth functions $\{v_j^k\}_k\subset \cH$ which decreases to $u_j$.  By adding a small constant $k^{-1}$ to each $v^k_j$, we can assume that $\{v^k_j\}_k$ strictly decreases (for each $j$). Then for each $k$, we can find $k_{j+1}$ such that 
\begin{equation}\label{selection01}v_{j+1}^{k_{j+1}}<v^k_j.\end{equation}Indeed we consider the open set $U^l:=\{x\in M: v_{j+1}^l<v^k_j\}$. Clearly $\{U^l\}_l$ is an increasing sequence of open sets such that $\cup_l U^l=M$, since \[\lim_{l\rightarrow \infty} v_{j+1}^l=u_{j+1}\leq u_j<v^k_j.\]  Since $M$ is compact, there exists $k_{j+1}$ such that $U^{k_{j+1}}=M$. By \eqref{selection01}, we can find a sequence $\{v_{j}^{k_j}\}_j$ inductively such that $v_j^{k_j}\searrow u$. This completes the proof. 
\end{proof}

\begin{rmk}The K\"ahler cone structure, in particular the relation between holomorphic charts and foliation charts as in Lemma \ref{chart} play a very important role in Sasaki setting. If the Reeb vector field is irregular, the approximation from transverse K\"ahler structure can produce local approximation. But it seems to be hard to patch such a local construction together when the Reeb vector field is irregular. Instead we do approximation on the K\"ahler cone.
We shall mention that in \eqref{selection01}, the assumption that each sequence $\{v_j^k\}_k$ strictly decreases is necessary. For example, we can take $u=1$ over $[0, 1]$, $v=0$ over $[0, 1)$ and $v(1)=1$. We can choose $u_k=1$ for each $k$, and $v_k(x)=x^k+k^{-1}$. Then $v\leq u$ and $\{u_k\}_k$ decreases to $u$ and $v_k$ (strictly) decreases to $v$. But for $\{u_k\}_k$ and $\{v_k\}_k$, \eqref{selection01} does not hold: given $u_k$, there does not exist $l$ such that $v_{l}\leq u_k$ since $v_{l}(1)>1$ for all $l$. 
\end{rmk}

As a direct consequence, we have the following (just as in K\"ahler setting),
\begin{prop}\label{volume}
For $u\in \text{PSH}(M, \xi, \omega^T)\cap L^\infty$,
\begin{equation}\label{approx0}
\text{Vol}(M):=\int_M  \omega^n_u\wedge \eta=\int_M \omega^n_T\wedge \eta 
\end{equation}
\end{prop}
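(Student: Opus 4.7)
The plan is a standard two-step argument: verify the identity first for smooth potentials $u\in\cH$ via integration by parts on $M$, and then extend to $u\in\text{PSH}(M,\xi,\omega^T)\cap L^\infty$ using the smooth approximation of Lemma \ref{BK} together with the weak convergence of Monge-Amp\`ere measures from Proposition \ref{weakcon0}.

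For the smooth case, I would expand the difference telescopically,
\[
\omega_u^n-(\omega^T)^n \;=\; dd^c_B u \wedge P, \qquad P:=\sum_{k=0}^{n-1}\omega_u^k\wedge(\omega^T)^{n-1-k},
\]
and observe that $P$ is a closed basic $(n-1,n-1)$-form. Since $d\eta = 2\omega^T$ and $dP = 0$, the Leibniz rule gives
\[
d\bigl(d^c_B u\wedge P\wedge\eta\bigr) \;=\; dd^c_B u\wedge P\wedge\eta \,+\, 2\,d^c_B u\wedge P\wedge\omega^T.
\]
The last term is a basic form of degree $2n+1$ on the $(2n+1)$-dimensional manifold $M$; since basic forms live on the $2n$-dimensional transverse leaf space, any basic form of degree $2n+1$ must vanish identically. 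Stokes' theorem on the closed manifold $M$ then yields $\int_M dd^c_B u\wedge P\wedge\eta = 0$, which is exactly \eqref{approx0} in the smooth case.

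For a general $u\in\text{PSH}(M,\xi,\omega^T)\cap L^\infty$, I would apply Lemma \ref{BK} to obtain a decreasing sequence $u_j\in\cH$ with $u_j\searrow u$. Because $u$ is bounded below and $u_j\searrow u$ with each $u_j$ smooth on the compact $M$, the sequence $\{u_j\}$ is uniformly bounded (from above by $\|u_1\|_{L^\infty}$ and from below by $\|u\|_{L^\infty}$), so $u_j\in\text{PSH}(M,\xi,\omega^T)\cap L^\infty$ and Proposition \ref{weakcon0} applies with $k=n$. Testing the weak convergence $\omega_{u_j}^n\wedge\eta\to\omega_u^n\wedge\eta$ against the continuous function $f\equiv 1$ and invoking the smooth case,
\[
\int_M\omega_u^n\wedge\eta \;=\; \lim_{j\to\infty}\int_M\omega_{u_j}^n\wedge\eta \;=\; \int_M(\omega^T)^n\wedge\eta.
\]

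The only real subtlety is the handling of the boundary term coming from the non-closedness of $\eta$ in the Stokes calculation; this is where the Sasaki geometry enters in a non-trivial way, and the clean vanishing via ``basic forms have maximal degree $2n$'' is the step that makes the argument go through without having to pass to the K\"ahler cone or to a quasiregular approximation. Everything else is a direct transcription of the corresponding K\"ahler result.
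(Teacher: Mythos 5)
Your proof is correct and follows essentially the same route as the paper: approximate by a decreasing smooth sequence from Lemma \ref{BK} and pass to the limit using the weak convergence of Proposition \ref{weakcon0} tested against $f\equiv 1$. Your additional Stokes-theorem computation for the smooth case (which the paper leaves implicit) is also sound; there is only a harmless sign slip in the Leibniz rule, immaterial since the extra term $d^c_B u\wedge P\wedge\omega^T$ vanishes, being a top-degree form annihilated by $\iota_\xi$.
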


\begin{proof}
By Lemma \ref{BK}, we can choose a smooth sequence $u_k$ converges to $u$ as a decreasing sequence. It then follows from Bedford-Taylor theory (see Proposition \ref{weakcon0}) that $\omega_{u_k}^n\wedge \eta$ converges to $\omega_u^n\wedge \eta$ weakly, we obtain \eqref{approx0}. 
\end{proof}

It is then clear that, given \eqref{measure2}, we have only $\int_M \omega^n_v\wedge \eta\leq \text{Vol}(M)$ for $v\in \text{PSH}(M, \xi, \omega^T)$. 
\begin{defn}We define the full-mass elements in $\text{PSH}(M, \xi, \omega^T)$ as
\begin{equation}
\cE(M, \xi, \omega^T):=\{v: v\in \text{PSH}(M, \xi, \omega^T)\; \text{such that}\; \int_M  \omega^n_v\wedge \eta=\text{Vol}(M)\}
\end{equation}
\end{defn}

As in K\"ahler case, many of the properties that hold for bounded TPSH functions hold for elements of $\cE(M, \xi, \omega^T)$ as well. We include the \emph{comparison principle, monotonicity property} and \emph{generalized Bedford-Taylor identity} as follows. These  properties are proved in \cite{GZ} for K\"ahler setting. Given \eqref{si1} and \eqref{approx0}, our proof follows almost identical as in K\"ahler setting (see \cite{GZ}[Theorem 1.5, Proposition 1.6, Corollary 1.7]). Nevertheless we include the details. 

\begin{prop}[Comparison principle]Suppose $u, v\in \cE(M, \xi, \omega^T)$. Then
\begin{equation}\label{cp01}
\int_{\{v<u\}}\omega_u^n\wedge \eta\leq \int_{\{v<u\}} \omega_v^n \wedge \eta. 
\end{equation}
\end{prop}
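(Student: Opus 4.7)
My plan is to follow the template of \cite{GZ}, first establishing the bounded case via the Sasaki Bedford--Taylor identity (Proposition \ref{measure1}) and then passing to $\cE(M,\xi,\omega^T)$ through canonical cutoffs.

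For the \emph{bounded case} I would take $u,v\in \text{PSH}(M,\xi,\omega^T)\cap L^\infty$ and introduce the perturbation $w_\epsilon=\max(u-\epsilon,v)$ for small $\epsilon>0$. Two applications of Proposition \ref{measure1}, with the roles of the two functions swapped, will give
\[
\chi_{\{u-\epsilon>v\}}\omega_{w_\epsilon}^n\wedge\eta=\chi_{\{u-\epsilon>v\}}\omega_u^n\wedge\eta, \qquad \chi_{\{v>u-\epsilon\}}\omega_{w_\epsilon}^n\wedge\eta=\chi_{\{v>u-\epsilon\}}\omega_v^n\wedge\eta.
\]
Decomposing $M=\{u-\epsilon>v\}\cup\{u-\epsilon=v\}\cup\{v>u-\epsilon\}$, integrating, and using Proposition \ref{volume} to equate $\int_M\omega_{w_\epsilon}^n\wedge\eta=\int_M\omega_v^n\wedge\eta$, I expect to arrive at
\[
\int_{\{v<u-\epsilon\}}\omega_u^n\wedge\eta \leq \int_{\{v<u-\epsilon\}}\omega_v^n\wedge\eta+\int_{\{v=u-\epsilon\}}\omega_v^n\wedge\eta.
\]
Since the level sets $\{v=u-\epsilon\}$ are pairwise disjoint with finite total $\omega_v^n$-mass, the last term vanishes outside a countable set of $\epsilon$. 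Taking a sequence $\epsilon_j\downarrow 0$ avoiding those exceptional values and applying monotone convergence to $\{v<u-\epsilon_j\}\nearrow\{v<u\}$ finishes the bounded case.

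For the \emph{full-mass case}, I would use the canonical cutoffs $u_k=\max(u,-k)$, $v_k=\max(v,-k)$, which are bounded, and apply the bounded case to them. A direct check gives $\{v_k<u_k\}=\{v<u\}\cap\{u>-k\}$. The bridge to the limit is the locality statement that I will derive from Proposition \ref{measure1} applied to the pair $(u_h,-k)$ with $h\geq k$ together with the definition \eqref{measure2}: for any $u\in\cE$ and any Borel $B\subset\{u>-k\}$, $\int_B\omega_u^n\wedge\eta=\int_B\omega_{u_k}^n\wedge\eta$. This converts the left-hand side of the bounded inequality into $\int_{\{v<u\}\cap\{u>-k\}}\omega_u^n\wedge\eta$, which rises to $\int_{\{v<u\}}\omega_u^n\wedge\eta$ by monotone convergence, using $\{v<u\}\subset\{u>-\infty\}$ and the fact that $\omega_u^n(\{u=-\infty\})=0$ is built into \eqref{measure2}.

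For the right-hand side I would split $\{v<u\}\cap\{u>-k\}$ along $\{v>-k\}$ and $\{v\leq-k\}$. On the former, locality again replaces $\omega_{v_k}^n$ by $\omega_v^n$, and the set increases monotonically to $\{v<u\}$ modulo an $\omega_v^n$-null set. On the latter, I expect the identity $\int_{\{v\leq-k\}}\omega_{v_k}^n\wedge\eta=\int_{\{v\leq-k\}}\omega_v^n\wedge\eta$ (obtained by comparing total masses through Proposition \ref{volume} together with the locality on $\{v>-k\}$ and the full-mass hypothesis $v\in\cE$) to yield a contribution tending to zero, since $\{v\leq-k\}\searrow\{v=-\infty\}$ and $\omega_v^n(\{v=-\infty\})=0$. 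Combining both sides produces the stated inequality. The main obstacle I anticipate is precisely the locality property: it must hold as an equality of Borel measures on the non-open set $\{u>-k\}$, so the stronger mode of convergence encoded in \eqref{measure21} is essential rather than the merely weak convergence of Proposition \ref{weakcon0}; here the foliation-chart form of Proposition \ref{measure1} already delivers exactly what is needed, so the Sasaki argument runs parallel to the K\"ahler one.
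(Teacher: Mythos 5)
Your proposal is correct and follows essentially the same route as the paper: the bounded case rests on Proposition \ref{measure1} together with the mass identity of Proposition \ref{volume} and an $\epsilon$-perturbation to dispose of the contact set $\{u=v\}$, and the general case passes to canonical cutoffs. The only difference is organizational: you use single-index cutoffs plus an explicit locality property of the measure on $\{u>-k\}$ (which does follow from Proposition \ref{measure1} and \eqref{measure21} as you indicate, and which lets you avoid a final perturbation), whereas the paper runs a double-index limit in $l$ and then $k$ followed by a final shift $v\mapsto v+\epsilon$; both are valid.
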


\begin{proof}First we show \eqref{cp01} for $u, v$ bounded. Using \eqref{si1} we write 
\begin{equation*}
\begin{split}
\int_{\{v<u\}}\omega_u^n\wedge\eta=&\int_{\{v<u\}}\omega^n_{\max\{u, v\}}\wedge \eta=\int_M \omega^n_{\max\{u, v\}}\wedge \eta-\int_{\{u\leq v\}}\omega^n_{\max\{u, v\}}\wedge \eta\\
\leq&\int_M \omega^n_{\max\{u, v\}}\wedge \eta-\int_{\{u<v\}}\omega^n_{\max\{u, v\}}\wedge \eta\\
\leq&\text{Vol}(M)-\int_{\{u<v\}}\omega^n_{\max\{u, v\}}\wedge \eta.
\end{split}
\end{equation*}
Using Proposition \ref{volume} and Proposition \ref{measure1} we write the above as
\begin{equation*}
\int_{\{v<u\}}\omega_u^n\wedge\eta\leq \int_{M}\omega^n_v\wedge \eta-\int_{\{u<v\}}\omega^n_v\wedge \eta\leq\int_{\{v\leq u\}}\omega^n_v\wedge \eta
\end{equation*}
Replacing $v$ by $v+\epsilon$, we have
\begin{equation*}
\int_{\{v+\epsilon<u\}}\omega_u^n\wedge\eta\leq \int_{\{v+\epsilon \leq u\}}\omega^n_v\wedge \eta
\end{equation*}
We get \eqref{cp01} for bounded potentials by letting $\epsilon\rightarrow 0$, 
noting that \[\{v<u\}=\cup_{\epsilon>0}\{v+\epsilon<u\}=\cup_{\epsilon>0}\{v+\epsilon\leq u\}.\]

In general, let $u_l=\max\{u, -l\}, v_{k}=\max\{v, -k\}, l, k\in \N$ be the canonical cutoffs of $u, v$ respectively. We apply \eqref{cp01} for these to get
\begin{equation*}
\int_{\{v_l<u_k\}} \omega^n_{u_k}\wedge \eta\leq \int_{\{v_l<u_k\}} \omega^n_{v_l}\wedge \eta. 
\end{equation*}
Together with the inclusions
$
\{v_l<u\}\subset \{v_l< u_k\}\subset\{v<u_k\}
$
we have
\begin{equation}\label{cp02}
\int_{\{v_l<u\}} \omega^n_{u_k}\wedge \eta\leq \int_{\{v<u_k\}} \omega^n_{v_l}\wedge \eta. 
\end{equation}
Letting $l\rightarrow \infty$ and using the definition \eqref{measure2} on $\omega^n_{v_l}\wedge \eta$, \eqref{cp02} gives
\[
\int_{\{v<u\}} \omega^n_{u_k}\wedge \eta\leq \int_{\{v<u_k\}} \omega^n_{v}\wedge \eta. 
\]
Letting $k\rightarrow \infty$ and using the definition \eqref{measure2} on $\omega^n_{u_k}\wedge \eta$, we get
\[
\int_{\{v<u\}} \omega^n_{u}\wedge \eta\leq \int_{\{v\leq u\}} \omega^n_{v}\wedge \eta. 
\]
The replacing $v$ by $v+\epsilon$ in the above inequality, we can then argue as in the bounded case, taking the limit $\epsilon\rightarrow 0$ yields \eqref{cp01}. 
\end{proof}

\begin{prop}[Monotonicity property]\label{mp01}Suppose $u\in \cE(M, \xi, \omega^T)$ and $v\in \text{PSH}(M, \xi, \omega^T)$. If $u\leq v$ then $v\in \cE(M, \xi, \omega^T)$.
\end{prop}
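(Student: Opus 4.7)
The plan is to introduce canonical cutoffs, reduce the claim to an estimate on the ``lost mass at infinity,'' and then transfer that estimate from $u$ to $v$ using $u \le v$ together with the fact that a cutoff is locally constant where the original function dips below the cutoff level. Set $u_t = \max\{u, -t\}$ and $v_t = \max\{v, -t\}$. By Proposition \ref{upperbound01} both lie in $\text{PSH}(M, \xi, \omega^T) \cap L^\infty$, clearly $u_t \le v_t$, and by Proposition \ref{volume} each has total Monge--Amp\`ere mass $V = \text{Vol}(M)$. Using \eqref{measure2} together with \eqref{measure21},
\[
\int_M \omega_v^n \wedge \eta \;=\; V - \lim_{t\to\infty} \int_{\{v \le -t\}} \omega_{v_t}^n \wedge \eta,
\]
and the analogous formula for $u$. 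Since $u \in \cE(M, \xi, \omega^T)$, the corresponding limit for $u$ vanishes; thus the proof reduces to showing $\lim_{t \to \infty} \int_{\{v \le -t\}} \omega_{v_t}^n \wedge \eta = 0$.

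The crucial step is the comparison
\[
\int_{\{v \le -t\}} \omega_{v_t}^n \wedge \eta \;\le\; \int_{\{u \le -t\}} \omega_{u_t}^n \wedge \eta,
\]
from which the conclusion is immediate upon letting $t \to \infty$. To establish it I would combine two inputs: first, the set inclusion $\{v \le -t\} \subset \{u \le -t\}$ coming directly from $u \le v$; second, the Bedford--Taylor locality statement that on the open set $\{v < -t\}$ (respectively $\{u < -t\}$) the function $v_t$ (respectively $u_t$) is the constant $-t$, so $dd^c_B v_t = 0$ there and consequently $\omega_{v_t}^n \wedge \eta = (\omega^T)^n \wedge \eta$ on that open set; this reduces to a check on smooth approximants from Lemma \ref{BK} together with Proposition \ref{weakcon0}. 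Combining these gives the inequality for the open sublevel sets:
\[
\int_{\{v < -t\}} \omega_{v_t}^n \wedge \eta = \int_{\{v < -t\}} (\omega^T)^n \wedge \eta \le \int_{\{u < -t\}} (\omega^T)^n \wedge \eta = \int_{\{u < -t\}} \omega_{u_t}^n \wedge \eta.
\]

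The main obstacle is then the passage from the open set $\{v < -t\}$ to the Borel set $\{v \le -t\}$: the level slice $\{v = -t\}$ may a priori carry mass of $\omega_{v_t}^n \wedge \eta$ coming from the transition of $v_t$ between the constant $-t$ and larger values. To handle this I would work along a sequence $t_k \to \infty$ chosen to avoid the at most countably many values of $t$ for which $(\omega^T)^n \wedge \eta$ charges the slice $\{v = -t\}$ (these slices are pairwise disjoint and $(\omega^T)^n \wedge \eta$ is finite, so only countably many can have positive measure). The tail quantity $t \mapsto \int_{\{v \le -t\}} \omega_{v_t}^n \wedge \eta$ is monotone decreasing in $t$: on $\{v > -s\}$ the cutoffs $v_t$ and $v_s$ coincide for $t \ge s$, so by locality the measures agree there and the complementary mass decreases. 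Hence the limit along any such subsequence equals the full limit. A perturbation argument using the enlarged cutoff $v_{t-\epsilon}$, which is the constant $-t + \epsilon$ on the open set $\{v < -t + \epsilon\} \supset \{v \le -t\}$, combined with the weak convergence in Proposition \ref{weakcon0} as $\epsilon \to 0^+$, eliminates the boundary contribution and completes the argument.
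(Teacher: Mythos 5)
There is a genuine gap at the step you yourself identify as crucial, namely the inequality $\int_{\{v \le -t\}} \omega_{v_t}^n \wedge \eta \le \int_{\{u \le -t\}} \omega_{u_t}^n \wedge \eta$. Your locality argument only controls the \emph{open} sublevel set: on $\{v < -t\}$ the cutoff $v_t$ is the constant $-t$, so indeed $\omega_{v_t}^n\wedge\eta=(\omega^T)^n\wedge\eta$ there, but the resulting estimate $\int_{\{v<-t\}}\omega_{v_t}^n\wedge\eta=(\omega^T)^n\wedge\eta(\{v<-t\})\to 0$ holds for \emph{every} $u,v\in \text{PSH}(M,\xi,\omega^T)$ whatsoever and carries no information about membership in $\cE$. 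The entire content of the proposition sits on the level slice $\{v=-t\}$, where the measure $\omega_{v_t}^n\wedge\eta$ typically \emph{concentrates}: already for $v$ with a positive Lelong number (the transverse analogue of $\log|z|$), $\omega_{v_t}^n$ puts a fixed positive mass on $\{v=-t\}$ for every $t$, which is exactly why such $v$ fails to be in $\cE$. Your two devices for handling the slice do not work: choosing $t$ to avoid the countably many levels charged by $(\omega^T)^n\wedge\eta$ addresses the wrong measure (the issue is whether $\omega_{v_t}^n\wedge\eta$ charges the slice, not whether $(\omega^T)^n\wedge\eta$ does), and the perturbation via $v_{t-\epsilon}$ combined with weak convergence gives at best $\liminf$/$\limsup$ inequalities in the wrong direction (weak convergence bounds the limit measure of a closed set from \emph{below} by the limsup of the approximating masses, so you obtain a lower bound on $\int_{\{v\le-t\}}\omega_{v_t}^n\wedge\eta$, whereas you need an upper bound).

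The paper circumvents precisely this difficulty by a different mechanism: it first proves $\psi=v/2\in\cE$, using the normalization $u\le v<-2$ to sandwich the sublevel set between two sets of the form $\{\phi_1<\phi_2\}$, namely $\{\psi_j\le -j\}\subset\{u_{2j}<\psi_j-j+1\}\subset\{u_{2j}\le -j\}$, and then applies the comparison principle \eqref{cp01} on the middle set — the comparison principle is the tool that legitimately transfers mass estimates from one potential to another, and it only applies to sets of that form, not to raw sublevel sets. The factor $1/2$ is what creates the room ($\psi_j-j+1>u_{2j}$ on $\{v\le-2j\}$) needed for the sandwich; one then recovers $v$ from $v/2$ via $\omega_{\psi_j}\ge\omega_{v_{2j}}/2$. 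If you want to salvage your outline, you should replace the direct slice-by-slice comparison with an application of the comparison principle along these lines; as written, the key inequality is unproved and the argument does not go through.
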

\begin{proof}This is proved in \cite{GZ}[Proposition 1.6] and our argument is almost identical. First we show that $\psi=v/2\in \cE(M, \xi, \omega^T)$. We can assume that $u\leq v<-2$, hence $\psi<-1$. This normalization gives the following inclusions for the canonical cutoffs $u_j, v_j, \psi_j$,
\[
\{\psi\leq -j\}=\{\psi_j\leq -j\}\subset \{u_{2j}<\psi_j-j+1\}\subset \{u_{2j}\leq -j\}
\]
By Proposition \ref{cp01} and the inclusions above, we have
\[
\int_{\{\psi_j\leq -j\}}\omega_{\psi_j}^n\wedge \eta\leq \int_{ \{u_{2j}<\psi_j-j+1\}}\omega_{\psi_j}^n\wedge \eta\leq \int_{ \{u_{2j}<\psi_j-j+1\}}\omega_{u_{2j}}^n\wedge \eta\leq \int_{\{u_{2j}\leq -j\}}\omega_{u_{2j}}^n\wedge \eta. 
\]
Note that we have
\[
\int_{\{u_{2j}\leq -j\}}\omega_{u_{2j}}^n\wedge \eta=\text{Vol}(M)-\int_{\{u_{2j}>-j\}} \omega_{u_{2j}}^n\wedge \eta.
\]
Applying Proposition \ref{measure1} to $\max\{u_{2j}, -j\}=u_{j}$ on the set $\{u_{2j}>-j\}=\{u_j>-j\}$, we have
\[
\int_{\{u_{2j}>-j\}} \omega_{u_{2j}}^n\wedge \eta=\int_{\{u_{j}>-j\}} \omega_{u_{j}}^n\wedge \eta.
\]
It then follows that 
\[
\int_{\{u_{2j}\leq -j\}}\omega_{u_{2j}}^n\wedge \eta=\int_{\{u_j\leq -j\}}\omega_{u_{j}}^n\wedge \eta=\int_{\{u\leq -j\}}\omega_{u_{j}}^n\wedge \eta.
\]
By definition of $u\in \cE(M, \xi, \omega^T)$, it follows that, as $j\rightarrow \infty$, 
\[
\int_{\{\psi_j\leq -j\}}\omega_{\psi_j}^n\wedge \eta\leq \int_{\{u\leq -j\}}\omega_{u_{j}}^n\wedge \eta\rightarrow 0. 
\]
Hence $\psi=v/2\in \cE(M, \xi, \omega^T)$. To show that $v\in \cE(M, \xi, \omega^T)$, we observe that $\{v\leq -2j\}=\{\psi\leq -j\}$ and $\omega_{\psi_j}\geq \omega_{v_{2j}}/2$, hence
\[
\int_{\{v\leq -2j\}} \omega^n_{v_{2j}}\wedge \eta\leq 2^n \int_{\{v\leq -2j\}} \omega_{\psi_j}^n\wedge \eta\leq 2^n\int_{\{\psi\leq -j\}}\omega_{\psi_j}^n\wedge \eta.
\]
By letting $j\rightarrow \infty$, we can then conclude that $v\in \cE(M, \xi, \omega^T)$. 
\end{proof}

\begin{prop}[Generalized Bedford-Taylor identity]\label{GBTI}For $u\in \cE(M, \xi, \omega^T)$, $v\in \text{PSH}(M, \xi, \omega^T)$, then $\max\{u, v\}\in \cE(M, \xi, \omega^T)$ and
\begin{equation}\label{si2}
\chi_{\{u>v\}}\omega^n_{\max(u, v)}\wedge \eta=\chi_{\{u>v\}} \omega^n_u\wedge \eta. 
\end{equation}
\end{prop}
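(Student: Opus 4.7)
My plan is to reduce everything to the bounded Bedford--Taylor identity of Proposition \ref{measure1} via canonical cutoffs and then pass to the limit using the Borel-set formulation \eqref{measure21} of the non-pluripolar measures for elements of $\cE(M,\xi,\omega^T)$.

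First, $w:=\max(u,v)\geq u$, so the monotonicity property (Proposition \ref{mp01}) immediately gives $w\in \cE(M,\xi,\omega^T)$. This disposes of the first assertion at no cost. For the measure identity \eqref{si2}, the idea is to introduce the canonical cutoffs $u_j=\max(u,-j)$, $v_j=\max(v,-j)$, and $w_j=\max(w,-j)$, all of which lie in $\text{PSH}(M,\xi,\omega^T)\cap L^\infty$, and to note the pointwise identity $w_j=\max(u_j,v_j)$. Applying Proposition \ref{measure1} to the bounded pair $u_j,v_j$ then yields
\begin{equation*}
\chi_{\{u_j>v_j\}}\,\omega_{w_j}^n\wedge\eta=\chi_{\{u_j>v_j\}}\,\omega_{u_j}^n\wedge\eta
\end{equation*}
for every $j\in\mathbb{N}$.

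The second step is the set-theoretic observation that pins down where the cutoff identity meets the limiting identity. On $\{u>v\}$ one has $w=u$, and a direct case check gives the equality
\begin{equation*}
\{u_j>v_j\}=\{u>v\}\cap\{u>-j\}=\{u>v\}\cap\{w>-j\}.
\end{equation*}
Indeed, if $u_j>v_j$ then $u_j>v_j\geq -j$ forces $u=u_j>-j$ and $u>v$; conversely, if $u>v$ and $u>-j$, then $u_j=u>\max(v,-j)=v_j$.

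The third step is to pass to the limit $j\to\infty$. For any Borel set $B\subset M$, the definition \eqref{measure21} applied to $u\in\cE$ and to $w\in\cE$ gives respectively
\begin{equation*}
\int_{B\cap\{u>v\}}\omega_u^n\wedge\eta=\lim_{j\to\infty}\int_{B\cap\{u>v\}\cap\{u>-j\}}\omega_{u_j}^n\wedge\eta=\lim_{j\to\infty}\int_{B\cap\{u_j>v_j\}}\omega_{u_j}^n\wedge\eta,
\end{equation*}
\begin{equation*}
\int_{B\cap\{u>v\}}\omega_w^n\wedge\eta=\lim_{j\to\infty}\int_{B\cap\{u>v\}\cap\{w>-j\}}\omega_{w_j}^n\wedge\eta=\lim_{j\to\infty}\int_{B\cap\{u_j>v_j\}}\omega_{w_j}^n\wedge\eta,
\end{equation*}
where in each line the second equality uses the set identity above. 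By the bounded identity for $u_j,v_j$ these two right-hand sides agree, so the left-hand sides agree for every Borel $B$, which is exactly \eqref{si2}.

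The main potential obstacle is a purely bookkeeping one: one must be sure that the Borel-set convergence in \eqref{measure21} (rather than mere weak convergence of measures) is strong enough to take the limit \emph{after} intersecting with the possibly irregular Borel set $\{u>v\}$. Fortunately this is exactly the content of \eqref{measure21}, so no extra regularization is needed — the cutoff definition of $\omega_u^n\wedge\eta$ for $u\in\cE$ is precisely designed to make this step work.
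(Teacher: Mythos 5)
Your proof is correct and follows the same overall strategy as the paper (reduce to the bounded identity of Proposition \ref{measure1} via canonical cutoffs, then pass to the limit using \eqref{measure21}), but your bookkeeping of the cutoff sets is genuinely different and arguably cleaner. The paper works with $\max\{u_j,v_{j+1}\}=w_j$ and approximates $\{u>v\}$ \emph{from the outside} by $\{u_j>v_{j+1}\}$; the resulting error term is supported on $\{u\leq -j\}$ (resp.\ $\{w\leq -j\}$) and is killed only because $u,w\in\cE(M,\xi,\omega^T)$, so the full-mass hypothesis enters the limit step. You instead use the exact identity $\{u_j>v_j\}=\{u>v\}\cap\{u>-j\}$, which approximates $\{u>v\}$ \emph{from the inside} and makes the passage to the limit a direct application of the monotone definition \eqref{measure21} on the Borel set $B\cap\{u>v\}$, with no error term at all. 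As a byproduct, your argument establishes the measure identity \eqref{si2} without invoking $u\in\cE$ (that hypothesis is still needed, via Proposition \ref{mp01}, for the assertion $\max\{u,v\}\in\cE$), which is consistent with the fact that in the non-pluripolar theory this locality property holds for arbitrary potentials. One small point worth stating explicitly in a final write-up: $\{u>v\}\subset\{u>-\infty\}$, so the sets $\{u>v\}\cap\{u>-j\}$ do exhaust $\{u>v\}$; this is immediate since $u>v$ forces $u>-\infty$.
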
 

\begin{proof}Our argument is identical to the K\"ahler setting; see \cite{GZ}[Corollary 1.7] and \cite{D4}[Lemma 2.5]. Proposition \ref{mp01} implies that $w:=\max\{u, v\}\in \cE(M, \xi, \omega^T)$. Now observe that $\max\{u_j, v_{j+1}\}=\max\{u, v, -j\}=w_j$. Since the cutoffs are bounded we have
\begin{equation}\label{measure31}
\chi_{\{u_j>v_{j+1}\}}\omega^n_{w_j}\wedge \eta=\chi_{\{u_j>v_{j+1}\}}\omega^n_{u_j}\wedge \eta
\end{equation}
By \ref{measure21}, we know that $\chi_{u>v}\omega^n_{u_j}\wedge \eta\rightarrow \chi_{u>v}\omega^n_{u}\wedge \eta$ and $\chi_{u>v}\omega^n_{w_j}\wedge \eta\rightarrow \chi_{u>v}\omega^n_{w}\wedge \eta$ as $j\rightarrow \infty$ (we also use the fact that $u, w\in \cE(M, \xi, \omega^T)$). Since 
\[
\{u>v\}\subset \{u_j>v_{j+1}\}\;\text{and}\; \{u_j>v_{j+1}\}\backslash \{u>v\}\subset \{u\leq -j\},
\]
it follows that
\[
0\leq (\chi_{\{u_j>v_{j+1}\}}-\chi_{\{u>v\}}) \omega^n_{u_j}\wedge \eta\leq \chi_{\{u\leq -j\}} \omega^n_{u_j}\wedge \eta\rightarrow 0.
\]
Similarly since 
\[
\{u_j>v_{j+1}\}\backslash \{u>v\}\subset \{w\leq -j\}
\]
we also obtain that 
\[
0\leq (\chi_{\{u_j>v_{j+1}\}}-\chi_{\{u>v\}}) \omega^n_{w_j}\wedge \eta\leq \chi_{\{w\leq -j\}} \omega^n_{w_j}\wedge \eta\rightarrow 0.
\]
By taking limit in \eqref{measure31} together with the limit facts above, we get the desired result. 
\end{proof}

Next we introduce finite energy class on Sasaki manifolds, following \cite{GZ}. 
By considering Young weights $\chi\in \cW^+_p$ (see \cite{D4}[Chapter 1] for a short introduction to Young wrights), one can introduce various finite energy subclasses of $\cE(M, \xi, \omega^T)$,
\[
\cE_\chi(M, \xi, \omega^T):=\{u\in \cE(M, \xi, \omega^T)\; \text{s. t.}\; E_\chi(u)<\infty\},
\]
where $E_\chi$ is the $\chi$-energy defined by
\[
E_\chi(u):=\int_M \chi(u)\omega^n_u\wedge \eta. 
\]
Of special importance are the weights $\chi^p(t)=|t|^p/p$ and the associated classes $\cE_p(M, \xi, \omega^T)$. For theses weights it is clear that $\cE_p(M, \xi, \omega^T) \subset \cE_1(M, \xi, \omega^T)$ for $p \geq 1$. We will need the following straightforward fact,
\begin{prop}
For any $u\in \cE_1(M, \xi, \omega^T)$, $u$ has Lelong number zero at every point. 
\end{prop}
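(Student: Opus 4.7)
The plan is to reduce the statement to the classical Kähler fact that any element of the full-mass class $\cE(X, \omega)$ has identically vanishing Lelong number (see \cite{GZ} and \cite{D4}). Since $\cE_1(M, \xi, \omega^T) \subset \cE(M, \xi, \omega^T)$ by definition, it suffices to show that every $u \in \cE(M, \xi, \omega^T)$ has zero Lelong number at every $p \in M$.

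First I would make precise the Lelong number of $u$ at a point $p \in M$. Choose a foliation chart $\Psi_\alpha : U_\alpha \to W_\alpha = (-\delta, \delta) \times V_\alpha$ containing $p$, with local transverse Kähler potential $h_\alpha$ on $V_\alpha$ as in \eqref{foliation02}. Since $u$ is basic, it depends only on the $V_\alpha$-coordinate and descends to an $\omega^T_\alpha$-psh function $\tilde u$ on $V_\alpha$; then $\tilde u + h_\alpha$ is honestly plurisubharmonic on $V_\alpha \subset \C^n$. Define $\nu(u, p) := \nu(\tilde u + h_\alpha, z(p))$, the usual Lelong number in $\C^n$. Smoothness of $h_\alpha$ and biholomorphy of the transitions between foliation charts make this chart-independent; equivalently, by Lemma \ref{chart}, $\nu(u, p)$ coincides with the classical Lelong number on the K\"ahler cone $X$ of the psh function $u + \log r$ at any lift of $p$.

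Now suppose for contradiction that $\nu := \nu(u, p) > 0$. Applying the local Kähler mass-concentration result to the psh function $\tilde u + h_\alpha$ on $V_\alpha$, one obtains a small ball $B \subset V_\alpha$ centered at $z(p)$ and a constant $c = c(n, \nu) > 0$ such that, for every $t$ sufficiently large,
\[
\int_{B \cap \{\tilde u \leq -t\}} \bigl(\omega^T_\alpha + \sqrt{-1}\p\bp\, \tilde u_t\bigr)^n \geq c, \qquad \tilde u_t := \max(\tilde u, -t).
\]
This is the quantitative form of the classical fact that positive Lelong number forces persistent concentration of Monge-Ampère mass near $p$ across the truncation levels. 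On the cylinder $W_\alpha = (-\delta, \delta) \times V_\alpha$ one has the product identity $\omega_{u_t}^n \wedge \eta = \omega_{u_t}^n \wedge dx$ already exploited in the proof of Proposition \ref{measure1}, and $\{u \leq -t\} \cap W_\alpha = (-\delta, \delta) \times \{\tilde u \leq -t\}$. Integrating in $x$ then yields
\[
\int_{\{u \leq -t\}} \omega_{u_t}^n \wedge \eta \geq 2\delta \cdot c > 0 \qquad \text{for all large } t.
\]

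This contradicts the full-mass hypothesis: by \eqref{measure2}, Proposition \ref{volume}, and $u \in \cE(M, \xi, \omega^T)$,
\[
\int_{\{u \leq -t\}} \omega_{u_t}^n \wedge \eta = \text{Vol}(M) - \int_{\{u > -t\}} \omega_{u_t}^n \wedge \eta \longrightarrow \text{Vol}(M) - \int_M \omega_u^n \wedge \eta = 0
\]
as $t \to \infty$. Hence $\nu(u, p) = 0$. The only genuinely non-trivial ingredient is the local Kähler mass-concentration statement in the third paragraph; its transfer to the Sasaki setting is essentially automatic once one works in foliation charts and uses the product decomposition of $\omega_{u_t}^n \wedge \eta$, a mechanism already developed in the preceding sections.
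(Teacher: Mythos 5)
Your argument is correct but takes a genuinely different route from the paper's. The paper works directly with the $\cE_1$ condition: it restricts the finite weighted energy $\int_M(-u)\,\omega_u^n\wedge\eta<\infty$ to a single foliation chart via the product identity $\omega_u^n\wedge\eta=\omega_u^n\wedge dx$, obtains $2\delta\int_{V_\alpha}(-u)\,\omega_u^n<\infty$, and then invokes the classical fact that locally finite weighted Monge--Amp\`ere energy forces the Lelong number to vanish. You instead discard the energy bound entirely, use only the inclusion $\cE_1\subset\cE$, and derive a contradiction between the mass concentration forced by a positive Lelong number and the full-mass property $\int_M\omega_u^n\wedge\eta=\text{Vol}(M)$; your transfer to the Sasaki setting through foliation charts and the product measure is exactly the mechanism of Proposition \ref{measure1}, and the final limit computation via \eqref{measure21} and Proposition \ref{volume} is correct. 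Your route proves the stronger statement that every element of $\cE(M,\xi,\omega^T)$ has zero Lelong numbers (the Sasaki analogue of Corollary 1.8 in \cite{GZ}), and it is arguably the more robust formulation: the purely local implication the paper leans on is delicate for the non-pluripolar measure (for instance $\log|z|$ on $\C^n$ has vanishing non-pluripolar Monge--Amp\`ere measure, hence zero local weighted energy, yet Lelong number one), so the full-mass hypothesis is doing the real work in either proof, and you make that explicit. The price is that your key input, the quantitative local estimate $\int_{B\cap\{\tilde u\le -t\}}\bigl(\omega^T_\alpha+\sqrt{-1}\p\bp\,\tilde u_t\bigr)^n\ge c(n,\nu)>0$ for large $t$, is asserted rather than proved; it is indeed classical (comparison with $\nu\log|z|$, i.e.\ Demailly's comparison theorem applied through the truncations), but a precise reference or a short derivation via the comparison principle would be needed to make the argument fully self-contained.
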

\begin{proof}This is straightforward. We can assume $\sup u=0$.  For $u\in \cE_1(M, \xi, \omega^T)$, we have
\[
\int_M (-u) \omega^n_u\wedge \eta<\infty.
\]
We  consider locally  $(0, 0)\in W_\alpha=(-\delta, \delta)\times V_\alpha$ in a foliation chart. Then we have
\[
2\delta \int_{V_{\alpha}} (-u) \omega_u^n<\int_M (-u) \omega^n_u\wedge \eta<\infty.
\]
This implies that $u$ has Lelong number zero at $(0, 0)$. 
\end{proof}

The following result implies that to test membership in $\cE_\chi(M, \xi, \omega^T)$ it is enough to test the finiteness condition $E_\chi(u)<\infty$ on canonical cutoffs.

\begin{prop}\label{cte}Suppose $u\in \cE(M, \xi, \omega^T)$ with canonical cutoffs $\{u_k\}_{k\in \N}$. If $h: \R_+\rightarrow \R_+$ is continuous and increasing, then
\[
\int_M h(|u|)\omega_u^n\wedge \eta<\infty\Longleftrightarrow\limsup_{k\rightarrow \infty}\int_M h(|u_k|)\omega_{u_k}^n\wedge \eta<\infty.
\]
Moreover, if the above condition holds, then
\[
\int_M h(|u|)\omega_u^n\wedge \eta=\lim_{k\rightarrow \infty}\int_M h(|u_k|)\omega_{u_k}^n\wedge \eta
\]
\end{prop}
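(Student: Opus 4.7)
The plan is to exploit the identity $u_k = \max(u,-k)$ together with the generalized Bedford--Taylor identity (Proposition \ref{GBTI}) to compare the measure $\omega_{u_k}^n \wedge \eta$ with $\omega_u^n \wedge \eta$ on the sub-level set $\{u>-k\}$, and then exploit the defining property of $\cE(M,\xi,\omega^T)$ to control the remainder on $\{u\leq -k\}$. The key observation is that on $\{u>-k\}$ we have $u_k=u$ and hence $|u_k|=|u|$, while on $\{u\leq -k\}$ the cutoff is constant, $|u_k|=k$, so $h(|u_k|)=h(k)$.

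Concretely, I would split
\begin{equation*}
\int_M h(|u_k|)\,\omega_{u_k}^n\wedge\eta
= \int_{\{u>-k\}} h(|u|)\,\omega_{u_k}^n\wedge\eta \;+\; h(k)\int_{\{u\leq -k\}}\omega_{u_k}^n\wedge\eta.
\end{equation*}
Proposition \ref{GBTI} applied to $u\in\cE(M,\xi,\omega^T)$ and the constant $-k$ (bounded, hence also in $\cE$) yields $\chi_{\{u>-k\}}\omega_{u_k}^n\wedge\eta=\chi_{\{u>-k\}}\omega_u^n\wedge\eta$, so the first integral equals $\int_{\{u>-k\}}h(|u|)\,\omega_u^n\wedge\eta$. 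Using Proposition \ref{volume} to write $\int_M\omega_{u_k}^n\wedge\eta=\mathrm{Vol}(M)$ and subtracting, the second factor equals $\mathrm{Vol}(M)-\int_{\{u>-k\}}\omega_u^n\wedge\eta$. Since $u\in\cE(M,\xi,\omega^T)$, definition \eqref{measure2}--\eqref{measure21} gives $\int_M\omega_u^n\wedge\eta=\mathrm{Vol}(M)$, so this difference is $\int_{\{u\leq -k\}}\omega_u^n\wedge\eta$ and in particular tends to $0$ as $k\to\infty$; moreover, the polar set $\{u=-\infty\}$ carries no $\omega_u^n\wedge\eta$-mass.

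The monotonicity of $h$ then supplies the sandwich I need. On one hand, since $|u|\geq k$ on $\{u\leq -k\}$,
\begin{equation*}
h(k)\int_{\{u\leq -k\}}\omega_u^n\wedge\eta \;\leq\; \int_{\{u\leq -k\}}h(|u|)\,\omega_u^n\wedge\eta,
\end{equation*}
which combined with the decomposition above gives $\int_M h(|u_k|)\omega_{u_k}^n\wedge\eta\leq\int_M h(|u|)\omega_u^n\wedge\eta$. On the other hand the same decomposition trivially gives $\int_M h(|u_k|)\omega_{u_k}^n\wedge\eta\geq\int_{\{u>-k\}}h(|u|)\omega_u^n\wedge\eta$. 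Applying monotone convergence to the increasing family $\{u>-k\}\nearrow\{u>-\infty\}$ (a set of full $\omega_u^n\wedge\eta$-measure), the right-hand side tends to $\int_M h(|u|)\omega_u^n\wedge\eta$, which proves simultaneously that the limit exists, that it equals $\int_M h(|u|)\omega_u^n\wedge\eta$, and that the finiteness conditions on the two sides are equivalent.

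The only subtle point, which I would treat with some care, is the verification that for $u\in\cE(M,\xi,\omega^T)$ the set $\{u=-\infty\}$ is $\omega_u^n\wedge\eta$-null; this follows directly from \eqref{measure21} since $\{u>-k\}\cap\{u=-\infty\}=\emptyset$ for all $k$, so the integral over $\{u=-\infty\}$ is a limit of zeros. Everything else is a direct application of Propositions \ref{measure1}, \ref{volume}, and \ref{GBTI}, together with elementary monotone convergence; no approximation by smooth potentials or delicate weak-convergence argument is needed beyond what the earlier propositions already package.
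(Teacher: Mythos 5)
Your proof is correct, and it takes a cleaner route than the paper's. You establish the exact decomposition
\begin{equation*}
\int_M h(|u_k|)\,\omega_{u_k}^n\wedge\eta=\int_{\{u>-k\}}h(|u|)\,\omega_u^n\wedge\eta+h(k)\int_{\{u\leq-k\}}\omega_u^n\wedge\eta,
\end{equation*}
using Proposition \ref{GBTI} with $v=-k$ and the full-mass condition, and then sandwich the left-hand side between $\int_{\{u>-k\}}h(|u|)\,\omega_u^n\wedge\eta$ and $\int_M h(|u|)\,\omega_u^n\wedge\eta$; monotone convergence on the increasing exhaustion $\{u>-k\}$ of the co-polar set finishes everything at once. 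The paper instead treats the two implications separately: for ``$\limsup<\infty\Rightarrow$ finite'' it extracts a weak-$*$ limit $\mu$ of the Radon measures $h(|u_k|)\omega_{u_k}^n\wedge\eta$ and uses lower semicontinuity of the increasing family $h(|u_k|)$ together with $\omega_{u_k}^n\wedge\eta\to\omega_u^n\wedge\eta$ to get $h(|u|)\omega_u^n\wedge\eta\leq\mu$; for the converse and the limit identity it splits into the cases $\lim_t h(t)=+\infty$ and $\lim_t h(t)=L<\infty$, bounding the difference of the two integrals by $2\int_{\{u\leq-k\}}h(|u|)\omega_u^n\wedge\eta$ in the first case. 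The essential inputs are the same (the generalized Bedford--Taylor identity on $\{u>-k\}$ and $\int_M\omega_{u_k}^n\wedge\eta=\int_M\omega_u^n\wedge\eta=\mathrm{Vol}(M)$), but your version avoids both the weak-compactness argument and the case distinction on the boundedness of $h$, at the cost of nothing; the one point you rightly flag --- that $\{u=-\infty\}$ is $\omega_u^n\wedge\eta$-null --- does follow directly from \eqref{measure21} as you say.
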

\begin{proof} Without loss of generality we can assume that $u \leq 0$.
If $\limsup_{k\rightarrow \infty}\int_M h(|u_k|)\omega_{u_k}^n\wedge \eta<\infty$, we obtain that
the sequence of Radon measures $h(|u_k|)\omega^n_{u_k} \wedge \eta$ is weakly compact. Hence there exists a subsequence $h(|u_{k_j}|)\omega_{u_{k_j}}^n \wedge \eta $ converging weakly to a Radon measure $\mu$. Recall that $h(|u_{k_j}|)$ is an increasing sequence of lower semicontinuous functions converging to $h(|u|)$ and $\omega_{u_{k_j}}^n \wedge \eta \xrightarrow{w} \omega_u^n \wedge \eta$, this yields that $h(|u|)\omega_u^n \wedge \eta \leq \mu$ as measure. In particular $\int_M \omega_u^n \wedge \eta \leq \mu(M)<\infty$.

Now assume $\int_M h(|u|)\omega_{u}^n \wedge \eta < \infty$. If $\lim\limits_{t \rightarrow +\infty}h(t)=+\infty$, we have
\begin{equation*}
\lim_{k \rightarrow \infty} \int_{\{u \leq -k\}} h(|u|)\omega_u^n \wedge \eta=\lim_{l \rightarrow +\infty} \int_{\{h(|u|) >l\}} h(|u|) \omega_u^n \wedge \eta=0
\end{equation*}
It follows from Proposition \ref{volume} and the Generalized Bedford-Taylor identity \ref{GBTI} that
\begin{equation*}
\int_{\{u \leq -k\}} \omega_{u_k}^n \wedge \eta =\int_{\{u \leq -k\}} \omega_u^n \wedge \eta
\end{equation*}
Then we have
\begin{equation*}
\begin{split}
|\int_M h(|u_k|)\omega_{u_k}^n \wedge \eta-\int_M h(|u|)\omega_u^n \wedge \eta| & \leq \int_{\{u \leq -k\}}h(k)\omega_{u_k}^n \wedge \eta+\int_{\{u \leq -k\}} h(|u|)\omega_u^n \wedge \eta     \\
 &=h(k) \int_{\{u \leq -k\}} \omega_u^n \wedge \eta +\int_{\{u \leq -k\}} h(|u|) \omega_u^n \wedge \eta   \\
 &\leq 2 \int_{\{u \leq -k\}} h(|u|) \omega_u^n \wedge \eta
 \end{split}
\end{equation*}
It follows that $\int_M h(|u_k|)\omega_{u_k}^n\wedge \eta$ is bounded and $\int_M h(|u|)\omega_u^n\wedge \eta=\lim_{k\rightarrow \infty}\int_M h(|u_k|)\omega_{u_k}^n\wedge \eta$.

If $\lim\limits_{t \rightarrow +\infty} h(t)=L<\infty$, it follows from Proposition \ref{volume} that $\int_M h(|u_k|)\omega_{u_k}^n\wedge \eta$ is bounded. Moreover for any $\epsilon>0$ there exists $N>0$ such that $0<L-h(t) <\epsilon$ for all $t>N$. Then for $k >N$ we have
\begin{equation*}
\begin{split}
|\int_M h(|u_k|)\omega_{u_k}^n \wedge \eta-\int_M h(|u|)\omega_u^n \wedge \eta| &=|\int_M (L-h(|u_k|))\omega_{u_k}^n \wedge \eta-\int_M (L-h(|u|))\omega_u^n \wedge \eta| \\
                                                                                                                                   &=|\int_{\{u \leq -k\}} (L-h(|u_k|))\omega_{u_k}^n \wedge \eta-\int_{\{u\leq-k\}}(L-h(|u|))\omega_u^n \wedge \eta|  \\
                                                                                                                                   &\leq 2\epsilon
                                                                                                                                   \end{split}
\end{equation*}
That is $\int_M h(|u|)\omega_u^n\wedge \eta=\lim_{k\rightarrow \infty}\int_M h(|u_k|)\omega_{u_k}^n\wedge \eta$.
\end{proof}
With the proposition above, we can then prove the so-called \emph{fundamental estimate}

\begin{prop}[Fundamental estimate]\label{fe}
Suppose $\chi\in \cW^+_p$ and $u, v\in \cE_\chi(M, \xi, \omega^T)$ such that $u\leq v\leq 0$. Then 
\begin{equation}\label{fe01}
E_\chi(v)\leq (p+1)^n E_\chi(u)
\end{equation}
\end{prop}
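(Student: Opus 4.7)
The plan is to reduce to bounded potentials and then establish an inductive inequality on mixed Monge--Amp\`ere energies, adapting the Guedj--Zeriahi argument to the Sasaki setting.

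First I would reduce to the case $u,v\in \text{PSH}(M,\xi,\omega^T)\cap L^\infty$. With canonical cutoffs $u_j=\max\{u,-j\}$ and $v_j=\max\{v,-j\}$, we have $u_j\leq v_j\leq 0$ and both are bounded TPSH. If the bounded case of \eqref{fe01} holds, then $E_\chi(v_j)\leq (p+1)^n E_\chi(u_j)$; Proposition \ref{cte} applied (with $h(t)=\chi(-t)$) to each of $u$ and $v$ then passes the limit $j\to\infty$ on both sides.

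For bounded $u\leq v\leq 0$, introduce the mixed energies
\[
J_k \,:=\, \int_M \chi(u)\,\omega_v^k\wedge\omega_u^{n-k}\wedge\eta,\qquad k=0,\ldots,n.
\]
Since $|v|\leq|u|$ and $\chi$ is nondecreasing in $|t|$, the pointwise bound $\chi(v)\leq\chi(u)$ gives $E_\chi(v)\leq J_n$, while $J_0=E_\chi(u)$. The proposition then reduces to the inductive claim $J_k\leq (p+1)J_{k-1}$ for $1\leq k\leq n$. To prove it, set $T=\omega_v^{k-1}\wedge\omega_u^{n-k}$, a closed positive basic $(n{-}1,n{-}1)$-current, and compute
\[
J_k - J_{k-1} \,=\, \int_M \chi(u)\,dd^c_B(v-u)\wedge T\wedge\eta.
\]
Integration by parts (justified below) moves $dd^c_B$ onto $\chi(u)$:
\[
J_k - J_{k-1} \,=\, \int_M (v-u)\bigl[\chi'(u)\,dd^c_B u+\chi''(u)\,d_B u\wedge d^c_B u\bigr]\wedge T\wedge\eta.
\]
Substitute $dd^c_B u=\omega_u-\omega^T$. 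The contribution $\int(v-u)\chi'(u)\,\omega_u\wedge T\wedge\eta$ is non-positive (since $v-u\geq 0$ and $\chi'(u)\leq 0$) and may be dropped. The remaining two non-negative terms are estimated using $0\leq v-u\leq -u$ together with the $\cW^+_p$-weight identity $u\chi'(u)=p\chi(u)$ for $\chi(t)=|t|^p/p$ (and $u\chi'(u)\leq p\chi(u)$ in general), yielding an upper bound of $p\int\chi(u)\,\omega^T\wedge T\wedge\eta$ for the first and of $(p-1)L$ for the second, where $L:=\int\chi(u)\,dd^c_B u\wedge T\wedge\eta$. A further integration by parts gives $L=-\int\chi'(u)\,d_B u\wedge d^c_B u\wedge T\wedge\eta\geq 0$, hence $\int\chi(u)\,\omega^T\wedge T\wedge\eta=J_{k-1}-L\leq J_{k-1}$. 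Combining these pieces produces $J_k-J_{k-1}\leq pJ_{k-1}$, and iterating yields $E_\chi(v)\leq J_n\leq (p+1)^n J_0 = (p+1)^n E_\chi(u)$.

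The main obstacle is the Sasaki integration-by-parts identity
\[
\int_M f\,dd^c_B g\wedge T\wedge\eta \,=\, \int_M g\,dd^c_B f\wedge T\wedge\eta
\]
for basic functions $f,g$ and closed basic $(n{-}1,n{-}1)$-forms $T$. In the K\"ahler case this is just Stokes' theorem, but on $M$ the contact form $\eta$ is not closed ($d\eta=2\omega^T$), so Stokes applied to $d(f\,d^c_B g\wedge T\wedge\eta)$ produces a spurious term $\int f\,d^c_B g\wedge T\wedge d\eta$. The crucial observation is that $d^c_B g\wedge T\wedge d\eta$ is a purely basic form of degree $2n+1$, and basic forms on $M$ live on the $2n$-dimensional contact distribution $\cD$, so they vanish identically. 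This establishes the identity for smooth $f,g$. For non-smooth bounded TPSH $u,v$, one approximates from above by smooth decreasing sequences in $\cH$ provided by Lemma \ref{BK} and passes to the limit using the weak convergence of mixed Monge--Amp\`ere measures in Proposition \ref{weakcon0}; a mild subtlety is that independent approximations $u_k\searrow u$ and $v_k\searrow v$ need not preserve $u_k\leq v_k$, which is remedied by replacing $v_k$ with $\max\{u_k,v_k\}$ (still bounded TPSH, still decreasing to $v$) before taking limits.
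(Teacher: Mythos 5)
Your overall strategy --- reduce to bounded potentials via canonical cutoffs and Proposition \ref{cte}, then prove the one-step inequality $J_k\le (p+1)J_{k-1}$ for the mixed energies $J_k=\int_M\chi(u)\,\omega_v^k\wedge\omega_u^{n-k}\wedge\eta$ --- is exactly the paper's, and your justification of Stokes' theorem (the spurious $d\eta$ term is a basic form of degree $2n+1$, hence vanishes) and the $\max\{u_k,v_k\}$ device are both fine. The gap is in how you split $\omega_v-\omega_u$. Writing $J_k-J_{k-1}=\int_M\chi(u)\,dd^c_B(v-u)\wedge T\wedge\eta$ and moving both derivatives onto $\chi(u)$ leaves the Hessian term $\int_M(v-u)\chi''(u)\,d_Bu\wedge d^c_Bu\wedge T\wedge\eta$ with the \emph{non-negative} factor $v-u$, so it cannot be discarded; your bound of it by $(p-1)L$ requires the pointwise inequality $|t|\,\chi''(t)\le(p-1)\chi'(|t|)$, which holds for $\chi(t)=|t|^p/p$ but is \emph{not} implied by the defining condition $t\chi'(t)\le p\chi(t)$ of $\cW^+_p$: that condition says $t\chi'(t)-p\chi(t)\le0$, whereas your inequality says this quantity is non-increasing, which is strictly stronger. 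It genuinely fails for weights the paper needs the Proposition for --- the auxiliary weight $\tilde\chi$ of Proposition \ref{weightchange} is built by doubling the slope of $\chi$ at a sequence of points, so $\tilde\chi''$ concentrates there while $\tilde\chi'$ stays comparable, and Proposition \ref{fe} is then invoked for exactly such weights inside Propositions \ref{oder} and \ref{weakcon3}.

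The repair is the paper's splitting: write $\omega_v=\omega^T+\sqrt{-1}\p_B\bar\p_Bv$ rather than $\omega_u+dd^c_B(v-u)$, and integrate by parts so that the factor multiplying $\chi''(u)\,\p_Bu\wedge\bar\p_Bu$ is $v\le0$; that term is then non-positive and is simply dropped, while the remaining term $\int_M v\chi'(u)\,\sqrt{-1}\p_B\bar\p_Bu\wedge\omega_v^{k-1}\wedge\omega_u^{n-k}\wedge\eta$ is bounded by $p\,J_{k-1}$ using only $|v|\le|u|$ and $t\chi'(t)\le p\chi(t)$. Alternatively, your route can be salvaged for general $\chi\in\cW^+_p$ by one more integration by parts on the Hessian term: with $\psi(t)=t\chi'(t)-\chi(t)$, which satisfies $0\le\psi\le(p-1)\chi$, one has $\int_M|u|\chi''(u)\,d_Bu\wedge d^c_Bu\wedge T\wedge\eta=\int_M\psi(|u|)\,dd^c_Bu\wedge T\wedge\eta\le(p-1)J_{k-1}$; but this gives $J_k\le 2p\,J_{k-1}$ and hence the constant $(2p)^n$ rather than the stated $(p+1)^n$.
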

\begin{proof}
First of all we assume that $u, v \in \text{PSH}(M,\xi,\omega^T) \cap L^{\infty}$. For $0 \leq j \leq n-1$ we have
\begin{equation*}
\int_M \chi(u)\omega_v^{j+1} \wedge \omega_u^{n-j-1}\wedge \eta =\int_M \chi(u)\omega^T \wedge \omega_v^j \wedge \omega_u^{n-j-1} \wedge \eta+\int_M \chi(u) i\partial_B\overline{\partial}_B v \wedge \omega_v^j \wedge \omega_u^{n-j-1} \wedge \eta                                                                                                        
\end{equation*}
Recall that $\chi'(l) \leq 0$ for $l<0$. Using integration by parts we have
\begin{equation*}
\begin{split}
\int_M \chi(u)\omega^T \wedge \omega_v^j \wedge \omega_u^{n-j-1} \wedge \eta &=\int_M \chi(u) \wedge \omega_v^j \wedge \omega_u^{n-j} \wedge \eta-\int_M \sqrt{-1}\chi(u)\partial_B\overline{\partial}_B u \wedge \omega_v^j \wedge \omega_u^{n-j-1} \wedge \eta  \\
                                                                                                                                  &=\int_M \chi(u) \wedge \omega_v^j \wedge \omega_u^{n-j} \wedge \eta+\int_M \sqrt{-1} \chi'(u)\partial_B u \wedge \overline{\partial}_B u \wedge \omega_v^j \wedge \omega_u^{n-j-1} \wedge \eta\\
                                                                                                                                  &\leq  \int_M \chi(u) \wedge \omega_v^j \wedge \omega_u^{n-j} \wedge \eta
 \end{split}
\end{equation*}
Recall that $\chi'(l) \leq 0$ for $l<0$ and $l\chi'(l) \leq p \chi(l)$ for $l \geq 0$. Using the integration by parts repeatedly we have
\begin{equation*}
\begin{split}
&\int_M \chi(u) i\partial_B\overline{\partial}_B v \wedge \omega_v^j \wedge \omega_u^{n-j-1} \wedge \eta    \\
&=\int_M \sqrt{-1} v\chi^{''}(u) \partial_B u \wedge \overline{\partial}_B u \wedge \omega_v^j \wedge \omega_u^{n-j-1} \wedge \eta +\int_M \sqrt{-1}v\chi'(u)\partial_B \overline{\partial}_B u \wedge \omega_v^j \wedge \omega_u^{n-j-1} \wedge \eta \\
&\leq \int_M \sqrt{-1}v\chi'(u)\partial_B \overline{\partial}_B u \wedge \omega_v^j \wedge \omega_u^{n-j-1} \wedge \eta  \\
&\leq \int_M v\chi'(u) \omega_v^j \wedge \omega_u^{n-j} \wedge \eta =\int_M |v|\chi'(|u|) \omega_v^j\wedge \omega_u^{n-j} \wedge \eta \\
&\leq \int_M  |u| \chi'(|u|) \omega_v^j \wedge \omega_u^{n-j} \wedge \eta \leq p \int_M \chi(|u|) \omega_v^j \wedge \omega_u^{n-j} \wedge \eta
\end{split}
\end{equation*}
Combine the inequalities above we obtain
\begin{equation*}
\int_M \chi(u)\omega_v^{j+1} \wedge \omega_u^{n-j-1}\wedge \eta \leq (p+1) \int_M \chi(u) \omega_v^j \wedge \omega_u^{n-j} \wedge \eta
\end{equation*}
It follows that
\begin{equation*}
E_{\chi}(v) \leq \int_M \chi(u) \omega_v^n \wedge \eta \leq (p+1)^n E_{\chi}(u)
\end{equation*}
In the general case $u, v \in \cE_{\chi}(M,\xi,\omega^T)$, we have $E_{\chi}(v_k) \leq E_{\chi}(u_k)$ for the canonical cutoffs $u_k, v_k$. It follows from  Proposition \ref{cte} that $E_{\chi}(v) \leq (p+1)^n E_{\chi}(u)$.
\end{proof}

As a direct consequence, we obtain the \emph{monotonicity property} for $\cE_\chi(M, \xi, \omega^T)$

\begin{prop}\label{oder}Suppose $u\in \cE_\chi(M, \xi, \omega^T)$ and $v\in \text{PSH}(M, \xi, \omega^T)$. If $u\leq v$, then $v\in \cE_\chi(M, \xi, \omega^T)$
\end{prop}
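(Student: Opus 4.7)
The plan is to combine the fundamental estimate (Proposition~\ref{fe}) with the testing criterion on canonical cutoffs (Proposition~\ref{cte}) and the already-established monotonicity for the full class $\cE$ (Proposition~\ref{mp01}).

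First, I would use Proposition~\ref{mp01} to immediately conclude that $v\in\cE(M,\xi,\omega^T)$, so the full-mass condition is taken care of and it remains to verify the finite $\chi$-energy condition $E_\chi(v)<\infty$. The next reduction is to arrange the sign condition $u\le v\le 0$ needed to invoke Proposition~\ref{fe}. Since $v$ is usc on the compact manifold $M$, $C:=\sup_M v$ is finite; replacing $u,v$ by $u-C,v-C$ gives the desired ordering and $\le 0$ property. The measure $\omega_u^n\wedge\eta$ is unchanged by this shift, and because $\chi\in\cW_p^+$ is a Young weight (convex, with $\chi(|t|)\lesssim |t|^p$ for large $|t|$ and satisfying a doubling-type estimate), there is a constant $K=K(\chi,C)$ such that
\[
\chi(|u-C|)\le K\bigl(\chi(|u|)+1\bigr),
\]
so $E_\chi(u-C)\le K\,E_\chi(u)+K\operatorname{Vol}(M)<\infty$. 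Thus without loss of generality we may assume $u\le v\le 0$ and $u\in\cE_\chi$.

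The main step is then to pass through the canonical cutoffs $u_k=\max\{u,-k\}$, $v_k=\max\{v,-k\}$. Since $u\le v\le 0$, we have $u_k\le v_k\le 0$ for every $k$, and both are bounded TPSH functions, hence trivially in $\cE_\chi$. Proposition~\ref{fe} applies and yields
\[
E_\chi(v_k)\le (p+1)^n\,E_\chi(u_k)\qquad\text{for all }k\in\N.
\]
Since $u\in\cE_\chi$, Proposition~\ref{cte} (applied to the continuous increasing function $h(t)=\chi(-t)$, or directly with $h=\chi\circ|\cdot|$) gives $\limsup_{k\to\infty}E_\chi(u_k)=E_\chi(u)<\infty$. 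Combining with the displayed inequality, $\limsup_{k\to\infty}E_\chi(v_k)\le (p+1)^n E_\chi(u)<\infty$. Now applying the reverse direction of Proposition~\ref{cte} to $v\in\cE(M,\xi,\omega^T)$ (noting this direction only needs finiteness of the limsup on the cutoffs) gives $E_\chi(v)=\lim_k E_\chi(v_k)<\infty$, and therefore $v\in\cE_\chi(M,\xi,\omega^T)$.

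The only delicate point I expect to have to justify carefully is the constant-shift invariance of $\cE_\chi$ used in the reduction $v\le 0$; this is a standard fact about Young weights in $\cW_p^+$ but worth a line to record since the remainder of the argument is a clean application of Propositions~\ref{mp01}, \ref{fe} and~\ref{cte}. Everything else is a straightforward transcription of the Kähler proof once the Bedford--Taylor machinery in the Sasaki setting (already established above) is in place.
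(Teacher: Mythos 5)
Your proposal is correct and follows essentially the same route as the paper: reduce to $u\le v\le 0$, invoke Proposition \ref{mp01} for full mass, apply the fundamental estimate (Proposition \ref{fe}) to canonical cutoffs to bound $E_\chi(v_k)$ by $(p+1)^nE_\chi(u)$, and conclude via Proposition \ref{cte}. The only (immaterial) difference is that you cut off both $u$ and $v$ where the paper applies Proposition \ref{fe} directly to the pair $(u,v_k)$, and you spell out the constant-shift normalization that the paper leaves implicit.
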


\begin{proof}
Without loss of generality we can assume that $u \leq v \leq 0$.The monotonicity property implies that $v \in \cE(M,\xi,\omega^T)$. We have $u \leq v_k$ for the canonical cutoffs of $v$,then $E_{\chi}(v_k) \leq (p+1)^nE_{\chi}(u)$ according to the Proposition \ref{fe}. It follows from Proposition \ref{cte} that $E_{\chi}(v) (p+1)^n\leq E_{\chi}(u)$ and $v \in \cE_{\chi}(M,\xi,\eta)$.
\end{proof}

We also have the following,

\begin{prop}\label{mixedenergy}Suppose $u, v\in \cE_\chi(M, \xi, \omega^T)$ for $\chi\in \cW^+_p$. If $u, v\leq 0$,  then
\[
\int_M \chi(u)\omega_v^n\wedge \eta\leq p 2^p(E_\chi(u)+E_\chi(v))
\]  
\end{prop}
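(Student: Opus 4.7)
The plan is to decompose $M$ into two regions according to whether $u$ is comparable in size to $v$ or much more negative, handling each with a different tool. On $\{u>2v\}$, where $u,v\le 0$ and hence $|u|<2|v|$, the defining growth condition $t\chi'(t)\le p\chi(t)$ of weights in $\cW^+_p$ yields $\chi(\lambda t)\le \lambda^p\chi(t)$ for $\lambda\ge 1$, $t\ge 0$; in particular $\chi(u)\le 2^p\chi(v)$ pointwise, so this piece contributes at most $2^p E_\chi(v)$ to the integral.

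For the harder region $\{u\le 2v\}$, where $|u|$ can be much larger than $|v|$, I would introduce the auxiliary function $w=\max(u/2,\,v)$. The first observation is that $u/2$ is itself a TPSH function, since
\[
\omega^T+\sqrt{-1}\p_B\bp_B(u/2)=\tfrac12\omega^T+\tfrac12\omega_u\ge 0,
\]
and $u\le u/2\le 0$ places $u/2$ in $\cE_\chi$ by Proposition \ref{oder}; hence so does $w$. Applying the Generalized Bedford-Taylor identity (Proposition \ref{GBTI}) to the pair $u/2$ and $v$, on $\{u/2<v\}=\{u<2v\}$ one has $\omega_v^n\wedge\eta=\omega_w^n\wedge\eta$, reducing the bound on this region to estimating $\int_M\chi(u)\omega_w^n\wedge\eta$.

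Since $u\le u/2\le w\le 0$ we are now in the ordered setting, so the iterated integration-by-parts identity proved in Proposition \ref{fe} applies verbatim and gives $\int_M\chi(u)\omega_w^n\wedge\eta\le (p+1)^n E_\chi(u)$. Collecting the two regions yields the qualitative estimate $\int_M\chi(u)\omega_v^n\wedge\eta\le C(n,p)\bigl(E_\chi(u)+E_\chi(v)\bigr)$ for a constant depending on $n,p$. Throughout this step I would reduce to the case $u,v\in \text{PSH}\cap L^\infty$ first, then pass to the general case by monotone approximation via canonical cutoffs using Proposition \ref{cte}, exactly as in the second half of the proof of Proposition \ref{fe}.

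The main obstacle is sharpening the constant to the stated $p2^p$: the route through Proposition \ref{fe} loses a factor $(p+1)^n$ because the Fundamental Estimate iterates the $\p_B\bp_B$ integration-by-parts $n$ times. To obtain an $n$-independent constant one must instead perform a single, carefully chosen integration-by-parts directly on $\int_M\chi(u)\omega_v^n\wedge\eta$, exploiting the convexity inequality $\chi(a+b)\le 2^{p-1}(\chi(a)+\chi(b))$ together with $t\chi'(t)\le p\chi(t)$ to absorb the cross terms $\sqrt{-1}\chi'(u)\p_B u\wedge\bp_B v$ into $E_\chi(u)+E_\chi(v)$ in one pass, without introducing the auxiliary function $w$. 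Verifying that this single-step integration-by-parts is valid for unbounded $u,v\in \cE_\chi$ — rather than only on their canonical cutoffs — and that the boundary-type terms really are controlled by the two energies is the delicate technical point.
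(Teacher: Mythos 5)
Your two-region decomposition is sound as far as it goes: on $\{u>2v\}$ the pointwise bound $\chi(u)\le 2^p\chi(v)$ is correct, and on $\{u<2v\}$ the device $w=\max(u/2,v)$, the observation that $u/2\in\text{PSH}(M,\xi,\omega^T)$, the Generalized Bedford--Taylor identity, and the intermediate inequality from the proof of Proposition \ref{fe} do combine to give $\int_M\chi(u)\,\omega_w^n\wedge\eta\le (p+1)^nE_\chi(u)$. So you have honestly proved $\int_M\chi(u)\,\omega_v^n\wedge\eta\le (p+1)^nE_\chi(u)+2^pE_\chi(v)$. But that is not the stated proposition: the constant $p2^p$ is dimension-free, your constant is not, and you yourself flag that the sharpening is the unresolved "delicate technical point." Worse, the route you propose for closing the gap --- a single global integration by parts absorbing the cross terms $\sqrt{-1}\chi'(u)\p_B u\wedge\bp_B v$ --- is not how the sharp constant is obtained, and you give no indication of how the boundary terms would actually be controlled; as written this is a genuine gap, not a technicality.

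The missing idea is that no integration by parts is needed at all. The paper's proof is purely measure-theoretic: write the integral by the layer-cake formula
\begin{equation*}
\int_M\chi(|u|)\,\omega_v^n\wedge\eta=\int_0^\infty\chi'(t)\,\bigl(\omega_v^n\wedge\eta\bigr)\{|u|>t\}\,dt,
\end{equation*}
substitute $t\mapsto 2t$ and use the doubling estimate $\chi'(2t)\le p2^{p-1}\chi'(t)$ (itself a consequence of $t\chi'(t)\le p\chi(t)$ and convexity, obtained in the paper via the perturbation $\tilde\chi=\chi+\delta|t|$), then use the elementary inclusion $\{|u|>2t\}\subset\{u<v-t\}\cup\{v<-t\}$. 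The second set contributes $E_\chi(v)$ directly; on the first set one invokes the comparison principle \eqref{cp01} to replace $\omega_v^n\wedge\eta\{u<v-t\}$ by $\omega_u^n\wedge\eta\{u<v-t\}\le\omega_u^n\wedge\eta\{u<-t\}$ (using $v\le 0$), which reassembles into $E_\chi(u)$. This yields $p2^p(E_\chi(u)+E_\chi(v))$ in one pass, with no iteration over the dimension and no need to treat bounded potentials first. I would keep your decomposition argument only as a fallback --- it does suffice for the applications later in the paper, where dimension-dependent constants already appear --- but to prove the proposition as stated you should switch to the distribution-function argument.
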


\begin{proof}
Take $\tilde{\chi}(t)=\chi(t)+\delta |t| \in \cW^+_p$. Assume that $t>0$, It is obvious $\tilde{\chi}(t),\tilde{\chi}'(t)>0$. Recall that  $\epsilon^p\tilde{\chi}(t) \leq \tilde{\chi}(\epsilon t)$  and $t\tilde{\chi}'(t) \leq p\tilde{\chi}(t)$ for $\tilde{\chi} \in \cW_p^+$ and  $0 < \epsilon <1$, hence we have $\tilde{\chi}(2t) \leq 2^p \tilde{\chi}(t)$. It follows from the convexity of the function $\tilde{\chi}(t)$ that $\frac{\tilde{\chi}(t)}{t} \leq \tilde{\chi}'(t)$. Then
\begin{equation*}
\tilde{\chi}'(2t)= \frac{1}{2}\frac{2t \tilde{\chi}'(2t)}{\tilde{\chi}(2t)} \frac{\tilde{\chi}(2t)}{\tilde{\chi}(t)} \frac{\tilde{\chi}(t)}{t} \leq p2^{p-1} \tilde{\chi}'(t)
\end{equation*}
Then $\delta \rightarrow 0$ implies that $\chi'(2t) \leq p 2^{p-1}\chi'(t)$ for $t>0$.

Consider the generalized Bedford-Taylor identity and $\{ |u|>2t\} \subset \{ u <v-t\} \cup \{ v < -t\}$, we have
\begin{equation*}
\begin{split}
\int_M \chi(u) \omega_v^n \wedge \eta &=\int_0^{\infty} \chi'(t) \omega_v^n \wedge \eta\{|u|>t\}dt \\
                                                              &\leq p2^p \int_0^{\infty} \chi'(t) \omega_v^n \wedge \eta\{|u|>2t\}dt  \\
                                                              &\leq p2^p(\int_0^{\infty} \chi'(t) \omega_v^n \wedge \eta \{u<v-t\}dt +\int_0^{\infty} \chi'(t) \omega_v^n \wedge \eta \{v<-t\} dt) \\
                                                              &\leq p2^p(\int_0^{\infty} \chi'(t) \omega_u^n \wedge \eta \{u<v-t\}dt +E_{\chi}(v)) \\
                                                              & \leq p2^p(\int_0^{\infty} \chi'(t) \omega_u^n \wedge \eta \{u<-t\}dt +E_{\chi}(v))  \\
                                                              &=p2^p(E_{\chi}(u)+E_{\chi}(v))
 \end{split}
\end{equation*}
\end{proof}

\begin{prop}\label{weightchange}Suppose $u\in \cE_\chi(M, \xi, \omega^T), \chi\in \cW^+_p$. Then there exists $\tilde \chi\in \cW^{+}_{2p+1}$ such that $\chi(t)\leq \tilde \chi(t), \chi(t)/\tilde \chi(t)\rightarrow 0$ as $t\rightarrow \infty$ and $u\in \cE_{\tilde \chi}(M, \xi, \omega^T)$
\end{prop}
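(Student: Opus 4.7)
The plan is to seek $\tilde\chi$ in the multiplicative form $\tilde\chi(t)=\chi(t)\Psi(\chi(t))$ for a suitable auxiliary $\Psi\colon[0,\infty)\to[1,\infty)$. With this ansatz the four requirements on $\tilde\chi$ decouple into pointwise conditions on $\Psi$ together with a single $L^1$-type integrability condition, so the argument splits into a clean algebraic part and a measure-theoretic de la Vallée Poussin step.

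First I would handle the algebraic part. Assume $\Psi$ is smooth, convex, increasing, $\Psi\ge 1$, $\Psi(s)\to\infty$ as $s\to\infty$, and satisfies the growth bound
\begin{equation}\label{psibound}
s\Psi'(s)\ \le\ \bigl(1+\tfrac{1}{p}\bigr)\Psi(s),\qquad s\ge 0.
\end{equation}
A direct differentiation gives $\tilde\chi'(t)=\chi'(t)\bigl[\Psi(\chi(t))+\chi(t)\Psi'(\chi(t))\bigr]$, and combining $t\chi'(t)\le p\chi(t)$ with \eqref{psibound} yields $t\tilde\chi'(t)\le p\tilde\chi(t)+(p+1)\tilde\chi(t)=(2p+1)\tilde\chi(t)$. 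Convexity of $\tilde\chi$ is immediate, being a product of nonnegative convex increasing functions on $[0,\infty)$. The estimate $\chi\le\tilde\chi$ follows from $\Psi\ge 1$, and $\chi(t)/\tilde\chi(t)=1/\Psi(\chi(t))\to 0$ follows since $\chi(t)\to\infty$ (any nontrivial $\chi\in\cW^+_p$ is convex with $\chi(0)=0$, so $\chi(t)/t$ is increasing and eventually positive) and $\Psi(s)\to\infty$. Thus $\tilde\chi\in\cW^+_{2p+1}$ with all pointwise properties in hand.

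Next I would construct $\Psi$ so that, in addition to \eqref{psibound}, one has $E_{\tilde\chi}(u)<\infty$. Set $\mu:=\omega_u^n\wedge\eta$, a finite positive Borel measure by Proposition~\ref{volume}, $f:=\chi(|u|)\ge 0$, and $\lambda(s):=\mu(\{f>s\})$. The hypothesis $u\in\cE_\chi$ gives $\int_0^\infty\lambda\,ds=\int_M f\,d\mu=E_\chi(u)<\infty$. A de la Vallée Poussin-type construction—prescribing $\Psi$ on a dyadic sequence $\{s_k\}$ and interpolating convexly, with slopes taken as large as \eqref{psibound} permits but small enough that the telescoping sum $\sum_k \Psi(s_{k+1})\bigl[\lambda(s_k)-\lambda(s_{k+1})\bigr]$ converges, then smoothed—produces a smooth convex increasing $\Psi\colon[0,\infty)\to[1,\infty)$ with $\Psi\to\infty$, obeying \eqref{psibound} and with $\int_0^\infty\Psi(s)\lambda(s)\,ds<\infty$. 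A layer-cake computation then gives
\[
E_{\tilde\chi}(u)=\int_M\chi(|u|)\Psi(\chi(|u|))\,d\mu=\int_0^\infty\bigl(\Psi(s)+s\Psi'(s)\bigr)\lambda(s)\,ds\ \le\ \bigl(2+\tfrac{1}{p}\bigr)\int_0^\infty\Psi(s)\lambda(s)\,ds<\infty,
\]
so $u\in\cE_{\tilde\chi}(M,\xi,\omega^T)$.

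The main obstacle is the simultaneous control in the construction of $\Psi$: it must grow fast enough that $\Psi(s)\to\infty$ (to force $\chi/\tilde\chi\to 0$), slowly enough that $s\Psi'(s)/\Psi(s)\le 1+1/p$ (to keep $\tilde\chi\in\cW^+_{2p+1}$), and slowly enough that $\int\Psi\lambda\,ds<\infty$ (to retain finite $\tilde\chi$-energy). The necessary room is provided precisely by $\lambda\in L^1([0,\infty))$: the tail decay of $\lambda$ supplies enough flexibility to satisfy all three constraints with a single convex $\Psi$. Once such a $\Psi$ is in hand the remaining verifications are routine differentiation and integration by parts.
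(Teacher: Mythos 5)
There is a genuine gap, and it sits exactly at the de la Vallée Poussin step: you cannot take $\Psi$ to be \emph{convex}. A convex, increasing, unbounded $\Psi$ on $[0,\infty)$ eventually has derivative bounded below by some $\delta>0$, hence $\Psi(s)\geq \delta s+O(1)$; then
\[
\int_0^\infty \Psi(s)\lambda(s)\,ds\ \geq\ \delta\int_0^\infty s\,\lambda(s)\,ds+O(1)\ =\ \frac{\delta}{2}\int_M \chi(|u|)^2\,\omega_u^n\wedge\eta+O(1),
\]
so finiteness of your integral forces $\chi(|u|)\in L^2(\omega_u^n\wedge\eta)$, which is strictly stronger than the hypothesis $u\in\cE_\chi$ (i.e.\ $\chi(|u|)\in L^1$) and fails in general — the energy classes are strictly nested, and the whole content of the proposition is that one can improve the weight only \emph{slightly}. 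Your growth cap $s\Psi'(s)\leq(1+1/p)\Psi(s)$ does not help, since it is an upper bound while the obstruction is the linear lower bound that convexity imposes. Note also that your claimed telescoping sum $\sum_k\Psi(s_{k+1})[\lambda(s_k)-\lambda(s_{k+1})]$ already essentially equals $\int\Psi(f)\,d\mu$, not $\int f\Psi(f)\,d\mu$; the quantity you actually need to control is the latter.

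The multiplicative ansatz itself is sound and is in effect what the paper does: its iterated doubling produces $\tilde\chi=\chi\cdot\Psi(\cdot)$ where $\Psi$ is (up to smoothing) the step function $2^{N(t)}$, $N(t)=\#\{k:\ t_k\leq t\}$, with thresholds $t_k$ chosen so that $\int_{\{|u|>t_k\}}\chi(|u|)\,\omega_u^n\wedge\eta<2^{-k-1}$; such a $\Psi$ grows arbitrarily slowly and is emphatically not convex. The correct repair of your argument is therefore to drop convexity of $\Psi$ and demand only that $G(s):=s\Psi(s)$ (equivalently $\tilde\chi=G\circ\chi$ after the composition) be convex and increasing — this is compatible with $\Psi$ sublinear, indeed slowly varying, and then the single-function de la Vallée Poussin lemma does produce $\Psi\to\infty$ with $\int f\Psi(f)\,d\mu<\infty$ from $f\in L^1(\mu)$ alone. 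With that change you must then verify convexity of $\tilde\chi$ and the bound $t\tilde\chi'(t)\leq(2p+1)\tilde\chi(t)$ directly (your "product of convex functions" shortcut is no longer available), which is where the paper's careful choice of $t_k$ "big enough" enters.
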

\begin{proof} 
Take $\chi_0=\chi$, recall that $\lim\limits_{t \rightarrow \infty} \chi_0(t)=\infty$ and $u \in \cE_{\chi}(M,\xi,\omega^T)$, we have
\begin{equation*}
\lim_{t \rightarrow \infty} \int_{\{|u| > t\}} \chi(|u|)\omega_u^n\wedge\eta =\lim_{s \rightarrow \infty} \int_{\{\chi(u) > s\}} \chi(|u|) \omega_u^n\wedge \eta=0
\end{equation*}
Then one can choose $t_1>0$ such that $\int_{|u|>t_1} \chi(|u|)\omega_u^n\wedge\eta<\frac{1}{2^2}$. We define $\chi_1:\mathbb{R}^+ \rightarrow \mathbb{R}^+$ by the formula:
\begin{equation*}
\chi_1(t)=
\begin{cases}
\chi_0(t)   & \text{if} \quad t\leq t_1  \\
\chi_0(t_1)+2(\chi_0(t)-\chi_0(t_1)) &\text{if} \quad t>t_1.
\end{cases}
\end{equation*}
Then it is easy to check that 
\begin{enumerate}
\item $\chi_0(t) \leq \chi_1(t)$;
\item $\lim\limits_{t \rightarrow \infty} \frac{\chi_1(t)}{\chi_0(t)}=2$;
\item $E_{\chi_1}(u) \leq E_{\chi_0}(u)+\frac{1}{2}$;
\item $\sup\limits_{t>0} \frac{|t\chi'_1(t)|}{|\chi_1(t)|} \leq \sup\limits_{t>0}\frac{2|t\chi'_0(t)|}{|\chi_0(t)|}<2p+1$;
\item $\lim\limits_{t \rightarrow \infty} \frac{t\chi'_1(t)}{\chi_1(t)} \leq p$
\end{enumerate}
These properties imply that for $t_2>t_1$ big enough, the function $\chi_2:\mathbb{R}^+ \rightarrow \mathbb{R}^+$
\begin{equation*}
\chi_2(t)=
\begin{cases}
\chi_1(t)   & \text{if} \quad t\leq t_2  \\
\chi_1(t_2)+2(\chi_1(t)-\chi_1(t_2)) &\text{if} \quad t>t_2.
\end{cases}
\end{equation*}
satisfies
\begin{enumerate}
\item $\chi_1(t) \leq \chi_1(t)$;
\item $\lim\limits_{t\rightarrow \infty} \frac{\chi_2(t)}{\chi_1(t)}=2$;
\item $E_{\chi_2}(u) \leq E_{\chi_1}(u)+\frac{1}{2^2}$;
\item $\sup\limits_{t>0} \frac{|t\chi'_2(t)|}{|\chi_2(t)|} <2p+1$;
\item $\lim\limits_{t \rightarrow \infty} \frac{t\chi'_2(t)}{\chi_2(t)} \leq p$
\end{enumerate}
Continuing the above construction we can obtain an increasing sequence $\{\chi_k\}_k$ and the limit weight $\tilde{\chi}(t)=\lim\limits_{k \rightarrow \infty} \chi_k(t)$ will satisfy the requirements of the Proposition.
\end{proof}

\begin{prop}\label{weakcon3}
Assume that $\{\psi_k\}_{k\in \N}, \{\phi_k\}_{k\in \N}, \{v_k\}_{k\in \N}\subset \cE_\chi(M, \xi, \omega^T)$ decrease (increase a. e) to $\phi, \psi, v\in \cE_\chi(M, \xi, \omega^T)$ respectively. Suppose 
\begin{enumerate}
\item $\psi_k\leq \phi_k$ and $\psi_k\leq v_k$.
\item $h: \R\rightarrow \R$ is continuous with $\limsup_{|l|\rightarrow \infty}|h(l)|/\chi(l)\leq C$ for some $C\geq 0$.
\end{enumerate}
Then we have the weak convergence of
\[
h(\phi_k-\psi_k)\omega_{v_k}^n\wedge \eta\rightarrow h(\phi-\psi)\omega^n_v\wedge \eta.
\]
\end{prop}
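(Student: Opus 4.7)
The strategy is to adapt the corresponding K\"ahler argument (compare Darvas \cite{D4}, Theorem~2.43) by splitting the statement into a bounded part handled via the Bedford-Taylor convergence (Proposition \ref{weakcon0}) and an unbounded tail handled via equi-integrability against the varying Monge-Amp\`ere measures. I will treat the decreasing case; the increasing case is entirely analogous with lower semicontinuity replacing monotone convergence. After adding constants I may assume all potentials are $\leq 0$, so the hypothesis $\psi_k \leq \phi_k, v_k$ forces $\phi_k - \psi_k \geq 0$ and $\phi_k - \psi_k \to \phi - \psi$ in capacity as $k\to\infty$.

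For each fixed $N>0$, set $\phi_k^N := \max(\phi_k,-N)$, and similarly $\psi_k^N,v_k^N$; these are bounded TPSH and decrease in $k$ to the corresponding truncations $\phi^N,\psi^N,v^N$ of the limits. By Proposition \ref{weakcon0}, $\omega_{v_k^N}^n \wedge \eta \to \omega_{v^N}^n \wedge \eta$ weakly. Since $|\phi_k^N-\psi_k^N|\leq 2N$ and $h$ is continuous, $h(\phi_k^N-\psi_k^N)$ is uniformly bounded and converges in capacity to $h(\phi^N-\psi^N)$. A standard argument reducing locally to the K\"ahler setting via foliation charts---where $\omega_{v_k^N}^n \wedge \eta$ decomposes as a product $\omega_{v_k^N}^n \wedge dx$ as in the proof of Proposition \ref{measure1}---then yields
\[
h(\phi_k^N - \psi_k^N)\, \omega_{v_k^N}^n \wedge \eta \; \longrightarrow \; h(\phi^N - \psi^N)\, \omega_{v^N}^n \wedge \eta \quad (k\to\infty).
\]
The generalized Bedford-Taylor identity (Proposition \ref{GBTI}) ensures that $\omega_{v_k}^n\wedge\eta$ and $\omega_{v_k^N}^n\wedge\eta$ agree on $\{v_k>-N\}$, and likewise in the limit, so the above displayed convergence captures the full assertion outside the tail set
\[
S_k^N := \{\phi_k<-N\}\cup\{\psi_k<-N\}\cup\{v_k<-N\}.
\]
The proof thus reduces to the uniform tail estimate $\lim_{N\to\infty}\sup_k \int_{S_k^N} |h(\phi_k-\psi_k)|\,\omega_{v_k}^n\wedge\eta = 0$, together with its analogue for $(\phi,\psi,v)$; combining it with the bounded-case convergence by a standard $\varepsilon/3$ splitting then finishes the argument.

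The main obstacle is precisely this equi-integrability, since finiteness of $E_\chi$ for each fixed $k$ is not sufficient. The growth hypothesis together with the convexity-type bound $\chi(|\phi_k|+|\psi_k|)\leq C(\chi(|\phi_k|)+\chi(|\psi_k|))$ gives $|h(\phi_k-\psi_k)| \leq C'(\chi(|\phi_k|)+\chi(|\psi_k|)+1)$. Proposition \ref{mixedenergy} bounds $\int \chi(|\phi_k|)\,\omega_{v_k}^n\wedge\eta \leq p2^p\bigl(E_\chi(\phi_k)+E_\chi(v_k)\bigr)$ and analogously for $\psi_k$, and the fundamental estimate (Proposition \ref{fe}) applied to $\phi\leq\phi_k\leq 0$ shows these energies are uniformly bounded in $k$. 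To upgrade boundedness to equi-integrability I invoke the weight-strengthening Proposition \ref{weightchange}: choose $\tilde\chi\in\cW^+_{2p+1}$ with $\tilde\chi\geq\chi$, $\chi/\tilde\chi\to 0$ at infinity, and $\phi,\psi,v\in\cE_{\tilde\chi}$. By the monotonicity property (Proposition \ref{oder}) together with Proposition \ref{fe} applied to $\tilde\chi$, each $\phi_k,\psi_k,v_k$ lies in $\cE_{\tilde\chi}$ with uniformly bounded $\tilde\chi$-energies. Writing $\chi(t)=(\chi/\tilde\chi)(t)\cdot\tilde\chi(t)$ and using that $(\chi/\tilde\chi)(t)$ vanishes at infinity, a Chebyshev-type argument on $S_k^N$ makes the $\chi$-tail uniformly small as $N\to\infty$, completing the proof.
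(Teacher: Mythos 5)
Your proposal is correct and follows essentially the same route as the paper's proof: reduce to the bounded case via canonical cutoffs plus a tail estimate that is uniform in $k$, prove the bounded case by quasicontinuity/capacity arguments combined with Proposition \ref{weakcon0}, and obtain the uniform tail bound from Proposition \ref{weightchange} together with Propositions \ref{oder}, \ref{fe} and \ref{mixedenergy}. The only cosmetic differences are that the paper spells out the bounded step explicitly via Theorem \ref{quasicontinuity} and a Tietze extension, and bounds $|h(\phi_k-\psi_k)|$ directly by $\tilde C\chi(\psi_k)$ rather than by $\chi(|\phi_k|)+\chi(|\psi_k|)$ (equivalent here since $\psi_k\leq\phi_k\leq 0$ gives $|\phi_k|\leq|\psi_k|$).
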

\begin{proof}Without loss of generality one can assume all the functions $\phi_k,\phi,\psi_k,\psi, v, v_k$ are negative. We will only prove the Proposition for decreasing sequences, the case of increasing sequences can be proved similarly.

First of all we suppose that the functions involved are uniformly bounded below, that is, there exists $L>1$ such that $-L \leq \phi_k,\phi,\psi_k,\psi, v_k, v \leq 0$. Given $\epsilon>0$, it follows from Theorem \ref{quasicontinuity} that there exists an open subset $O_{\epsilon} \subset M$ such that $\text{cap}(O_{\epsilon})<\epsilon$ and $\phi_k,\phi,\psi_k,\psi, v_k, v$ are continuous on $M-O_{\epsilon}$. Then $\phi_k \rightarrow \phi$ and $\psi_k \rightarrow \psi$ uniformly on $M-O_{\epsilon}$. Hence there exists $N$ such that for $k>N$ we have $|h(\phi_k-\psi_k)-h(\phi-\psi)|<\epsilon$ on $M-O_{\epsilon}$ and the term
\begin{equation*}
\int_M h(\phi_k-\psi_k)\omega_{v_k}^n \wedge \eta-\int_M h(\phi-\psi)\omega_{v_k}^n \wedge \eta=(\int_{O_{\epsilon}}+\int_{M-O_{\epsilon}}) [h(\phi_k-\psi_k)-h(\phi-\psi)] \omega_{v_k}^n \wedge \eta
\end{equation*}
is bounded by $2\epsilon L^n \max\limits_{0 \leq l \leq L}|h(l)| +\epsilon$. Hence
\begin{equation}\label{weakcon1(1)}
\int_M h(\phi_k-\psi_k) \omega_{v_k}^n \wedge \eta-\int_M h(\phi-\psi)\omega_{v_k}^n \wedge \eta \rightarrow 0
\end{equation}

Given $\epsilon>0$, it follows from Theorem \ref{quasicontinuity} that there exists an open subset $\tilde{O}_{\epsilon}$ such that $\text{cap}(\tilde{O}_{\epsilon}) <\epsilon$ and $\phi,\psi$ are continuous on $M-\tilde{O}_{\epsilon}$. By the Tietze's extension theorem the function $h(\phi-\psi)|_{M-\tilde{O}_{\epsilon}}$ can be extended  to a continuous function $\alpha$ on $M$ bounded by $\max\limits_{0\leq l\leq L}|h(l)|$. By Proposition \ref{weakcon0} we have $\omega_{v_k}^n\wedge\eta \rightarrow \omega_v^n\wedge\eta$ weakly. Then there exists a constant $N$ such that for $k>N$ we have $|\int_M \alpha\omega_{v_k}^n\wedge\eta-\int_M\alpha\omega_v^n\wedge\eta|<\epsilon$ and the term
\begin{equation*}
\begin{split}
&\quad \int_M h(\phi-\psi)\omega_{v_k}^n\wedge\eta-\int_M h(\phi-\psi)\omega_v^n\wedge\eta \\
&= \int_{O_{\epsilon}} (h(\phi-\psi)-\alpha)\omega_{v_k}^n\wedge\eta-\int_{O_{\epsilon}}(h(\phi-\psi)-\alpha)\omega_v^n\wedge\eta+(\int_M \alpha \omega_{v_k}^n\wedge\eta-\int_M\alpha\omega_v^n\wedge\eta)
\end{split}
\end{equation*}
is bounded by $4\epsilon L^n \max\limits_{0\leq l\leq L}|h(l)|+\epsilon$. Hence
\begin{equation}\label{weakcon1(2)}
\int_M h(\phi-\psi)\omega_{v_k}^n\wedge \eta-\int_Mh(\phi-\psi)\omega_v^n \wedge \eta \rightarrow 0
\end{equation}
It follows from \ref{weakcon1(1)} and \ref{weakcon1(2)} that $h(\phi_k-\psi_k)\omega_{v_k}^n\wedge\eta \rightarrow h(\phi-\psi)\omega_v^n\wedge\eta$.

Now consider the general case when $\phi_k,\phi,\psi_k,\psi, v_k, v$ are unbounded. Let $\phi_k^l,\phi^l,\psi_k^l,\psi^l, v_k^l, v^l$ be the canonical cutoffs of the corresponding potentials, then we only have to show that
\begin{equation}\label{weakcon1(3)}
\int_M h(\phi_k-\psi_k)\omega_{v_k}^n\wedge\eta-\int_Mh(\phi_k^l-\psi_k^l)\omega_{v_k^l}^n\wedge\eta \rightarrow 0
\end{equation}
and
\begin{equation}\label{weakcon1(4)}
\int_Mh(\phi-\psi)\omega_v^n\wedge\eta-\int_Mh(\phi^l-\psi^l)\omega_{v^l}^n\wedge\eta \rightarrow 0
\end{equation}
as $l\rightarrow \infty$ uniformly with respect to $k$.

By Proposition \ref{weightchange} there exists $\tilde{\chi}  \in \cW_{2p+1}^+$ such that $\chi \leq \tilde{\chi},\lim\limits_{t \rightarrow \infty} \frac{\chi(t)}{\tilde{\chi}(t)}=0$ and $\psi \in \cE_{\tilde{\chi}}(M,\xi,\omega^T)$. Then $\psi_k,\phi_k,\phi, v_k, v \in \cE_{\tilde{\chi}}(M,\xi,\omega^T)$ according to Proposition \ref{oder}.

Recall that there exists $L>0$ such that $\chi(L) \geq 1$ and $|h(t)| \leq (C+1)\chi(t)$ for $|t|>L$. Take $\tilde{C}= \max\{C+1,\frac{\max\limits_{0 \leq l \leq L}|h(l)|}{\chi(L)}\}$, then we have
\begin{equation*}
|h(l_1-l_2)| \leq \tilde{C} \chi(l_2)
\end{equation*}
for $l_2 \leq -L$ and $l_2  \leq l_1 \leq 0$.
Using the Generalized Bedford-Taylor identity, the fundamental estimate and Proposition \ref{mixedenergy} we have
\begin{equation*}
\begin{split}
    &\quad |\int_M h(\phi_k-\psi_k)\omega_{v_k}^n\wedge\eta-\int_Mh(\phi_k^l-\psi_k^l)\omega_{v_k^l}^n\wedge\eta|  \\
&=|\int_{\{\psi_k \leq -l\}} h(\phi_k-\psi_k)\omega_{v_k}^n\wedge\eta-\int_{\{\psi_k \leq -l\}}h(\phi_k^l-\psi_k^l)\omega_{v_k^l}^n\wedge\eta| \\
&\leq \int_{\{\psi_k \leq -l\}} |h(\phi_k-\psi_k)|\omega_{v_k}^n\wedge\eta+\int_{\{\psi_k \leq -l\}}|h(\phi_k^l-\psi_k^l)|\omega_{v_k^l}^n\wedge\eta \\
 &\leq \tilde{C}(\int_{\{\psi_k \leq -l\}} \chi(\psi_k)\omega_{v_k}^n\wedge\eta+\int_{\{\psi_k \leq -l\}} \chi(\psi_k^l)\omega_{v_k^l}^n\wedge\eta)  \\
 &\leq \tilde{C}\sup_{s\leq-l}\frac{\chi(s)}{\tilde{\chi}(s)} (\int_{\{\psi_k \leq -l\}} \tilde{\chi}(\psi_k)\omega_{v_k}^n\wedge\eta+\int_{\{\psi_k \leq -l\}} \tilde{\chi}(\psi_k^l)\omega_{v_k^l}^n\wedge\eta)  \\
 &\leq \tilde{C}\sup_{s \leq -l}\frac{\chi(s)}{\tilde{\chi}(s)} (\int_M \tilde{\chi}(\psi_k)\omega_{v_k}^n\wedge\eta+\int_M \tilde{\chi}(\psi_k^l)\omega_{v_k^l}^n\wedge\eta)  \\ 
 &\leq (2p+1)2^{2p+1}\tilde{C}\sup_{s\leq-l}\frac{\chi(s)}{\tilde{\chi}(s)}(E_{\tilde{\chi}}(\psi_k)+E_{\tilde{\chi}}(v_k)+E_{\tilde{\chi}}(\psi_k^l)+E_{\tilde{\chi}}(v_k^l))  \\
 &\leq 4(2p+1)(2p+2)^n2^{2p+1}\tilde{C}E_{\tilde{\chi}}(\psi)\sup_{s\leq-l}\frac{\chi(s)}{\tilde{\chi}(s)}
  \end{split}
\end{equation*}
for $l>L$ and the statement \ref{weakcon1(3)} follows.
We also have
\begin{equation*}
\begin{split}
&\quad |\int_Mh(\phi-\psi)\omega_v^n\wedge\eta-\int_Mh(\phi^l-\psi^l)\omega_{v^l}^n\wedge\eta| \\
&=|\int_{\{\psi\leq-l\}}h(\phi-\psi)\omega_v^n\wedge\eta-\int_{\{\psi\leq-l\}}h(\phi^l-\psi^l)\omega_{v^l}^n\wedge\eta| \\
&\leq \int_{\{\psi\leq-l\}}|h(\phi-\psi)|\omega_v^n\wedge\eta+\int_{\{\psi\leq-l\}}|h(\phi^l-\psi^l)|\omega_{v^l}^n\wedge\eta \\
&\leq \tilde{C} (\int_{\{\psi\leq-l\}}\chi(\psi)\omega_v^n\wedge\eta+\int_{\{\psi\leq-l\}}\chi(\psi^l)\omega_{v^l}^n\wedge\eta) \\
&\leq \tilde{C} \sup_{s\leq-l} \frac{\chi(s)}{\tilde{\chi}(s)}(\int_{\{\psi\leq-l\}}\tilde{\chi}(\psi)\omega_v^n\wedge\eta+\int_{\{\psi\leq-l\}}\tilde{\chi}(\psi^l)\omega_{v^l}^n\wedge\eta) \\
&\leq \tilde{C} \sup_{s\leq-l} \frac{\chi(s)}{\tilde{\chi}(s)}(\int_M\tilde{\chi}(\psi)\omega_v^n\wedge\eta+\int_M\tilde{\chi}(\psi^l)\omega_{v^l}^n\wedge\eta) \\
&\leq (2p+1)2^{2p+1}\tilde{C}\sup_{s\leq-l}\frac{\chi(s)}{\tilde{\chi}(s)}(E_{\tilde{\chi}}(\psi)+E_{\tilde{\chi}}(v)+E_{\tilde{\chi}}(\psi^l)+E_{\tilde{\chi}}(v^l))  \\
&\leq  4(2p+1)(2p+2)^n2^{2p+1}\tilde{C}E_{\tilde{\chi}}(\psi)\sup_{s\leq-l}\frac{\chi(s)}{\tilde{\chi}(s)}
\end{split}
\end{equation*}
for $l>L$ and the statement \ref{weakcon1(4)} follows. This completes the proof.
\end{proof}

\begin{prop}\label{boundedenergy}Suppose $\chi\in \cW^+_p$ and $\{u_k\}_{k\in \N}\subset \cE_\chi(M, \xi, \omega^T)$ is a decreasing sequence converging to $u\in \text{PSH}(M, \xi, \omega^T)$. If $\sup_k E_\chi(u_k)<\infty$ then $u\in \cE_\chi(M, \xi, \omega^T)$ and 
\[
E_\chi(u)=\lim_{k\rightarrow \infty} E_\chi(u_k).
\]
\end{prop}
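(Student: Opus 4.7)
The plan is to first establish that $u\in \cE(M,\xi,\omega^T)$ by controlling the Monge–Ampère mass on super-level sets using a uniform energy bound on the canonical cutoffs $u^l=\max(u,-l)$, then upgrade membership to $\cE_\chi(M,\xi,\omega^T)$ via Proposition \ref{cte}, and finally pass to the limit $E_\chi(u_k)\to E_\chi(u)$ by the weak continuity result of Proposition \ref{weakcon3}. Without loss of generality we may assume $u_k\le 0$, and we set $K:=(p+1)^n\sup_k E_\chi(u_k)<\infty$.

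The main step is to bound $E_\chi(u^l)$ uniformly in $l$. Fix $l$ and consider the bounded cutoffs $u_k^l=\max(u_k,-l)$, which form a decreasing sequence converging to $u^l$. Since each $u_k^l$ is bounded and hence lies in $\cE_\chi$, and since $u_k\le u_k^l\le 0$, the fundamental estimate (Proposition \ref{fe}) gives
\[
E_\chi(u_k^l)\le (p+1)^n E_\chi(u_k)\le K.
\]
Applying Proposition \ref{weakcon3} with $\phi_k=0$, $\psi_k=v_k=u_k^l$ and $h=\chi$ (extended symmetrically), we get the weak convergence $\chi(u_k^l)\,\omega_{u_k^l}^n\wedge\eta\rightharpoonup \chi(u^l)\,\omega_{u^l}^n\wedge\eta$; integrating the constant function $1$ over the compact manifold $M$ yields $E_\chi(u_k^l)\to E_\chi(u^l)$, hence $E_\chi(u^l)\le K$ for every $l$. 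Now on the set $\{u\le -l\}$ we have $u^l\equiv -l$, so
\[
K\ge E_\chi(u^l)\ge \int_{\{u\le -l\}}\chi(-l)\,\omega_{u^l}^n\wedge\eta=\chi(-l)\int_{\{u\le -l\}}\omega_{u^l}^n\wedge\eta.
\]
Since $\chi(-l)\to\infty$ as $l\to\infty$, we obtain $\int_{\{u\le -l\}}\omega_{u^l}^n\wedge\eta\to 0$. Combined with $\int_M \omega_{u^l}^n\wedge\eta=\mathrm{Vol}(M)$ from Proposition \ref{volume}, the definition \eqref{measure21} of $\omega_u^n\wedge\eta$ forces $\int_M\omega_u^n\wedge\eta=\mathrm{Vol}(M)$, i.e. $u\in\cE(M,\xi,\omega^T)$.

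With $u\in\cE(M,\xi,\omega^T)$ in hand and $\limsup_l E_\chi(u^l)\le K<\infty$, Proposition \ref{cte} (applied to the continuous increasing function $h=\chi$) gives $u\in\cE_\chi(M,\xi,\omega^T)$ together with $E_\chi(u)=\lim_{l\to\infty} E_\chi(u^l)$. Finally, to prove $E_\chi(u_k)\to E_\chi(u)$ we apply Proposition \ref{weakcon3} once more, this time directly to the original decreasing sequences, with $\phi_k=0$, $\psi_k=v_k=u_k$ and $h=\chi$, all of which lie in $\cE_\chi(M,\xi,\omega^T)$ by what we have just shown. This yields the weak convergence $\chi(u_k)\,\omega_{u_k}^n\wedge\eta\rightharpoonup \chi(u)\,\omega_u^n\wedge\eta$; testing against the constant $1$ on the compact manifold $M$ gives $E_\chi(u_k)\to E_\chi(u)$, completing the proof.

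The main obstacle is the uniform estimate on $E_\chi(u^l)$ in the second paragraph: one cannot simply invoke monotonicity of $E_\chi$ along the decreasing sequence $u_k\searrow u$, since monotonicity of $\cE_\chi$ (Proposition \ref{oder}) goes the wrong way, so the passage from the a priori bound on $E_\chi(u_k)$ to a bound on $E_\chi(u^l)$ must route through the fundamental estimate applied to the bounded cutoffs and a careful application of the weak convergence result. Once this is done, everything else is a direct appeal to the machinery already developed.
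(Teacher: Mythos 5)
Your proposal is correct and follows essentially the same route as the paper: fundamental estimate on the bounded cutoffs $u_k^l$, Proposition \ref{weakcon3} to pass to the limit in $k$, Proposition \ref{cte} to conclude $u\in\cE_\chi$, and a final application of Proposition \ref{weakcon3} for $E_\chi(u_k)\to E_\chi(u)$. The one place you go beyond the paper's terse write-up is the explicit verification that $u$ has full Monge--Amp\`ere mass (via $\chi(-l)\int_{\{u\le -l\}}\omega_{u^l}^n\wedge\eta\le K$), which is indeed needed before Proposition \ref{cte} can be invoked and is left implicit in the paper.
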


\begin{proof}
Without loss of generality we assume that $u_1\leq0$. The canonical cutoffs $u_k^l=\max\{u_k,-l\}$ decreases to the canonical cutoff $u^l=\max\{u,-l\}$. As $-l \leq u^l \leq u_k^l \leq 0$,  Proposition \ref{weakcon3} and the fundamental estimate imply that 
\begin{equation*}
E_{\chi}(u^l) =\lim_{k \rightarrow \infty} E_{\chi}(u_k^l) \leq (p+1)^n \sup_k E_{\chi}(u_k)
\end{equation*}
By Proposition \ref{cte}, $u \in \cE_{\chi}(M,\xi,\omega^T)$. Applying the previous Proposition in the case $\psi_k=v_k=u_k,\phi_k=0$ gives that $E_{\chi}(u)=\lim\limits_{k \rightarrow \infty} E_{\chi}(u_k)$.
\end{proof}

A very important notion in pluripotential theory is the \emph{envelop construction}, which we shall describe below. In our setting on a compact Sasaki manifold, given a usc function $f\in M\rightarrow [-\infty, \infty)$ such that $f$ is invariant under the Reeb flow, we consider the envelop
\begin{equation}\label{envelop01}
P(f):=\sup\{u\in \text{PSH}(M, \xi, \omega^T)\; \text{such that}\; u\leq f\}.
\end{equation}
As in K\"ahler setting, we have the following
\begin{prop}The envelop construction $P(f)\in \text{PSH}(M, \xi, \omega^T)$. 
\end{prop}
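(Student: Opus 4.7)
The plan is to mimic the standard Kähler argument based on the upper semicontinuous regularization of a supremum of (quasi-)plurisubharmonic functions, together with Choquet's lemma, and then verify Reeb invariance is preserved by the regularization.

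First I would check that the family $\mathcal{F}=\{u\in \text{PSH}(M,\xi,\omega^T):u\leq f\}$ is non-empty and uniformly bounded from above. Non-emptiness follows because $f$ is usc on a compact manifold, hence bounded above, and since we are only interested in the case $f\not\equiv -\infty$ there is a sufficiently negative constant lying in $\mathcal{F}$. Uniform upper boundedness follows from $u\le f\le \sup_M f$, which, combined with the sub-mean value type estimate of Proposition \ref{upperbound01}, makes the set $\mathcal{F}$ precompact in $L^1(d\mu_g)$ by Proposition \ref{compactness001}.

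Next I would pass to the upper semicontinuous regularization $P(f)^*$. By Choquet's lemma there is a countable subfamily $\{u_j\}\subset \mathcal{F}$ with $(\sup_j u_j)^*=P(f)^*$ almost everywhere. Working in foliation charts $W_\alpha=(-\delta,\delta)\times V_\alpha$, a function $u\in \text{PSH}(M,\xi,\omega^T)$ corresponds to the $\omega^T_\alpha$-plurisubharmonic function $u|_{V_\alpha}$ (equivalently, by Lemma \ref{chart}, $u+\log r$ is a psh function on the cone $X$). The usc regularization of a uniformly upper-bounded supremum of $\omega^T_\alpha$-psh functions is again $\omega^T_\alpha$-psh by the classical Kähler/pluripotential result; patching across foliation charts gives $P(f)^*\in \text{PSH}(M,\xi,\omega^T)$ modulo Reeb invariance.

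For Reeb invariance I would use that the Reeb flow $\phi_t$ acts by isometries of $g$ and preserves $\omega^T$, hence by homeomorphisms of $M$. Since every $u_j$ is Reeb invariant, so is $v:=\sup_j u_j$, and for its usc regularization
\[
v^*(\phi_t(p))=\limsup_{q\to \phi_t(p)} v(q)=\limsup_{r\to p} v(\phi_t(r))=\limsup_{r\to p} v(r)=v^*(p),
\]
so $P(f)^*$ is Reeb invariant as required.

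Finally I would close the loop with the usual "$P(f)=P(f)^*$" argument: since $f$ is usc, $P(f)\leq f$ implies $P(f)^*\leq f^*=f$, so $P(f)^*$ itself lies in $\mathcal{F}$ and thus $P(f)^*\leq P(f)$; the reverse inequality $P(f)\leq P(f)^*$ is automatic. Hence $P(f)=P(f)^*\in \text{PSH}(M,\xi,\omega^T)$. The main subtlety, and the only place where the Sasaki setting differs from the Kähler one, is matching the local foliation-chart version of the regularization (which is a statement about psh functions on $V_\alpha\subset\mathbb{C}^n$) with the global Reeb-invariance requirement; the observation that $\phi_t$ is an isometry makes this purely formal, so I do not anticipate any real obstruction beyond careful bookkeeping.
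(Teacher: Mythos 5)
Your proof is correct and follows essentially the same route as the paper: reduce to foliation charts, apply the classical result that the usc regularization of a bounded-above family of $\omega^T_\alpha$-psh functions is again $\omega^T_\alpha$-psh, note that Reeb invariance survives regularization, and close with the standard $P(f)^*\leq f^*=f$ argument. Your version merely spells out the details (Choquet's lemma, the explicit invariance computation) that the paper leaves implicit.
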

\begin{proof}This statement is local in nature, hence we only need to argue in foliations charts $W_\alpha=(-\delta, \delta)\times V_\alpha$, where $V_\alpha\subset \C^n$ give a transverse holomorphic charts. Since $P(f)$ is invariant under the Reeb flow, its usc regularization $P(f)^{*}$ is invariant under the Reeb flow. Hence by $P(f)^*$ is $\omega^T_\alpha$-psh on each $V_\alpha$, see \cite{BN}[Theorem 1.2.3 (viii)]. Since $f$ is usc, hence $P(f)^*\leq f^*=f$. Hence $P(f)^*$ is a candidate in the definition of $P(f)$, gives that $P(f)^*\leq P(f)$. This implies that $P(f)=P(f)^*$ and $P(f)\in \text{PSH}(M, \xi, \omega^T)$. 
\end{proof}

We also introduce the notion \emph{rooftop envelop}, for usc functions $\{f_1, \cdots, f_n\}$ which are invariant under the Reeb flow,
\[
P(f_1, \cdots, f_n):=P(\min\{f_1, \cdots, f_n\}). 
\]
We have the following, 
\begin{thm}\label{rooftop101}Given $f\in C^\infty_B$, then 
we have the following estimate
\[
\|P(f)\|_{C^{1, \bar 1}}\leq C(M, \omega^T, g, \|f\|_{C^{1, \bar 1}}).
\]
Moreover, if 
 $u_1, \cdots, u_k\in \cH_\Delta$, where we use the notation
\[
\cH_\Delta=\{u\in \text{PSH}(M, \xi, \omega^T): \|u\|_{C^{1, \bar 1}}<\infty\}
\]
then $P(u_1, \cdots, u_k)\in \cH_\Delta$.
\end{thm}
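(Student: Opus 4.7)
The plan is to realize $P(f)$, and more generally $P(u_1,\ldots,u_k)$, as the monotone limit of smooth transverse K\"ahler potentials solving a penalized transverse complex Monge--Amp\`ere equation, and to prove a $C^{1,\bar 1}$ bound on these approximants that is uniform in the penalty parameter. This is the Sasaki adaptation of the standard approach to $C^{1,\bar 1}$-regularity of envelopes in the K\"ahler setting (Berman, Chu, Tosatti, Chen--Cheng).

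For the single obstacle I would fix $\beta>0$ and consider the smooth basic solution $u_\beta\in \cH$ of
\[
(\omega^T+\sqrt{-1}\partial_B\bar\partial_B u_\beta)^n\wedge\eta \;=\; e^{\beta(u_\beta-f)}\,(\omega^T)^n\wedge\eta,
\]
whose solvability in $C^\infty_B$ is the transverse Calabi--Yau theorem. At a maximum of $u_\beta-f$ one has $\omega_{u_\beta}\leq \omega^T$ and hence $e^{\beta(u_\beta-f)}\leq 1$, so $u_\beta\leq f$ pointwise on $M$; thus $u_\beta$ competes in \eqref{envelop01} and $u_\beta\leq P(f)$. A standard argument combining the penalty and the comparison principle shows $u_\beta\nearrow P(f)$ as $\beta\to\infty$. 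The rooftop case runs in parallel with right-hand side $\bigl(\sum_{i=1}^k e^{\beta(\phi_\beta-u_i)}\bigr)(\omega^T)^n\wedge\eta$: the maximum principle applied to each $\phi_\beta-u_j$ forces $\phi_\beta\leq\min_i u_i$, and $\phi_\beta\nearrow P(u_1,\ldots,u_k)$.

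The core of the argument is a $\beta$-independent Laplacian estimate. I would run the Aubin--Yau maximum-principle computation on the basic auxiliary function
\[
H \;:=\; \log \tr_{\omega^T}\omega_{u_\beta} + A\,u_\beta
\]
with $A$ large, depending only on $(M,\omega^T,g)$ and $\|f\|_{C^{1,\bar 1}}$. Because $u_\beta$ is basic, in a foliation chart $W_\alpha=(-\delta,\delta)\times V_\alpha$ the computation reduces verbatim to the K\"ahler Aubin--Yau estimate on $V_\alpha\subset \C^n$ for the transverse metric $\omega_\alpha^T$. The penalty contributes a $\beta$-weighted term when one differentiates the equation, but this is absorbed by the concavity of $\log\det$ together with the sign $e^{\beta(u_\beta-f)}\leq 1$; the net contribution depends only on $\|f\|_{C^{1,\bar 1}}$. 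This yields $\tr_{\omega^T}\omega_{u_\beta}\leq C$ uniformly in $\beta$, hence $\|u_\beta\|_{C^{1,\bar 1}}\leq C$, and, passing to the limit, $\|P(f)\|_{C^{1,\bar 1}}\leq C$. For the rooftop envelope the same computation applies, with $f$ replaced at the maximum point of $H$ by the $u_j$ that dominates the sum $\sum_i e^{\beta(\phi_\beta-u_i)}$ there; only $\|u_j\|_{C^{1,\bar 1}}\leq \max_i\|u_i\|_{C^{1,\bar 1}}<\infty$ enters, which is precisely the hypothesis $u_i\in \cH_\Delta$.

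The hardest point will be the uniform Laplacian estimate itself. In the Sasaki setting one must check that $H$ is basic so that its maximum on $M$ projects onto a genuine pointwise maximum on the transverse leaf space, thereby allowing Lemma~\ref{chart} and the foliation charts to reduce the pointwise calculation to the K\"ahler Aubin--Yau one. The delicate accounting is in showing that the penalty contributes $O(1)$ rather than $O(\beta)$ to the maximum-principle inequality, which uses both the sign $\beta(u_\beta-f)\leq 0$ and the concavity of the Monge--Amp\`ere operator; in the rooftop case one additionally has to verify that switching to the dominant obstacle $u_j$ at the maximum point of $H$ is legitimate, which is where the $C^{1,\bar 1}$ hypothesis on each $u_i$ is consumed. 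Once the uniform estimate is in hand, weak $W^{2,p}_B$ compactness and standard elliptic theory transfer the bound to the limit, completing both assertions of the theorem.
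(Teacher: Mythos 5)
Your treatment of the first assertion is essentially the paper's: both run Berman's penalization $\omega_{u_\beta}^n\wedge\eta=e^{\beta(u_\beta-f)}(\omega^T)^n\wedge\eta$ and observe that, since all quantities are basic and only the transverse K\"ahler structure enters, the Aubin--Yau maximum-principle computation of \cite{D4}[Theorem A.7] transfers verbatim to the foliation charts. (One small slip: at a maximum of $u_\beta-f$ one only gets $\omega_{u_\beta}\leq\omega^T+dd^c_Bf$, not $\omega_{u_\beta}\leq\omega^T$, so the correct conclusion is $u_\beta\leq f+C(\|f\|_{C^{1,\bar1}})/\beta$ rather than $u_\beta\leq f$; this does not affect $u_\beta\nearrow P(f)$.)

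For the rooftop assertion you diverge from the paper, and here your argument has a genuine gap. You penalize directly against the obstacles via the right-hand side $\sum_ie^{\beta(\phi_\beta-u_i)}$, but the $u_i$ are only assumed to lie in $\cH_\Delta$: they have bounded Laplacian, hence lie in $W^{2,p}$ for all $p$ and in $C^{1,\alpha}$, but they are not $C^2$. Two steps then break down as written: the transverse Calabi--Yau theorem does not produce solutions regular enough to differentiate the equation twice when the data is non-smooth, and ``switching to the dominant obstacle $u_j$ at the maximum point of $H$'' uses $\Delta u_j$ pointwise at a point where $u_j$ need not be twice differentiable. These defects are repairable by running the maximum principle in the viscosity sense (using that $\min\{f_1,f_2\}=\tfrac{f_1+f_2}{2}-\tfrac{|f_1-f_2|}{2}$ satisfies $\Delta\min\leq\max\{\Delta f_1,\Delta f_2\}$ in the viscosity sense), and the paper records exactly this alternative in a remark but explicitly declines to carry it out. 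The paper's actual route is different and sidesteps the issue: it reduces by induction to $k=2$, takes the weak geodesic $u_t$ joining $u_0,u_1$, invokes the interior Laplacian estimate of Lemma \ref{cma10} (which depends only on $\|u_i\|_{C^{1,\bar1}}$) to get $u_t\in\cH_\Delta$ uniformly in $t$, identifies $P(u_0,u_1)=\inf_{t\in[0,1]}u_t$ via the Kiselman/rooftop formula of Lemma \ref{rf}, and then applies \cite{DR1}[Proposition 4.4] in each foliation chart to conclude that the infimum of this $t$-convex family retains the Laplacian bound. To keep your penalization route for the rooftop case you must either smooth the obstacles and track the constants or carry out the viscosity-sense maximum principle in detail; otherwise the geodesic argument is the cleaner path.
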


We shall prove Theorem \ref{rooftop101} in Appendix. 
The following result would be very essential for the rooftop envelop $P(u_0, u_1)$: that is, on the non-contact set $\Gamma:=\{P(u_0, u_1)<\min (u_0, u_1)\}, \omega_{P(u_0, u_1)}^n\wedge \eta=0$.  

\begin{lemma}For $u_0, u_1\in \cH_\Delta$, then on $\Gamma$, 
\begin{equation}\omega_{P(u_0, u_1)}^n\wedge \eta=0
\end{equation}
\end{lemma}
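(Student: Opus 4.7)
The idea is to argue by contradiction, using the rooftop regularity of Theorem~\ref{rooftop101} to reduce to a local Dirichlet problem on a foliation chart. Write $\phi:=P(u_0,u_1)$. By Theorem~\ref{rooftop101}, $\phi\in \cH_\Delta$; in particular $\phi$ is continuous, and since $u_0,u_1$ are continuous as well, the non-contact set $\Gamma=\{\phi<\min(u_0,u_1)\}$ is open in $M$. Suppose toward a contradiction that $(\omega_\phi^n\wedge\eta)(\Gamma)>0$. Then there is a point $p\in\Gamma$ in the support of $\omega_\phi^n\wedge\eta$, and by openness of $\Gamma$ an $\epsilon>0$ and a neighborhood $U$ of $p$ with $\phi+3\epsilon\leq\min(u_0,u_1)$ on $U$. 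The plan is to construct a competitor $\psi\in \text{PSH}(M,\xi,\omega^T)$ satisfying $\phi\leq\psi\leq\min(u_0,u_1)$ and $\psi(q)>\phi(q)$ at some $q\in U$, which directly contradicts the definition \eqref{envelop01} of $\phi$.

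I would work in a foliation chart $W_\alpha=(-\delta,\delta)\times V_\alpha$ centered at $p$; since $\phi,u_0,u_1$ are basic they descend to functions of the transverse variables only. Pick a local K\"ahler potential $h_\alpha$ on $V_\alpha$ with $\omega^T=dd^c_B h_\alpha$, and set $\tilde\phi:=\phi+h_\alpha$, $\tilde u_i:=u_i+h_\alpha$; these are plurisubharmonic on $V_\alpha$, and by the product structure established in Lemma~\ref{chart} and used in the proof of Proposition~\ref{measure1}, the measure $\omega_\phi^n\wedge\eta$ coincides with $(dd^c\tilde\phi)^n\wedge dx$ on $W_\alpha$. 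Choose a small Euclidean ball $B\Subset V_\alpha$ about the transverse projection of $p$ so that $\int_B(dd^c\tilde\phi)^n>0$ and $\tilde\phi+2\epsilon\leq\min(\tilde u_0,\tilde u_1)$ on a neighborhood of $\bar B$. By the Bedford--Taylor solvability of the homogeneous Dirichlet problem, there is a unique continuous psh $v$ on $\bar B$ with $(dd^c v)^n=0$ on $B$ and $v=\tilde\phi$ on $\partial B$. The comparison principle forces $v\geq\tilde\phi$ on $B$, and since $(dd^c v)^n\equiv 0$ on $B$ while $(dd^c\tilde\phi)^n(B)>0$, continuity of $v,\tilde\phi$ prevents $v\equiv\tilde\phi$, so $v>\tilde\phi$ on a nonempty open $O\subset B$. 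The boundary inequality $v=\tilde\phi\leq\tilde u_i-\epsilon$ on $\partial B$ together with $(dd^c v)^n=0\leq(dd^c\tilde u_i)^n$ yields $v\leq\tilde u_i$ on $B$, hence $v\leq\min(\tilde u_0,\tilde u_1)$ on $B$.

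To finish, define $\tilde\psi:=v$ on $B$ and $\tilde\psi:=\tilde\phi$ on $V_\alpha\setminus B$. Continuity at $\partial B$ and the standard gluing lemma for psh functions show $\tilde\psi$ is plurisubharmonic on $V_\alpha$. Setting $\psi:=\tilde\psi-h_\alpha$ on $W_\alpha$, extended trivially in the Reeb direction so that $\psi$ is basic, and $\psi:=\phi$ on $M\setminus W_\alpha$ produces a globally defined basic function; the two pieces agree outside a compact subset of $W_\alpha$ since $v=\tilde\phi$ outside $B$, so $\psi$ is continuous on $M$ and $\omega^T$-plurisubharmonic. By construction $\phi\leq\psi\leq\min(u_0,u_1)$ on $M$ and $\psi>\phi$ on the lift of $O$ to $W_\alpha$, the desired contradiction. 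The chief subtlety is ensuring that every local construction respects the basic structure so that no difficulty arises from an irregular Reeb foliation; this is automatic here since all input data are basic and the Dirichlet/gluing constructions are carried out entirely on the transverse slice $V_\alpha$.
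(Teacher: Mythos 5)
Your local Dirichlet-problem strategy founders on exactly the point you wave away in the last sentence. The competitor you build is modified only on the set $(-\delta,\delta)\times B$ inside a single foliation chart $W_\alpha$ and set equal to $\phi=P(u_0,u_1)$ on $M\setminus W_\alpha$. Such a function is not basic: a leaf of the Reeb foliation through a point of $B$ is not contained in $W_\alpha$ (the chart only contains the plaque $x\in(-\delta,\delta)$ of each leaf), so the same leaf meets $M\setminus W_\alpha$, where your $\psi$ equals $\phi<\psi(q)$. Hence $\psi$ is not constant along leaves, so $\psi\notin \text{PSH}(M,\xi,\omega^T)$ and it is not an admissible competitor in the envelope \eqref{envelop01}; no contradiction is obtained. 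To repair this one must modify $\phi$ on the full saturation of $B$ under the Reeb flow. In the quasiregular case this saturation is a tube of circle fibers and the argument goes through by working on the quotient orbifold $Z=M/\cF_\xi$ — which is precisely what the paper does. In the irregular case the saturation of $B$ is typically dense in a large subset of $M$, and no consistent local modification exists; this is the obstruction the paper explicitly flags, and it is why the paper's proof instead approximates $\xi$ by quasiregular $\xi_i$ via Type-I deformation, transfers the envelopes using Lemma \ref{type1} and the $C^{1,\bar 1}$ bound of Theorem \ref{rooftop101}, and passes to the limit with Lemma \ref{measure100}. Your argument cannot be completed without some such device.

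A secondary error: the step ``$v=\tilde\phi\le\tilde u_i-\epsilon$ on $\partial B$ together with $(dd^cv)^n=0\le(dd^c\tilde u_i)^n$ yields $v\le\tilde u_i$ on $B$'' misapplies the comparison principle, which runs the other way (the function with the larger Monge--Amp\`ere mass is the smaller one, given ordered boundary values). Indeed $v$ is the maximal plurisubharmonic function with boundary data $\tilde\phi|_{\partial B}$ and can well exceed $\tilde u_i$ in the interior (take $\tilde\phi=|z|^2-1$, $\tilde u_i=\tilde\phi+2\epsilon$ on the unit ball: then $v\equiv 0>\tilde u_i(0)$ for small $\epsilon$). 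This particular step is salvageable by shrinking $B$ until $\mathrm{osc}_{\bar B}\tilde\phi<\epsilon$ and using $\sup_B v=\sup_{\partial B}\tilde\phi$, but as written it is wrong; the globalization issue above, however, is the essential gap.
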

\begin{proof}
First we assume $\xi$ is regular or quasiregular, then the proof follows similarly as in K\"ahler setting. We sketch the proof briefly. We consider the quotient K\"ahler manifold (orbifold) $(Z=M/\cF_\xi, \omega_Z)$ such that $\omega^T=\pi^*\omega_Z$, where $\pi: M\rightarrow Z$ is the natural quotient map. Since $u_0, u_1$ and $P(u_0, u_1)$ are all basic functions, and they descend to $Z$ to define the functions on $Z$, which we still denote as $u_0, u_1$ and $P(u_0, u_1)$. We only need to show that $(\omega_Z+\sqrt{-1}\p\bar \p P(u_0, u_1))^n=0$ on $\Gamma_Z:=\{z\in Z: P(u_0, u_1)<\min (u_0, u_1)\}$. Note that $\Gamma_Z=\pi(\Gamma)$. This simply follows from \cite{BT1}[Corollary 9.2]. 

Now we deal with the case when $\xi$ is irregular. We need to use a Type-I deformation to approximate $(M, \xi, \eta, g, \Phi)$, as in Theorem \ref{type101}. Denote $T^k$ to be the torus in $\text{Aut}(\xi, \eta, g)$ with the Lie algebra $\mathfrak{t}$. Take $\rho_i\in \mathfrak{t}$ such that $\rho_i\rightarrow 0$ (convergence is smooth with respect to a fixed metric $g$).  We can take $\rho_i$ such that $\xi_i=\xi+\rho_i$ is quasiregular.  Consider the Type-I deformation $(M, \xi_i, \eta_i, g_i, \Phi_i)$ as in Definition \ref{type-01}. Given $u_0, u_1\in \cH_\Delta$ and we know that $P(u_0, u_1)\in \cH_\Delta$ (see Theorem \ref{rooftop101}), by Lemme \ref{type1}, there exists $\epsilon_i\rightarrow 0$ such that $(1-\epsilon_i) u_0, (1-\epsilon_i)u_1, (1-\epsilon_i)P(u_0, u_1)\in \text{PSH}(M, \xi_i, \omega^T_i)$.
Define
\begin{equation}\label{rooftop102}
P_i=P_i((1-\epsilon_i)u_0, (1-\epsilon_i) u_1)=\sup\{v\in \text{PSH}(M, \xi_i, \omega^T_i), v\leq (1-\epsilon_i)u_0, (1-\epsilon_i) u_1\}. 
\end{equation}
Since $(1-\epsilon_i)P(u_0, u_1)\in \text{PSH}(M, \xi_i, \omega^T_i)$ and $(1-\epsilon_i)P(u_0, u_1)\leq (1-\epsilon_i)u_0, (1-\epsilon_i) u_1$, hence $(1-\epsilon_i)P(u_0, u_1)\leq P_i$. On the other hand, we apply Lemma \ref{type1} and we know there exists $\varepsilon_i\rightarrow 0$, such that $(1-\varepsilon_i)P_i\in \text{PSH}(M, \xi, \omega^T)$. It follows that
\[
(1-\varepsilon_i) P_i\leq P(u_0, u_1)\leq P_i (1-\epsilon_i)^{-1}
\]
By Theorem \ref{rooftop101}, we know that $|d\Phi d P_i|$ is uniformly bounded and hence $P_i\rightarrow P(u_0, u_1)$ in $C^{1, \alpha}$. For any compact subset  $ K\subset \Gamma=\{P(u_0, u_1)<\min (u_0, u_1)\}$, we can choose $i$ sufficiently large, such that $P_i<\min\{(1-\epsilon_i)u_0, (1-\epsilon_i) u_1\}$. Since $\xi_i$ is quasiregular, by \eqref{rooftop102}, we can then get that 
\[(\omega_i^T+\frac{1}{2}d\Phi_i d P_i)^n \wedge \eta_i=0, \;\text{on}\; K.\]
Taking $i\rightarrow \infty$, by Lemma \ref{measure100}, we get that
\[(\omega^T+\frac{1}{2}d\Phi d P(u_0, u_1))^n \wedge \eta=0, \;\text{on}\; K.\]
This completes the proof. 
\end{proof}

As a consequence, we get a volume partition formula for $\omega^n_{P(u_0, u_1)}\wedge\eta$ as follows,
\begin{lemma}\label{decomposition}
For $u_0, u_1\in \cH_\Delta$, denote $\Lambda_{u_0}=\{P(u_0, u_1)=u_0\}$ and $\Lambda_{u_1}=\{P(u_0, u_1)=u_1\}$. Then we have the following
\begin{equation}
\omega^n_{P(u_0, u_1)}\wedge \eta=\chi_{\Lambda_{u_0}} \omega^n_{u_0}\wedge \eta+\chi_{\Lambda_{u_1}\backslash \Lambda_{u_0}} \omega^n_{u_1}\wedge \eta. 
\end{equation}
\end{lemma}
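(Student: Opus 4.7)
The plan is to combine the vanishing of $\omega_{P}^n\wedge\eta$ on the non-contact set $\Gamma$ from the previous lemma with a pointwise Hessian comparison, which is available because by Theorem \ref{rooftop101}, $P:=P(u_0,u_1)$ belongs to $\cH_\Delta$ and is in particular $C^{1,\bar 1}$. Using the disjoint partition
\[
M \;=\; \Lambda_{u_0} \,\sqcup\, (\Lambda_{u_1}\setminus\Lambda_{u_0}) \,\sqcup\, \Gamma
\]
together with the previous lemma, I get
\[
\omega_P^n\wedge\eta \;=\; \chi_{\Lambda_{u_0}}\omega_P^n\wedge\eta \;+\; \chi_{\Lambda_{u_1}\setminus\Lambda_{u_0}}\omega_P^n\wedge\eta.
\]
So the task reduces to the two ``locality'' identities
\[
\chi_{\Lambda_{u_0}}\omega_P^n\wedge\eta = \chi_{\Lambda_{u_0}}\omega_{u_0}^n\wedge\eta, \qquad \chi_{\Lambda_{u_1}\setminus\Lambda_{u_0}}\omega_P^n\wedge\eta = \chi_{\Lambda_{u_1}\setminus\Lambda_{u_0}}\omega_{u_1}^n\wedge\eta.
\]

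Next I would pass to a foliation chart $W_\alpha=(-\delta,\delta)\times V_\alpha$. Since $u_0, u_1, P$ are basic and $C^{1,\bar 1}$, their transverse complex Hessians $(u_j)_{i\bar k}$ and $P_{i\bar k}$ exist a.e.\ and are $L^\infty$. Because $\omega_u^n$ already saturates the transverse $(1,0)\wedge(0,1)$-part, only the $dx$ component of $\eta$ contributes, so $\omega_u^n\wedge\eta$ is in the chart an absolutely continuous measure whose density is a positive multiple of $\det(h_{i\bar k}+u_{i\bar k})$. Consequently the two identities above reduce to the pointwise a.e.\ claims $P_{i\bar k}=(u_0)_{i\bar k}$ on $\Lambda_{u_0}$ and $P_{i\bar k}=(u_1)_{i\bar k}$ on $\Lambda_{u_1}\setminus\Lambda_{u_0}$.

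This last step is a standard Stampacchia computation. Set $\rho:=u_0-P\ge 0$, a $C^{1,\bar 1}$ basic function with $\{\rho=0\}=\Lambda_{u_0}$. Viewed on the transverse slice $V_\alpha\subset\R^{2n}$, $\rho\in W^{2,\infty}_{\mathrm{loc}}$. Stampacchia's theorem applied to $\rho$ gives $\nabla\rho=0$ a.e.\ on $\{\rho=0\}$, and a second application to each $\partial_k\rho\in W^{1,\infty}_{\mathrm{loc}}$ yields $\partial_\ell\partial_k\rho=0$ a.e.\ on $\{\partial_k\rho=0\}$, hence (up to a null set) a.e.\ on $\Lambda_{u_0}$. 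Translating real second derivatives into complex ones gives $P_{i\bar k}=(u_0)_{i\bar k}$ a.e.\ on $\Lambda_{u_0}$. Running the identical argument with $\rho':=u_1-P\ge 0$, whose zero set $\Lambda_{u_1}$ contains $\Lambda_{u_1}\setminus\Lambda_{u_0}$, proves the second identity and completes the proof.

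The main obstacle, though modest, is the reduction of $\omega_u^n\wedge\eta$ to a genuine pointwise determinantal measure in each foliation chart; this requires the $C^{1,\bar 1}$ input from Theorem \ref{rooftop101}, without which one would only have a weak limit and could not transfer a.e.\ Hessian agreement to the measures. Once that is in place, the double Stampacchia application closes the argument immediately.
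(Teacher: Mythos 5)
Your proposal is correct and follows essentially the same route as the paper: use the previous lemma to see that $\omega_P^n\wedge\eta$ is supported on $\Lambda_{u_0}\cup\Lambda_{u_1}$, invoke Theorem \ref{rooftop101} to get $P\in W^{2,\infty}$ so that second derivatives of $P$ agree a.e.\ with those of $u_0$ on $\Lambda_{u_0}$ and with those of $u_1$ on $\Lambda_{u_1}$, and then identify the Monge--Amp\`ere measure with the pointwise determinantal density in foliation charts. Your explicit double Stampacchia step is just the detailed version of the paper's one-line assertion that the second partials coincide a.e.\ on the contact sets.
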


\begin{proof}
The previous Lemma implies that the measure $\omega^n_{P(u_0,u_1)}\wedge \eta$ is supported on the set $\Lambda_{u_0} \cup \Lambda_{u_1}$. It follows from Theorem \ref{rooftop101} that $P(u_0,u_1)$ has bounded Laplacian, hence all second partial derivatives of $P(u_0,u_1)$ are in $L^p(M)$ for all $p>1$. Then all the second order partial derivatives of $P(u_0,u_1)$ and $u_0$ coincide on $\Lambda_{u_0}$ a.e., all the second order partial derivatives of $P(u_0,u_1)$ and $u_1$ coincide on $\Lambda_{u_1}$ a.e..Recall the definition of Monge-Ampere operators on functions belong to $W^{2,n}$,we can write:
\begin{equation*}
\omega^n_{P(u_0, u_1)}\wedge \eta=\chi_{\Lambda_{u_0}} \omega^n_{u_0}\wedge \eta+\chi_{\Lambda_{u_1}\backslash \Lambda_{u_0}} \omega^n_{u_1}\wedge \eta. 
\end{equation*}
\end{proof}

\begin{lemma}\label{}
Suppose $\chi\in \cW^+_p$ and $u_0, u_1\in \cE_\chi(M, \xi, \omega^T)$. Then $P(u_0, u_1)\in \cE_\chi(M, \xi, \omega^T)$. If $u_0, u_1\leq 0$, then the following estimates hold
\begin{equation}
E_\chi(P(u_0, u_1))\leq (p+1)^n(E_\chi(u_0)+E_\chi(u_1)). 
\end{equation}
\end{lemma}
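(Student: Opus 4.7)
The plan is a three-step reduction, progressively extending the inequality from the smoothest case to the general one via approximation. The factor $(p+1)^n$ will arise only at the final step, when we approximate $\cE_\chi$ potentials by canonical cutoffs and invoke the fundamental estimate; the preceding steps produce the sharper coefficient $1$.

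First I would settle the case $u_0, u_1 \in \cH_\Delta$ with $u_0, u_1 \leq 0$. Lemma \ref{decomposition} provides the partition
\[
\omega^n_{P(u_0,u_1)} \wedge \eta \;=\; \chi_{\Lambda_{u_0}} \omega^n_{u_0} \wedge \eta + \chi_{\Lambda_{u_1} \setminus \Lambda_{u_0}} \omega^n_{u_1} \wedge \eta.
\]
Because $P(u_0,u_1) = u_0$ on $\Lambda_{u_0}$ and $P(u_0,u_1) = u_1$ on $\Lambda_{u_1}$, and $\chi \geq 0$, each of the two pieces is bounded by the corresponding full energy, yielding the sharp bound $E_\chi(P(u_0,u_1)) \leq E_\chi(u_0) + E_\chi(u_1)$.

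Next, I would upgrade to bounded $\omega^T$-psh $u_0, u_1 \leq 0$. The technical point is to preserve the sign condition through smoothing. I first reduce to the strict case $\sup u_0, \sup u_1 < 0$ by replacing $u_i$ with $u_i - \delta$, using $P(u_0 - \delta, u_1 - \delta) = P(u_0, u_1) - \delta$ and continuity of $E_\chi$ under this shift (dominated convergence, noting $\omega^n_{u_i - \delta} = \omega^n_{u_i}$). Under the strict hypothesis, Lemma \ref{BK} yields smooth $v_k \searrow u_0$, $w_k \searrow u_1$ in $\cH \subset \cH_\Delta$, and $\sup v_k \searrow \sup u_0 < 0$ forces $v_k, w_k \leq 0$ for all sufficiently large $k$. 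The $\cH_\Delta$ bound from the first step gives $E_\chi(P(v_k, w_k)) \leq E_\chi(v_k) + E_\chi(w_k)$ with the right-hand side bounded uniformly in $k$ by Proposition \ref{fe}. Since $P(v_k, w_k) \searrow P(u_0, u_1)$, Proposition \ref{boundedenergy}, applied to both sides, passes to the limit and establishes $E_\chi(P(u_0,u_1)) \leq E_\chi(u_0) + E_\chi(u_1)$ in the bounded case.

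Finally, for general $u_0, u_1 \in \cE_\chi$ with $u_0, u_1 \leq 0$, I would take canonical cutoffs $u_i^j = \max\{u_i, -j\}$, bounded and $\leq 0$, with $u_i^j \searrow u_i$. The bounded case applied to the pair $(u_0^j, u_1^j)$ yields $E_\chi(P(u_0^j, u_1^j)) \leq E_\chi(u_0^j) + E_\chi(u_1^j)$, and since $u_i \leq u_i^j \leq 0$ Proposition \ref{fe} gives $E_\chi(u_i^j) \leq (p+1)^n E_\chi(u_i)$. Combining,
\[
E_\chi(P(u_0^j, u_1^j)) \leq (p+1)^n \bigl( E_\chi(u_0) + E_\chi(u_1) \bigr)
\]
uniformly in $j$. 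Since $P(u_0^j, u_1^j) \searrow P(u_0, u_1)$, a final application of Proposition \ref{boundedenergy} shows $P(u_0,u_1) \in \cE_\chi$ and promotes the inequality to the limit. The unsigned membership statement (for $u_0, u_1 \in \cE_\chi$ not necessarily $\leq 0$) follows by shift-invariance of $\cE_\chi$, since $\chi \in \cW^+_p$ has at most polynomial growth and $\omega^n_{u-c} = \omega^n_u$.

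The main obstacle I expect is sign preservation under the smooth approximation of Lemma \ref{BK}, which I handle by the $\delta$-shift device above. A subsidiary point worth noting is that the $\cH_\Delta$ bound already has coefficient $1$; the $(p+1)^n$ factor enters only at the last stage, through Proposition \ref{fe} applied to the canonical cutoffs of $\cE_\chi$ potentials.
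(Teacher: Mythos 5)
Your proposal is correct and follows essentially the same route as the paper: approximate by decreasing smooth potentials from Lemma \ref{BK}, apply the partition formula of Lemma \ref{decomposition} to the rooftop envelopes of the approximants, control the energies via the fundamental estimate (Proposition \ref{fe}), and pass to the limit with Proposition \ref{boundedenergy}. Your three-stage organization is just a more carefully staged version of the paper's single pass (and incidentally records the sharper constant $1$ for bounded potentials), but the key lemmas and the structure of the argument are identical.
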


\begin{proof}
Without loss of generality we can assume $u_0,u_1<0$. It follows from Lemma \ref{BK} that there exist negative transverse K\"ahler potentials $u_0^k, u_1^k \in \mathcal{H}$ deceasing to $u_0,u_1$ respectively. By Theorem \ref{rooftop101}, the rooftop envelopes $P(u_0^k, u_1^k) \in \mathcal{H}_{\triangle}$ decreases to $P(u_0,u_1)$. And we have the following inequality by Lemma \ref{decomposition}:
\begin{equation*}
\omega^n_{P(u_0^k,u_1^k)}\wedge \eta \leq \chi_{\Lambda_{u_0}} \omega^n_{u_0} \wedge \eta +\chi_{\Lambda_{u_1}} \omega^n_{u_1} \wedge \eta
\end{equation*}
Then
\begin{equation*}
\begin{split}
E_{\chi}(P(u^k_0,u^k_1)) &=\int_M \chi(P(u^k_0,u^k_1)) \omega^n_{P(u^k_0,u^k_1)} \wedge \eta  \\
                                 &\leq \int_{P(u^k_0,u^k_1)=u^k_0} \chi(u_0^k)\omega^n_{u_0^k} \wedge \eta+\int_{P(u_0^k,u_1^k)=u_1^k} \chi(u_1^k) \omega^n_{u_1^k} \wedge \eta \\
                                 &\leq E_{\chi}(u_0^k)+E_{\chi}(u_1^k) \\
                                 &\leq (p+1)^n(E_{\chi}(u_0)+E_{\chi}(u_1))
\end{split}
\end{equation*}
By Proposition \ref{boundedenergy} we have $P((u_0,u_1)) \in \cE_{\chi}(M,\xi,\omega^T)$ and the required inequality holds.\end{proof}

As a corollary we know that $\cE_\chi(M, \xi, \omega^T)$ is convex,
\begin{cor}\label{convex1}
If $u_0, u_1\in \cE_\chi(M, \xi, \omega^T)$, then $tu_0+(1-t)u_1\in  \cE_\chi(M, \xi, \omega^T)$ for any $t\in [0, 1]$.
\end{cor}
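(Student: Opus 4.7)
The plan is to deduce this directly from the preceding lemma on rooftop envelopes together with the monotonicity property (Proposition \ref{oder}), without any further pluripotential-theoretic machinery.

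First I would verify that $v_t := tu_0 + (1-t)u_1$ lies in $\text{PSH}(M, \xi, \omega^T)$ for every $t \in [0,1]$. This is immediate: basic-ness and invariance under the Reeb flow are preserved by convex combinations, upper semicontinuity of each $u_i$ (combined with $L^1$ membership) transfers to $v_t$, and on each foliation chart the positivity $\omega^T + \sqrt{-1}\partial_B\bar{\partial}_B v_t = t(\omega^T + \sqrt{-1}\partial_B\bar{\partial}_B u_0) + (1-t)(\omega^T + \sqrt{-1}\partial_B\bar{\partial}_B u_1) \geq 0$ holds as a sum of positive currents.

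Next I would compare $v_t$ with the rooftop envelope $P(u_0, u_1)$. By the definition of $P$ in \eqref{envelop01}, we have $P(u_0, u_1) \leq u_0$ and $P(u_0, u_1) \leq u_1$ pointwise, hence
\begin{equation*}
P(u_0, u_1) \leq t u_0 + (1-t) u_1 = v_t \quad \text{for every } t \in [0,1].
\end{equation*}
The preceding lemma guarantees $P(u_0, u_1) \in \cE_\chi(M, \xi, \omega^T)$ whenever $u_0, u_1 \in \cE_\chi(M, \xi, \omega^T)$, with the explicit bound $E_\chi(P(u_0, u_1)) \leq (p+1)^n(E_\chi(u_0) + E_\chi(u_1))$ (after the standard normalization $u_0, u_1 \leq 0$, which we may assume up to subtracting a large constant, since adding a constant to both $u_i$ does not affect membership in $\cE_\chi$).

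Finally I would invoke the monotonicity property (Proposition \ref{oder}): since $P(u_0, u_1) \in \cE_\chi(M, \xi, \omega^T)$, $v_t \in \text{PSH}(M, \xi, \omega^T)$, and $P(u_0, u_1) \leq v_t$, we conclude $v_t \in \cE_\chi(M, \xi, \omega^T)$ as required. There is essentially no obstacle here—the only subtle point worth double-checking is the reduction to the normalized situation $u_i \leq 0$, but this is harmless because $\cE_\chi$ is invariant under translation by constants (the Monge–Ampère measure $\omega_u^n \wedge \eta$ and the integrand $\chi(|\cdot|)$-type behavior only affects energy bounds quantitatively, and the qualitative condition $\int_M \chi(u)\,\omega_u^n \wedge \eta < \infty$ is unchanged when shifting $u$ by a constant).
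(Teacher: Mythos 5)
Your proposal is correct and follows essentially the same route as the paper: the paper's proof likewise observes that $P(u_0,u_1)\leq tu_0+(1-t)u_1$, invokes the preceding lemma to place $P(u_0,u_1)$ in $\cE_\chi(M,\xi,\omega^T)$, and concludes via the monotonicity property (Proposition \ref{oder}). Your additional checks (that the convex combination is TPSH and that the normalization $u_i\leq 0$ is harmless) are fine but are left implicit in the paper.
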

\begin{proof}
By the previous Lemma we have $P(u_0,u_1) \in \cE_\chi(M, \xi, \omega^T)$. Notice that $P(u_0,u_1) \leq tu_0+(1-t)u_1$ for $t \in [0,1]$, then the monotonicity property of $\cE_\chi(M, \xi, \omega^T)$ implies that $tu_0+(1-t)u_1 \in \cE_\chi(M, \xi, \omega^T)$.\end{proof}

\begin{lemma}
Let $U \subset M$ be a Borel set with $(\omega^T)^n\wedge\eta(U)>0$. and $u \in \cE_1(M, \xi, \omega^T)$. There there exists $\varphi \in \cE_1(M,\xi, \omega^T)$ with $\varphi \leq u$ and $\omega_{\varphi}^n\wedge\eta(U) >0$.
\end{lemma}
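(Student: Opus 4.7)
The plan is to form the rooftop envelope of $u$ with a large negative constant, exploiting the elementary fact that the Monge--Amp\`ere measure of the zero potential equals $(\omega^T)^n\wedge\eta$, which charges $U$ by hypothesis. For a parameter $C>0$ to be chosen large, set $\varphi_C := P(u,-C)$. The constant $-C$ lies in $\cE_1(M,\xi,\omega^T)$ since $\omega_{-C}^n\wedge\eta=(\omega^T)^n\wedge\eta$ has full mass and $\int_M C\,(\omega^T)^n\wedge\eta=C\,\text{Vol}(M)<\infty$, so the previous lemma gives $\varphi_C\in\cE_1(M,\xi,\omega^T)$; the inequality $\varphi_C\le u$ is immediate from the definition of the rooftop envelope, and the candidate $\varphi:=\varphi_C$ will work once we show $\omega_{\varphi_C}^n\wedge\eta(U)>0$ for $C$ sufficiently large.

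The first step is to extend the decomposition of Lemma~\ref{decomposition} from $\cH_\Delta$ to $\cE_1$. I would approximate $u$ from above by smooth potentials $\tilde u_k\in\cH$ via Lemma~\ref{BK}; by Theorem~\ref{rooftop101} the bounded envelopes $\varphi_{C,k}:=P(\tilde u_k,-C)$ lie in $\cH_\Delta$, and Lemma~\ref{decomposition} gives
\begin{equation*}
\omega_{\varphi_{C,k}}^n\wedge\eta \;=\; \chi_{\Lambda_k^u}\,\omega_{\tilde u_k}^n\wedge\eta \;+\; \chi_{\Lambda_k^c\setminus\Lambda_k^u}\,(\omega^T)^n\wedge\eta,
\end{equation*}
with $\Lambda_k^u=\{\varphi_{C,k}=\tilde u_k\}$ and $\Lambda_k^c=\{\varphi_{C,k}=-C\}$. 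Monotonicity of the rooftop gives $\varphi_{C,k}\downarrow\varphi_C$, and Proposition~\ref{weakcon3} (with $h\equiv 1$) supplies the weak convergences $\omega_{\varphi_{C,k}}^n\wedge\eta\to\omega_{\varphi_C}^n\wedge\eta$ and $\omega_{\tilde u_k}^n\wedge\eta\to\omega_u^n\wedge\eta$; passing to the limit $k\to\infty$ yields the $\cE_1$ analogue
\begin{equation*}
\omega_{\varphi_C}^n\wedge\eta \;=\; \chi_{\Lambda_u}\,\omega_u^n\wedge\eta \;+\; \chi_{\Lambda_{-C}\setminus\Lambda_u}\,(\omega^T)^n\wedge\eta,
\end{equation*}
with $\Lambda_u=\{\varphi_C=u\}$ and $\Lambda_{-C}=\{\varphi_C=-C\}$.

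Given the decomposition, the conclusion is a short tail estimate. Since $\varphi_C\le-C$ everywhere, $\Lambda_u\subset\{u\le-C\}$, and Chebyshev's inequality together with $u\in\cE_1$ yields
\begin{equation*}
\omega_u^n\wedge\eta(\Lambda_u)\;\le\;\omega_u^n\wedge\eta(\{u\le-C\})\;\le\;\frac{1}{C}\int_M|u|\,\omega_u^n\wedge\eta\;\xrightarrow{C\to\infty}\;0.
\end{equation*}
By Proposition~\ref{volume}, $\omega_{\varphi_C}^n\wedge\eta(M)=\text{Vol}(M)$, so the decomposition forces $(\omega^T)^n\wedge\eta(\Lambda_{-C}\setminus\Lambda_u)\to\text{Vol}(M)$ and hence $(\omega^T)^n\wedge\eta\bigl(M\setminus(\Lambda_{-C}\setminus\Lambda_u)\bigr)\to 0$. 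For $C$ sufficiently large,
\begin{equation*}
(\omega^T)^n\wedge\eta\bigl(U\cap\Lambda_{-C}\setminus\Lambda_u\bigr)\;\ge\;(\omega^T)^n\wedge\eta(U)-(\omega^T)^n\wedge\eta\bigl(M\setminus(\Lambda_{-C}\setminus\Lambda_u)\bigr)\;>\;0,
\end{equation*}
and the decomposition gives $\omega_{\varphi_C}^n\wedge\eta(U)\ge(\omega^T)^n\wedge\eta(U\cap\Lambda_{-C}\setminus\Lambda_u)>0$.

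The hard part is the extension of the decomposition in the second step: the contact sets $\Lambda_k^u$ and $\Lambda_k^c$ need not converge pointwise under decreasing limits, so isolating the $\omega_u^n\wedge\eta$ and $(\omega^T)^n\wedge\eta$ contributions in the limit will require combining Proposition~\ref{weakcon3} with the generalized Bedford--Taylor identity (Proposition~\ref{GBTI}) and carefully exploiting the semicontinuity of the contact sets under monotone convergence of the envelopes.
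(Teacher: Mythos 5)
Your construction is the same as the paper's: since $P(u,-C)=P(u+C,0)-C$, your candidate $\varphi_C$ is exactly the potential the paper uses (with $\tau=C$), and your final Chebyshev/full-mass bookkeeping is in the right spirit. But the step you yourself flag as ``the hard part'' is a genuine gap, and it is avoidable. You want to pass the exact two-sided decomposition of Lemma \ref{decomposition} to the limit, i.e.\ to show $\chi_{\Lambda_k^u}\,\omega^n_{\tilde u_k}\wedge\eta\to\chi_{\Lambda_u}\,\omega^n_u\wedge\eta$ weakly. Proposition \ref{weakcon3} does not apply here: it handles integrands of the form $h(\phi_k-\psi_k)$ with $h$ continuous, whereas $\chi_{\Lambda_k^u}$ is the indicator of a contact set that varies with $k$ and has no semicontinuity that would let you identify the limit. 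Moreover, your concluding inequality $\omega_{\varphi_C}^n\wedge\eta(U)\ge(\omega^T)^n\wedge\eta(U\cap\Lambda_{-C}\setminus\Lambda_u)$ uses the \emph{lower}-bound half of the decomposition, which is precisely the direction that is delicate for finite-energy potentials; the known extension of the partition formula to $\cE_1$ is an upper bound.

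The paper's proof shows how to sidestep both issues. Before taking any limit, majorize the indicator: on the smooth level, $\Lambda_k^u=\{P(u_k+\tau,0)=u_k+\tau\}\subset\{u_k\le-\tau\}$, so Lemma \ref{decomposition} gives only the one-sided estimate
\begin{equation*}
\omega^n_{P(u_k+\tau,0)}\wedge\eta\ \le\ \chi_{\{u_k\le-\tau\}}\,\omega^n_{u_k}\wedge\eta+(\omega^T)^n\wedge\eta\ \le\ \frac{-u_k}{\tau}\,\omega^n_{u_k}\wedge\eta+(\omega^T)^n\wedge\eta .
\end{equation*}
The right-hand side now involves the quasi-continuous weight $-u_k/\tau$ rather than an indicator, so Proposition \ref{weakcon3} \emph{does} apply and the inequality survives the decreasing limit, yielding $\omega^n_{P(u+\tau,0)}\wedge\eta\le-\tfrac{u}{\tau}\,\omega^n_u\wedge\eta+(\omega^T)^n\wedge\eta$. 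Then one only needs to estimate the measure of the \emph{complement}: $\omega^n_{P(u+\tau,0)}\wedge\eta(M\setminus U)\le\tfrac{1}{\tau}\int_M|u|\,\omega^n_u\wedge\eta+(\omega^T)^n\wedge\eta(M\setminus U)$, and the full-mass property $\omega^n_{P(u+\tau,0)}\wedge\eta(M)=\mathrm{Vol}(M)$ converts this into the lower bound $\omega^n_{P(u+\tau,0)}\wedge\eta(U)\ge(\omega^T)^n\wedge\eta(U)-\tfrac{1}{\tau}\int_M|u|\,\omega^n_u\wedge\eta>0$ for $\tau$ large. No lower bound on the measure over the contact set, and no exact decomposition in $\cE_1$, is ever needed. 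If you rework your second and third steps along these lines your argument closes.
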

\begin{proof}
Without loss of generality we can assume that $u<0$. Then we can choose a sequence $u_k \in \cH$ decreasing to $u$ with $u_k <0$. For a constant $\tau>0$, we have $\{P(u_k+\tau, 0)=u_k+\tau\} \subset \{ u_k \leq -\tau\}$. It follows from Proposition \ref{decomposition} that
\[
\omega_{P(u_k+\tau,0)}^n\wedge\eta \leq \chi_{\{u_k \leq -\tau\}} \omega_{u_k}^n\wedge\eta+ (\omega^T)^n\wedge\eta \leq -\frac{u_k}{\tau}\omega_{u_k}^n\wedge\eta+(\omega^T)^n\wedge\eta
\]
The sequence $P(u_k+\tau, 0) \in \cE_1(M,\xi,\omega^T)$ decreases to $P(u+\tau, 0) \in \cE_1(M, \xi, \omega^T)$. It follows from  Proposition \ref{weakcon3} that
\[
\omega_{P(u+\tau,0)}^n\wedge\eta \leq -\frac{u}{\tau}\omega_u^n\wedge\eta+(\omega^T)^n\wedge\eta
\]
Hence we have
\[
\omega_{P(u+\tau,0)}^n\wedge\eta(M-U) \leq \frac{1}{\tau} \int_{M-U}|u|\omega_u^n\wedge\eta+(\omega^T)^n\wedge\eta(M-U) \leq \frac{1}{\tau}\int_M|u|\omega_u^n\wedge\eta+(\omega^T)^n\wedge\eta(M-U)
\]
It follows from $\omega_{P(u+\tau,0)}^n\wedge\eta(M)=(\omega^T)^n\wedge\eta(M)$ that
\[
\omega_{P(u+\tau,0)}^n\wedge\eta(U) \geq (\omega^T)^n\wedge\eta(U)- \frac{1}{\tau}\int_M|u|\omega_u^n\wedge\eta
\]
and $\omega_{P(u+\tau,0)}^n\wedge\eta(U)>0$ for $\tau$ big enough. Then $\varphi=P(u+\tau, 0) -\tau$ is the potential required.
\end{proof}

\begin{lemma}\label{domination2}(The domination principle)
If $u, v\in \cE_1(M,\xi,\omega^T)$ and $u \leq v$ almost everywhere with respect to the measure $\omega_v^n\wedge\eta$. Then $u\leq v$.
\end{lemma}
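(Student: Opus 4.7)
The plan is to argue by contradiction: assume the Borel set $U := \{v < u\}$ has positive $(\omega^T)^n \wedge \eta$-measure. After subtracting a constant we may take $u, v \leq 0$. The first step is to show $\omega_u^n \wedge \eta(U) = 0$: for any $\epsilon > 0$, the comparison principle \eqref{cp01} applied to $u$ and $v + \epsilon$ (noting $\omega_{v+\epsilon} = \omega_v$) yields
\[
\int_{\{v + \epsilon < u\}} \omega_u^n \wedge \eta \;\leq\; \int_{\{v + \epsilon < u\}} \omega_v^n \wedge \eta \;\leq\; \omega_v^n \wedge \eta(U) \;=\; 0,
\]
the last equality being the hypothesis. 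Letting $\epsilon \downarrow 0$ and using $\{v + \epsilon < u\} \uparrow U$ gives $\omega_u^n \wedge \eta(U) = 0$.

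Set $w := \max(u, v)$, which belongs to $\cE_1$ by the monotonicity of Proposition \ref{oder} with the Young weight $\chi(t)=|t|$. The Generalized Bedford-Taylor identity (Proposition \ref{GBTI}) gives
\[
\chi_{\{u > v\}} \omega_w^n \wedge \eta = \chi_{\{u > v\}} \omega_u^n \wedge \eta \quad\text{and}\quad \chi_{\{v > u\}} \omega_w^n \wedge \eta = \chi_{\{v > u\}} \omega_v^n \wedge \eta,
\]
and on the coincidence set $\{u = v\}$, where $w = u = v$, the three Monge-Amp\`ere measures agree a.e.\ (a standard consequence of bounded approximation). Combining with the previous step and the hypothesis, $\omega_w^n \wedge \eta = \omega_v^n \wedge \eta$ as Borel measures on $M$. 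I then invoke the Monge-Amp\`ere uniqueness principle in $\cE_1(M, \xi, \omega^T)$: two such potentials with equal Monge-Amp\`ere measure differ by a constant. Applied to $w$ and $v$ this yields $w = v + c$ for some $c \geq 0$ (since $w \geq v$). If $c > 0$, then $\max(u, v) > v$ everywhere forces $u = v + c$ on all of $M$, so $U = M$ and $\omega_v^n \wedge \eta(U) = \text{Vol}(M) > 0$, contradicting the hypothesis. Hence $c = 0$, $w = v$, and $u \leq v$.

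The main obstacle is the uniqueness step, the Sasaki counterpart of Dinew's theorem. Its proof starts from
\[
0 = \int_M (w - v)(\omega_w^n - \omega_v^n)\wedge\eta = \int_M (w - v)\, dd^c_B(w - v) \wedge T \wedge \eta = -\int_M d_B(w - v) \wedge d^c_B(w - v) \wedge T \wedge \eta,
\]
with $T := \sum_{k=0}^{n-1}\omega_w^k \wedge \omega_v^{n-1-k}$ a positive basic current; non-negativity of the integrand forces $w - v$ to be constant. Justifying the integration by parts for unbounded $\cE_1$-potentials is the technical heart, which I would handle by passing to the canonical cutoffs $w^l, v^l$ and using Proposition \ref{weakcon3} together with the fundamental estimate (Proposition \ref{fe}) to control the limiting quadratic form.
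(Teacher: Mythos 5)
Your route is genuinely different from the paper's. The paper first proves an auxiliary lemma (for any Borel set $U$ of positive volume and any $u\in \cE_1(M,\xi,\omega^T)$ there is $\varphi\in\cE_1(M,\xi,\omega^T)$ with $\varphi\leq u$ whose Monge--Amp\`ere measure charges $U$, built from the rooftop envelope $P(u+\tau,0)$), and then applies the comparison principle to the convex combinations $t\varphi+(1-t)u$, letting $t\to 0$ to contradict $\omega_\varphi^n\wedge\eta(\{v<u\})>0$. You instead reduce to the uniqueness theorem for Monge--Amp\`ere measures in $\cE_1$; that theorem is proved in the paper's appendix and its proof does not use the domination principle, so you may simply cite it rather than re-derive it. Your first step (comparison principle applied to $u$ and $v+\epsilon$, giving $\omega_u^n\wedge\eta(\{v<u\})=0$) and your endgame ($w=v+c$ with $c\geq0$, and $c>0$ forcing $\{v<u\}=M$, contradicting the hypothesis) are both correct.

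The gap is the assertion that on the coincidence set $\{u=v\}$ ``the three Monge--Amp\`ere measures agree a.e.'' This is not a standard fact and is false in general: already in one variable, for $u=\max(\log|z|,-1)$ and $v=\log|z|$ the set $\{u=v\}$ contains the circle $\{|z|=e^{-1}\}$, which carries all of the mass of $\Delta u$ and none of the mass of $\Delta v$. Propositions \ref{measure1} and \ref{GBTI} only control $\omega_w^n\wedge\eta$ on the strict-inequality sets, and what your bookkeeping actually yields is equality of the total masses $\omega_w^n\wedge\eta(\{u=v\})=\omega_v^n\wedge\eta(\{u=v\})$, not equality as measures there. The step can be repaired: one needs the Demailly-type inequality $\omega_{\max(u,v)}^n\wedge\eta\geq \chi_{\{v\geq u\}}\,\omega_v^n\wedge\eta+\chi_{\{v<u\}}\,\omega_u^n\wedge\eta$, which in the Sasaki setting follows from the local K\"ahler statement by the same foliation-chart localization used for Proposition \ref{measure1}, and passes to the class $\cE$ through canonical cutoffs as in Proposition \ref{GBTI}. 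Combined with $\omega_v^n\wedge\eta(\{u>v\})=0$ this gives $\omega_w^n\wedge\eta\geq\omega_v^n\wedge\eta$, and since both measures have total mass $\text{Vol}(M)$ they coincide; the rest of your argument then goes through. Note that this inequality is nowhere stated in the paper, so you must supply it explicitly.
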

\begin{proof} 
We only have to prove $u \leq v$ almost everywhere with respect to $(\omega^T)^n\wedge\eta$ for $u, v<0$.

Suppose that $(\omega^T)^n\wedge\eta(\{u>v\}) >0$. The previous Lemma implies that there exists  $\varphi \in \cE_1(M, \xi, \omega^T)$ with $\varphi \leq u$ and $\omega_{\varphi}^n\wedge\eta(\{u>v\}) >0$. It follows from Corollary \ref{convex1} that $t \varphi +(1-t) u \in \cE_1(M, \xi, \omega^T)$ for $t \in [0,1]$. Using the fact $\omega_{t\varphi+(1-t)u}^n\wedge\eta \geq t^n \omega_{\varphi}^n\wedge\eta$ , the Comparison principle $(\ref{cp01})$ and $\{v < t\varphi+(1-t)u\} \subset \{v < u\}$ we have
\begin{equation*}
\begin{split}
t^n\int_{\{v < t\varphi +(1-t)u\}} \omega_{\varphi}^n\wedge\eta &\leq \int_{\{v<t\varphi+(1-t)u\}} \omega_{t\varphi+(1-t)u}^n\wedge\eta \\
                                                                                                  &\leq \int_{\{ v < t\varphi+(1-t)u\}} \omega_v^n\wedge\eta \\
                                                                                                  & \leq \int_{\{v < u\}} \omega_v^n\wedge\eta \\
                                                                                                  &=0
\end{split}
\end{equation*}
and $\omega_{\varphi}^n\wedge\eta(\{v < t\varphi+(1-t)u\})=0$ for $t \in (0,1]$. Then
\[
\omega_{\varphi}^n\wedge\eta(\{v<u\})=\lim_{k\rightarrow\infty}\omega_{\varphi}^n\wedge\eta(\{v<\frac{1}{k}\varphi+(1-\frac{1}{k})u\})=0
\]
This leads to a contradiction.
\end{proof}

\subsection{The space of transverse K\"ahler potentials and $(\cH, d_2)$}
The Riemannian structure on $\cH$ has been studied extensively, notably by Guan-Zhang \cite{GZ1}. Guan-Zhang proved that for any two points $\phi_1, \phi_2\in \cH$, there exists a unique $C^{1, \bar 1}_B$ geodesic which realizes the distance of $(\cH, d_2)$ and $(\cH, d_2)$ is a metric space.  The Riemannian structure would play a very central role, as in Chen's result \cite{chen01} in K\"ahler setting. 

We shall recall these results. 
For $\psi_1,\psi_2 \in T_{\phi} \mathcal{H}=C_B^{\infty}(M)$, define a $L^2$ inner product on this tangent space
\begin{equation*}
(\psi_1,\psi_2)_{\phi}=\int_{M} \psi_1\psi_2 d\mu_{\phi}
\end{equation*}
and the length $||\psi||_{\phi}$ of a vector $\psi \in T_{\phi} \cH$ is 
\begin{equation*}
||\psi||_{2, \phi}=(\int_{M} \psi_1\psi_2 d\mu_{\phi})^{\frac{1}{2}},
\end{equation*}
where we use the notation 
\begin{equation}
d\mu_\phi=\omega_\phi^n\wedge \eta_\phi=\omega^n_\phi \wedge \eta. 
\end{equation}
For a smooth path $\phi_t \in \cH$, the length of the path is defined to be
\begin{equation*}
l(\phi_t)=\int_0^1 ||\dot{\phi}_t||_{2, \phi_t}dt
\end{equation*}

This is a direct adaption of Mabuchi's metric \cite{M1} on the space of K\"ahler potentials to Sasaki setting. 
The Levi-Civita connection $\nabla$ is torsion free and satisfies
\begin{equation*}
\frac{d}{dt}(u_t, v_t)_{\phi_t}=(\nabla_{\dot{\phi}_t} u_t, v_t)_{\phi_t}+(u_t,\nabla_{\dot{\phi}_t} v_t)_{\phi_t}
\end{equation*}
for any smooth vector fields $u_t, v_t$ along the path $\phi_t$ in $\mathcal{H}_{\omega^T}$.
Let $u_t\in C^\infty_B(M)$ be smooth vector fields along a smooth curve $\phi_t$ in $\mathcal{H}$,then
\begin{equation}
\nabla_{\dot{\phi}_t} u_t=\dot{u}_t-\frac{1}{4}<\nabla \dot{\phi}_t,\nabla u_t>_{{\phi_t}}
\end{equation}
The geodesic equation can be written as
\begin{equation}\label{cma01}
\nabla_{\dot \phi_t}(\dot\phi_t)=\ddot{\phi}_t-\frac{1}{4}|\nabla \dot{\phi}_t|^2_{{\phi_t}}=0
\end{equation}
Given $\phi_0, \phi_1\in \cH$, to solve the geodesic equation, Guan-Zhang \cite{GZ1} introduced the following perturbation equation, for a path $\phi_t: M\times [0, 1]\rightarrow \R$,
\begin{equation}\label{cma02}
\left\{\begin{matrix}
\left(\ddot{\phi}_t-\frac{1}{4}|\nabla \dot{\phi}_t|^2_{\omega_{\phi_t}}\right)\omega_\phi^n \wedge \eta=\epsilon (\omega^T)^n\wedge \eta, M\times (0, 1)\\
\phi|_{t=0}=\phi_0\\
\phi|_{t=1}=\phi_1
\end{matrix}
\right.
\end{equation}
Define a function $\psi$ on $M\times [1, 3/2]$, as a subset of the cone $X$,
 \[
 \psi(\cdot, r)=\phi_{t}(\cdot)+4\log r,\;\;t=2r-2
 \] 
Set a $(1, 1)$-form by,
\[
\Omega_\psi=\omega_X+\frac{r^2}{2}\sqrt{-1}\left(\p\bar\p \psi-\frac{\p \psi}{\p r}\p\bar\p r\right)
\]
 Guan-Zhang wrote  an equivalent form of \eqref{cma02} in terms of a complex Monge-Ampere equation on $\psi$ of  the following form (with $f=r^2, \epsilon\in(0, 1] $), 
 \begin{equation}\label{cma03}
 \begin{split}
 &(\Omega_\psi)^{n+1}=\epsilon f (\omega_X)^{n+1}, M\times \left(1, \frac{3}{2}\right)\\
 &\psi|_{M\times \{r=1\}}=\phi_0, \psi|_{M\times \{r=3/2\}}=\psi_1+4\log(3/2)
 \end{split}
 \end{equation}
 
 Guan-Zhang proved the following results reagrding \eqref{cma03},
 \begin{thm}[Guan-Zhang]Fix a Sasaki structure $(M, \xi, \eta, g)$ on a compact manifold $M$. For any positive basic function $f$ and any two points $\phi_0, \phi_1\in \cH$, there exists a unique smooth solution of $\psi$ to \eqref{cma03}, satisfying the following estimates: $\psi$ is basic and there exists a constant $C>0$, depending only on $\|f^{\frac{1}{n}}\|_{C^{2}(M\times [1, \frac{3}{2}])}, \|\phi_0\|_{C^{2, 1}}, \|\phi_1\|_{C^{2, 1}}$ such that
 \begin{equation}\label{cma05}
\|\psi\|_{C^{2}_w}:= \|\psi\|_{C^1}+\sup |\Delta \psi| \leq C. 
 \end{equation}
 Denote the corresponding solution of \eqref{cma02} by $\phi_t^\epsilon$, then $\phi^\epsilon_t$ is called a $\epsilon$-geodesic (smooth) connecting $\phi_0, \phi_1$ satisfying 
 \begin{equation}\label{cma04}
 \|\phi^\epsilon_t\|_{C^1}+\sup(\ddot\phi^\epsilon+|\nabla \dot\phi^\epsilon_t|_g+\Delta_g \phi^\epsilon_t)\leq C
 \end{equation}
 When $\epsilon \rightarrow 0$, there exists a unique (weak $C^2_w$) limit $\phi_t$ of $\phi^\epsilon_t: M\times [0, 1]\rightarrow \R$ connecting $\phi_0, \phi_1$ such that $\Omega_{\phi^\epsilon+4log r}$ is positive. The later is equivalent to \[\omega_{\phi^\epsilon_t}>0, \ddot{\phi^\epsilon_t}-\frac{1}{4}|\nabla \dot{\phi^\epsilon_t}|^2_{\omega_{\phi^\epsilon_t}}>0.\]
 As a consequence, $(\cH, d_2)$ is a metric space. 
 \end{thm}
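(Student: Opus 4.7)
The plan is to treat \eqref{cma03} as a Dirichlet problem for a non-degenerate complex Monge-Amp\`ere equation on the annular region $M \times [1, 3/2] \subset X$ and solve it by the continuity method in $\epsilon$. For $\epsilon > 0$ the equation is uniformly elliptic on the admissible class $\{\Omega_\psi > 0\}$, so the implicit function theorem yields openness, and the task reduces to a priori estimates up to the boundary $M \times \{1, 3/2\}$ that are uniform in $\epsilon \in (0, 1]$. Uniqueness of the perturbed problem together with the Reeb $S^1$-symmetry of all the data (acting in the $x$-direction and trivially on $r$) ensures that $\psi$ is automatically basic throughout the deformation.

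The estimates come in four stages, each adapted from the K\"ahler Dirichlet theory of Chen and B\l ocki. First, a uniform $C^0$ bound is obtained by comparing $\psi$ with the linear interpolation $(1-t)\phi_0 + t\phi_1 + 4\log r$ above and with a subsolution barrier below. Second, boundary gradient estimates follow by building barriers from $\phi_0, \phi_1$ plus a multiple of $-(r-1)(3/2 - r)$, with tangential derivatives at $r = 1, 3/2$ already prescribed by the Dirichlet data. Third, the interior gradient estimate comes from the maximum principle applied to the auxiliary function $|\nabla \psi|^2 e^{-A\psi}$ for $A$ large. The main obstacle is the fourth step, the uniform bound on $\sup |\Delta \psi|$: one adapts Yau's $C^2$-technique to the boundary setting on the cone, computing $\Delta_\Omega \log \text{tr}_{\omega_X} \Omega_\psi$ and absorbing curvature contributions with a term $-A\psi$, while the boundary second derivatives are controlled by a concave barrier together with the already-established first-order information. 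The cone geometry, which forces one to simultaneously control transverse and radial second derivatives and to handle two boundary components, is what makes this step delicate; the flat nature of $\omega_X$ in the relevant directions and the smoothness of the boundary data make it tractable.

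With these estimates in hand, a diagonal argument extracts a weak limit $\psi \in C^{1, \alpha}$ (for every $\alpha < 1$) with $\Delta \psi \in L^{\infty}$, solving the degenerate equation $(\Omega_\psi)^{n+1} = 0$ in the pluripotential sense. Translating via $\phi_t(\cdot) = \psi(\cdot, (t+2)/2) - 4 \log r$ produces the generalized geodesic with the estimates \eqref{cma04}. Uniqueness of the limit, and hence convergence of the full family $\phi_t^\epsilon$, follows from the comparison principle applied to admissible $C^{1, \bar 1}_B$ solutions of the degenerate equation. For the final assertion that $(\cH, d_2)$ is a metric space, the nontrivial property is positivity, $d_2(\phi_0, \phi_1) > 0$ whenever $\phi_0 \neq \phi_1$. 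Following Chen's argument in the K\"ahler setting, one shows that $\|\dot\phi_t^\epsilon\|_{2, \phi_t^\epsilon}$ is conserved along $\epsilon$-geodesics up to an error $O(\epsilon)$ and then passes to the limit using the $C^{1, \bar 1}_B$ convergence to obtain a positive lower bound on $d_2(\phi_0, \phi_1)$ in terms of an $L^2(d\mu_{\phi_0})$-type quantity separating the endpoints.
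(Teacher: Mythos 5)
This theorem is quoted in the paper from Guan--Zhang \cite{GZ1} and is not reproved there, so there is no internal proof to compare against; what you have written is a reconstruction of Guan--Zhang's original argument, and at the level of a sketch it follows the same strategy they (and Chen, B\l ocki, and Guan before them in the K\"ahler case) actually use: continuity method in $\epsilon$, basicness of $\psi$ from uniqueness plus invariance of all data under the Reeb flow, and the four-stage ladder of a priori estimates with the boundary second-order estimate on the annulus $M\times[1,3/2]\subset X$ as the hard step. One small caution: in the irregular case the Reeb flow is an $\R$-action, not an $S^1$-action, so the symmetry argument for basicness should be phrased in terms of invariance under the flow of $\xi$ (or under the closure torus $T^k$), not literally an $S^1$-symmetry; the conclusion is unaffected.

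The one place where your sketch omits the decisive ingredient is the final claim that $(\cH,d_2)$ is a metric space. Conservation of $\|\dot\phi_t^\epsilon\|_{2,\phi_t^\epsilon}$ up to $O(\epsilon)$ (which follows from the computation $\frac{d}{dt}\int_M|\dot\phi_t^\epsilon|^2\,\omega_{\phi_t^\epsilon}^n\wedge\eta=2\epsilon\int_M\dot\phi_t^\epsilon\,(\omega^T)^n\wedge\eta$) only controls the length of the $\epsilon$-geodesic itself; since $d_2$ is an infimum over \emph{all} smooth curves joining $\phi_0$ to $\phi_1$, positivity of $d_2(\phi_0,\phi_1)$ requires comparing the length of an arbitrary competitor curve with that of the $\epsilon$-geodesic. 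This is exactly Guan--Zhang's endpoint-variation inequality (Lemma 14 of \cite{GZ1}, recorded in this paper as Lemma \ref{triangle01}): for a smooth curve $\psi_s$ and the family of $\epsilon$-geodesics $u^\epsilon_t(\cdot,s)$ from a fixed $\phi$ to $\psi_s$, one has $l(u^\epsilon_t(\cdot,0))\le l(\psi)+l(u^\epsilon_t(\cdot,1))+\epsilon C$. Applying this with $\psi_s$ an arbitrary curve from $\phi_1$ to $\phi_0$ and shrinking the second geodesic to a point, then letting $\epsilon\to 0$, is what converts the lower bound on the $\epsilon$-geodesic's length into a lower bound on $d_2(\phi_0,\phi_1)$. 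Your phrase ``following Chen's argument'' presumably intends this, but as written the two steps you list do not by themselves yield nondegeneracy of $d_2$; the second-variation/triangle lemma must be stated and proved (it requires differentiating the length functional in the endpoint parameter $s$ and using the $\epsilon$-geodesic equation to identify the derivative as a boundary term plus an $O(\epsilon)$ error).
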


\begin{rmk}The constant $1/4$ appears in the geodesic equation 
\[
\ddot{\phi}_t-\frac{1}{4}|\nabla \dot{\phi}_t|^2_{\omega_{\phi_t}}=0
\]
This constant is insignificant. In K\"ahler setting, some authors write the constant as $1/2$ and some write as $1$, depending on the gradient $\nabla$ is interpreted as  real or complex; they differ by a constant $2$. The constant $1/4$ appears in Sasaki setting in \cite{GZ1} since the authors use the real gradient and use the space of Sasaki potentials (transverse K\"ahler potentials) defined as
\[
\{\phi: d\eta+\sqrt{-1}\p_B\bar\p_B \phi>0. \}
\]
In the following, we shall write the geodesic equation as
\[
\ddot{\phi}_t-|\nabla \dot{\phi}_t|^2_{\omega_{\phi_t}}=0,
\]
where we use complex gradient, and our choice space of transverse K\"ahler potentials is as
 \[
 \cH=\{\phi\in C^\infty_B(M): \omega^T+\sqrt{-1}\p_B\bar\p_B\phi>0\}. 
 \]
\end{rmk}

To prove $(\cH, d_2)$ is a metric space, Guan-Zhang \cite{GZ1}[Lemma 14, proof of Theorem 2] proved the following triangle inequality,

\begin{lemma}[Guan-Zhang]\label{triangle01}Let $\psi(s): [0, 1]\rightarrow \cH$ be a smooth curve, $\phi\in \cH\backslash \psi([0, 1])$. Fix $\epsilon\in (0, 1]$. Let $u^{\epsilon}\in C^\infty_B([0, 1]\times [0, 1]\times M)$ be the function such that $u^\epsilon_t(\cdot, s)$ is the $\epsilon$-geodesic connecting $\phi$ and $\psi_s$, for $t\in [0, 1]$. Then the following estimate holds,
\begin{equation}
l(u^{\epsilon}_t(\cdot, 0))\leq l(\psi)+l(u^\epsilon_t(\cdot, 1))+\epsilon C,
\end{equation}
where $C=C(\phi, \psi, g)$ is a uniform constant, independent of $\epsilon$. 
\end{lemma}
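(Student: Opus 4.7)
The plan is to follow the standard variational argument for $\epsilon$-geodesics, adapted to the Sasaki setting from Chen's proof of the triangle inequality in the K\"ahler case. Define the energy of the $\epsilon$-geodesic $u^\epsilon(\cdot, s, \cdot)$ from $\phi$ to $\psi_s$ by
\[
E(s) := \frac{1}{2}\int_0^1 \int_M |\partial_t u^\epsilon|^2 \,\omega_{u^\epsilon}^n \wedge \eta \, dt,
\]
and set $L(s) := l(u^\epsilon_t(\cdot, s))$. Since each $\epsilon$-geodesic has nearly constant speed (verified below), $L(s)^2$ and $2E(s)$ differ by $O(\epsilon)$, and the desired inequality reduces to a Gr\"onwall-type estimate for $\sqrt{2E(s)}$.

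The first step is to differentiate $E(s)$ in $s$. Using that the Levi-Civita connection on $\cH$ is torsion free, one can swap $\nabla_{\partial_s}\partial_t = \nabla_{\partial_t}\partial_s$; integration by parts in $t$ then gives
\[
E'(s) = (\partial_t u^\epsilon,\, \partial_s u^\epsilon)_{u^\epsilon}\big|_{t=0}^{t=1} - \int_0^1 (\nabla_{\partial_t}\partial_t u^\epsilon,\, \partial_s u^\epsilon)_{u^\epsilon}\, dt.
\]
The boundary term at $t=0$ vanishes since $u^\epsilon(0, s, \cdot) = \phi$ is independent of $s$; at $t=1$ it equals $(\partial_t u^\epsilon|_{t=1},\, \dot\psi_s)_{\psi_s}$, which by Cauchy--Schwarz is bounded by $\|\partial_t u^\epsilon|_{t=1}\|_{2,\psi_s}\cdot \|\dot\psi_s\|_{2,\psi_s}$. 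The $\epsilon$-geodesic equation \eqref{cma02} gives $\nabla_{\partial_t}\partial_t u^\epsilon = \epsilon (\omega^T)^n\wedge\eta/(\omega^n_{u^\epsilon}\wedge\eta)$; together with Guan--Zhang's a priori estimate \eqref{cma04}, which bounds $u^\epsilon$ in $C^1$ uniformly in $\epsilon$, the interior term is $O(\epsilon)$ with constant depending on $\phi$, $\psi$, and $g$.

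Next I would verify the approximate constant speed property. Differentiating $\|\partial_t u^\epsilon\|^2_{2, u^\epsilon}$ in $t$ yields $2\epsilon \int_M \partial_t u^\epsilon\,(\omega^T)^n\wedge\eta$, which is $O(\epsilon)$ by the same $C^1$ bound. Hence $\|\partial_t u^\epsilon\|^2_{2, u^\epsilon}$ is constant in $t$ up to $O(\epsilon)$, giving $\|\partial_t u^\epsilon|_{t=1}\|_{2,\psi_s}^2 \leq 2E(s) + O(\epsilon)$ and $L(s)^2 = 2E(s) + O(\epsilon)$. Combining with the first variation bound, and observing that $\phi \notin \psi([0,1])$ forces $\sqrt{2E(s)}$ to stay bounded below by a positive constant depending only on $\phi, \psi, g$, one obtains
\[
\Big|\frac{d}{ds}\sqrt{2E(s)}\Big| \leq \|\dot\psi_s\|_{2,\psi_s} + C\epsilon.
\]
Integrating from $s=0$ to $s=1$ and translating back to $L(0), L(1)$ via $L(s)^2 = 2E(s) + O(\epsilon)$ yields the claim.

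The main obstacle is the careful bookkeeping of $\epsilon$-error terms, in particular ensuring that all constants appearing from Cauchy--Schwarz, from the ratio $(\omega^T)^n\wedge\eta/(\omega_{u^\epsilon}^n\wedge\eta)$, and from dividing by $\sqrt{2E(s)}$, remain uniform in $\epsilon$ and can be absorbed into a single constant $C = C(\phi, \psi, g)$. The uniform $C^1$ and Laplacian bounds \eqref{cma04} on the $\epsilon$-geodesic provided by Guan--Zhang are essential here, as is the hypothesis $\phi\notin\psi([0,1])$, which gives a uniform positive lower bound on $L(s)$ and hence on $\sqrt{2E(s)}$ throughout the interval.
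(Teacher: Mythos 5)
Your argument is correct and is essentially the variational proof that Guan--Zhang give (the paper itself only cites \cite{GZ1} for this lemma, but proves the Orlicz generalization by the same method in Proposition \ref{triangleinequality}): differentiate in $s$, use torsion-freeness to swap $\nabla_{\partial_s}\partial_t u=\nabla_{\partial_t}\partial_s u$, integrate by parts in $t$ so that the $\epsilon$-geodesic equation turns the interior term into an $O(\epsilon)$ quantity, and control the $t=1$ boundary term by $\|\dot\psi_s\|_{2,\psi_s}$ via Cauchy--Schwarz; the only cosmetic difference is that you route the computation through the energy $E(s)$ and convert back to the length at the end, whereas the paper's version differentiates the length directly. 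Two details should be tightened. First, the uniform bound on $\partial_s u^\epsilon$ that makes the interior term $O(\epsilon)$ comes from the comparison principle \eqref{comparison01} (equivalently \cite{GZ1}[Lemma 14]), not from \eqref{cma04}, which only controls derivatives in $t$ and along $M$ for fixed $s$. Second, the positive lower bound on $\sqrt{2E(s)}$ by which you divide is obtained (via Lemma \ref{kai length estimate} or \eqref{dis03}) only for $\epsilon\leq\epsilon_0(\phi,\psi)$; since the lemma is stated for all $\epsilon\in(0,1]$, one should add the remark that for $\epsilon\geq\epsilon_0$ the inequality holds trivially after enlarging $C$, because all three lengths are bounded by a constant depending only on $\phi$, $\psi$, $g$ thanks to the uniform $C^1$ estimates.
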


There are several estimates which are not explicitly stated or not proved in \cite{GZ1}. We include these estimates below since we shall need them below. 
Regarding \eqref{cma02}, first we have the following comparison principle, 
\begin{lemma}
Suppose we have two solutions $\varphi, \phi$ with boundary datum $\varphi_0, \varphi_1$ and $\phi_0, \phi_1$ respectively, 
\begin{equation} \left(\ddot{\phi}_t-|\nabla \dot{\phi}_t|^2_{\omega_{\phi_t}}\right)\omega_\phi^n \wedge \eta=\epsilon (\omega^T)^n\wedge \eta=\left(\ddot{\varphi}_t-|\nabla \dot{\varphi}_t|^2_{\omega_{\varphi_t}}\right)\omega_\varphi^n \wedge \eta,
\end{equation}
then we have the following
\begin{equation}\label{comparison01}
\max |\phi-\varphi|\leq \max |\phi_0-\varphi_0|+\max |\phi_1-\varphi_1|. 
\end{equation}
\end{lemma}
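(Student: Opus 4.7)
The plan is to reduce \eqref{comparison01} to the classical weak maximum principle for a linear elliptic operator on the K\"ahler cone, via the Guan–Zhang substitution from \eqref{cma03}. Set
\[
\psi_\phi(x,r)=\phi_{2r-2}(x)+4\log r, \qquad \psi_\varphi(x,r)=\varphi_{2r-2}(x)+4\log r
\]
on the annular region $X':=M\times(1,3/2)\subset X$. By the equivalence recorded in \eqref{cma03}, both are smooth solutions of the complex Monge--Amp\`ere equation $(\Omega_\psi)^{n+1}=\epsilon r^2(\omega_X)^{n+1}$ with $\Omega_\psi>0$. The $4\log r$ contributions cancel in the difference $w:=\psi_\phi-\psi_\varphi$, so $w(x,r)=(\phi-\varphi)(2r-2,x)$ and it suffices to bound $\max_{\overline{X'}}|w|$ by its boundary supremum.

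The key input is concavity. Since the map $\psi\mapsto\Omega_\psi$ is affine and $\log\det$ is concave on the cone of positive Hermitian forms, the functional $F(\psi):=\log\det(\Omega_\psi/\omega_X)$ is concave in $\psi$. Applying the concavity inequality $F(\psi_\phi)-F(\psi_\varphi)\leq dF_{\psi_\varphi}(w)$ together with $F(\psi_\phi)=F(\psi_\varphi)=\log(\epsilon r^2)$ yields the pointwise differential inequality
\[
A^{i\bar j} w_{i\bar j}\;-\;c\,\partial_r w\;\geq\;0 \quad \text{on } X',
\]
where $A^{i\bar j}$ is the (uniformly positive-definite Hermitian) tensor obtained from $\tfrac{r^2}{2}\,\Omega_{\psi_\varphi}^{-1}$---strict positivity following from the Guan--Zhang estimate \eqref{cma04}---and $c$ is a bounded lower-order coefficient arising from the correction term $-\tfrac{r^2}{2}\partial_r\psi\,\partial\bar\partial r$ in the definition of $\Omega_\psi$.

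The principal part $A^{i\bar j}\partial_i\partial_{\bar j}$ is a genuine real uniformly elliptic operator on the compact $(2n+2)$-real-dimensional domain $\overline{X'}=M\times[1,3/2]$, whose boundary is the disjoint union $M\times\{1\}\sqcup M\times\{3/2\}$. Hence the classical weak maximum principle for linear elliptic operators with bounded coefficients gives $\max_{\overline{X'}}w\leq \max_{\partial X'}w$; running the same argument with $\psi_\phi$ and $\psi_\varphi$ swapped gives the same bound for $-w$, so $\max_{\overline{X'}}|w|\leq\max_{\partial X'}|w|$. Finally $w|_{r=1}=\phi_0-\varphi_0$ and $w|_{r=3/2}=\phi_1-\varphi_1$, so $\max_{\partial X'}|w|\leq\max|\phi_0-\varphi_0|+\max|\phi_1-\varphi_1|$, which proves \eqref{comparison01}. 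The only mildly delicate point is verifying that the first-order term $c\,\partial_r w$ does not interfere with the maximum principle, which is standard once strict ellipticity is in hand.
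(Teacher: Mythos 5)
Your argument is correct and rests on the same two ingredients as the paper's own proof: concavity of the log-determinant of the Monge--Amp\`ere operator (which is affine in the potential) and a maximum principle applied to the difference of the two solutions, so I would classify it as essentially the same approach. The only real difference is presentational: you pass to the cone formulation \eqref{cma03}, linearize the concavity inequality globally to get $A^{i\bar j}w_{i\bar j}-c\,\partial_r w\geq 0$, and then invoke the classical weak maximum principle (whose standard proof hides the same exponential/quadratic perturbation), whereas the paper stays with \eqref{cma02} on $M\times[0,1]$, perturbs by $at(t-1)$ to force an interior maximum, and evaluates the concavity inequality pointwise there.
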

\begin{proof}This is a standard comparison principle. We sketch the proof for completeness. Denote the operator
\[
F(D^2\phi)=\log \det \begin{pmatrix} \ddot \phi & \nabla \dot \phi\\
(\nabla \dot \phi)^t& g_{i\bar j}^T+\phi_{i\bar j}
\end{pmatrix}-\log \det(g^T_{i\bar j})=\log \left(\ddot{\phi}_t-|\nabla \dot{\phi}_t|^2_{\omega_{\phi_t}}\right)+\log\frac{ \det(g_{i\bar j}^T+\phi_{i\bar j})}{\det(g_{i\bar j}^T)}
\]
The $\epsilon$-geodesic equation can be written as $F(D^2\phi)=\epsilon$. Now suppose $F(D^2\phi)=F(D^2\varphi)=\epsilon>0$, then \eqref{comparison01} holds. Otherwise suppose at some interior point 
\[\phi-\varphi>\max |\phi_0-\varphi_0|+\max |\phi_1-\varphi_1|. \]
Hence $\phi-\varphi+at(1-t)$ obtains its maximum at an interior point $p$ for some $a>0$. Denote $v=\phi+at(t-1)$. Then on one hand, 
\[
F(D^2v)>F(D^2\phi)=\epsilon
\]
On the other hand at $p$, $D^2 v\leq D^2 \varphi$. It follows from the concavity of $F$, we have at $p$, 
\[
F(D^2v)-F(D^2\varphi)\leq \cL_{F} (v-\varphi)\leq 0,
\]
where $\cL_{F_v}$ is the linearized operator of $F$ at $v$. Contradiction. 
\end{proof}

One can actually be more precise about the estimate \eqref{cma04} (and \eqref{cma05}). For simplicity, we state the result for \eqref{cma02}, 

\begin{lemma}\label{cma10}The $\epsilon$ geodesic $\phi^\epsilon_t$ connecting $\phi_0, \phi_1\in \cH$ satisfies the following estimate,
\begin{equation}\label{cma06}
\max |\dot\phi^\epsilon_t|\leq \max |\phi_1-\phi_0|+C \max |\nabla(\phi_1-\phi_0)|^2_g+\epsilon,
\end{equation}
where $C$ depends only on $\phi_0, \phi_1$. Moreover, we have
\begin{equation}\label{cma07}
|\nabla \phi^\epsilon_t|_g+\sup \Delta_g\phi^\epsilon\leq C(\|\phi_0\|_{C^1}, \|\phi_1\|_{C^1}, \sup \Delta_g\phi_0, \Delta_g \phi_1, g)
\end{equation}
\end{lemma}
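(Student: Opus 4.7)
The estimates \eqref{cma06} and \eqref{cma07} are Sasaki analogues of Chen's $C^{1,\bar1}$-estimate for K\"ahler geodesics (as sharpened by B{\l}ocki and Phong--Sturm). My plan is to derive \eqref{cma06} by a comparison argument on $M\times[0,1]$ exploiting the strict $t$-convexity of $\phi^\epsilon$, and \eqref{cma07} by Yau--B{\l}ocki maximum principles applied to the cone Monge--Amp\`ere formulation \eqref{cma03} on the slab $X':=M\times[1,3/2]\subset X$.

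For \eqref{cma06}, observe that $\ddot\phi^\epsilon > |\nabla\dot\phi^\epsilon|^2_{\omega_{\phi^\epsilon}}\ge 0$, so $\phi^\epsilon(x,\cdot)$ is strictly convex in $t$ and $\dot\phi^\epsilon$ is monotone increasing in $t$. I will introduce the lower barrier
\[
L(x,t)=(1-t)\phi_0(x)+t\phi_1(x)+At(t-1),\qquad A:=C_0\bigl(\max_M|\nabla(\phi_1-\phi_0)|_g^2+\epsilon\bigr),
\]
with $C_0$ large depending only on $\phi_0,\phi_1,g$. Since $\cH$ is convex, $\omega_L=(1-t)\omega_{\phi_0}+t\omega_{\phi_1}$ is uniformly positive and equivalent to $\omega^T$. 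A direct computation gives $\ddot L-|\nabla\dot L|^2_{\omega_L}=2A-|\nabla(\phi_1-\phi_0)|^2_{\omega_L}$, and for $C_0$ large enough one obtains
\[
(\ddot L-|\nabla\dot L|^2_{\omega_L})\,\omega_L^n\ \ge\ \epsilon(\omega^T)^n,
\]
so $L$ is a subsolution of \eqref{cma02} with the same boundary values as $\phi^\epsilon$. The MA comparison principle for \eqref{cma02} (which follows from the concavity of the operator $F$ used in deriving \eqref{comparison01}) gives $L\le\phi^\epsilon$ on $M\times[0,1]$. As $L=\phi^\epsilon$ at $t=0$, the boundary derivative inequality yields $\dot\phi^\epsilon(x,0)\ge\dot L(x,0)=(\phi_1-\phi_0)(x)-A$, and by $t$-monotonicity $\dot\phi^\epsilon(x,t)\ge\min_M(\phi_1-\phi_0)-A$. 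A symmetric upper barrier $\widetilde L=(1-t)\phi_0+t\phi_1-At(t-1)$ gives $\dot\phi^\epsilon(x,t)\le\max_M(\phi_1-\phi_0)+A$, proving \eqref{cma06}.

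For \eqref{cma07}, note that on $X'$ the cone K\"ahler form $\omega_X$ is uniformly equivalent to $\omega^T\oplus(dr^2+r^2\eta\otimes\eta)$, so classical Yau--B{\l}ocki arguments apply verbatim to $\psi(x,r)=\phi^\epsilon_{2r-2}(x)+4\log r$, which solves \eqref{cma03}. The interior gradient bound on $\psi$ follows from the maximum principle for $|\nabla\psi|^2 e^{-h(\psi)}$; the boundary gradient bound at $r=1,3/2$ translates via $\partial_r\psi|_{r=1}=2\dot\phi^\epsilon|_{t=0}+4$ (and the analog at $r=3/2$) into a bound depending on \eqref{cma06} together with $\|\phi_0\|_{C^1},\|\phi_1\|_{C^1}$, since all tangential derivatives of $\psi$ on $\{r=1\}$ are controlled by those of $\phi_0$. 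For $\Delta_g\phi^\epsilon$ I apply Yau's second-order estimate to $\log\operatorname{tr}_{\omega_X}\Omega_\psi-A\psi$ on $X'$; the interior maximum-principle argument reduces the problem to a boundary $C^2$ bound on $\psi$, constructed from barriers involving $\phi_0,\phi_1$ whose second derivatives along $\{r=1\}\cup\{r=3/2\}$ are dominated by $\sup\Delta_g\phi_0,\sup\Delta_g\phi_1$. Combined with the $C^0,C^1$ bounds already established, this gives \eqref{cma07}.

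The main obstacle is the boundary $C^2$ estimate for \eqref{cma03}. In the K\"ahler case this is the Chen--Guan--Phong--Sturm barrier construction; in the Sasaki setting the barriers must moreover be basic (invariant under the Reeb flow) so that they descend to basic functions on $M$, with second derivatives at the boundary controlled by $\sup\Delta_g\phi_i$ rather than by higher-order quantities. In the quasiregular case one can descend to the K\"ahler quotient orbifold and invoke the K\"ahler barrier directly; in the irregular case one combines the holomorphic/foliation charts of Lemma \ref{chart} (which explicitly identify the cone holomorphic structure with the transverse K\"ahler structure) with the Type-I deformation of Definition \ref{type-01} to approximate by quasiregular structures and pass to the limit, using that the barrier construction is uniform in the deformation parameter.
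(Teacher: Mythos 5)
Your treatment of \eqref{cma06} is essentially the paper's own argument: the lower barrier $U^a=at(t-1)+(1-t)\phi_0+t\phi_1$ with $a\sim C\max|\nabla(\phi_1-\phi_0)|^2+\epsilon$, the concavity/comparison argument for $F$ at an interior minimum of $\phi^\epsilon-U^a$, and then $t$-convexity to pass from boundary derivatives to all $t$. One mechanical slip: the upper bound $\dot\phi^\epsilon_t\le \phi_1-\phi_0+a$ does \emph{not} come from a symmetric upper barrier $\widetilde L$ (which, combined with monotonicity of $\dot\phi^\epsilon_t$ in $t$, only controls $\dot\phi^\epsilon_0$ from above and $\dot\phi^\epsilon_1$ from below — the useless directions); it comes from the \emph{lower} barrier evaluated at $t=1$, where $\phi^\epsilon\ge U^a$ with equality forces $\dot\phi^\epsilon_1\le \dot U^a_1=\phi_1-\phi_0+a$. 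This is repairable in one line from what you already have.

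For \eqref{cma07} you diverge from the paper, and your route has a genuine gap. The paper does not redo a boundary-value analysis on the cone: it quotes the gradient bound from Guan--Zhang and observes that the \emph{interior} Laplacian estimate of \cite{he12} (and \cite{BD} for $\epsilon=0$), which works directly with the product equation \eqref{cma02} and involves only basic functions and the transverse K\"ahler structure, transfers verbatim — no boundary barriers, no quasiregular approximation, no Type-I deformation. Your plan instead runs the Yau/Chen--Guan-type boundary $C^2$ estimate for \eqref{cma03} on $X'$. The problem is the dependence of the constants: the classical boundary barrier construction (and Guan--Zhang's own estimate \eqref{cma05}) produces constants depending on $\|\phi_i\|_{C^{2,1}}$, i.e.\ on third derivatives of the boundary data. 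The whole point of \eqref{cma07} is that the bound depends only on $\|\phi_i\|_{C^1}$ and $\sup\Delta_g\phi_i$, and obtaining that refinement is precisely the content of \cite{BD}/\cite{he12}; your assertion that the barriers can be built with second derivatives "dominated by $\sup\Delta_g\phi_0,\sup\Delta_g\phi_1$" is exactly the hard step and is not supplied by the sketch. The quasiregular/Type-I machinery you invoke addresses the irregular-foliation issue but not this one, and it introduces a further uniformity question (under Type-I deformation the endpoints are only $(1-\epsilon_i)\phi_j$-type potentials for the deformed structure, so one must track that the estimates are uniform in $\rho_i$). The short fix is to abandon the cone formulation for the second-order bound and argue as the paper does, transferring the interior estimate of \cite{he12} through the foliation charts.
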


\begin{proof}The first estimate follows from $\ddot \phi^\epsilon_t>0$ and the following $C^0$ estimate \eqref{cma08}, which can be proved similarly using the concavity of $F$. 
First there exists $a>0$ such that
\begin{equation}\label{cma08}
at(t-1)+(1-t)\phi_0+t\phi_1
\leq \phi_t^\epsilon\leq (1-t)\phi_0+t\phi_1
\end{equation}
The righthand side is a direction consequence of $\ddot \phi^\epsilon_t>0$, while the lefthand side can be argued as follows. Denote $U^a=at(t-1)+(1-t)\phi_0+t\phi_1$; we know $\phi^\epsilon_t$ agrees with $U^a$ on the boundary. Hence if $\phi^\epsilon_t<U^a$, then $\phi^\epsilon_t-U^a$ takes its minimum at some interior point $p$. At $p$, we know $D^2 \phi^\epsilon\geq D^2 U^a$. By concavity of $F$, we get (at $p$)
\[
0\leq \cL_{F_a} (\phi^\epsilon_t-U^a)\leq F(D^2\phi^\epsilon_t)-F(D^2 U^a)
\]
That is $F(D^2 U^a)\leq \log \epsilon$. This is a contradiction when $a>0$ is sufficiently large. Indeed, a direct computation shows that if $a\geq C\max |\nabla(\phi_1-\phi_0)|^2+\epsilon$, then $F(D^2 U^a)>\log \epsilon$. Hence for such choice of $a$, \eqref{cma08} holds. By convexity in $t$ direction, we know that
\[
\dot\phi^\epsilon_t (\cdot, 0)\leq \dot \phi^\epsilon_t\leq \dot \phi^\epsilon_t(\cdot, 1)
\]
It is evident to show that
\[-a+ \phi_1-\phi_0 \leq \dot\phi^\epsilon_t (\cdot, 0) \leq \phi_1-\phi_0\leq \dot \phi^\epsilon_t(\cdot, 1)\leq  a+\phi_1-\phi_0\]
Hence \eqref{cma06} follows. The gradient estimate $|\nabla \phi^\epsilon_t|$ is given by \cite{GZ1}[Proposition 2].  The estimate on $\Delta _g \phi^\epsilon_t$, depending only on $\phi_0, \phi_1$ up to second order derivative, was proved for K\"ahler setting by the first named author \cite{he12}[Theorem 1.1] (for $\epsilon=0$, it was proved earlier in \cite{BD} using pluripotential theory). The method in \cite{he12} is to deal with the equation \eqref{cma02} directly, and it can be carried over to prove the interior estimate of $\Delta_g \phi^\epsilon$ word by word (since in Sasaki setting, this estimate only involves transverse K\"ahler structure and basic functions). We skip the details. 
\end{proof}

By taking $\epsilon \rightarrow 0$, we have the following,

\begin{lemma}\label{zero01}Suppose $\phi$ is the weak geodesic connecting $\phi_0, \phi_1\in \cH$, then for some positive constant $C=C(M, g, \|\phi_0\|_{C^2}, \|\phi_1\|_{C^2})$, we have
\[
|\dot \phi|\leq \max |\phi_1-\phi_0|+C \max |\nabla\phi_1-\nabla \phi_0|_g^2
\]
As a consequence, when $\phi_0\rightarrow \phi_1$ in $\cH$, then $d_2(\phi_0, \phi_1)\rightarrow 0$. 
\end{lemma}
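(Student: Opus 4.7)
The plan is to obtain the estimate by passing to the limit $\varepsilon\to 0$ in the corresponding bound for the $\varepsilon$-geodesics provided by Lemma \ref{cma10}, and then to deduce the consequence about $d_2$ from the length formula along the geodesic.

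First I would take, for each $\varepsilon\in (0,1]$, the smooth $\varepsilon$-geodesic $\phi^\varepsilon_t$ connecting $\phi_0$ and $\phi_1$. Lemma \ref{cma10} yields the pointwise bound
\[
\max_{M\times[0,1]} |\dot\phi^\varepsilon_t|\leq \max_M|\phi_1-\phi_0|+C\max_M|\nabla(\phi_1-\phi_0)|_g^2+\varepsilon,
\]
where the constant $C$ depends only on $M,g,\|\phi_0\|_{C^2},\|\phi_1\|_{C^2}$ and is in particular independent of $\varepsilon$. By Guan--Zhang's theorem recalled above, as $\varepsilon\to 0$ the family $\phi^\varepsilon$ converges (after extracting a subsequence) to the weak geodesic $\phi$ in the weak $C^2_w$ sense, with uniform bounds on $\|\phi^\varepsilon\|_{C^1}$ and $\sup\Delta_g\phi^\varepsilon$ from \eqref{cma07}.

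Second, I would use this convergence to transfer the $L^\infty$ bound on $\dot\phi^\varepsilon_t$ to $\dot\phi_t$. Because $\phi^\varepsilon_t$ is uniformly Lipschitz in $t$ with constant independent of $\varepsilon$, the limit $\phi$ is also Lipschitz in $t$, $\dot\phi^\varepsilon$ converges to $\dot\phi$ in the weak-$\ast$ topology of $L^\infty$, and the pointwise (a.e.) bound
\[
|\dot\phi_t|\leq \max_M|\phi_1-\phi_0|+C\max_M|\nabla(\phi_1-\phi_0)|_g^2
\]
follows by sending $\varepsilon\to 0$ in the previous inequality. This is the desired estimate. The step I expect to be technically subtle but not essentially hard is justifying pointwise/weak passage to the limit of $\dot\phi^\varepsilon$; one can avoid weak compactness by noting that convexity in $t$ (guaranteed by $\ddot\phi^\varepsilon_t\geq |\nabla\dot\phi^\varepsilon_t|^2_{\omega_{\phi^\varepsilon_t}}\geq 0$) gives one-sided difference quotients that are monotone in $t$ and therefore converge to those of $\phi$ everywhere, transferring the uniform bound.

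Finally, for the consequence about $d_2$, I would use that along the (weak) geodesic $\phi_t$ the quantity $\|\dot\phi_t\|_{2,\phi_t}$ is constant, so
\[
d_2(\phi_0,\phi_1)=\left(\int_M \dot\phi_t^{\,2}\, d\mu_{\phi_t}\right)^{1/2}\leq \bigl(\sup|\dot\phi_t|\bigr)\sqrt{\mathrm{Vol}(M)}.
\]
When $\phi_0\to\phi_1$ in $\cH$, their $C^2$-norms stay bounded so $C$ is controlled, while $\max|\phi_1-\phi_0|\to 0$ and $\max|\nabla(\phi_1-\phi_0)|_g\to 0$; hence the right-hand side tends to zero and $d_2(\phi_0,\phi_1)\to 0$, as claimed.
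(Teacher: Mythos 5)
Your proposal is correct and follows essentially the same route as the paper, whose proof of this lemma consists precisely of passing to the limit $\epsilon\to 0$ in the $\epsilon$-geodesic estimate of Lemma \ref{cma10}; your additional remarks on using convexity in $t$ to justify convergence of the difference quotients, and on bounding $d_2$ via the constancy of $\|\dot\phi_t\|_{2,\phi_t}$, simply make explicit details the paper leaves implicit.
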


\begin{rmk}One can get a much sharper estimate, 
\[
|\dot \phi|\leq \max |\phi_1-\phi_0|
\]
using the uniqueness and comparison for the generalized solutions of complex Monge-Ampere in the sense of Bedford-Taylor, see \cite{D4}[Lemma 3.5] for K\"ahler setting. We shall prove this sharper version below.   
\end{rmk}
Using Lemma \ref{triangle01} and Lemma \ref{zero01}, it follows that the distance function $d_2(\phi_0, \phi_1)$ is realized by the weak geodesic $\phi$ connecting $\phi_0, \phi_1$. In particular, 
\begin{lemma}\label{distance}
Given $\phi_0, \phi_1\in \cH$, we have,
\begin{equation}\label{distance01}
d_2(\phi_0, \phi_1)=\|\dot\phi\|_{2, \phi_t}, \forall t\in [0, 1]
\end{equation}
\end{lemma}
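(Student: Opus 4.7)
The plan is to realize the distance by approximating the weak geodesic $\phi$ by smooth $\epsilon$-geodesics $\phi^\epsilon$ and combining near-conservation of kinetic energy along $\phi^\epsilon$ with the triangle inequality (Lemma \ref{triangle01}) and the small-distance estimate (Lemma \ref{zero01}).

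\textbf{Step 1 (near energy conservation along $\epsilon$-geodesics).} I first show that $\|\dot\phi^\epsilon_t\|_{2,\phi^\epsilon_t}$ is nearly independent of $t$. Differentiating under the integral,
\[
\frac{d}{dt}\|\dot\phi^\epsilon_t\|_{2,\phi^\epsilon_t}^2 = \int 2\dot\phi^\epsilon_t\ddot\phi^\epsilon_t\,\omega_{\phi^\epsilon_t}^n\wedge\eta + \int (\dot\phi^\epsilon_t)^2\, n\omega_{\phi^\epsilon_t}^{n-1}\wedge\sqrt{-1}\p_B\bar\p_B\dot\phi^\epsilon_t\wedge\eta.
\]
Integrating by parts in the second term (moving the transverse Laplacian onto $(\dot\phi^\epsilon_t)^2$) and collecting terms yields
\[
\frac{d}{dt}\|\dot\phi^\epsilon_t\|_{2,\phi^\epsilon_t}^2 = 2\int \dot\phi^\epsilon_t\Bigl(\ddot\phi^\epsilon_t - |\nabla\dot\phi^\epsilon_t|^2_{\omega_{\phi^\epsilon_t}}\Bigr)\omega_{\phi^\epsilon_t}^n\wedge\eta = 2\epsilon\int \dot\phi^\epsilon_t\,(\omega^T)^n\wedge\eta,
\]
using the $\epsilon$-geodesic equation \eqref{cma02}. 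By Lemma \ref{cma10} the right-hand side is $O(\epsilon)$, so $\|\dot\phi^\epsilon_t\|_{2,\phi^\epsilon_t}^2$ varies in $t$ by at most $C\epsilon$. Consequently, for every $t_0\in[0,1]$,
\[
l(\phi^\epsilon) = \int_0^1 \|\dot\phi^\epsilon_t\|_{2,\phi^\epsilon_t}\,dt = \|\dot\phi^\epsilon_{t_0}\|_{2,\phi^\epsilon_{t_0}} + O(\sqrt\epsilon).
\]

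\textbf{Step 2 (length of $\epsilon$-geodesic realizes the distance in the limit).} Since $\phi^\epsilon$ is smooth, $d_2(\phi_0,\phi_1)\le l(\phi^\epsilon)$. For the matching lower bound, given any smooth competitor $\psi:[0,1]\to\cH$ from $\phi_0$ to $\phi_1$, apply Lemma \ref{triangle01} with the base point $\phi_1$ (perturbing $\psi$ slightly away from $\phi_1$ if necessary, which is harmless by continuity). Writing $u^\epsilon_t(\cdot,s)$ for the $\epsilon$-geodesic from $\phi_1$ to $\psi_s$, the inequality
\[
l\bigl(u^\epsilon_t(\cdot,0)\bigr) \le l(\psi)+l\bigl(u^\epsilon_t(\cdot,1)\bigr)+\epsilon C
\]
gives, after noting $u^\epsilon(\cdot,0)$ is the $\epsilon$-geodesic from $\phi_1$ to $\phi_0$ (same length as $\phi^\epsilon$ by $t\mapsto 1-t$) and $u^\epsilon(\cdot,1)$ is the $\epsilon$-geodesic from $\phi_1$ to $\phi_1$ (length $\to 0$ by Lemma \ref{zero01}),
\[
\limsup_{\epsilon\to 0} l(\phi^\epsilon)\le l(\psi).
\]
Taking infimum over $\psi$ yields $\lim_{\epsilon\to 0} l(\phi^\epsilon)=d_2(\phi_0,\phi_1)$.

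\textbf{Step 3 (passing to the limit).} Combining Steps 1 and 2, for every fixed $t\in[0,1]$,
\[
d_2(\phi_0,\phi_1) = \lim_{\epsilon\to 0}\|\dot\phi^\epsilon_t\|_{2,\phi^\epsilon_t}.
\]
The $C^2_w$-convergence $\phi^\epsilon\to\phi$ provides uniform convergence $\dot\phi^\epsilon_t\to\dot\phi_t$ together with a uniform bound $\|\omega_{\phi^\epsilon_t}\|_{L^\infty}\le C$. Consequently $\omega_{\phi^\epsilon_t}^n\wedge\eta$ converges weakly to $\omega_{\phi_t}^n\wedge\eta$ (standard $C^{1,\bar1}$ Bedford–Taylor convergence for bounded densities, cf.\ Proposition \ref{weakcon0}), while $(\dot\phi^\epsilon_t)^2\to(\dot\phi_t)^2$ uniformly. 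Thus
\[
\|\dot\phi^\epsilon_t\|_{2,\phi^\epsilon_t}^2 = \int (\dot\phi^\epsilon_t)^2\,\omega_{\phi^\epsilon_t}^n\wedge\eta \longrightarrow \int (\dot\phi_t)^2\,\omega_{\phi_t}^n\wedge\eta = \|\dot\phi_t\|_{2,\phi_t}^2,
\]
proving \eqref{distance01}.

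\textbf{Main obstacle.} The computational core of the argument is routine; the subtle point is Step~3, namely justifying the joint convergence of the integrands and measures $\omega_{\phi^\epsilon_t}^n\wedge\eta$ in the Sasaki setting when $\phi^\epsilon$ converges only in the weak $C^2_w$ sense rather than monotonically. This is handled by the uniform Laplacian bound from Lemma \ref{cma10}, which forces $\omega_{\phi^\epsilon_t}$ to have uniformly bounded $L^\infty$ coefficients, so Bedford–Taylor-type weak continuity of the Monge–Amp\`ere operator applies in each foliation chart and patches together via $\eta$.
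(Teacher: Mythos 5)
Your proposal is correct and follows essentially the same route as the paper: the paper's proof consists of exactly your Step 1 computation $\frac{d}{dt}\int_M |\dot\phi^\epsilon_t|^2\omega_{\phi^\epsilon_t}^n\wedge\eta=2\epsilon\int_M\dot\phi^\epsilon_t(\omega^T)^n\wedge\eta$ followed by letting $\epsilon\to 0$, with your Steps 2--3 (the appeal to Lemma \ref{triangle01} and Lemma \ref{zero01} to identify $\lim_\epsilon l(\phi^\epsilon)$ with $d_2(\phi_0,\phi_1)$, and the passage to the limit in the norms) being precisely what the paper invokes in the sentence immediately preceding the lemma. Your write-up merely makes the limiting arguments more explicit than the paper does.
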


\begin{proof}Let $\phi^\epsilon_t$ be the $\epsilon$ geodesic connecting $\phi_0, \phi_1$. Then we compute
\begin{equation}\label{dis03}
\begin{split}
\frac{d}{dt}\int_M |\dot\phi^\epsilon_t|^2 (\omega_{\phi^\epsilon_t})^n\wedge \eta=&2\int_M \dot \phi^\epsilon_t (\ddot \phi^\epsilon_t-|\nabla \dot\phi^\epsilon_t|_{\phi_t^\epsilon}) (\omega_{\phi^\epsilon_t})^n\wedge \eta\\
=&2\epsilon \int_M \dot\phi^\epsilon_t (\omega^T)^n\wedge \eta
\end{split}
\end{equation}
Since $|\dot \phi^\epsilon_t|$ is uniformly bounded, letting $\epsilon\rightarrow 0$, we get that
\[
\frac{d}{dt}\int_M |\dot\phi_t|^2 (\omega_{\phi_t})^n\wedge \eta=0. 
\]
This proves \eqref{distance01}. In particular if $\phi_0\neq \phi_1$, $\dot\phi_t$ is not identically zero for any $t$. Moreover, if $\epsilon$ is small enough, depending on $\phi_0\neq \phi_1$, then $\dot\phi^\epsilon_t$ is not identically zero for any $t\in [0, 1]$. This follows from \eqref{dis03} and it is easy to see that $\int_M |\dot\phi^\epsilon_t|^2 (\omega_{\phi^\epsilon_t})^n\wedge \eta$ has a positive lower bound for any $t$ (say $l(\phi^\epsilon_t)/2$),  if $\epsilon$ is sufficiently small. 
\end{proof}

We also have the following
\begin{thm}[Guan-Zhang, Theorem 2]\label{triangle100}For $u, v, w\in \cH$, 
\[
d_2(u, w)\leq d_2(u, v)+d_2(v, w). 
\]
\end{thm}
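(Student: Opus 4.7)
The plan is to deduce the triangle inequality directly from Lemma \ref{triangle01} by letting $\epsilon \to 0$. Given $u, v, w \in \cH$ (the degenerate cases $u = v$, $v = w$, or $u = w$ being trivial), choose a smooth path $\psi: [0, 1] \to \cH$ with $\psi(0) = w$ and $\psi(1) = v$, whose image avoids $u$ (this can be arranged by a small perturbation of either $u$ or $\psi$, and removed later via the continuity of $d_2$ under $C^2$-perturbation of endpoints coming from Lemma \ref{zero01}). Applying Lemma \ref{triangle01} with $\phi = u$ and this $\psi$, for each $\epsilon \in (0, 1]$ we obtain a two-parameter family $u^\epsilon_t(\cdot, s)$ of $\epsilon$-geodesics with $u^\epsilon_t(\cdot, 0)$ joining $u$ to $w$ and $u^\epsilon_t(\cdot, 1)$ joining $u$ to $v$, satisfying
\begin{equation*}
l(u^\epsilon_t(\cdot, 0)) \leq l(\psi) + l(u^\epsilon_t(\cdot, 1)) + \epsilon C.
\end{equation*}

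The crux of the proof is to identify the limits of the two $\epsilon$-geodesic lengths as $\epsilon \to 0$ with the corresponding $d_2$-distances. Let $\phi^\epsilon_t$ denote the $\epsilon$-geodesic connecting two fixed potentials $\phi_0, \phi_1 \in \cH$, and let $\phi_t$ be its weak geodesic limit. By Lemma \ref{distance}, $\|\dot\phi_t\|_{2, \phi_t}$ is constant in $t$ and equals $d_2(\phi_0, \phi_1)$. Moreover, formula \eqref{dis03} combined with the uniform $C^0$-bound on $\dot\phi^\epsilon_t$ from Lemma \ref{cma10} shows that
\begin{equation*}
\frac{d}{dt}\|\dot\phi^\epsilon_t\|^2_{2, \phi^\epsilon_t} = 2\epsilon \int_M \dot\phi^\epsilon_t (\omega^T)^n \wedge \eta = O(\epsilon)
\end{equation*}
uniformly in $t$, so the $\epsilon$-speed is nearly constant. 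On the other hand, the $C^2_w$ estimates from Lemma \ref{cma10} imply that $\dot\phi^\epsilon_t \to \dot\phi_t$ pointwise a.e., while Proposition \ref{weakcon0} gives the weak convergence $\omega^n_{\phi^\epsilon_t} \wedge \eta \to \omega^n_{\phi_t}\wedge \eta$. Together with the uniform bound, this forces $\|\dot\phi^\epsilon_t\|_{2, \phi^\epsilon_t}^2 \to d_2(\phi_0, \phi_1)^2$ for (almost every, hence by near-constancy, every) $t \in [0, 1]$, and integrating in $t$ yields $l(\phi^\epsilon_t) \to d_2(\phi_0, \phi_1)$.

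Applying this convergence to both $\epsilon$-geodesics in the displayed inequality and then taking the infimum over smooth curves $\psi$ from $w$ to $v$, we conclude
\begin{equation*}
d_2(u, w) \leq d_2(w, v) + d_2(u, v) = d_2(u, v) + d_2(v, w),
\end{equation*}
which is the asserted triangle inequality. The main obstacle is precisely the step of passing to the limit in the $\epsilon$-geodesic length. The near-constancy of $\|\dot\phi^\epsilon_t\|_{2, \phi^\epsilon_t}$ provided by \eqref{dis03} is essential: it avoids having to interchange the limit $\epsilon \to 0$ with integration over $t$ in a setting where only Bedford-Taylor weak convergence is available for the measures $\omega_{\phi^\epsilon_t}^n \wedge \eta$ while the integrand $|\dot\phi^\epsilon_t|^2$ is itself $\epsilon$-dependent and only weakly regular in the limit. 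Granted this convergence, the rest of the proof is a formal consequence of Guan–Zhang's estimate Lemma \ref{triangle01}.
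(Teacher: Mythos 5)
Your proof is correct and follows essentially the route the paper intends (and repeats verbatim in the Orlicz setting, cf.\ Proposition \ref{triangleinequality} and the proof of Theorem \ref{darvas1}): Guan--Zhang's $\epsilon$-geodesic estimate of Lemma \ref{triangle01} combined with the convergence of the $\epsilon$-geodesic lengths to the $d_2$-distance, the latter resting on Lemma \ref{distance} and the uniform bounds of Lemma \ref{cma10}. One caution: to remove the hypothesis $u\notin\psi([0,1])$ you should perturb or truncate the curve $\psi$ (as the paper does with the parameter $h\to 1$ in the proof of Theorem \ref{darvas1}) rather than perturb the endpoint $u$, since ``continuity of $d_2$ in its endpoints'' is itself a consequence of the triangle inequality being proved and would make that variant circular.
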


\subsection{The Orlicz-Finsler geometry on Sasaki manifolds}
The Orlicz-Finsler geometry on the space of K\"ahler potentials was introduced by T. Darvas \cite{D2} and it has played an important role in problems regarding csck and Calabi's extremal metric in K\"ahler geometry. In particular the Finsler metric $d_1$ will play an important role and it is used to define the properness of $\cK$-energy. 
In this section we discuss the Orlicz-Finsler geometry on Sasaki geometry.
We prove the following theorem, which is the counterpart of Darvas's  \cite{D2}[Theorem 1] in Sasaki setting.

\begin{thm}\label{darvas1}If $\chi\in \cW_p^{+}, p\geq 1$, then $(\cH, d_\chi)$ is a metric space and for any $u_0, u_1\in \cH$, the $C^{1, \bar 1}_B$ geodesic $t\rightarrow u_t$ connecting $u_0, u_1$ satisfies 
\begin{equation}
d_\chi(u_0, u_1)=\|\dot u_t\|_{\chi, u_t}, t\in [0, 1]. 
\end{equation}
\end{thm}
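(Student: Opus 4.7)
The approach is the Sasaki adaptation of Darvas's argument in \cite{D2}, built on the Guan-Zhang $\epsilon$-geodesic machinery already invoked above. The main technical input is the near-constancy of the $\chi$-speed along smooth $\epsilon$-geodesics: for $u_t^\epsilon$ connecting $u_0, u_1 \in \cH$, the quantity $\int_M \chi(\dot u_t^\epsilon)\,\omega_{u_t^\epsilon}^n \wedge \eta$ is constant up to $O(\epsilon)$.

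To establish this, after regularizing $\chi \in \cW^+_p$ by a smooth $\chi_\delta \in \cW^+_p$ that approximates $\chi$ from above, I would compute directly
\begin{equation*}
\frac{d}{dt}\int_M \chi_\delta(\dot u_t^\epsilon)\, \omega_{u_t^\epsilon}^n \wedge \eta = \int_M \chi_\delta'(\dot u_t^\epsilon)\bigl(\ddot u_t^\epsilon - |\nabla \dot u_t^\epsilon|^2_{\omega_{u_t^\epsilon}}\bigr)\, \omega_{u_t^\epsilon}^n \wedge \eta = \epsilon \int_M \chi_\delta'(\dot u_t^\epsilon)\,(\omega^T)^n \wedge \eta,
\end{equation*}
where the first equality comes from integrating by parts twice in the term coming from the $t$-derivative of $\omega_{u_t^\epsilon}^n$, namely $n\, dd^c_B \dot u_t^\epsilon \wedge \omega_{u_t^\epsilon}^{n-1}\wedge\eta$ (since $\dot u_t^\epsilon$ is basic, this reduces to the standard K\"ahler computation in each foliation chart); the second equality uses the $\epsilon$-geodesic equation \eqref{cma02}. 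The uniform bound $|\dot u_t^\epsilon| \leq C$ from Lemma \ref{cma10} combined with the polynomial growth of $\chi_\delta'$ controls the right-hand side by $C\epsilon$, uniformly in $\delta$. Sending $\delta \to 0$ and then $\epsilon \to 0$, and using the uniform $C^{1,\bar 1}_B$ convergence $u_t^\epsilon \to u_t$, the $\chi$-speed $\|\dot u_t\|_{\chi, u_t}$ of the weak $C^{1,\bar 1}_B$ geodesic is constant in $t$.

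To identify this constant with $d_\chi(u_0, u_1)$ and verify the metric axioms, I would follow the $d_2$ blueprint. The bound $d_\chi(u_0, u_1) \leq \|\dot u_t\|_{\chi, u_t}$ follows from smooth approximations of the weak geodesic (mollifications of $u_t^\epsilon$) whose $\chi$-length tends to $\|\dot u_t\|_{\chi, u_t}$. The reverse inequality requires the $\chi$-analog of Guan-Zhang's triangle estimate (Lemma \ref{triangle01}): for any smooth path $\psi_s$ from $u_0$ to $u_1$ and any fixed basepoint $\phi \in \cH$, the family of $\epsilon$-geodesics $u^\epsilon(t,s)$ from $\phi$ to $\psi_s$ satisfies
\begin{equation*}
\frac{d}{ds}\int_0^1 \|\partial_t u^\epsilon(\cdot,s)\|_{\chi, u^\epsilon(\cdot,s)}\, dt \;\leq\; \|\dot\psi_s\|_{\chi,\psi_s} + C\epsilon,
\end{equation*}
where $C$ depends only on a priori bounds for $u^\epsilon$ and $\partial_s u^\epsilon$. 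Integrating in $s$, letting $\epsilon \to 0$, and taking the infimum over $\psi$ gives $d_\chi(u_0, u_1) \geq \|\dot u_t\|_{\chi, u_t}$. Symmetry of $d_\chi$ is immediate from time-reversal; the triangle inequality follows by concatenation of paths; non-degeneracy follows because if $u_0 \neq u_1$ then by the argument in Lemma \ref{distance} we have $\dot u_t^\epsilon \not\equiv 0$ for small $\epsilon$ and hence $\dot u_t \not\equiv 0$, so $\|\dot u_t\|_{\chi, u_t} > 0$ by positivity of the Young weight $\chi$.

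The main obstacle is the triangle estimate in the last step: its proof requires differentiating the degenerate Monge-Amp\`ere equation \eqref{cma03} in the parameter $s$ and deriving a uniform $C^0$ bound on $\partial_s u^\epsilon$ on the closure of the cylinder. Guan-Zhang establish the analogous estimate for $\chi(t) = t^2/2$ by means of a maximum-principle barrier on the K\"ahler cone; extending to general $\chi \in \cW^+_p$ requires tracking the dependence on $\chi'$ carefully, but since the barrier constructions are insensitive to the weight and only use the $\epsilon$-geodesic equation together with boundary control, the same K\"ahler-cone arguments transfer once $\chi_\delta$ is fixed. The non-smoothness of $\chi$ at $0$ (e.g.\ $\chi(t) = |t|^p/p$ for $p=1$) is handled by the regularization $\chi_\delta$ and a routine limit using $\chi_\delta' \in L^\infty$ on the range of $\dot u_t^\epsilon$ together with the uniform bounds of Lemma \ref{cma10}.
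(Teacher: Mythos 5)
Your proposal follows essentially the same route as the paper: near-constancy of the $\chi$-speed along Guan--Zhang $\epsilon$-geodesics via the $\epsilon$-geodesic equation and integration by parts, the $\chi$-version of the Guan--Zhang triangle estimate for a family of $\epsilon$-geodesics with one moving endpoint (this is exactly the paper's Proposition \ref{triangleinequality}, proved there by the variational formula of Proposition \ref{derivativeoflength} together with the Young identity and the uniform bounds on $\partial_s u^\epsilon$ from \cite{GZ1}), passage to the limit $\epsilon\to 0$, and regularization of the weight. One point needs care: your displayed computation shows that the $\chi$-\emph{energy} $\int_M \chi_\delta(\dot u_t^\epsilon)\,\omega_{u_t^\epsilon}^n\wedge\eta$ is constant up to $O(\epsilon)$, but for a non-homogeneous $\chi\in\cW_p^+$ this is not literally the same as constancy of the Orlicz \emph{norm} $\|\dot u_t^\epsilon\|_{\chi,u_t^\epsilon}$, which is defined implicitly by $\int_M\chi(\dot u_t^\epsilon/r)\,\omega_{u_t^\epsilon}^n\wedge\eta=\chi(1)$; the paper instead differentiates the norm directly (Proposition \ref{derivativeoflength} and Lemma \ref{derivativeofsmoothgeodesics}). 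Your argument repairs itself by running the same computation with $\chi_\delta$ replaced by $\chi_\delta(\cdot/r)$ for each fixed $r$ (equivalently, by the implicit function theorem), but as written the conclusion ``hence $\|\dot u_t\|_{\chi,u_t}$ is constant'' does not follow for general weights. The remaining steps (upper bound via the smooth $\epsilon$-geodesics themselves, which need no further mollification; lower bound via the triangle estimate with basepoint $u_1$; non-degeneracy) match the paper.
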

 
Theorem \ref{darvas1} is the generalization  for $d_2$ to general smooth Young weights. 
 This important result in T. Darvas's theory says that, the same $C^{1, \bar 1}_B$ geodesic (with respect to $d_2$) is ``length minimizing" for all $d_\chi$ metric structures and this holds in Sasaki setting.
The proof of Theorem \ref{darvas1} pretty much follows Darvas's proof \cite{D4}[Theorem 3.4], with minor modifications adapted to Sasaki setting. The main point is that only transverse K\"ahler structure is involved, and hence this is essentially the same as in K\"ahler setting. We include the details for completeness. 
 
Following T. Darvas (see \cite{D4}[Chapter 3]), we define the Orlicz-Finsler length of $v\in T_u\cH=C^\infty_B(M)$ for any weight $\chi\in \cW^+_p$:
\begin{equation}
\|v\|_{\chi, u}=\inf\left\{r>0: \frac{1}{\text{Vol}(M)}\int_M \chi\left(\frac{v}{r}\right)\omega_u^n\wedge d\eta\leq \chi(1)\right\}
\end{equation}
For simplicity, we shall assume $\text{Vol}(M)=1$ in this section. 
Given a smooth curve $\gamma: t\in [0, 1]\rightarrow \cH$, its length is computed by the formula
\begin{equation}
l_\chi(\gamma_t)=\int_0^1\|\dot\gamma_t\|_{\chi, \gamma_t}dt
\end{equation}
Furthermore, the distance $d_\chi(u_0. u_1)$ between $u_0, u_1\in \cH$ is the infimum of the $l_\chi$-length of smooth curves joining $u_0$ and $u_1$:
\begin{equation}\label{dis10}
d_{\chi}(u_0, u_1)=\inf\{l_\chi(\gamma_t): \gamma_t\;\text{is a smooth curve with}\; \gamma_0=u_0, \gamma_1=u_1\}. 
\end{equation}
First we have the following,
\begin{prop}\label{derivativeoflength}
Suppose $\chi \in \cW_p^+ \cap C^{\infty}(\mathbb{R})$. For a smooth curve $u_t (t \in [0,1])$ in $\cH$ and a vector field $f_t \in C_B^{\infty}(X)$ along this curve with $f_t \not\equiv 0$,we have
\begin{equation}
\frac{d}{dt} ||f_t||_{\chi, u_t}=\frac{\int_M \chi'(\frac{f_t}{||f_t||_{\chi,\phi_t}})\nabla_{\dot{u}_t} f_t d\mu_{u_t}}{\int_M\chi'(\frac{f_t}{||f_t||_{\chi, u_t}}) \frac{f_t}{||f_t||_{\chi, u_t}}d\mu_{u_t}}
\end{equation}
\end{prop}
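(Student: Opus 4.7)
My plan is to view $r(t) := \|f_t\|_{\chi, u_t}$ as determined implicitly by the scalar equation $G(r,t) = 0$ for
\[
G(r,t) := \int_M \chi\!\left(\frac{f_t}{r}\right) d\mu_{u_t} - \chi(1),
\]
and to apply the implicit function theorem. Since $\chi \in \cW_p^+ \cap C^\infty$ is convex with $\chi(0)=0$ and satisfies the usual growth controls of a $p$-weight, and since $f_t \not\equiv 0$, the map $r\mapsto G(r,t)$ is smooth and strictly decreasing on $(0,\infty)$ with $G \to +\infty$ as $r\to 0^+$ and $G \to -\chi(1)$ as $r\to\infty$; hence $r(t)$ exists, is unique, and satisfies
\[
\partial_r G\big|_{r=r(t)} = -\frac{1}{r(t)}\int_M \chi'\!\left(\frac{f_t}{r(t)}\right)\frac{f_t}{r(t)}\, d\mu_{u_t} \;\neq\; 0.
\]
Smoothness of $r(t)$ in $t$ then follows from the implicit function theorem, together with the identity $\dot r(t) = -\partial_t G/\partial_r G$, which already produces the denominator in the claimed formula.

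For the numerator I differentiate $G$ in $t$, accounting for the $t$-dependence of both $f_t$ and the measure $d\mu_{u_t} = \omega_{u_t}^n \wedge \eta$. Using $\partial_t \omega_{u_t}^n = n\sqrt{-1}\partial_B \bar\partial_B \dot u_t \wedge \omega_{u_t}^{n-1}$ gives
\[
\partial_t G \;=\; \frac{1}{r}\int_M \chi'(f_t/r)\,\dot f_t\, d\mu_{u_t} \;+\; n\int_M \chi(f_t/r)\,\sqrt{-1}\partial_B\bar\partial_B \dot u_t\wedge \omega_{u_t}^{n-1}\wedge \eta.
\]
Integrating the second term by parts, applying the chain rule $\sqrt{-1}\partial_B\chi(f_t/r) = (\chi'(f_t/r)/r)\sqrt{-1}\partial_B f_t$, and using the pointwise identity $n\sqrt{-1}\partial_B\psi\wedge\bar\partial_B\phi\wedge\omega_{u_t}^{n-1} = \langle \nabla\psi, \nabla\phi\rangle_{u_t}\,\omega_{u_t}^n$, that term becomes $-\frac{1}{r}\int_M \chi'(f_t/r)\langle \nabla f_t, \nabla\dot u_t\rangle_{u_t}\, d\mu_{u_t}$. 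Combining with the first piece and invoking the connection formula $\nabla_{\dot u_t} f_t = \dot f_t - \langle \nabla f_t, \nabla \dot u_t\rangle_{u_t}$ yields $\partial_t G = \frac{1}{r}\int_M \chi'(f_t/r)\,\nabla_{\dot u_t} f_t\, d\mu_{u_t}$, and dividing by $-\partial_r G$ cancels the common $1/r$ factor and gives the formula.

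The one step requiring care is the integration by parts on the compact Sasaki manifold. Because $\psi := \chi(f_t/r)$ and $\dot u_t$ are basic and $\omega_{u_t}^{n-1}$ is a basic $(n-1,n-1)$-form, a direct computation shows $d(\psi\,\bar\partial_B \dot u_t\wedge \omega_{u_t}^{n-1}\wedge\eta) = \partial_B\psi\wedge\bar\partial_B \dot u_t\wedge\omega_{u_t}^{n-1}\wedge\eta + \psi\,\partial_B\bar\partial_B \dot u_t\wedge\omega_{u_t}^{n-1}\wedge\eta$, the term generated by $d\eta = 2\omega^T$ vanishing because $\bar\partial_B\dot u_t\wedge\omega_{u_t}^{n-1}\wedge\omega^T$ has basic bidegree $(n,n+1)$. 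Stokes' theorem on the closed manifold $M$ then produces the required identity after multiplying by $\sqrt{-1}$. Equivalently one can verify this locally in a foliation chart $W_\alpha = (-\delta,\delta)\times V_\alpha$, where every basic form factors as a form on $V_\alpha$ times $dx$, reducing the statement to standard K\"ahler integration by parts on $V_\alpha$ followed by integration in $x$; this local argument is insensitive to whether the Reeb flow is (quasi)regular or irregular, so no additional difficulty arises.
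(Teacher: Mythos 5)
Your argument is correct and is essentially the proof the paper intends: the paper simply defers to Darvas \cite{D2}[Proposition 3.1], whose proof is exactly this implicit differentiation of the normalization identity $\int_M \chi(f_t/r(t))\,d\mu_{u_t}=\chi(1)$, with $\dot r=-\partial_t G/\partial_r G$ producing the stated quotient. The one genuinely Sasaki-specific point — that the extra term coming from $d\eta=2\omega^T$ in the integration by parts dies for basic-bidegree reasons — is exactly what the paper's ``word by word'' remark sweeps under the rug, and you have verified it correctly.
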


\begin{proof}This works as in \cite{D2}[Proposition 3.1] word by word. We skip the details. 
\end{proof}

\begin{lemma}\label{kai length estimate}
Suppose $\chi \in \mathcal{W}_p^+ \cap C^{\infty}(\mathbb{R})$ and $u_0,u_1 \in \mathcal{H},u_0 \neq u_1$.Then the $\epsilon$-geodesics $[0,1] \owns t \rightarrow u_{t}^{\epsilon} \in \mathcal{H} $  connecting $u_0,u_1$ satisfies the following estimate:
\begin{equation}
\int_M \chi(\dot{u}_t^{\epsilon})\omega_{u_t^{\epsilon}}^n \wedge \eta \geq \max(\int_M \chi(\min(u_1-u_0,0)) \omega_{u_0}^n \wedge \eta, \int_M \chi(\min(u_0-u_1,0)) \omega_{u_1}^n \wedge \eta)-\epsilon C
\end{equation}
for all $t \in [0,1]$, where $C:=C(\chi,||u_0||_{C^2(M)},||u_1||_{C^2(M)})$
\end{lemma}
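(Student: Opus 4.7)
I will study the functional
\[
I(t) := \int_M \chi(\dot u_t^\epsilon) \, \omega_{u_t^\epsilon}^n \wedge \eta,
\]
showing that $|I'(t)| = O(\epsilon)$ uniformly in $t$ and then bounding $I(0)$ and $I(1)$ from below by a pointwise comparison coming from the $t$-convexity of $u_t^\epsilon(x)$. This is the Sasaki counterpart of Darvas' argument in \cite{D4}, and the computation essentially reduces to transcribing his K\"ahler argument inside the foliation charts of Definition 2.2.

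The first step is to differentiate. Since $\tfrac{d}{dt}(\omega_{u_t^\epsilon}^n \wedge \eta) = n\, dd^c_B \dot u_t^\epsilon \wedge \omega_{u_t^\epsilon}^{n-1} \wedge \eta$, an application of Stokes' theorem on $M$---noting that the extra term $2\chi(\dot u)\, d^c_B \dot u \wedge \omega_u^{n-1} \wedge \omega^T$ arising from $d\eta = 2\omega^T$ is a basic form of degree $2n+1$ on $M$ and therefore vanishes identically---gives
\[
n\int_M \chi(\dot u)\, dd^c_B \dot u \wedge \omega_u^{n-1}\wedge \eta = -\int_M \chi'(\dot u)\,|\nabla \dot u|^2_{\omega_u}\, \omega_u^n \wedge \eta.
\]
Combining with the chain rule and the $\epsilon$-geodesic equation $(\ddot u_t^\epsilon - |\nabla \dot u_t^\epsilon|^2_{\omega_{u_t^\epsilon}})\omega_{u_t^\epsilon}^n \wedge \eta = \epsilon (\omega^T)^n \wedge \eta$ yields
\[
I'(t) = \int_M \chi'(\dot u_t^\epsilon)\bigl(\ddot u_t^\epsilon - |\nabla \dot u_t^\epsilon|^2_{\omega_{u_t^\epsilon}}\bigr) \omega_{u_t^\epsilon}^n \wedge \eta = \epsilon \int_M \chi'(\dot u_t^\epsilon)\,(\omega^T)^n \wedge \eta.
\]
By Lemma \ref{cma10} the quantity $|\dot u_t^\epsilon|$ is uniformly bounded on $M\times [0,1]$ in terms of $\|u_0\|_{C^2},\|u_1\|_{C^2}$, so $|\chi'(\dot u_t^\epsilon)|$ is controlled by a constant depending on $\chi,\|u_0\|_{C^2},\|u_1\|_{C^2}$; hence $|I'(t)|\leq \epsilon C$, and integrating gives $I(t) \geq \max(I(0), I(1)) - \epsilon C$ for every $t \in [0,1]$.

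Next I bound $I(0)$ and $I(1)$ from below pointwise. The $\epsilon$-geodesic equation forces $\ddot u_t^\epsilon = |\nabla \dot u_t^\epsilon|^2_{\omega_{u_t^\epsilon}} + \epsilon (\omega^T)^n/\omega_{u_t^\epsilon}^n > 0$, so $t \mapsto u_t^\epsilon(x)$ is strictly convex for each $x\in M$, which gives
\[
\dot u_0^\epsilon \leq u_1 - u_0 \leq \dot u_1^\epsilon \quad \text{pointwise on } M.
\]
Since $\chi \in \cW^+_p$ is convex, nonnegative, even, satisfies $\chi(0)=0$, and is increasing in $|\cdot|$, a case split on the sign of $u_1-u_0$ shows
\[
\chi(\dot u_0^\epsilon) \geq \chi(\min(u_1-u_0, 0)) \quad \text{and} \quad \chi(\dot u_1^\epsilon) \geq \chi(\min(u_0-u_1, 0))
\]
pointwise on $M$. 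Integrating the first inequality against $\omega_{u_0}^n \wedge \eta$ and the second against $\omega_{u_1}^n \wedge \eta$, and combining with the previous step, finishes the proof.

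The main obstacle will be making the integration by parts rigorous in the Sasaki setting and tracking where $d\eta=2\omega^T$ contributes. The crucial observation is that basic $p$-forms on $M$ correspond to $p$-forms on the $2n$-dimensional transverse slice, so they vanish identically for $p \geq 2n+1$; this kills the offending boundary term. Once the identity
\[
n\int_M \chi(\dot u)\, dd^c_B \dot u \wedge \omega_u^{n-1} \wedge \eta = -\int_M \chi'(\dot u)\,|\nabla \dot u|^2_{\omega_u}\, \omega_u^n \wedge \eta
\]
is recorded, the rest is a routine Sasaki transcription of Darvas' K\"ahler computation.
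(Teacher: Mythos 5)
Your proof is correct and follows essentially the same route the paper takes, namely the direct computation of Darvas's Lemma 3.8 (differentiate $I(t)$ along the $\epsilon$-geodesic, integrate by parts to produce $\epsilon\int_M\chi'(\dot u_t^\epsilon)(\omega^T)^n\wedge\eta$, and bound $I(0),I(1)$ from below via $t$-convexity and the evenness/monotonicity of $\chi$), which the paper simply cites without writing out. You also correctly identify and dispose of the only genuinely Sasaki-specific issue, the extra term coming from $d\eta=2\omega^T$, which vanishes because it is a basic form of degree $2n+1$.
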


\begin{proof}This follows exactly as in K\"ahler setting \cite{D4}[Lemma 3.8],  by a direct computation and the convexity of $\chi$. 
\end{proof}

\begin{lemma}\label{derivativeofsmoothgeodesics}
Suppose $\chi \in \mathcal{W}_p^+ \cap C^{\infty}(\mathbb{R})$ and $u_0,u_1 \in \mathcal{H},u_0 \neq u_1$.Then there exists a constant $\epsilon_0$ depends on $u_0, u_1$ such that for all $\epsilon \in (0, \epsilon_0]$ the $\epsilon$-geodesic $[0,1] \owns t \rightarrow u_{t}^{\epsilon} \in \mathcal{H} $ connecting $u_0,u_1$ satisfies:
\begin{equation}
\frac{d}{dt} ||\dot{u}_t^{\epsilon}||_{\chi, u_t^{\epsilon}}=\epsilon \frac{\int_M \chi'(\frac{\dot{u}_t^{\epsilon}}{||\dot{u}_t^{\epsilon}||_{\chi,\dot{u}_t^{\epsilon}}})(\omega^T)^n \wedge \eta}{\int_M \frac{\dot{u}_t^{\epsilon}}{||\dot{u}_t^{\epsilon}||_{\chi,\dot{u}_t^{\epsilon}}} \chi'(\frac{\dot{u}_t^{\epsilon}}{||\dot{u}_t^{\epsilon}||_{\chi,\dot{u}_t^{\epsilon}}}) \omega_{u_t^{\epsilon}} \wedge \eta_{u_t^{\epsilon}} },\quad t \in [0,1].
\end{equation}
\end{lemma}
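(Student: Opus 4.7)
The plan is to derive this identity by specializing Proposition \ref{derivativeoflength} to the vector field $f_t = \dot u_t^{\epsilon}$ along the $\epsilon$-geodesic, with the key input being the $\epsilon$-geodesic equation \eqref{cma02}. The only genuine issue is ensuring $\dot u_t^{\epsilon} \not\equiv 0$ for every $t \in [0,1]$ (uniformly in small $\epsilon$), so that both $\|\dot u_t^{\epsilon}\|_{\chi,u_t^{\epsilon}} > 0$ and the denominator in Proposition \ref{derivativeoflength} is nonzero; once this is in place the rest is a direct substitution.

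First, I would recall from the formula for the Levi-Civita connection of the Mabuchi-type metric (with the complex-gradient normalization fixed in the remark preceding Lemma \ref{triangle100}) that
\begin{equation*}
\nabla_{\dot u_t^{\epsilon}} \dot u_t^{\epsilon} \;=\; \ddot u_t^{\epsilon} - |\nabla \dot u_t^{\epsilon}|^2_{\omega_{u_t^{\epsilon}}}.
\end{equation*}
The $\epsilon$-geodesic equation \eqref{cma02} (with the $1/4$ absorbed into the complex gradient convention) then reads
\begin{equation*}
\bigl(\ddot u_t^{\epsilon} - |\nabla \dot u_t^{\epsilon}|^2_{\omega_{u_t^{\epsilon}}}\bigr)\,\omega_{u_t^{\epsilon}}^n \wedge \eta \;=\; \epsilon\,(\omega^T)^n \wedge \eta,
\end{equation*}
so that pointwise on $M \times [0,1]$,
\begin{equation*}
\nabla_{\dot u_t^{\epsilon}} \dot u_t^{\epsilon} \;=\; \epsilon\,\frac{(\omega^T)^n \wedge \eta}{\omega_{u_t^{\epsilon}}^n \wedge \eta}.
\end{equation*}

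Next I would verify the nondegeneracy needed to invoke Proposition \ref{derivativeoflength}. By Lemma \ref{kai length estimate} applied to the smooth weight $\chi$, for every $t \in [0,1]$,
\begin{equation*}
\int_M \chi(\dot u_t^{\epsilon})\,\omega_{u_t^{\epsilon}}^n \wedge \eta \;\geq\; \int_M \chi\bigl(\min(u_1 - u_0, 0)\bigr)\,\omega_{u_0}^n \wedge \eta - \epsilon\,C.
\end{equation*}
Because $u_0 \neq u_1$ and $\chi \in \cW_p^+$ is strictly positive away from $0$, the first term on the right is a strictly positive constant $c_0 = c_0(u_0,u_1,\chi)$. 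Choose $\epsilon_0 = c_0/(2C)$. Then for all $\epsilon \in (0,\epsilon_0]$ and all $t \in [0,1]$ the integral above is bounded below by $c_0/2$, which forces $\|\dot u_t^{\epsilon}\|_{\chi,u_t^{\epsilon}} > 0$ (and in particular $\dot u_t^{\epsilon} \not\equiv 0$, so that the $\chi'$-weighted denominator appearing in Proposition \ref{derivativeoflength} is strictly positive, since $s\mapsto s\chi'(s)$ is nonnegative and vanishes only at $s=0$).

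Finally, substituting $f_t = \dot u_t^{\epsilon}$ and the formula for $\nabla_{\dot u_t^{\epsilon}}\dot u_t^{\epsilon}$ into Proposition \ref{derivativeoflength}, together with $d\mu_{u_t^{\epsilon}} = \omega_{u_t^{\epsilon}}^n \wedge \eta$, the factor $\omega_{u_t^{\epsilon}}^n \wedge \eta$ in the numerator cancels against the measure ratio and yields
\begin{equation*}
\frac{d}{dt}\|\dot u_t^{\epsilon}\|_{\chi,u_t^{\epsilon}} \;=\; \epsilon\,\frac{\displaystyle\int_M \chi'\!\Bigl(\tfrac{\dot u_t^{\epsilon}}{\|\dot u_t^{\epsilon}\|_{\chi,u_t^{\epsilon}}}\Bigr)\,(\omega^T)^n \wedge \eta}{\displaystyle\int_M \tfrac{\dot u_t^{\epsilon}}{\|\dot u_t^{\epsilon}\|_{\chi,u_t^{\epsilon}}}\chi'\!\Bigl(\tfrac{\dot u_t^{\epsilon}}{\|\dot u_t^{\epsilon}\|_{\chi,u_t^{\epsilon}}}\Bigr)\,\omega_{u_t^{\epsilon}}^n \wedge \eta},
\end{equation*}
which is the desired identity. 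The only step requiring care is the lower bound on $\|\dot u_t^{\epsilon}\|_{\chi,u_t^{\epsilon}}$; everything else is a bookkeeping calculation in the transverse K\"ahler setting that mirrors the K\"ahler argument of \cite{D4}, since only the transverse K\"ahler structure enters.
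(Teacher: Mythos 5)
Your proposal is correct and follows essentially the same route as the paper: the identity is obtained by specializing Proposition \ref{derivativeoflength} to $f_t=\dot u_t^{\epsilon}$ and using the $\epsilon$-geodesic equation to replace $\nabla_{\dot u_t^{\epsilon}}\dot u_t^{\epsilon}\,d\mu_{u_t^{\epsilon}}$ by $\epsilon(\omega^T)^n\wedge\eta$, the only point needing care being $\dot u_t^{\epsilon}\not\equiv 0$ for small $\epsilon$. The paper secures that nondegeneracy by citing the $L^2$ lower bound from Lemma \ref{distance}, whereas you use Lemma \ref{kai length estimate}; this is exactly the argument the paper itself deploys in Proposition \ref{tangentestimate}(i), so the difference is immaterial.
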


\begin{proof}
If we choose $\epsilon_0>0$ sufficiently small, then $\dot u^\epsilon_t$ is not identically zero for any $t\in [0, 1]$, if $\epsilon \in (0, \epsilon_0]$, given $u_0\neq u_1$, see Lemma \ref{distance01}. Then the results follows from Proposition \ref{derivativeoflength}.
\end{proof}

We have the following, similar to Lemma \ref{distance01} (for $d_2$), 
\begin{prop}\label{tangentestimate}
Suppose $\chi \in \mathcal{W}_p^+ \cap C^{\infty}(\mathbb{R})$ and $u_0,u_1 \in \mathcal{H},u_0 \neq u_1$.Then there exists $\epsilon_0>0$ such that for any $\epsilon \in (0,\epsilon_0]$ the $\epsilon$-geodesic $[0,1] \owns t \rightarrow u_{t}^{\epsilon} \in \mathcal{H} $  connecting $u_0,u_1$ satisfies
\begin{itemize}
\item[(i)] $||\dot{u}_t^{\epsilon}||_{\chi,\dot{u}_t^{\epsilon}}>R_0,t \in [0,1]$;
\item[(ii)] $|\frac{d}{dt} ||\dot{u}_t^{\epsilon}||_{\chi,\dot{u}_t^{\epsilon}}| \leq \epsilon R_1,t \in [0,1]$.
\end{itemize}
where $\epsilon_0,R_0,R_1$ depends on upper bounds for $||u_0||_{C^2(M)},||u_1||_{C^2(M)}$ and lower bounds for $||\chi(u_1-u_0)||_{L^1((\omega^T)^n \wedge \eta)},\frac{\omega_{u_0}^n \wedge \eta_{u_0}}{ (\omega^T)^n\wedge \eta}$ and $\frac{\omega_{u_1}^n \wedge \eta_{u_1}}{ (\omega^T)^n\wedge \eta}$.
\end{prop}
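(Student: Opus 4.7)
The plan is to derive (i) from Lemma \ref{kai length estimate} via a standard Luxemburg-norm comparison, and to deduce (ii) from Lemma \ref{derivativeofsmoothgeodesics} by bounding its numerator and denominator separately using (i) together with the $C^0$-bound on $\dot u^\epsilon_t$ supplied by Lemma \ref{cma10}.

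For (i), I would first observe that $u_0 \neq u_1$ forces at least one of the basic sets $\{u_0 < u_1\}$, $\{u_1 < u_0\}$ to have positive $(\omega^T)^n \wedge \eta$-measure. The hypothesised pointwise lower bounds on the Radon--Nikodym densities $\omega^n_{u_0}\wedge\eta/(\omega^T)^n\wedge\eta$ and $\omega^n_{u_1}\wedge\eta/(\omega^T)^n\wedge\eta$, together with $\chi>0$ away from the origin, then imply that
\[
c_1 := \max\left(\int_M \chi(\min(u_1-u_0,0))\,\omega^n_{u_0}\wedge\eta,\ \int_M \chi(\min(u_0-u_1,0))\,\omega^n_{u_1}\wedge\eta\right)>0,
\]
with $c_1$ depending only on the data in the statement. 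Shrinking $\epsilon_0$ to absorb the $\epsilon C$ error in Lemma \ref{kai length estimate} yields $\int_M \chi(\dot u^\epsilon_t)\,\omega^n_{u^\epsilon_t}\wedge\eta \geq c_1/2$ uniformly in $t \in [0,1]$ and $\epsilon \in (0,\epsilon_0]$. Convexity of $\chi$ with $\chi(0)=0$ makes $\chi(s)/|s|$ non-decreasing in $|s|$, so that $\chi(\dot u^\epsilon_t/r) \geq \chi(\dot u^\epsilon_t)/r$ pointwise for any $r \in (0,1]$. Writing $r := \|\dot u^\epsilon_t\|_{\chi, u^\epsilon_t}$ and using the defining Luxemburg identity $\int_M \chi(\dot u^\epsilon_t/r)\,\omega^n_{u^\epsilon_t}\wedge\eta = \chi(1)$ (we are under the normalization $\text{Vol}(M)=1$) forces $r \geq c_1/(2\chi(1))$ whenever $r \leq 1$, producing the uniform lower bound $R_0 := \min\{1,\ c_1/(2\chi(1))\}$.

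For (ii), set $w^\epsilon_t := \dot u^\epsilon_t/\|\dot u^\epsilon_t\|_{\chi, u^\epsilon_t}$. The denominator in Lemma \ref{derivativeofsmoothgeodesics} equals $\int_M w^\epsilon_t\,\chi'(w^\epsilon_t)\,\omega^n_{u^\epsilon_t}\wedge\eta$; applying the convex-function inequality $s\chi'(s) \geq \chi(s)$ together with the Luxemburg identity $\int_M \chi(w^\epsilon_t)\,\omega^n_{u^\epsilon_t}\wedge\eta = \chi(1)$ bounds it from below by $\chi(1)>0$. For the numerator, Lemma \ref{cma10} gives a uniform pointwise bound $|\dot u^\epsilon_t| \leq C_1$, which combined with (i) yields $|w^\epsilon_t| \leq C_1/R_0$; since $\chi'$ is continuous, $|\chi'|$ is bounded on $[-C_1/R_0,\, C_1/R_0]$ by some constant $M_1 = M_1(\chi, C_1, R_0)$, so the numerator is at most $M_1$. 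The explicit factor of $\epsilon$ in Lemma \ref{derivativeofsmoothgeodesics} then gives
\[
\left|\frac{d}{dt}\|\dot u^\epsilon_t\|_{\chi, u^\epsilon_t}\right| \leq \epsilon\, \frac{M_1}{\chi(1)},
\]
which is (ii) with $R_1 := M_1/\chi(1)$.

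I expect the main obstacle to be (i), specifically translating the qualitative non-degeneracy $u_0 \neq u_1$ into an effective, $t$- and $\epsilon$-uniform lower bound on $\|\dot u^\epsilon_t\|_{\chi, u^\epsilon_t}$ depending only on the listed auxiliary data. This is where the density hypotheses enter crucially: without them, the sets on which $u_0$ and $u_1$ differ could collapse in the endpoint Monge--Amp\`ere measures, rendering the right-hand side of Lemma \ref{kai length estimate} too small to be useful and breaking the chain of estimates. Once (i) provides such a quantitative lower bound, part (ii) is essentially a direct calculation, since (i) is exactly what is needed to control the denominator of the differentiation formula from below and confine the argument of $\chi'$ to a fixed compact interval.
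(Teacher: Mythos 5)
Your proposal is correct and follows essentially the same route as the paper: part (i) comes from Lemma \ref{kai length estimate} plus a lower bound for the Luxemburg norm in terms of $\int_M\chi(\dot u^\epsilon_t)\,\omega^n_{u^\epsilon_t}\wedge\eta$ (the paper quotes Darvas's inequality (1.11), while you re-derive the needed half from convexity of $\chi$ with $\chi(0)=0$), and part (ii) bounds the denominator of Lemma \ref{derivativeofsmoothgeodesics} below by $\chi(1)$ via $s\chi'(s)\geq\chi(s)$ (equivalently the Young identity used in the paper) and the numerator above using the uniform $C^0$ bound on $\dot u^\epsilon_t$ together with (i).
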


\begin{proof}
\begin{itemize}
\item[(i)] Recall the equation $(1.11)$ in \cite{D4}
\begin{equation*}
||f||_{\chi,\mu} \geq \min\{\frac{\int_{\Omega} \chi(f)d\mu}{\chi(1)} ,(\frac{\int_{\Omega} \chi(f)d\mu}{\chi(1)})^{\frac{1}{p}}\}
\end{equation*}
and Lemma \ref{kai length estimate}, the estimate in (i) follows immediately.
\item[(ii)] Choose $\epsilon_0$ small so that Lemma\ref{derivativeofsmoothgeodesics} applies. Recall the Young identity
\begin{equation*}
\chi(a)+\chi^*(\chi'(a))=a\chi'(a),a, b \in \mathbb{R},\chi'(a) \in \partial \chi(a)
\end{equation*}
Then we have
\begin{equation}
\begin{split}
|\frac{d}{dt} ||\dot{u}_t^{\epsilon}||_{\chi, u_t^{\epsilon}}| &=\epsilon \frac{|\int_M \chi'(\frac{\dot{u}_t^{\epsilon}}{||\dot{u}_t^{\epsilon}||_{\chi,\dot{u}_t^{\epsilon}}})(\omega^T)^n \wedge \eta|}{\int_M \frac{\dot{u}_t^{\epsilon}}{||\dot{u}_t^{\epsilon}||_{\chi,\dot{u}_t^{\epsilon}}} \chi'(\frac{\dot{u}_t^{\epsilon}}{||\dot{u}_t^{\epsilon}||_{\chi,\dot{u}_t^{\epsilon}}}) \omega_{u_t^{\epsilon}} \wedge \eta_{u_t^{\epsilon}} }   \\
&=\epsilon \frac{|\int_M \chi'(\frac{\dot{u}_t^{\epsilon}}{||\dot{u}_t^{\epsilon}||_{\chi,\dot{u}_t^{\epsilon}}})(\omega^T)^n \wedge \eta|}{\chi(1)+\int_M \chi^*(\chi'(\frac{\dot{u}_t^{\epsilon}}{||\dot{u}_t^{\epsilon}||_{\chi,\dot{u}_t^{\epsilon}}})) \omega_{u_t^{\epsilon}} \wedge \eta_{u_t^{\epsilon}} }  \\
&\leq \frac{\epsilon}{\chi(1)} |\int_M \chi'(\frac{\dot{u}_t^{\epsilon}}{||\dot{u}_t^{\epsilon}||_{\chi,\dot{u}_t^{\epsilon}}})(\omega^T)^n \wedge \eta|
\end{split}
\end{equation}
Then the estimates (ii) follows from (i) and the fact that $\dot{u}_t^{\epsilon}$ is uniformly bounded  in terms of $||u_0||_{C^2(M)},||u_1||_{C^2(M)}$.
\end{itemize}
\end{proof}
Next we are ready to prove the triangle inequality, as in Lemma \ref{triangle01} for $d_2$ and \cite{D2}[Proposition 3.4] in K\"ahler setting, 
\begin{prop}\label{triangleinequality}
Suppose $\chi \in \mathcal{W}_p^+ \cap C^{\infty}(\mathbb{R})$ ,$\psi_s \in \mathcal{H}$ is a smooth curve,$\phi \in \mathcal{H}\backslash{\psi([0,1])}$ and $\epsilon>0$.$u^{\epsilon} \in C^{\infty}([0,1] \times [0,1] \times M)$ is the smooth function for which $t \rightarrow u_t^{\epsilon}(.,s)=u^{\epsilon}(t,s,.)$ is the $\epsilon$-geodesic connecting $\phi$ and $\psi_s$. There exists $\epsilon_0(\phi,\psi)>0$ such that for any $\epsilon \in (0,\epsilon_0)$ the following holds:
\begin{equation*}
l_{\chi}(u_t^{\epsilon}(.,0)) \leq l_{\chi}(\psi_s)+l_{\chi}(u_t^{\epsilon}(.,1)) +\epsilon R
\end{equation*}
for some $R(\phi,\psi,\chi,\epsilon_0)>0$ independent of $\epsilon$.
\end{prop}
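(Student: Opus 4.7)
The plan is to follow Darvas's argument \cite{D4}[Proposition 3.10] adapted to the Sasaki setting. Because all tangent vectors are basic functions on $M$ and only the transverse K\"ahler geometry enters the formulas, the formal manipulations transfer essentially verbatim; the Sasaki-side input is the a priori control on $\epsilon$-geodesics supplied by Lemma \ref{cma10} and Proposition \ref{tangentestimate}.

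First I would reduce the length comparison to a tangent-norm comparison at the endpoint $t=1$. By Proposition \ref{tangentestimate}(ii), for $\epsilon\in(0,\epsilon_0]$ the $t$-derivative of $t\mapsto\|\dot u_t^\epsilon(\cdot,s)\|_{\chi, u_t^\epsilon(\cdot,s)}$ is bounded by $\epsilon R_1$, with $R_1$ uniform in $s$. Setting $\alpha(s):=\|\dot u_1^\epsilon(\cdot,s)\|_{\chi,\psi_s}$ and using $u_1^\epsilon(\cdot,s)=\psi_s$, this yields
\begin{equation*}
\bigl|\,l_\chi(u_t^\epsilon(\cdot,s))-\alpha(s)\,\bigr|\le\epsilon R_1,\qquad s\in[0,1],
\end{equation*}
so it suffices to prove $\alpha(0)\le\alpha(1)+l_\chi(\psi_s)+\epsilon R_2$ for some $R_2$ independent of $\epsilon$.

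Next I would differentiate $\alpha$. Applying Proposition \ref{derivativeoflength} to the curve $s\mapsto\psi_s$ and the vector field $f_s:=\dot u_1^\epsilon(\cdot,s)$ along it yields
\begin{equation*}
\alpha'(s)=\frac{\int_M\chi'(f_s/\alpha(s))\,\nabla_{\dot\psi_s}f_s\,d\mu_{\psi_s}}{\int_M\chi'(f_s/\alpha(s))(f_s/\alpha(s))\,d\mu_{\psi_s}}.
\end{equation*}
The Young identity $a\chi'(a)=\chi(a)+\chi^\ast(\chi'(a))$ and the definition of $\alpha(s)$ force denominator $\ge\chi(1)$, while H\"older's inequality for Orlicz spaces applied to the numerator gives $|\alpha'(s)|\le\|\nabla_{\dot\psi_s}f_s\|_{\chi,\psi_s}$. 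Since the Mabuchi-type connection is torsion-free and partial derivatives commute on the two-parameter family $u^\epsilon(t,s,\cdot)$, one has $\nabla_{\partial_s}\partial_t u^\epsilon=\nabla_{\partial_t}\partial_s u^\epsilon$; at $t=1$ this reads $\nabla_{\dot\psi_s}f_s=\nabla_{\partial_t}J|_{t=1}$, where $J(t):=\partial_s u^\epsilon(t,s,\cdot)$ satisfies $J(0)=0$ and $J(1)=\dot\psi_s$. Granted the Orlicz estimate $\|\nabla_{\partial_t}J|_{t=1}\|_{\chi,\psi_s}\le\|\dot\psi_s\|_{\chi,\psi_s}+\epsilon R_3$, integration of $|\alpha'(s)|$ in $s$ combined with the reduction of the previous paragraph yields the proposition with $R=2R_1+R_3$.

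The main obstacle is this Jacobi-field estimate. The variation field $J(t)$ satisfies a linear Jacobi-type equation obtained by differentiating the $\epsilon$-geodesic equation $\ddot u^\epsilon-|\nabla\dot u^\epsilon|^2_{\omega_{u^\epsilon}}=\epsilon(\omega^T)^n/\omega_{u^\epsilon}^n$ in $s$, with an $O(\epsilon)$ inhomogeneous term. A maximum-principle argument on this equation, together with the uniform $C^{1,\bar1}$ bounds for $u^\epsilon$ from \eqref{cma07}, transfers the endpoint datum $J(1)=\dot\psi_s$ to the required Orlicz bound on $\nabla_{\partial_t}J|_{t=1}$, with $\epsilon$-errors absorbed into the additive $\epsilon R$ term. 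This is the Sasaki analogue of Darvas's key computation and is the only place where the $\epsilon$-perturbation plays a nontrivial role; in the transverse direction everything is local and identical to the K\"ahler case.
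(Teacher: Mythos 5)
Your reduction in the first two steps is fine, and your use of Proposition \ref{derivativeoflength} together with the Young/Orlicz--H\"older bound for $|\alpha'(s)|$ matches the mechanics of the paper's argument. The proof breaks down at the Jacobi-field estimate $\|\nabla_{\p_t}J|_{t=1}\|_{\chi,\psi_s}\leq\|\dot\psi_s\|_{\chi,\psi_s}+\epsilon R_3$ with $J(0)=0$, $J(1)=\dot\psi_s$. This inequality goes the wrong way for a nonpositively curved metric, and the Mabuchi-type metric on $\cH$ is nonpositively curved (its curvature is a Poisson-bracket expression with a sign). In the model case of a surface of curvature $-1$, the Jacobi field vanishing at $t=0$ is $J(t)=\sinh(t)E(t)$, so $\|\nabla_tJ(1)\|=\cosh(1)>\sinh(1)=\|J(1)\|$: the endpoint derivative exceeds the endpoint value by a multiplicative factor depending on the curvature and the length of the geodesic, not by an additive $O(\epsilon)$. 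No maximum principle for the linearized ($\epsilon$-perturbed) geodesic equation will produce the bound you need, because a maximum principle controls $J$ from its boundary data, not the transversal derivative $\nabla_tJ$ at an endpoint. So Step 4/5 is a genuine gap, and it is the heart of the matter rather than a technicality.

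The paper (following Darvas) avoids this entirely by never isolating the $t=1$ slice. It keeps the full expression $\frac{d}{ds}l_\chi(u^\epsilon_t(\cdot,s))=\int_0^1\frac{d}{ds}\|\dot u\|_{\chi,u}\,dt$, uses torsion-freeness to replace $\nabla_{\p_su}\dot u$ by $\nabla_{\dot u}\p_su$, and then integrates by parts in $t$: the commutator terms involve $\nabla_{\dot u}\dot u$, which equals $\epsilon(\omega^T)^n/\omega_{u}^n$ by the $\epsilon$-geodesic equation, and $\frac{d}{dt}\|\dot u\|_{\chi,u}$, which is $O(\epsilon)$ by Proposition \ref{tangentestimate}(ii). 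Hence the integrand is, up to $\epsilon R$, a total $t$-derivative, and only the boundary terms survive: the one at $t=0$ vanishes because $\p_su|_{t=0}=0$ (all $\epsilon$-geodesics start at $\phi$), and the one at $t=1$ is bounded below by $-\|\dot\psi_s\|_{\chi,\psi_s}$ via Young's inequality. In other words, the only ``Jacobi-field information'' ever used is an identity for the $t$-integral of $\int_M\chi'(\cdot)\nabla_{\dot u}\p_su\,d\mu_{u_t}$, not a pointwise-in-$t$ Orlicz-norm bound on $\nabla_t\p_su$. If you replace your Steps 4--5 by this integration-by-parts argument you recover the paper's proof.
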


\begin{proof}
Fix $s \in [0,1]$. By Proposition \ref{derivativeoflength} and Proposition \ref{tangentestimate}, there exists a constant $\epsilon_0(\phi,\psi)>0$ such that for $\epsilon \in (0,\epsilon_0)$
\begin{equation*}
\begin{split}
\frac{d}{ds}l_{\chi}(u_t(.,s)) &=\int_0^1 \frac{d}{ds}||\dot{u}(t,s,.)||_{\chi,u(t,s,.)}dt  \\
                                           & =\int_0^1 \frac{\int_M \chi'(\frac{\dot{u}}{||\dot{u}||_{\chi,u}})\nabla_{\frac{du}{ds}} \dot{u} d\mu_{u_t}}{\int_M\chi'(\frac{\dot{u}}{||\dot{u}||_{\chi,u}}) \frac{\dot{u}}{||\dot{u}||_{\chi,u}}d\mu_{u_t}}dt  \\
                                           &=\int_0^1 \frac{\int_M \chi'(\frac{\dot{u}}{||\dot{u}||_{\chi,u}})\nabla_{\frac{du}{ds}} \dot{u} d\mu_{u_t}}{\chi(1)+\int_M\chi^*(\chi'(\frac{\dot{u}}{||\dot{u}||_{\chi,u}}) )d\mu_{u_t}}dt    \\
                                           &=\int_0^1 \frac{\frac{d}{dt}\int_M \chi'(\frac{\dot{u}}{||\dot{u}||_{\chi,u}})\frac{du}{ds} d\mu_{u_t}-\int_M\frac{du}{ds}\nabla_{\dot{u}}(\chi'(\frac{\dot{u}}{||\dot{u}||_{\chi,u}}))d\mu_{u_t}}{\chi(1)+\int_M\chi^*(\chi'(\frac{\dot{u}}{||\dot{u}||_{\chi,u}}) )d\mu_{u_t}}dt 
\end{split}    
\end{equation*}

Moreover we have
\begin{equation}
\nabla_{\dot{u}}(\chi'(\frac{\dot{u}}{||\dot{u}||_{\chi,u}}))d\mu_{u_t}=\chi''(\frac{\dot{u}}{||\dot{u}||_{\chi,u}})(\frac{\nabla_{\dot{u}}\dot{u}}{||\dot{u}||_{\chi,u}}-\frac{\dot{u}}{||\dot{u}||_{\chi,u}^2}\frac{d}{dt}||\dot{u}||_{\chi,u})d\mu_{u_t}
\end{equation}
It follows from Proposition \ref{tangentestimate} that  $||\dot{u}||_{\chi,t}$ is uniformly bounded away from zero and both $\nabla_{\dot{u}}\dot{u}d\mu_{u_t}$ and $\frac{d}{dt} ||\dot{u}||_{\chi, u}$ are uniformly bounded by the form $\epsilon R$, where $R$ is uniformly bounded.
Moreover $\dot{u},\frac{du}{ds}$ are uniformly bounded  independent of $\epsilon$ \cite{GZ1}[Lemma 14]. Hence
\begin{equation*}
\frac{d}{ds}l_{\chi}(u_t(.,s))=\int_0^1 \frac{\frac{d}{dt}\int_M \chi'(\frac{\dot{u}}{||\dot{u}||_{\chi,u}})\frac{du}{ds} d\mu_{u_t}}{\chi(1)+\int_M\chi^*(\chi'(\frac{\dot{u}}{||\dot{u}||_{\chi,u}}) )d\mu_{u_t}}dt +\epsilon R
\end{equation*}
where $R$ is  uniform bounded independent of $\epsilon$.

Recall that $\chi*'(\chi'(l))=l$ for $l \in \mathbb{R}$,the expression
\begin{equation*}
\frac{d}{dt} (\chi(1)+\int_M\chi^*(\chi'(\frac{\dot{u}}{||\dot{u}||_{\chi,u}}) )d\mu_{u_t})=\int_M \frac{\dot{u}}{||\dot{u}||_{\chi,u}} \chi''(\frac{\dot{u}}{||\dot{u}||_{\chi,u}}) \nabla_{\dot{u}}(\frac{\dot{u}}{||\dot{u}||_{\chi, u}})d\mu_{u_t}
\end{equation*}
is a term of type $\epsilon R$.Hence we can write

\begin{equation*}
\begin{split}
\frac{d}{ds}l_{\chi}(u_t(.,s)) &=\int_0^1 \frac{d}{dt}\frac{\int_M \chi'(\frac{\dot{u}}{||\dot{u}||_{\chi,u}})\frac{du}{ds} d\mu_{u_t}}{\chi(1)+\int_M\chi^*(\chi'(\frac{\dot{u}}{||\dot{u}||_{\chi,u}}) )d\mu_{u_t}}dt +\epsilon R   \\
                                          &= \frac{\int_M \chi'(\frac{\dot{u}(1,s)}{||\dot{u}(1,s)||_{\chi,\psi}})\frac{d\psi}{ds} d\mu_{\psi}}{\chi(1)+\int_M\chi^*(\chi'(\frac{\dot{u}(1,s)}{||\dot{u}(1,s)||_{\chi,\psi}}) )d\mu_{\psi}}          \\
                                          &\geq -||\frac{d\psi}{ds}||_{\chi,\psi}+\epsilon R                                     
\end{split}
\end{equation*}
where the last line follows from the Young inequality
\begin{equation*}
\chi(a)+\chi^*(b) \geq ab ,a,b \in \mathbb{R} 
\end{equation*}
The integration of the above inequality with respect to $s \in [0,1]$ yields the desired inequality.

\end{proof}

Now we are ready to prove Theorem \ref{darvas1}. Certainly the proof follows closely Darvas's result in K\"ahler setting \cite{D2}[Section 3].  
\begin{proof}
First we show that for $u_0,u_1 \in \mathcal{H}$ and the weak $C^{1,\overline{1}}$-geodesic $u_t$ connecting $u_0,u_1$
\begin{equation}
d_{\chi}(u_0,u_1)=l_{\chi}(u_t)
\end{equation}

We assume $u_0 \neq u_1$. We first assume $\chi \in C^{\infty}(\mathbb{R})$. Recall that, by Guan-Zhang \cite{GZ1}, $\epsilon$-geodesics $u_t^{\epsilon}$ connecting $u_0,u_1$ converge to the weak $C^{1,\overline{1}}_B$ geodesic $u_t$ in $C^{1,\alpha}$. Hence $\dot{u}_t^{\epsilon}$ converges uniformly to $\dot{u}_t$.
\begin{clm}
$||\dot{u}_t^{\epsilon}||_{\chi, u_t^{\epsilon}} \rightarrow ||\dot{u}_t^{\epsilon}||_{\chi, u_t}$ as $\epsilon \rightarrow 0$.
\end{clm}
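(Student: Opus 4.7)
The plan is to reduce the claim to weak convergence of the transverse Monge--Amp\`ere measures $\omega_{u_t^\epsilon}^n\wedge\eta \to \omega_{u_t}^n\wedge\eta$ combined with the uniform convergence $\dot u_t^\epsilon \to \dot u_t$ on $M$. Both inputs are already at hand: Lemma \ref{cma10} gives a uniform $C^{1,\bar 1}_w$ bound on the $\epsilon$-geodesics, and by Guan--Zhang the $u_t^\epsilon$ converge to the weak $C^{1,\bar 1}_B$ geodesic $u_t$ in $C^{1,\alpha}$, so in particular $\dot u_t^\epsilon$ is uniformly bounded and $\dot u_t^\epsilon\to\dot u_t$ uniformly on $M$.

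Fix $t\in[0,1]$ and $r>0$. The key identity is
\[
\int_M \chi\!\Big(\tfrac{\dot u_t^\epsilon}{r}\Big)\omega_{u_t^\epsilon}^n\wedge\eta - \int_M \chi\!\Big(\tfrac{\dot u_t^\epsilon}{r}\Big)\omega_{u_t}^n\wedge\eta = \int_M \chi\!\Big(\tfrac{\dot u_t^\epsilon}{r}\Big)\big[\omega_{u_t^\epsilon}^n-\omega_{u_t}^n\big]\wedge\eta,
\]
which I would split as
\[
\int_M\!\Big(\chi\!\Big(\tfrac{\dot u_t^\epsilon}{r}\Big)-\chi\!\Big(\tfrac{\dot u_t}{r}\Big)\Big)\big[\omega_{u_t^\epsilon}^n-\omega_{u_t}^n\big]\wedge\eta \;+\; \int_M \chi\!\Big(\tfrac{\dot u_t}{r}\Big)\big[\omega_{u_t^\epsilon}^n-\omega_{u_t}^n\big]\wedge\eta.
\]
The first term is bounded by $\sup_M|\chi(\dot u_t^\epsilon/r)-\chi(\dot u_t/r)|$ times the uniformly bounded total mass of the difference of measures and thus tends to $0$ by uniform convergence of $\dot u_t^\epsilon$. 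The second term tends to $0$ by weak convergence of the measures against the fixed continuous test function $\chi(\dot u_t/r)$. This shows that for each $r>0$ the two integrals defining $\|\dot u_t^\epsilon\|_{\chi,u_t^\epsilon}$ and $\|\dot u_t^\epsilon\|_{\chi,u_t}$ differ by $o_\epsilon(1)$. Since both integrals are continuous and strictly decreasing in $r$, and each norm is the unique $r$ pinning the corresponding integral to $\chi(1)\mathrm{Vol}(M)$, a standard monotonicity argument transfers the $o_\epsilon(1)$ error to the norms themselves, provided that the unique roots stay in a uniformly bounded window away from $0$ and $\infty$. This last uniform control follows from Lemma \ref{distance} together with the uniform $C^{1,\bar 1}_w$ bounds on the $\epsilon$-geodesics: the speeds $\dot u_t^\epsilon$ have uniformly bounded $L^2$-norm that is also uniformly bounded below in $\epsilon$ (for $\epsilon$ small, using $u_0\neq u_1$).

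The main obstacle is therefore the weak convergence $\omega_{u_t^\epsilon}^n\wedge\eta \to \omega_{u_t}^n\wedge\eta$: the family $\{u_t^\epsilon\}$ is neither monotone in $\epsilon$ nor smooth in the limit, so Proposition \ref{weakcon0} does not apply verbatim. I would verify it locally in foliation charts $W_\alpha=(-\delta,\delta)\times V_\alpha$, where each $u_t^\epsilon$ is an $\omega^T_\alpha$-plurisubharmonic function on $V_\alpha$ with uniformly bounded transverse Laplacian, converging in $C^{1,\alpha}$ to $u_t$. Using Lemma \ref{chart} to lift to the K\"ahler cone if desired, the required weak convergence of $(\omega^T_\alpha+dd^c u_t^\epsilon)^n$ reduces to the classical Bedford--Taylor continuity statement for locally bounded psh functions converging in $C^{1,\alpha}$ with uniformly bounded Laplacians, which follows from integration by parts using the uniform $L^p$ bounds on the mixed second derivatives of $u_t^\epsilon$. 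Piecing these local convergences together via a basic partition of unity then yields weak convergence of $\omega_{u_t^\epsilon}^n\wedge\eta$ on all of $M$, which is what we need.
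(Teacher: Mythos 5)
Your argument is correct and rests on the same three inputs as the paper's own proof: uniform convergence of $\dot u_t^\epsilon$ to $\dot u_t$, weak convergence of $\omega_{u_t^\epsilon}^n\wedge\eta$ to $\omega_{u_t}^n\wedge\eta$, and two-sided uniform bounds on the Orlicz norms for small $\epsilon$; the paper merely packages the conclusion differently, extracting a cluster point $N$ of $\|\dot u_t^\epsilon\|_{\chi,u_t^\epsilon}$ and identifying $N=\|\dot u_t\|_{\chi,u_t}$ from $\int_M\chi(\dot u_t/N)\,\omega_{u_t}^n\wedge\eta=\chi(1)$, rather than comparing the defining integrals at each fixed $r$ and transferring by monotonicity. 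Your closing paragraph justifying the weak convergence of the Monge--Amp\`ere measures is a welcome addition, since the paper asserts this without comment and, as you note, Proposition \ref{weakcon0} does not apply verbatim to the non-monotone family $u_t^\epsilon$.
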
 
Recall that $\dot{u}_t^{\epsilon}$ is uniformly bounded  in terms of $||u_0||_{C^2(M)},||u_1||_{C^2(M)}$ and the estimate (i) in Proposition \ref{tangentestimate}, there exist constants $0<C_1<C_2$ such that for sufficiently small $\epsilon>0$
\begin{equation*}
C_1 \leq ||\dot{u}_t^{\epsilon}||_{\chi, u_t^{\epsilon}} \leq C_2
\end{equation*}
Then the claim follows immediately if we can prove the only cluster point of $\{||\dot{u}_t^{\epsilon}||_{\chi, u_t^{\epsilon}}\}_{\epsilon}$ is $||\dot{u}_t||_{\chi, u_t}$.Take a cluster point  $N$, after taking a subsequence,  we can assume that $||\dot{u}_t^{\epsilon}||_{\chi, u_t^{\epsilon}}\rightarrow N$ as $\epsilon \rightarrow 0$.Then $\frac{\dot{u}_t^{\epsilon}}{||\dot{u}_t^{\epsilon}||_{\chi, u_t^{\epsilon}}}$ converges to $\frac{\dot{u}_t}{N}$ uniformly.
Moreover, we have $\omega_{u_t^{\epsilon}}^n \wedge \eta_{u_t^{\epsilon}}$ converges to $\omega_{u_t}^n \wedge \eta_{u_t}$ weakly. Hence
 \begin{equation*}
 \chi(1)=\int_M \chi(\frac{\dot{u}_t^{\epsilon}}{||\dot{u}_t^{\epsilon}||_{\chi, u_t^{\epsilon}}}) \omega_{u_t^{\epsilon}}^n \wedge \eta_{u_t^{\epsilon}} \rightarrow \int_M \chi(\frac{\dot{u}_t}{N}) \omega_{u_t}^n \wedge \eta_{u_t}
  \end{equation*}
Recall $||f||_{\chi,\mu} =\alpha>0$ if and only if $\int_{\Omega} \chi(\frac{f}{\alpha})d\mu=\chi(1)$. Hence $N=||\dot{u}_t||_{\chi, u_t}$.

Then it follows from the dominated convergence theorem that
\begin{equation}\label{smoothapproximation}
\lim\limits_{\epsilon \rightarrow 0} l_{\chi}(u_t^{\epsilon}) =l_{\chi}(u_t)
\end{equation}
and $d_{\chi}(u_0,u_1) \leq l_{\chi}(u_t)$.
Next we show that
\begin{equation}
l_{\chi}(\phi_t) \geq l_{\chi}(u_t)
\end{equation}
for all smooth curves $\phi_t$ in $\mathcal{H}$ connecting $u_0,u_1$.
We can assume that $u_1 \notin \phi([0,1))$ and take $h \in [0,1)$. Applying Proposition \ref{triangleinequality} to the case $\phi=u_1$ and $\psi_s=\phi|_{[0,h]}$, letting  $\epsilon \rightarrow 0$, we can obtain
\begin{equation*}
l_{\chi}(u_{t}) \leq l_{\chi}(\phi_t|_{[0,h]})+l_{\chi}(w_t^h)
\end{equation*}
where $u_{t}$ is the $C^{1,\bar 1}$ geodesic connecting $u_1, u_0$ and $w_t^h$ is the $C^{1,\bar 1}$ geodesic connecting $u_1,\phi_h$.
By Lemma \ref{cma10}, $l_{\chi}(w_t^h) \rightarrow 0$ as $h \rightarrow 1$. Hence $l_{\chi}(\phi_t) \geq l_{\chi}(u_t)$

For the general weight $\chi \in \mathcal{W}_p^+$,
we need to do approximation as in \cite{D2}[Proposition 2.4]. There exists sequence $ \chi_k \in \mathcal{W}_{p_k}^+ \cap C^{\infty}(\mathbb{R})$ such that $\chi_k$ converges to $\chi$ uniformly on compact subsets. Then we have
\begin{equation*}
\int_0^1 ||\dot{\phi}_t||_{\chi_k,\phi_t} dt=l_{\chi_k}(\phi_t) \geq l_{\chi_k}(u_t)=\int_0^1 ||\dot{u}_t||_{\chi_k,u_t} dt
\end{equation*}
and $||\dot{\phi}_t||_{\chi_k,\phi_t} \rightarrow ||\dot{\phi}_t||_{\chi,\phi_t},||\dot{u}_t||_{\chi_k,u_t} \rightarrow ||\dot{u}_t||_{\chi, u_t}$. Moreover,$\dot{u}_t,\dot{\phi}_t$ are uniformly bounded. By the dominated convergence theorem, $l_{\chi}(\phi_t) \geq l_{\chi}(u_t)$.

Recall $l_{\chi}(u_t)=\int_0^1 ||\dot{u}_t||_{\chi,u_t} dt$ and by Lemma \ref{arcparameter} below, we have
\begin{equation*}
d_\chi(u_0, u_1)=\|\dot u_t\|_{\chi, u_t}, t\in [0, 1]\end{equation*}

Suppose $u_0 \neq u_1 \in \mathcal{H}$, take $\epsilon \rightarrow 0$ in the estimate Lemma \ref{kai length estimate} we obtain $\dot{u}_0 \not\equiv 0$ and $d_{\chi}(u_0,u_1)=||\dot{u}_0||_{\chi,u_0} >0$. This implies that $(\mathcal{H},d_{\chi})$ is a metric space.
\end{proof}

\begin{lemma}\label{arcparameter}
Let $u_t$ be the weak $C^{1, \bar 1}_B$-geodesic connecting $u_0,u_1$. Then for any $\chi \in \mathcal{W}_p^+$ and $t_0, t_1 \in [0,1]$ the following hold
\begin{equation}
d_\chi(u_0, u_1)=||\dot{u}_{t_0}||_{\chi, u_{t_0}} =||\dot{u}_{t_1}||_{\chi, u_{t_1}}
\end{equation}
\end{lemma}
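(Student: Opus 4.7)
The plan is to establish the constant-speed property along the weak $C^{1,\bar 1}_B$-geodesic, i.e.\ $\|\dot u_t\|_{\chi,u_t}$ is independent of $t$. Combined with the equality $d_\chi(u_0, u_1) = l_\chi(u_t) = \int_0^1 \|\dot u_t\|_{\chi,u_t}\, dt$ already established in the proof of Theorem \ref{darvas1}, the lemma follows immediately.

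First I would handle the case of smooth weights $\chi \in \cW^+_p \cap C^\infty(\R)$. For the $\epsilon$-geodesic $u_t^\epsilon$ connecting $u_0, u_1$, Proposition \ref{tangentestimate}(ii) guarantees that for all sufficiently small $\epsilon > 0$,
\[
\left|\frac{d}{dt}\|\dot u_t^\epsilon\|_{\chi, u_t^\epsilon}\right| \leq \epsilon R_1 \quad \text{for all } t \in [0,1],
\]
with $R_1$ independent of $\epsilon$. Integrating from $t_0$ to $t_1$ yields
\[
\bigl|\|\dot u_{t_0}^\epsilon\|_{\chi, u_{t_0}^\epsilon} - \|\dot u_{t_1}^\epsilon\|_{\chi, u_{t_1}^\epsilon}\bigr| \leq \epsilon R_1 |t_0 - t_1|.
\]

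Next I would pass to the limit $\epsilon \to 0$. By Guan--Zhang, $u_t^\epsilon \to u_t$ in $C^{1,\alpha}$, so $\dot u_t^\epsilon \to \dot u_t$ uniformly, and by Proposition \ref{weakcon0} (applied to the transverse K\"ahler currents) we have $\omega_{u_t^\epsilon}^n \wedge \eta_{u_t^\epsilon} \to \omega_{u_t}^n \wedge \eta_{u_t}$ weakly for each fixed $t$. The Claim in the proof of Theorem \ref{darvas1} therefore gives $\|\dot u_t^\epsilon\|_{\chi, u_t^\epsilon} \to \|\dot u_t\|_{\chi, u_t}$ pointwise in $t$. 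Letting $\epsilon \to 0$ in the displayed inequality above yields $\|\dot u_{t_0}\|_{\chi, u_{t_0}} = \|\dot u_{t_1}\|_{\chi, u_{t_1}}$ for any $t_0, t_1 \in [0,1]$ when $\chi$ is smooth.

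For a general weight $\chi \in \cW^+_p$, I would approximate $\chi$ by a sequence $\chi_k \in \cW^+_{p_k} \cap C^\infty(\R)$ converging uniformly on compact sets, exactly as in the final step of the proof of Theorem \ref{darvas1} (following \cite{D2}[Proposition 2.4]). Since $\dot u_t$ is uniformly bounded (from the $C^{1,\bar 1}_B$ regularity of the weak geodesic), the defining integrals for $\|\dot u_t\|_{\chi_k, u_t}$ converge to those for $\|\dot u_t\|_{\chi, u_t}$, and passing to the limit in $\|\dot u_{t_0}\|_{\chi_k, u_{t_0}} = \|\dot u_{t_1}\|_{\chi_k, u_{t_1}}$ gives the constant-speed identity for general $\chi$. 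Setting $c := \|\dot u_t\|_{\chi, u_t}$, we then have $l_\chi(u_t) = \int_0^1 c\, dt = c$, and combining with $d_\chi(u_0, u_1) = l_\chi(u_t)$ completes the proof.

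The main technical obstacle is the pointwise convergence $\|\dot u_t^\epsilon\|_{\chi, u_t^\epsilon} \to \|\dot u_t\|_{\chi, u_t}$ as $\epsilon \to 0$, since the Monge--Amp\`ere measures converge only weakly while the norm is defined implicitly through the Young weight; this was already resolved in the Claim within the proof of Theorem \ref{darvas1} by exploiting uniform $C^0$ bounds on $\dot u_t^\epsilon$ together with the uniform positive lower bound on $\|\dot u_t^\epsilon\|_{\chi, u_t^\epsilon}$ from Proposition \ref{tangentestimate}(i), so no new ingredient beyond what has already been developed is needed here.
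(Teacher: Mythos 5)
Your proposal is correct and follows essentially the same route as the paper: integrate the bound $|\frac{d}{dt}\|\dot u_t^\epsilon\|_{\chi,u_t^\epsilon}|\leq \epsilon R_1$ from Proposition \ref{tangentestimate}(ii), pass to the limit $\epsilon\to 0$ using the convergence $\|\dot u_{t_i}^\epsilon\|_{\chi,u_{t_i}^\epsilon}\to\|\dot u_{t_i}\|_{\chi,u_{t_i}}$ established in the Claim of Theorem \ref{darvas1}, and conclude. The only difference is that you explicitly carry out the approximation of a general $\chi\in\cW_p^+$ by smooth weights, a step the paper's proof leaves implicit; this is a welcome bit of extra care rather than a different argument.
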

\begin{proof}
It had been shown that for $\epsilon$-geodesics $u^{\epsilon}_t$ joining $u_0,u_1$, we have
\begin{equation*}
||\dot{u}_{t_0}^{\epsilon}||_{\chi, u_{t_0}^{\epsilon}} \rightarrow ||\dot{u}_{t_0}||_{\chi, u_{t_0}}, ||\dot{u}_{t_1}^{\epsilon}||_{\chi, u_{t_1}^{\epsilon}} \rightarrow ||\dot{u}_{t_1}||_{\chi, u_{t_1}}\end{equation*} 
as $\epsilon \rightarrow 0$.
Proposition \ref{tangentestimate} implies that
\begin{equation*}
| ||\dot{u}_{t_0}^{\epsilon}||_{\chi, u_{t_0}^{\epsilon}}- ||\dot{u}_{t_1}^{\epsilon}||_{\chi, u_{t_1}^{\epsilon}}| \leq |t_0-t_1|\epsilon R_1
\end{equation*}
Then taking $\epsilon \rightarrow 0$  we have $||\dot{u}_{t_0}||_{\chi, u_{t_0}} =||\dot{u}_{t_1}||_{\chi, u_{t_1}}$.
\end{proof}

Finally, we have the following triangle inequality,

\begin{lemma}\label{triangle200}For $u, v, w\in \cH$, $\chi\in \cW^+_p, p\geq 1$,
\[
d_\chi(u, w)\leq d_\chi(u, v)+d_\chi(v, w). 
\]
\end{lemma}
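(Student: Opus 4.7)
The plan is to deduce the triangle inequality from Proposition \ref{triangleinequality} by letting $\epsilon\to 0$, mirroring how Guan--Zhang derived the $d_2$ triangle inequality (Theorem \ref{triangle100}) in the smooth case.

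First, I would reduce to the nondegenerate situation $u\neq w$, $v\neq w$ (the degenerate cases follow from $d_\chi(u,u)=0$ shown in Theorem \ref{darvas1}). Let $\psi_s:[0,1]\to\cH$ be an arbitrary smooth path with $\psi_0=u$, $\psi_1=v$. If its image contains $w$, I would perturb $\psi_s$ slightly in $\cH$ (e.g.\ by adding a compactly supported basic bump in the $s$-variable localized away from the endpoints) to arrange $w\notin \psi([0,1])$; since $\dot\psi_s$ and $\omega_{\psi_s}^n\wedge\eta$ depend continuously on the perturbation, $l_\chi(\psi_s)$ changes by an arbitrarily small amount. Thus the infimum in \eqref{dis10} realizing $d_\chi(u,v)$ may as well be taken over smooth paths avoiding $w$.

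Next, for such a $\psi_s$, apply Proposition \ref{triangleinequality} with $\phi=w$: for all sufficiently small $\epsilon>0$ there are $\epsilon$-geodesics $t\mapsto u^\epsilon_t(\cdot,s)$ connecting $w$ to $\psi_s$, satisfying
\begin{equation*}
l_\chi\bigl(u^\epsilon_t(\cdot,0)\bigr)\;\leq\; l_\chi(\psi_s)+l_\chi\bigl(u^\epsilon_t(\cdot,1)\bigr)+\epsilon R,
\end{equation*}
where $R=R(w,\psi,\chi,\epsilon_0)$ is independent of $\epsilon$. Letting $\epsilon\to 0$, the limit formula \eqref{smoothapproximation} established in the proof of Theorem \ref{darvas1} gives $l_\chi(u^\epsilon_t(\cdot,0))\to l_\chi(\overline u_t)$ and $l_\chi(u^\epsilon_t(\cdot,1))\to l_\chi(\widetilde u_t)$, where $\overline u_t$, $\widetilde u_t$ are the weak $C^{1,\bar 1}_B$ geodesics from $w$ to $u$ and from $w$ to $v$ respectively. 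By Theorem \ref{darvas1} these lengths equal $d_\chi(w,u)=d_\chi(u,w)$ and $d_\chi(w,v)=d_\chi(v,w)$ (symmetry of $d_\chi$ is immediate from the symmetry of $\|\cdot\|_{\chi,u}$ and parameter reversal). We therefore obtain
\begin{equation*}
d_\chi(u,w)\;\leq\; l_\chi(\psi_s)+d_\chi(v,w).
\end{equation*}

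Finally, taking the infimum of $l_\chi(\psi_s)$ over smooth paths from $u$ to $v$ avoiding $w$, which by the first step still equals $d_\chi(u,v)$, yields $d_\chi(u,w)\leq d_\chi(u,v)+d_\chi(v,w)$, as required.

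The only mildly subtle step is the perturbation argument that permits the avoidance hypothesis $\phi\notin\psi([0,1])$ in Proposition \ref{triangleinequality}; everything else is simply quoting the convergence \eqref{smoothapproximation} and the length-minimizing property of the $C^{1,\bar 1}_B$ geodesic from Theorem \ref{darvas1}.
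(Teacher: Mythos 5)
Your argument is correct and is precisely the route the paper intends: Lemma \ref{triangle200} is stated without proof, and it follows from Proposition \ref{triangleinequality} together with the convergence \eqref{smoothapproximation} and Theorem \ref{darvas1}, exactly as Theorem \ref{triangle100} follows from Lemma \ref{triangle01} in the $d_2$ case. Two minor remarks: the perturbation to arrange $w\notin\psi([0,1])$ is unnecessary, since if $w=\psi_{s_0}$ then $d_\chi(u,w)\leq l_\chi(\psi|_{[0,s_0]})\leq l_\chi(\psi)$ directly from the definition \eqref{dis10}; and Proposition \ref{triangleinequality} is only stated for $\chi\in\cW_p^+\cap C^\infty(\R)$, so for a general weight $\chi\in\cW_p^+$ you should run the argument for smooth weights $\chi_k$ approximating $\chi$ and pass to the limit via Theorem \ref{darvas1} and dominated convergence, as is done at the end of the proof of Theorem \ref{darvas1}.
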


\section{The metric space $(\cE_p(M, \xi, \omega^T), d_p)$}

In this section we prove Theorem \ref{pluri01}. We shall follow the K\"ahler setting closely as in \cite{D2}[Section 4], but we shall only consider $d_p$ distance. Given $u_0, u_1\in \cE_p(M, \xi, \omega^T), p\geq 1$, by Lemma \ref{BK} there exists decreasing sequences $u_0^k, u_1^k\in \cH$ such that $u_0^k\searrow u_0$ and $u_1^k\searrow u_1$. We shall prove that  the following formula for distance $d_p$ is well-defined,
\begin{equation}\label{distance10}
d_p(u_0, u_1)=\lim_{k\rightarrow \infty} d_p(u_0^k, u_1^k)
\end{equation}
and the definition in \eqref{distance10} coincides with \eqref{dis10} (we only consider $\chi(l)=|l|^p/p$).
We have the following
\begin{thm}\label{distance11}$(\cE_p, d_p)$ is a geodesic metric space extending $(\cH, d_p)$. 
\end{thm}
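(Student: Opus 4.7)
The plan is to define $d_p$ on $\cE_p$ via the limiting formula \eqref{distance10}, verify well-posedness, check the metric axioms, and construct a length-minimizing path between any two points. Throughout, I would exploit that all the relevant objects (envelopes, Monge--Ampère measures, energies) are basic and hence reduce to the transverse K\"ahler structure in foliation charts.

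As a preliminary I would establish the two-sided comparison $C^{-1} I_p(u,v) \leq d_p(u,v) \leq C I_p(u,v)$ for $u,v \in \cH$ with $C=C(n,p)$. The lower bound is a direct consequence of the geodesic length formula in Theorem \ref{darvas1} evaluated at $t=0,1$, combined with the $C^{1,\bar 1}_B$-regularity of the weak geodesic and Lemma \ref{cma10}. The upper bound proceeds via the rooftop envelope $P(u,v)\in \cH_\Delta$ (Theorem \ref{rooftop101}) and the volume-partition formula of Lemma \ref{decomposition}: along the weak geodesic from $u$ to $P(u,v)$ one bounds $d_p(u,P(u,v))$ by $\|u-v\|_{p,u}$ via the Darvas length formula, and symmetrically for $v$; then the triangle inequality (Lemma \ref{triangle200}) assembles the bound.

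Next I would show that the limit in \eqref{distance10} exists and is independent of the approximating sequences. Given $u_0^k \searrow u_0$ and $u_1^k \searrow u_1$ from Lemma \ref{BK}, the upper bound together with Proposition \ref{mixedenergy} shows that $\{d_p(u_0^k, u_1^k)\}_k$ stays bounded. The Cauchy property follows from the triangle inequality
\[
|d_p(u_0^k, u_1^k) - d_p(u_0^l, u_1^l)| \leq d_p(u_0^k, u_0^l) + d_p(u_1^k, u_1^l),
\]
where each right-hand term is controlled (via the upper bound) by $I_p(u_i^k, u_i^l)$, which vanishes as $k,l \to \infty$ by the weak convergence machinery of Proposition \ref{weakcon3} applied to decreasing sequences. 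Independence of the chosen approximation is handled by a standard interlacing argument, combining two descending sequences converging to the same $u \in \cE_p$ into a single descending sequence. The metric axioms then transfer: symmetry is immediate, triangle inequality comes from Lemma \ref{triangle200} passed to the limit, and non-degeneracy follows by taking limits in $d_p \geq C^{-1} I_p$ and invoking the domination principle (Lemma \ref{domination2}): if $I_p(u,v)=0$ in the limit then $u \leq v$ almost everywhere with respect to $\omega_v^n\wedge\eta$ and vice versa, forcing $u=v$. The extension property $(\cH, d_p) \subset (\cE_p, d_p)$ is automatic using the constant approximating sequence.

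For the geodesic structure, given $u_0,u_1 \in \cE_p$ with smooth decreasing approximations $u_0^k,u_1^k$, I would take the weak $C^{1,\bar 1}_B$ geodesics $u_t^k$ connecting them. Using the comparison principle \eqref{comparison01} on the $\epsilon$-level together with the monotonicity of the boundary data, one verifies (after an inessential modification) that $\{u_t^k\}_k$ is decreasing in $k$ for each $t$; its limit $u_t$ lies in $\cE_p$ by the monotonicity property (Proposition \ref{mp01}) since $u_t^k \geq $ (some convex combination of lower approximants) and then this candidate $t \mapsto u_t$ is the desired finite energy geodesic. Realization of $d_p(u_0,u_1)$ along $u_t$ follows by passing the length identity $d_p(u_0^k,u_1^k) = \|\dot u_t^k\|_{p,u_t^k}$ to the limit via the two-sided comparison and \eqref{distance10}. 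The main obstacle I anticipate is the sharp quantitative comparison $d_p \sim I_p$ on $\cH$ with constants depending only on $n,p$, since every subsequent step, from well-posedness of the limiting distance to the existence of the geodesic in $\cE_p$, hinges on being able to trade $d_p$-quantities for $I_p$-quantities with uniform control across the approximation.
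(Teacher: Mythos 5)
Your overall architecture — defining $d_p$ on $\cE_p$ by the limit \eqref{distance10}, verifying well-posedness through the triangle inequality and decreasing-sequence estimates, obtaining the geodesic as a decreasing limit of the smooth ($C^{1,\bar 1}_B$) geodesics, and closing non-degeneracy with the domination principle — matches the paper's strategy. But there is a genuine gap at the step you yourself identify as the crux. You claim the lower bound $C^{-1}I_p(u,v)\leq d_p(u,v)$ for \emph{general} $u,v\in\cH$ is ``a direct consequence of the geodesic length formula in Theorem \ref{darvas1} evaluated at $t=0,1$.'' It is not. The length formula gives $d_p(u_0,u_1)^p=\int_M|\dot u_0|^p\omega_{u_0}^n\wedge\eta$, but for non-ordered endpoints there is no pointwise inequality $|\dot u_0|\gtrsim|u_0-u_1|$: by the rooftop formula (Lemma \ref{rf}) the sign and size of $\dot u_0$ are governed by the contact sets of $P(u_0,u_1-\tau)$, not by $u_0-u_1$ itself. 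Even in the ordered case $u\leq v$ the bound $2^{-(n+p)}\int|u-v|^p\omega_u^n\wedge\eta\leq d_p(u,v)^p$ already requires the midpoint trick via $(u+v)/2$ (Lemma \ref{dis100}), and in the general case the paper's proof of $C^{-1}I_p\leq d_p$ (Theorem \ref{comparison04}) goes through the Pythagorean formula, the estimate $d_p(u_0,\tfrac{u_0+u_1}{2})^p\leq Cd_p(u_0,u_1)^p$, and a H\"older argument. You have in effect assumed the hardest part of Theorem \ref{pluri01} as a ``preliminary.'' Relatedly, the paper does not need the full two-sided comparison to set up the metric space: the well-posedness of \eqref{distance10} and the Cauchy estimates only use the easy one-sided bounds for \emph{ordered} potentials (which do follow from $0\leq\dot w_0\leq v-u\leq\dot w_1$ and the length formula) combined with the fundamental estimate, $d_p(u_l,u_k)^p\leq(p+1)^n\int|u-u_l|^p\omega_{u_l}^n\wedge\eta$; the full $I_p$ comparison is a corollary proved at the end, not an input.

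Two smaller points. First, your lower bound for the limiting geodesic is backwards: a convex combination $(1-t)u_0+tu_1$ is an \emph{upper} bound for $u_t$ (by convexity in $t$); the correct lower bound placing $u_t$ in $\cE_p$ is $u_t\geq P(u_0,u_1)$, which lies in $\cE_p$ by the envelope energy estimate, after which Proposition \ref{mp01} applies. Second, ``passing the length identity to the limit'' does not by itself yield the geodesic property $d_p(u_{t_1},u_{t_2})=|t_1-t_2|\,d_p(u_0,u_1)$: since $u_t$ for interior $t$ is only a decreasing limit, one must interpose smooth potentials $w_l^k$ with $u_l^k\leq w_l^k\leq u_l^{k+1}$ and control the convergence of the endpoint velocities (Lemma \ref{tangentapp04}) to identify $d_p(u_0,u_l)$ with $l\, d_p(u_0,u_1)$. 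These can be repaired, but as written the argument for the central comparison inequality does not go through.
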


We start with the notion of generalized solution of complex Monge-Ampere in the sense of Bedford-Taylor in Sasaki setting, which was considered by van Coevering in \cite{VC}, by adapting  the complex Monge-Ampere operator for basic functions in  $\text{PSH}(M, \xi, \omega^T)\cap L^\infty$ to Sasaki setting. van Coevering discussed in particular weak solution in $\text{PSH}(M, \xi, \omega^T)\cap C^0(M)$ \cite{VC}[Section 2.4]. 
Let $S=[0, 1]\times S^1$ be the cylinder and $N=M\times S$. Then $N$ is a manifold of dimension $2n+3$ with boundary and $N$ has a transverse holomorphic structure, simply the product structure of transverse holomorphic structure on $M$ and holomorphic structure on $S$. A path $\phi: [0, 1]\rightarrow C^\infty_B(M)$ corresponds to an $S^1$-invariant function  $\Phi_w$ on $N$. 
If $\phi_t$ is a smooth path in $\cH$ then a direct computation gives,
\begin{equation}\label{vc01}
(\pi^*\omega^T+\sqrt{-1}\p_B\bar\p_B \Phi)^{n+1}=c_m(\ddot \phi-|\nabla\dot\phi|^2_{\omega^T_{\phi_t}}) (\omega^T_{\phi_t})^n\wedge{dw\wedge d\bar w}
\end{equation}
Note that this choice of complexification (see van Coevering \eqref{vc01}) is different with the choice of Guan-Zhang \eqref{cma03}. It seems that \eqref{vc01} would be more natural to discuss weak solutions. By \eqref{vc01}, a smooth geodesic then corresponds to a solution of homogeneous complex Monge-Ampere for basic function $\Phi: N\rightarrow \R$,
\[
(\pi^*\omega^T+\sqrt{-1}\p_B\bar\p_B \Phi)^{n+1}\wedge \eta=0.
\]
We define a \emph{weak geodesic} between $u_0, u_1\in \text{PSH}(M, \xi, \omega^T)\cap L^\infty$ as follows, for $\Phi(\cdot, w)=\Phi(\cdot, t)\in \text{PSH}({N}^{\circ}, \xi, \pi^*\omega^T)\cap L^\infty$, $(t=\text{Re}(w))$, it satisfies
\begin{equation}\label{bounded01}
\begin{cases}
(\pi^*\omega^T+\sqrt{-1}\p_B\bar\p_B \Phi)^{n+1}\wedge \eta=0\\
\lim_{t\rightarrow 0}\Phi(\cdot, t)=u_0,\; \lim_{t\rightarrow 1}\Phi(\cdot, t)=u_1
\end{cases}
\end{equation}

We have the following \emph{strong maximum principle},  see \cite{VC}[Theorem 2.5.3], \cite{Blocki12}[Theorem 21] and \cite{D4}[Theorem 3.2].
\begin{lemma}\label{strong01}
Let $u, v\in  \text{PSH}({N}^{\circ}, \xi, \pi^*\omega^T)\cap L^\infty(N)$. Suppose that
\[
(\pi^*\omega^T+\sqrt{-1}\p_B\bar\p_B u)^{n+1}\wedge \eta\leq (\pi^*\omega^T+\sqrt{-1}\p_B\bar\p_B v)^{n+1}\wedge \eta
\]
and $\lim_{x\rightarrow \p N} (u-v)(x)\geq 0$, then $u\geq v$ on $N$. 
\end{lemma}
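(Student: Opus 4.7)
The plan is to adapt the classical Bedford--Taylor strong maximum principle \cite{BT1} to the Sasaki setting via a perturbation argument that exploits the product structure $N=M\times S$ to manufacture a basic, strictly psh perturbation of $v$ vanishing on $\partial N$. Suppose for contradiction that $v(x_0)>u(x_0)$ at some $x_0\in N^\circ$. By the boundary hypothesis, for every $\varepsilon>0$ the set $\{v>u+\varepsilon\}$ is relatively compact in $N^\circ$, and is nonempty for some $\varepsilon_0>0$.

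Let $w=t+\sqrt{-1}s$ be the holomorphic coordinate on $S$ and set $\chi(w):=t(t-1)$. Then $\chi$ is smooth on $S$, $\chi\le 0$ with $\chi|_{\partial S}=0$, and $\sqrt{-1}\partial\bar\partial\chi=\tfrac12\sqrt{-1}\,dw\wedge d\bar w>0$. Pulled back to $N$, $\chi$ is basic. For small $\delta>0$ set $v_\delta:=v+\delta\chi$, which lies in $\mathrm{PSH}(N^\circ,\xi,\pi^{*}\omega^{T})\cap L^{\infty}(N)$ and satisfies $v_\delta\le v$ on $N$ with equality on $\partial N$. Thus $\liminf_{x\to\partial N}(u-v_\delta)(x)\ge 0$, while $\{v_\delta>u\}$ is a nonempty open set once $\delta\|\chi\|_\infty<\varepsilon_0$.

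Since $(dd^c\chi)^2=0$, writing $T:=\pi^{*}\omega^{T}+dd^{c}_{B}v$ gives
\[
(T+\delta\,dd^c\chi)^{n+1}=T^{n+1}+(n+1)\delta\,dd^c\chi\wedge T^{n}.
\]
Expanding $T^n=\sum_{k=0}^{n}\binom{n}{k}(\pi^{*}\omega^{T})^{n-k}\wedge(dd^{c}_{B}v)^{k}$ and dropping the nonnegative Bedford--Taylor currents with $k\ge 1$ (valid for bounded $v$ by approximating through Lemma \ref{BK} applied on $N$ and passing to the weak limit via Proposition \ref{weakcon0}), one obtains $T^n\ge(\pi^{*}\omega^{T})^{n}$ as currents. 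In a foliation chart with coordinates $(x,z,w)$ a direct computation gives $dd^c\chi\wedge(\pi^{*}\omega^{T})^{n}\wedge\eta=c_0\,dV_N$ for a positive constant $c_0=c_0(M,g,\omega^{T})$ and a smooth volume form $dV_N$ on $N$. Combined with the hypothesis this yields
\[
(\pi^{*}\omega^{T}+dd^{c}_{B}v_\delta)^{n+1}\wedge\eta\ \ge\ (\pi^{*}\omega^{T}+dd^{c}_{B}u)^{n+1}\wedge\eta+c_0(n+1)\delta\,dV_N.
\]

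Applying the Bedford--Taylor comparison principle to the bounded basic psh functions $u$ and $v_\delta$ on the relatively compact open set $\{v_\delta>u\}\Subset N^\circ$ (which reduces in foliation charts to the classical comparison principle of \cite{BT1} via the identity of Proposition \ref{measure1}, the boundary condition from the previous step ensuring the required localization), one has
\[
\int_{\{v_\delta>u\}}(\pi^{*}\omega^{T}+dd^{c}_{B}v_\delta)^{n+1}\wedge\eta\ \le\ \int_{\{v_\delta>u\}}(\pi^{*}\omega^{T}+dd^{c}_{B}u)^{n+1}\wedge\eta.
\]
Combined with the previous inequality this forces $c_0(n+1)\delta\cdot\mathrm{Vol}_N(\{v_\delta>u\})\le 0$, contradicting the positivity of $\mathrm{Vol}_N(\{v_\delta>u\})$. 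Hence $v\le u$ almost everywhere, and upper semicontinuity of basic psh functions (together with the sub-mean-value property) promotes this to $v\le u$ pointwise on $N$. The main obstacle is the strict dominance $dd^c\chi\wedge T^{n}\wedge\eta\ge c_0\,dV_N$ for merely bounded $v$: one must justify the binomial expansion and the nonnegativity of each mixed term in the Bedford--Taylor sense, which is the main place where the transverse K\"ahler structure is used essentially. A secondary technical point is that the comparison principle on the non-compact $N^\circ$ is not stated verbatim in the excerpt but follows from its foliation-chart version combined with the boundary control we have arranged.
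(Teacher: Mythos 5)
Your overall strategy (perturb $v$ so that its Monge--Amp\`ere measure strictly dominates that of $u$, then invoke a comparison principle on $\{v_\delta>u\}$) is a legitimate classical route, but the step that is supposed to produce the strict domination is wrong, and the argument collapses there. You claim $T^{n}\geq(\pi^{*}\omega^{T})^{n}$ for $T=\pi^{*}\omega^{T}+dd^{c}_{B}v$ by expanding $T^{n}=\sum_{k}\binom{n}{k}(\pi^{*}\omega^{T})^{n-k}\wedge(dd^{c}_{B}v)^{k}$ and ``dropping the nonnegative terms with $k\geq1$.'' Those mixed terms are \emph{not} nonnegative: for an $\omega^{T}$-psh function only the full powers $(\pi^{*}\omega^{T}+dd^{c}_{B}v)^{k}$ are positive currents, while $dd^{c}_{B}v$ itself can be very negative (anything $\geq-\pi^{*}\omega^{T}$). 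Already for $n=1$ the claim reads $\omega+dd^{c}v\geq\omega$, i.e.\ $dd^{c}v\geq0$, which is false for a general $\omega$-psh $v$. Worse, the conclusion you need, $dd^{c}\chi\wedge T^{n}\wedge\eta\geq c_{0}\,dV_{N}$, genuinely fails: locally one can take $v$ so that $\pi^{*}\omega^{T}+dd^{c}_{B}v$ vanishes identically on an open set of the transverse directions (e.g.\ the local model $\omega=dd^{c}|z|^{2}$, $v=-|z|^{2}+\max(\mathrm{Re}\,z,0)$), and then $dd^{c}\chi\wedge T^{n}$ has no mass there. Since $\chi$ is strictly psh only in the one-dimensional $S$-direction, $(dd^{c}\chi)^{2}=0$ and you cannot fall back on the superadditivity $(A+B)^{n+1}\geq A^{n+1}+B^{n+1}$ either; all your perturbation honestly yields from the comparison principle is $\int_{\{v_\delta>u\}}dd^{c}\chi\wedge T^{n}\wedge\eta=0$, which is not a contradiction. (A secondary, fixable issue is that the comparison principle on the non-compact $N^{\circ}$ with boundary is not established in the paper and would itself need an argument.)

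The paper avoids this trap entirely by an energy argument in the style of Darvas: set $v_{\epsilon}=\max\{u,v-\epsilon\}$, which equals $u$ near $\p N$, use the Bedford--Taylor superadditivity of the Monge--Amp\`ere measure of a maximum together with the hypothesis to get $\omega_{v_{\epsilon}}^{n+1}\wedge\eta\geq\omega_{u}^{n+1}\wedge\eta$, and then integrate $(v_{\epsilon}-u)(\omega_{v_{\epsilon}}^{n+1}-\omega_{u}^{n+1})\wedge\eta\geq0$ by parts. An induction replaces the factors $\omega_{u},\omega_{v_{\epsilon}}$ by $\pi^{*}\omega^{T}$, and the crucial observation is that even though $\pi^{*}\omega^{T}$ is degenerate along $S$, the top term $\int d(u-v_{\epsilon})\wedge d^{c}_{B}(u-v_{\epsilon})\wedge(\pi^{*}\omega^{T})^{n}\wedge\eta=0$ still controls exactly the $t$-derivative of $u-v_{\epsilon}$; combined with $u-v_{\epsilon}=0$ near $\p N$ this forces $u=v_{\epsilon}$. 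If you want to keep a comparison-principle approach instead, you would need a perturbation that is strictly psh in \emph{all} $n+1$ complex directions of $N$ (so that superadditivity produces a genuinely positive extra volume term), which the product $\chi(t)$ cannot provide.
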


\begin{proof} Our proof is similar to K\"ahler case, see \cite{D4}[Theorem 3.2].  Fix $\epsilon>0$ and $v_\epsilon:=\max\{u, v-\epsilon\}\in \text{PSH}(N^\circ, \xi, \omega^T)\cap L^\infty$. Then $v_\epsilon=u$ near the boundary $\p N=M\times S^1\times \{t=0\}\cup M\times S^1\times \{t=1\}$. Hence it is enough to show that $u=v_\epsilon$ on $N$.

We write $N=M\times S$ and $\omega_u=\pi^*\omega^T+dd^c_B u$ etc. 
Note that on each foliation chart $W_\alpha=(-\delta, \delta)\times V_\alpha$ of $M$, we have the following inequality on $V_\alpha\times S$ for complex Monge-Ampere measure \cite{BN}[Theorem 2.2.10]
\[
\omega_{v_\epsilon}^{n+1}\geq \chi_{\{u\geq v-\epsilon\}\cap V_\alpha} \omega_u^{n+1}+ \chi_{\{u< v-\epsilon\}\cap V_\alpha} \omega_v^{n+1}\geq \omega_u^{n+1}
\]
It follows that on $N$, we have
\[
\omega_{v_\epsilon}^{n+1}\wedge \eta\geq \omega_u^{n+1}\wedge \eta
\]
Then we have the following, 
\begin{equation}\label{i001}
0\leq \int_N (v_\epsilon-u)(\omega_{v_\epsilon}^{n+1}-\omega_u^{n+1})\wedge \eta
\end{equation}
Using integration by parts, we obtain that
\[
\int_N d(u-v_\epsilon)\wedge d^c_B(u-v_\epsilon)\wedge \omega_u^k\wedge \omega_{v_\epsilon}^{n-k}\wedge \eta=0, 0\leq k\leq n.
\]
By an induction argument as in \cite{D4}[Theorem 3.2], we can prove that
\[
\int_N d(u-v_\epsilon)\wedge d^c_B(u-v_\epsilon)\wedge \omega_u^k\wedge (\pi^*\omega^T)^{n-k}\wedge \eta=0, 0\leq k\leq n.
\]
For $k=n$, this shows that 
\[
\int_{M\times S} d(u-v_\epsilon)\wedge d^c_B(u-v_\epsilon)\wedge (\pi^*\omega^T)^n\wedge \eta=0.
\]
Writing $\rho=u-v_\epsilon$, this reads
\[\int_{M\times S} |\p_t\rho|^2 dt\wedge ds \wedge (\pi^*\omega^T)^n\wedge \eta=0\]
Hence $\p_t\rho=0$. Since $\rho=0$ near the boundary $\p N=M\times S^1\times \{t=0\}\cup M\times S^1\times \{t=1\}$, this shows that $\rho=0$. It completes the proof. 
\end{proof}

\begin{rmk}One can certainly formulate a general version of comparison principle as in \cite{D4}[Theorem 3.2]. But one would need certainly a (transverse) K\"ahler form. Note that $\pi^*\omega^T$ is not transverse K\"ahler (it is zero along $S$-direction). Here we use the product structure of $N$. 
\end{rmk}

With this maximum principle for bounded TPSH, we have the following,
\begin{lemma}\label{strong02}Given $u_0, u_1\in \cH$, let $u_t: [0, 1]\rightarrow \cH$ be the unique $C^{1, \bar 1}_B$ geodesic connecting $u_0, u_1$. Then we have the following,
\[
\|\dot u_t\|_{C^0}\leq \|u_0-u_1\|_{C^0}, \forall t\in [0, 1].
\]
\end{lemma}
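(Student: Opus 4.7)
The plan is to lift the $C^{1,\bar 1}_B$ geodesic $u_t$ to an $S^1$--invariant bounded TPSH function $\Phi(x,w)$ on the cylinder $N=M\times S$ satisfying the homogeneous transverse Monge--Amp\`ere equation \eqref{bounded01}, and then deploy the strong maximum principle of Lemma \ref{strong01} against affine--in--$t$ barriers. A first useful observation is that $t\mapsto \Phi(x,t)$ is convex in $t=\mathrm{Re}(w)$: since $\Phi$ is $S^1$--invariant, $\partial_s\Phi=0$, so the $dw\wedge d\bar w$--coefficient of $\pi^*\omega^T+dd^c_B\Phi$ equals $\tfrac14\partial_t^2\Phi$, which is nonnegative because $\pi^*\omega^T$ has no $dw\wedge d\bar w$--component and $\Phi\in\mathrm{PSH}(N^\circ,\xi,\pi^*\omega^T)$.

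Next I would introduce the four affine barriers, with $C:=\|u_0-u_1\|_{C^0}$,
\[
v^{\pm}_0(x,t):=u_0(x)\pm Ct,\qquad v^{\pm}_1(x,t):=u_1(x)\pm C(1-t).
\]
Each is bounded, basic, $S^1$--invariant, and $\pi^*\omega^T$--plurisubharmonic on $N^\circ$ because $t$ is pluriharmonic in $w$ and $u_0,u_1\in\cH$. Moreover
\[
(\pi^*\omega^T+dd^c_B v^{\pm}_0)^{n+1}\wedge\eta=(\pi^*\omega_{u_0})^{n+1}\wedge\eta=0,
\]
since $\omega_{u_0}$ is a transverse $(1,1)$--form on $M$ with only $n$ transverse complex directions, and likewise for $v^{\pm}_1$. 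The boundary comparisons
\[
v^+_0\big|_{t=0}=u_0=\Phi,\quad v^+_0\big|_{t=1}=u_0+C\ge u_1=\Phi,
\]
then put Lemma \ref{strong01} into force (with $\Phi$ in the subsolution role) to give $\Phi\le v^+_0$ on $N$; the three analogous comparisons yield
\[
|\Phi(x,t)-u_0(x)|\le Ct,\qquad |\Phi(x,t)-u_1(x)|\le C(1-t).
\]

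Finally, the convexity of $t\mapsto u_t(x)=\Phi(x,t)$ combined with the two--sided barrier estimates gives, for each $x$ and $t\in(0,1)$,
\[
\dot u_t(x)\le \frac{u_1(x)-u_t(x)}{1-t}\le C,\qquad \dot u_t(x)\ge\frac{u_t(x)-u_0(x)}{t}\ge -C,
\]
which is the desired bound. The endpoints $t\in\{0,1\}$ follow from the same inequalities and the one--sided derivative (which exists by convexity and $C^{1,\bar 1}$ regularity). The only genuine technical check is that the $v^{\pm}_i$ truly belong to the bounded TPSH class on $N^\circ$ for which Lemma \ref{strong01} was established, but this is immediate in the foliation/cone holomorphic coordinates of Lemma \ref{chart}, since $t$ is pluriharmonic there and the transverse positivity of $\omega_{u_i}$ carries over to $\pi^*\omega_{u_i}$ on $N$.
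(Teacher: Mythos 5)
Your proposal is correct and follows essentially the same route as the paper: both arguments lift the geodesic to the homogeneous transverse Monge--Amp\`ere equation on $N=M\times S$, compare it via the strong maximum principle (Lemma \ref{strong01}) with affine-in-$t$ barriers of the form $u_0\pm Ct$, $u_1\pm C(1-t)$ (which solve the degenerate equation since $t=\mathrm{Re}(w)$ is pluriharmonic), and then convert the resulting barrier bounds into the derivative bound using convexity of $t\mapsto u_t$. The only cosmetic difference is that the paper first uses convexity to reduce to the endpoint derivatives $\dot u_0$ and $\dot u_1$ and then invokes two barriers, whereas you keep all four barriers and read off the interior bound from the difference quotients directly; the content is identical.
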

\begin{proof} Note that this gives a much sharper estimate than Lemma \ref{zero01}. The proof follows the K\"ahler setting \cite{D4}[Lemma 3.5]. Denote $C=\max|u_0- u_1|$. By the convexity of $u$ in $t$-variable, we know that
\[
\dot u_0\leq \dot u_t\leq \dot u_1.
\]
Note that $v_t=u_0-Ct$ is a smooth geodesic connecting $u_0$ and $u_0-C$. Hence its complexification gives a solution to \eqref{bounded01}. By Lemma \ref{strong01}, we know that $v_t\leq u_t$, for $t\in [0, 1]$, since $u_0-C\leq u_1$. It follows that $-C\leq \dot u_0$. Similarly one can prove that $\dot u_1\leq C$, by considering $\tilde v_t=u_0+Ct$. 
\end{proof}

\begin{rmk}The upper envelop construction was used to construct bounded weak geodesic segment in K\"ahler setting by Berndtsson \cite{Berndtsson}, where he proved that Lemma \ref{strong02} holds for $u_0, u_1\in \text{PSH}(M, \omega)$ (when $(M, \omega)$ is K\"ahler). A direct adaption to Sasaki setting using Lemma \ref{strong01} would lead to an extension of Berndtsson's result to Sasaki setting.
\end{rmk}

In general, $\Phi(\cdot, w)\in \text{PSH}(N^\circ, \xi, \pi^*\omega^T)$ will be called \emph{weak subgeodesic}, if $\Phi(\cdot, )=\Phi(\cdot, \text{Re}(w))$, $(t=\text{Re}(w))$. For $u_0, u_1\in \text{PSH}(M, \xi, \omega^T)$, we define
\begin{equation}\label{weakgeodesic}
u=\sup\{ \Phi: \Phi(\cdot, t)\in  \text{PSH}(N^\circ, \xi, \pi^*\omega^T), \lim_{t\rightarrow 0, 1}\Phi(\cdot, t)\leq u_{0, 1}\}
\end{equation}
We have the following,
\begin{prop}  $u\in \text{PSH}(N^\circ, \xi, \pi^*\omega^T)$. Denote $u_t=u(\cdot, t)$. We refer $t\rightarrow u_t$ to the weak geodesic segment connecting $u_0, u_1$. 
\end{prop}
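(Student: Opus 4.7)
The plan is to carry out the standard Perron--Bremermann envelope argument, adapted to the Sasaki setting via the foliation charts on $N = M \times S$ and the transverse pluripotential theory developed in Section~3.1. The class of candidates $\cF = \{\Phi : \Phi(\cdot, w) \in \text{PSH}(N^\circ, \xi, \pi^*\omega^T),\ \Phi(\cdot, w) = \Phi(\cdot, \mathrm{Re}(w)),\ \limsup_{t \to 0,1} \Phi(\cdot, t) \leq u_{0,1}\}$ is nonempty: since $u_0, u_1 \in \text{PSH}(M, \xi, \omega^T)$ are $L^1$ and usc, hence bounded above on $M$ by some constant $C$, the candidate $\Phi_0(\cdot, w) := A\cdot(\mathrm{Re}(w) - 1/2)^2 + \max(u_0, u_1) - A/4$ lies in $\cF$ for $A$ sufficiently large, using that $\max(u_0, u_1) \in \text{PSH}(M,\xi,\omega^T)$ and that the quadratic term in $\mathrm{Re}(w)$ contributes a nonnegative $(1,1)$-form on $S$. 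Similarly, boundedness from above follows by comparing with the harmonic extension of $\max_M u_0, \max_M u_1$ on $S$: any $\Phi \in \cF$ satisfies $\Phi(\cdot, t) \leq (1-t) \sup u_0 + t \sup u_1 + C'$ for a uniform $C'$ depending on $\omega^T$, by applying the maximum principle to $\Phi - (\pi^*h)$ where $h$ is a local potential of $\omega^T$ in each foliation chart.

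Next I would invoke the local theory: on each foliation chart $W_\alpha \times S$, the functions $\pi^* h_\alpha + \Phi$ are genuine plurisubharmonic functions of the variables $(z_1, \ldots, z_n, w)$ (constant along the Reeb direction $\partial_x$), since $\Phi$ is basic in the $M$-direction and the product structure makes the $S$-direction compatible. The classical fact that the upper envelope of a uniformly bounded family of psh functions is psh after usc regularization (Choquet's lemma together with the submean-value property) then gives that $u^*$ is $\pi^*\omega^T$-plurisubharmonic on each chart. Since the family $\cF$ is stable under the Reeb flow (averaging over $\xi$) and under rotations of $S^1 \subset S$ (translation in the imaginary $w$-direction), the supremum $u$ (and hence $u^*$) inherits invariance under both, so $u^*$ depends only on $\mathrm{Re}(w)$ and is basic on $N$. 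Thus $u^* \in \text{PSH}(N^\circ, \xi, \pi^*\omega^T)$.

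The step that requires a little care is showing $u = u^*$, so that $u$ itself is in the class rather than just its regularization. By Choquet's lemma there is a countable subfamily $\{\Phi_j\} \subset \cF$ whose supremum has the same usc regularization as $u$. One then observes that $\Phi := (\sup_j \Phi_j)^*$ lies in $\cF$: it satisfies the plurisubharmonicity, the $S^1$-invariance, and the boundary conditions, the last because $u_0, u_1$ are usc and $\limsup_{t\to 0,1} \Phi_j(\cdot, t) \leq u_{0,1}$ for each $j$. Hence $\Phi \leq u$ pointwise, and since $\Phi = u^*$ almost everywhere and $u \leq u^*$ everywhere, we conclude $u = u^* \in \text{PSH}(N^\circ, \xi, \pi^*\omega^T)$.

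The main obstacle I expect is verifying that the boundary conditions pass to the envelope in the appropriate usc sense, i.e. $\limsup_{t \to 0,1} u(\cdot, t) \leq u_{0,1}$, especially when $u_0, u_1$ are only in $\text{PSH}(M,\xi,\omega^T)$ (possibly unbounded). The argument proceeds by constructing explicit barriers from above: approximate $u_0, u_1$ from above by decreasing sequences of smooth transverse K\"ahler potentials $u_0^k \searrow u_0$, $u_1^k \searrow u_1$ via Lemma~\ref{BK}, build the corresponding bounded weak geodesics $v^k$ (which exist by Lemma~\ref{strong02} and the construction after it), and show $u \leq v^k$ on $N$ by the comparison principle Lemma~\ref{strong01}; then letting $k \to \infty$ gives $\limsup_{t \to 0,1} u(\cdot, t) \leq u_{0,1}$, which combined with the trivial lower bound (using the candidate $\Phi_0$) pins down the boundary behavior.
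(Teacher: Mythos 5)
There is a genuine gap, and also a concrete error in a construction you rely on. The error first: your explicit candidate $\Phi_0(\cdot,w)=A(\mathrm{Re}(w)-\tfrac12)^2+\max(u_0,u_1)-A/4$ does \emph{not} lie in the candidate family, because at $t=0$ the quadratic term cancels and $\Phi_0(\cdot,0)=\max(u_0,u_1)\geq u_0$, whereas the definition \eqref{weakgeodesic} requires $\limsup_{t\to 0}\Phi(\cdot,t)\leq u_0$ (and similarly at $t=1$). Enlarging $A$ does not help. A correct lower barrier would be something like $\max(u_0-Ct,\,u_1-C(1-t))$ for bounded data, or a constant-in-$t$ candidate $v\leq\min(u_0,u_1)$.

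The more serious gap is in the step you yourself flag as delicate: why the usc regularization still satisfies the boundary conditions. Your assertion that $(\sup_j\Phi_j)^*$ obeys $\limsup_{t\to 0,1}\leq u_{0,1}$ ``because $u_0,u_1$ are usc and each $\Phi_j$ does'' is not a proof — usc regularization can a priori increase boundary limsups, and a countable supremum of functions each with the right boundary behavior need not retain it without a uniform bound. The missing idea, which is exactly what the paper uses and which makes the whole proof two lines, is that every candidate $\Phi$ is psh in $w$ and independent of $\mathrm{Im}(w)$, hence \emph{convex in $t=\mathrm{Re}(w)$}; convexity plus the boundary constraints forces $\Phi(\cdot,t)\leq(1-t)u_0+tu_1$ pointwise. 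This uniform usc majorant passes to $u$ and then to $u^*$, so $u^*$ is itself a candidate, giving $u^*\leq u$ and hence $u=u^*$. Your proposed repair via barriers built from Lemma \ref{BK} and the comparison principle Lemma \ref{strong01} does not substitute for this: that comparison principle is stated for bounded potentials (where the Monge--Amp\`ere measure is defined), so it cannot be applied to an arbitrary, possibly unbounded candidate $\Phi$ in the family, and in any case it is far heavier machinery than the convexity observation that resolves the issue directly.
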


\begin{proof}Note that usc $u^*$ is basic, and $u^*\in \text{PSH}(N^\circ, \xi, \pi^*\omega^T)$. Since $\Phi$ is convex in $t$ direction, it follows that $\Phi(\cdot, t)\leq (1-t)u_0+tu_1$. Hence $u_t\leq (1-t)u_0+t u_1$. It follows that
\[
u^*\leq (1-t)u_0+tu_1
\]
In other words, $u^*\leq u$ by definition. It follows that $u^*=u$. 
\end{proof}

\begin{prop}\label{coincide4}
If $u_0, u_1 \in \text{PSH}(M, \xi, \omega^T)\cap L^{\infty}(M)$, $u$ is defined by $(\ref{weakgeodesic})$ and $u_t=u(\cdot, t)$ is the weak geodesic. Let $C$ be a constant $\geq ||u_1-u_0||_{L^{\infty}(M)}$.
\begin{enumerate}
\item We have
\begin{equation}\label{convex4}
\max(u_0 -Ct, u_1-C(1-t)) \leq u_t \leq (1-t) u_0+tu_1
\end{equation}
\item $u_t \in \text{PSH}(M, \xi, \omega^T) \cap L^{\infty}(M)$ and $u$ is the unique solution of (\ref{bounded01}).
\item $u_t$ is uniformly Lipschitz continuous with respect to $t$:
\[
|u_t-u_s| \leq C|s-t|.
\]
for $s, t \in [0,1]$.
\item The derivatives $\dot{u}_0, \dot{u}_1$ exists and 
\[
|\dot{u}_0|\leq C, \quad |\dot{u}_1| \leq C.
\]
\end{enumerate}
\end{prop}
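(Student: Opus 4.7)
\textbf{Proof plan for Proposition \ref{coincide4}.} The plan is to first establish the two-sided bound in part (1) by exhibiting concrete comparison subgeodesics, then deduce parts (2)--(4) from it together with convexity in the $t$-direction.

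For part (1), the lower bound is obtained via the explicit candidate $\Phi(x, w) = u_0(x) - Ct$, $t=\text{Re}(w)$. A direct computation shows that $\pi^*\omega^T + dd^c_B\Phi$ is block-diagonal with $M$-block $\omega^T + dd^c_B u_0 \geq 0$ and $w$-block zero, so $\Phi \in \text{PSH}(N^\circ, \xi, \pi^*\omega^T)\cap L^\infty$. Its boundary traces are $u_0$ at $t=0$ and $u_0 - C \leq u_1$ at $t=1$ (by the choice of $C$), so $\Phi$ is a valid competitor in \eqref{weakgeodesic}, giving $u_t \geq u_0 - Ct$; the symmetric construction with $u_1 - C(1-t)$ completes the lower bound. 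For the upper bound, every competitor $\Phi$ in \eqref{weakgeodesic} is $S^1$-invariant in $w$ and plurisubharmonic, hence convex in $t$; this forces $\Phi(\cdot, t) \leq (1-t)\Phi(\cdot, 0) + t\Phi(\cdot, 1) \leq (1-t)u_0 + t u_1$, and taking the supremum yields $u_t \leq (1-t)u_0 + t u_1$.

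For part (2), boundedness of $u_t$ is immediate from \eqref{convex4} together with $u_0, u_1 \in L^\infty$. The fact that $u \in \text{PSH}(N^\circ, \xi, \pi^*\omega^T)$ follows from $u = u^*$ established in the preceding proposition. Uniqueness is a direct consequence of the strong maximum principle Lemma \ref{strong01} applied to two candidate solutions whose boundary values agree. The substantive step is to verify that $u$ satisfies the homogeneous Monge--Amp\`ere equation $(\pi^*\omega^T + dd^c_B u)^{n+1}\wedge \eta = 0$ on $N^\circ$. I would use the classical Perron/balayage argument: if the HMA equation were to fail on some small coordinate ball $B \Subset N^\circ$, one replaces $u|_B$ by the solution of the Dirichlet problem for HMA with data $u|_{\partial B}$, obtaining a new TPSH function on $B$ that is $\geq u$; extending by $u$ off $B$ yields a subgeodesic strictly larger than $u$ somewhere while still dominated by $u_{0,1}$ at the two boundary components of $N$, contradicting the maximality of $u$ in \eqref{weakgeodesic}. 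To implement this in the Sasaki setting I would pass to foliation charts on $M$ and the associated holomorphic charts on the K\"ahler cone supplied by Lemma \ref{chart}, reducing the local Dirichlet problem to the classical K\"ahler case (Bedford--Taylor). The boundary limits $\lim_{t\to 0,1}u_t = u_{0,1}$ then follow directly from sandwiching by \eqref{convex4}.

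For parts (3) and (4), convexity in $t$ (a consequence of $S^1$-invariance together with subharmonicity in $w$) combines with \eqref{convex4} to give $|u_t - u_0| \leq Ct$ and $|u_t - u_1| \leq C(1-t)$. For $0 \leq s < t \leq 1$, the slope inequality for convex functions
\[
\frac{u_s - u_0}{s} \;\leq\; \frac{u_t - u_s}{t - s} \;\leq\; \frac{u_1 - u_s}{1-s}
\]
gives $|u_t - u_s| \leq C|t-s|$, since the extremes are bounded in absolute value by $C$. For part (4), convexity in $t$ guarantees that the one-sided derivatives $\dot u_0^+$ and $\dot u_1^-$ exist, and passing to the limit in $|u_t - u_0|/t \leq C$ as $t \to 0^+$ (respectively in $|u_1 - u_t|/(1-t) \leq C$ as $t \to 1^-$) yields $|\dot u_0|, |\dot u_1| \leq C$.

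The main obstacle is the HMA equation in part (2): the Perron/balayage argument itself is standard in the K\"ahler case, but adapting it to the Sasaki setting requires carefully checking that the local HMA solution on a coordinate ball in $N^\circ$ remains basic and transverse when patched globally, and that the limit of this balayage procedure stays $S^1$-invariant on $S$. The explicit correspondence between foliation and holomorphic charts afforded by Lemma \ref{chart} makes this reduction transparent, as it realizes the local geometry of $N$ as a product of the standard K\"ahler geometry on $\mathbb{C}^n \times S$ and the Reeb direction, along which all objects involved are invariant by construction.
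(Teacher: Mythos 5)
Your proposal is correct and follows essentially the same route as the paper: explicit comparison subgeodesics $u_0-Ct$ and $u_1-C(1-t)$ plus convexity in $t$ for part (1), the Perron--Bremermann balayage argument (reduced to the transverse/local K\"ahler picture) for the homogeneous Monge--Amp\`ere equation and the strong maximum principle for uniqueness in part (2), and the convexity slope inequalities for parts (3) and (4). The only difference is that you spell out details (the block computation for the subgeodesic, the invariance of the balayage replacement) that the paper leaves implicit.
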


\begin{proof}
\begin{enumerate}
\item It is obvious that $u_0-Ct, u_1-C(1-t)$ are weak subgeodesics. It follows from the definition of $u_t$ $(\ref{weakgeodesic})$ that
\[
\max(u_0-Ct, u_1-C(1-t)) \leq u_t
\]
The other half of the inequality comes from the convexity of $u_t$ with respect to $t$. 

\item By the inequality $(\ref{convex4})$ we have $u_t \in \text{PSH}(M, \xi, \omega^T) \cap L^{\infty}(M)$ and $\lim\limits_{t \rightarrow 0,1} u_t=u_{0,1}$. Then $u \in \text{PSH}({N}^{\circ}, \xi, \pi^*\omega^T)\cap L^\infty$. Using the classical Perron-Bremmerman  argument we have $(\pi^*\omega^T+\sqrt{-1}\partial_B\overline{\partial}_B u)^{n+1}\wedge\eta=0$. Hence $u$ is a solution of $(\ref{bounded01})$. The uniqueness of the solution of $(\ref{bounded01})$ follows from the strong maximum principle.

\item If one of $s, t$ equals to $0$ or $1$, the required inequality is a direct consequence of $(\ref{convex4})$. If $0<s<t<1$, by the convexity of $u_t$ with respect to $t$ we have
\[
\frac{t-s}{s}(u_s-u_0) \leq u_t-u_s \leq \frac{t-s}{1-s}(u_1-u_s)
\]
and the inequality follows from the case $t=0,1$ we have proved.

\item By the convexity of $u_t$ we have 
\[
\frac{u_{t_1}-u_0}{t_1} \leq \frac{u_{t_2}-u_0}{t_2}
\]
for $0<t_1 <t_2$. These quantities are uniformly bounded by $C$. Hence $\dot{u}_0$ exists and $|\dot{u}_0| \leq C$. The case of $\dot{u}_1$ follows by a similar argument.
\end{enumerate}
\end{proof}

\begin{rmk}
If $u_0, u_1 \in \cH_{\triangle}$, the weak geodesic $u_t$ coincides with the $C_B^{1,\bar1}$ geodesic.
\end{rmk}

\begin{prop}\label{homweakgeodesic}
Let $u_0^k,u_1^k\in\text{PSH}(M,\xi,\omega^T)$ be sequences decreasing to $u_0,u_1\in \text{PSH}(M,\xi,\omega^T)$ respectively. Suppose that $u_t^k,u_t \in \text{PSH}(M,\xi,\omega^T)$ be the weak geodesic connecting $u_0^k,u_1^k$ and $u_0,u_1$ respectively. Then
\begin{enumerate}
\item $u_t^k$ decreases to $u_t$ for $t \in [0,1]$;
\item For any $t_1,t_2\in[0,1]$, $[0,1] \ni t \rightarrow u_{(1-t)t_1+tt_2} \in \text{PSH}(M, \xi, \omega^T)$ is the weak geodesic connecting $u_{t_1}$ and $u_{t_2}$.
\end{enumerate}
\end{prop}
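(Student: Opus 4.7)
For (1), the plan is to use the envelope definition \eqref{weakgeodesic} directly. Since $u_0^{k+1}\leq u_0^k$ and $u_1^{k+1}\leq u_1^k$, the class of admissible subgeodesics for $u^{k+1}$ is contained in the corresponding class for $u^k$, so $\{u_t^k\}_k$ is monotone decreasing for each fixed $t$. Let $\tilde u := \lim_{k\to\infty} u^k$. Because $u_0\leq u_0^k$ and $u_1\leq u_1^k$, the function $u$ itself is admissible in each of the envelopes defining $u^k$, so $u\leq u^k$ and hence $u\leq \tilde u$; in particular $\tilde u \not\equiv -\infty$, and after usc regularization $\tilde u\in \text{PSH}(N^\circ,\xi,\pi^*\omega^T)$. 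For the reverse inequality, $\tilde u\leq u^k$ yields $\limsup_{t\to 0}\tilde u_t\leq u_0^k$ for every $k$, and letting $k\to\infty$ gives $\limsup_{t\to 0}\tilde u_t\leq u_0$; similarly at $t=1$. Thus $\tilde u$ is a competitor for the envelope defining $u$, so $\tilde u\leq u$, and therefore $\tilde u = u$.

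For (2), I would first dispose of the bounded case, i.e.~assume $u_0,u_1\in\text{PSH}(M,\xi,\omega^T)\cap L^\infty$. By Proposition \ref{coincide4}, the weak geodesic $u$ is then the unique bounded solution of the homogeneous Monge-Amp\`ere equation \eqref{bounded01}. The function $W_s(x):=u_{(1-s)t_1+st_2}(x)$ is the pull-back of $u$ under the affine holomorphic change of coordinates $w\mapsto t_1+(t_2-t_1)w$ on the cylinder $S$, so $W\in\text{PSH}(N^\circ,\xi,\pi^*\omega^T)\cap L^\infty$, and $(\pi^*\omega^T+\sqrt{-1}\partial_B\bar\partial_B W)^{n+1}\wedge\eta=0$, with boundary values $W_0=u_{t_1}$, $W_1=u_{t_2}$. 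By the uniqueness embedded in the strong maximum principle Lemma \ref{strong01}, $W_s$ coincides with the weak geodesic $v_s$ between $u_{t_1}$ and $u_{t_2}$.

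For the general (possibly unbounded) case in (2), I would approximate $u_0,u_1$ by decreasing sequences of bounded potentials $u_0^k,u_1^k$, e.g.~via the canonical cutoffs or Lemma \ref{BK}. Writing $u^k$ for the weak geodesic between $u_0^k,u_1^k$, part (1) gives $u_t^k\searrow u_t$ for every $t$, and in particular $u_{t_i}^k\searrow u_{t_i}$ for $i=1,2$. Letting $v^k$ be the weak geodesic between $u_{t_1}^k$ and $u_{t_2}^k$, another application of (1) gives $v_s^k\searrow v_s$. The bounded case applied at level $k$ reads $u_{(1-s)t_1+st_2}^k=v_s^k$, and passing to the limit in $k$ yields $u_{(1-s)t_1+st_2}=v_s$.

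The main obstacle is the careful handling of the boundary behaviour as $t\to 0,1$ in (1); since the envelope \eqref{weakgeodesic} uses a $\limsup$ inequality rather than a literal boundary equality, the two-sided comparison — $u$ is a competitor for every $u^k$ while $\tilde u$ is a competitor for $u$ — is the crucial bookkeeping. A secondary point is the invariance of both $\text{PSH}(N^\circ,\xi,\pi^*\omega^T)$ and the equation $(\pi^*\omega^T+\sqrt{-1}\partial_B\bar\partial_B\cdot)^{n+1}\wedge\eta=0$ under the affine reparameterization of the cylinder in Step 2, which is immediate from the chain rule but is essential for identifying $W$ with the weak geodesic of the restricted endpoints.
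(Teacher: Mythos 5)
Your proposal is correct and follows essentially the same route as the paper: part (1) by the two-sided competitor comparison in the envelope definition \eqref{weakgeodesic} (with the boundary control coming from the upper bound $u^k_t\leq(1-t)u_0^k+tu_1^k$, which the paper makes explicit via convexity in $t$), and part (2) by reducing to bounded potentials through part (1) and then identifying the affinely reparametrized path with the unique bounded solution of \eqref{bounded01} via Proposition \ref{coincide4}. The only cosmetic difference is that you invoke the strong maximum principle directly where the paper cites Proposition \ref{coincide4}(2), which is the same uniqueness statement.
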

\begin{proof}
\begin{enumerate}
\item By the definition of $u^k_t$ (\ref{weakgeodesic}) it is obvious that $\{u_t^k\}_{k\in\mathbb{N}}$ is decreasing and $v_t := \lim\limits_{k\rightarrow\infty} u_t^k \in \text{PSH}(M, \xi, \omega^T)$. Again by the definition of $u_t^k, u_t$ (\ref{weakgeodesic}) we have $u^k_t \geq u_t$, hence $v_t \geq u_t$.

Recall that $u_t^k$ is convex with respect to $t$. Then $u_t^k \leq (1-t)u_0^k+t u_1^k$ and $v_t \leq (1-t)u_0+t u_1$. It follows from the definition of $u_t$ (\ref{weakgeodesic}) that $v_t \leq u_t$.

Consequently the sequence $u_t^k$ decreases to $u_t$ for $t \in [0,1]$.

\item Recall that $u_0, u_1$ are the decreasing limits of their canonical cutoffs, it follows from part (1) that we only have to prove the proposition for $u_0, u_1$ in $L^{\infty}(M)$.  $v_t:=u_{(1-t)t_1+tt_2}$ be a path connecting $u_{t_1}, u_{t_2}$. By Proposition \ref{coincide4} we have $\lim\limits_{t\rightarrow0,1} v_t=u_{t_1,t_2}$ and $\Phi(\cdot,t)=v_t$ is a solution of the equation $(\ref{bounded01})$ with initial data $u_{t_1},u_{t_2}$. Then it follows from Proposition \ref{coincide4}(2) that $v_t=u_{(1-t)t_1+tt_2}$ is the weak geodesic connecting $u_{t_1}, u_{t_2}$.
\end{enumerate}
\end{proof}

\begin{lemma}[Rooftop formula]\label{rf}Suppose $u_0, u_1\in \text{PSH}(M, \xi, \omega^T)$ and $t\rightarrow u_t$ is the weak geodesic segment connecting $u_0, u_1$. Then
\begin{equation}
\inf_{t\in (0, 1)} (u_t-t\tau)=P(u_0, u_1-\tau), \tau\in \R
\end{equation}
Moreover, for any $\tau\in \R$, we have
\begin{equation}
\{\dot u_0\geq \tau\}=\{P(u_0, u_1-\tau)=u_0\}. 
\end{equation}
If $u_0, u_1\in \cE_p(M, \xi, \omega^T)$, then $u_t\in \cE_p(M, \xi, \omega^T)$. 
\end{lemma}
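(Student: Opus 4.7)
The plan is to establish the rooftop formula via a two-sided inequality following \cite{D4}[Lemma 3.13] in the Kähler setting, then deduce the contact set identity and $\cE_p$-stability.

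\emph{Step 1 (the inequality $P(u_0,u_1-\tau) \leq \inf_{t \in (0,1)}(u_t - t\tau)$):} Set $v := P(u_0, u_1 - \tau)$ and consider the path $\Phi(\cdot, w) := v + \mathrm{Re}(w)\,\tau$ on $N^\circ$. Since $\mathrm{Re}(w)\,\tau$ is pluriharmonic in $w$ and $v \in \mathrm{PSH}(M,\xi,\omega^T)$, we have $\Phi \in \mathrm{PSH}(N^\circ,\xi,\pi^*\omega^T)$ and it is $S^1$-invariant. Its boundary values are $v \leq u_0$ at $t=0$ and $v + \tau \leq u_1$ at $t=1$, so the envelope definition \eqref{weakgeodesic} yields $v + t\tau \leq u_t$ pointwise, whence the claimed inequality.

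\emph{Step 2 (reverse inequality via Kiselman's minimum principle):} Let $\psi_\tau(x) := \inf_{t \in (0,1)}(u_t(x) - t\tau)$. In a foliation chart with transverse holomorphic coordinates on $V_\alpha$ and local potential $h_\alpha$ for $\omega^T$, the function $h_\alpha(z) + u_{\mathrm{Re}(w)}(z) - \mathrm{Re}(w)\,\tau$ is plurisubharmonic in $(z,w) \in V_\alpha \times S^\circ$ and invariant in $\mathrm{Im}(w)$; Kiselman's minimum principle (applied via Lemma \ref{chart}) then shows that its infimum over the strip, which equals $h_\alpha(z) + \psi_\tau(z)$, is plurisubharmonic in $z$, so $\psi_\tau \in \mathrm{PSH}(M,\xi,\omega^T)$. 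Convexity of $t \mapsto u_t(x)$ (from the TPSH structure on $N^\circ$) with the boundary conditions in \eqref{weakgeodesic} gives $\limsup_{t \to 0^+} u_t \leq u_0$ and $\limsup_{t \to 1^-} u_t \leq u_1$, hence $\psi_\tau \leq u_0$ and $\psi_\tau \leq u_1 - \tau$. By definition of $P$, $\psi_\tau \leq P(u_0, u_1 - \tau)$. Combined with Step 1, equality holds.

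\emph{Step 3 (contact set and $\cE_p$-stability):} Fix $x \in M$: the function $f(t) := u_t(x) - t\tau$ is convex on $[0,1]$ with $f(0) = u_0(x)$, and by Step 2 its infimum equals $P(u_0,u_1-\tau)(x)$. The infimum of a convex function on $[0,1]$ coincides with the left endpoint value iff the right derivative $f'(0^+) = \dot u_0(x) - \tau$ is non-negative, giving the set identity. For the $\cE_p$-statement, specialize Step 1 (or argue directly) with $\tau=0$: the constant path $\Phi \equiv P(u_0,u_1)$ is a weak subgeodesic with endpoints $\leq u_0, u_1$, so $P(u_0,u_1) \leq u_t$. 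The earlier lemma bounding $E_\chi(P(u_0,u_1))$ by $(p+1)^n(E_\chi(u_0)+E_\chi(u_1))$ gives $P(u_0,u_1) \in \cE_p$, and the monotonicity Proposition \ref{oder} applied to $P(u_0,u_1) \leq u_t$ yields $u_t \in \cE_p$.

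\emph{Main obstacle:} The key technical point is invoking Kiselman's minimum principle faithfully in the Sasaki transverse setting. One must verify that the weak geodesic's complexification is genuinely $\pi^*\omega^T$-plurisubharmonic and $S^1$-invariant in each transverse holomorphic chart (exploiting Lemma \ref{chart}), and that the infimum procedure preserves upper semicontinuity, Reeb-invariance, and basicness. A secondary subtlety is reducing from bounded $u_0,u_1$—where Proposition \ref{coincide4} supplies clean geodesic properties and $\dot u_0$ exists classically—to general $u_0,u_1 \in \cE_p$ via canonical cutoffs and the monotone convergence result Proposition \ref{homweakgeodesic}(1), taking care that both sides of the rooftop formula are stable under decreasing limits of the endpoints.
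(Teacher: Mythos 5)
Your proof is correct and follows essentially the same route as the paper's: a linear (sub)geodesic candidate built from $P(u_0,u_1-\tau)$ for one inequality, and Kiselman's minimum principle applied in foliation charts for the other. The only difference is that the paper first reduces to $\tau=0$ and stops after the rooftop identity, whereas you treat general $\tau$ directly and also supply the convexity argument for the contact-set identity and the $\cE_p$-stability step, which the paper leaves implicit.
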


\begin{proof}First note that $t\rightarrow v_t=u_t-\tau t$ is the weak geodesic connecting $u_0, u_1-\tau$, hence the proof can be reduced to the particular case $\tau=0$. 
By definition $P(u_0, u_1)\leq u_0, u_1$. As a result, the constant weak subgeodesic $t\rightarrow h_t:=P(u_0, u_1)$ is a candidate for definition of $u_t$, hence $h_t\leq u_t, t\in [0, 1]$. It follows that $P(u_0, u_1)\leq \inf_{t\in [0, 1]} u_{t}$.

For the other direction, we use Kiselman minimum principle \cite{Demailly}[Chapter I, Theorem 7.5], which asserts that $w:=\inf_{t\in [0, 1]} u_t\in \text{PSH}(M, \xi, \omega^T)$ (note that $u_t$ is a genuine plurisubharmonic function on foliation charts, for each $t$ and $u_t$ is convex in $t$-variable; hence Kiselman minimum principle applies, as in K\"ahler setting). Note that $u_t\leq (1-t)u_0+t u_1$, it follows that $w$ is a candidate for $P(u_0, u_1)$ and hence $w\leq P(u_0, u_1)$. This completes the proof. 
\end{proof}

Now we prove Theorem \ref{distance11}, through a series of propositions and lemmas, following \cite{D2}[Section 4] (and in particular \cite{D4}[Section 3]). 
\begin{lemma}\label{dis100}Suppose $u, v\in \cH$ with $u\leq v$. We have
\begin{equation}\label{distance12}
\max\left\{\frac{1}{2^{n+p}}\int_M |u-v|^p\omega_u^n\wedge \eta, \int_M |u-v|^p\omega_v^n\wedge\eta\right\}\leq d_p(u, v)^p\leq \int_M |u-v|^p\omega_u^n\wedge\eta
\end{equation}
\end{lemma}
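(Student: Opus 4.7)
The plan is to establish the three inequalities separately using the $C^{1,\bar 1}_B$-geodesic $u_t$ joining $u$ and $v$, recalling from Theorem \ref{darvas1} and Lemma \ref{arcparameter} (with the normalization $\text{Vol}(M)=1$) that
\[
d_p(u,v)^p=\int_M |\dot u_t|^p\,\omega_{u_t}^n\wedge\eta\quad\text{for every }t\in[0,1].
\]
The convexity of $u_t$ in $t$ gives $\dot u_0 \leq u_1-u_0 = v-u$ and $\dot u_1\ge u_1-u_0 = v-u$; and since the constant subgeodesic $\Phi\equiv u$ is a candidate in the envelope \eqref{weakgeodesic} (using $u\le v$), we get $u_t\ge u$ for all $t\in[0,1]$, whence $\dot u_0\ge 0$. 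So pointwise $0\le \dot u_0\le v-u$ and $\dot u_1\ge v-u\ge 0$. Evaluating $d_p(u,v)^p$ at $t=0$ and $t=1$ respectively yields the upper bound
\[
d_p(u,v)^p \leq \int_M (v-u)^p\,\omega_u^n\wedge\eta
\]
and the first lower bound
\[
d_p(u,v)^p \geq \int_M (v-u)^p\,\omega_v^n\wedge\eta.
\]

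For the remaining lower bound involving $\omega_u^n$, I would use the midpoint $m=(u+v)/2\in \cH$ (well-defined in $\cH$ because $\omega_m=\tfrac12(\omega_u+\omega_v)>0$). Applying the first lower bound already established to the pair $(u,m)$ gives
\[
d_p(u,m)^p\geq \int_M (m-u)^p\,\omega_m^n\wedge\eta=\frac{1}{2^p}\int_M (v-u)^p\,\omega_m^n\wedge\eta,
\]
and since $\omega_m\ge \tfrac12\omega_u$ as $(1,1)$-forms, the monotonicity of top wedge products for positive transverse $(1,1)$-forms implies $\omega_m^n\ge 2^{-n}\omega_u^n$, hence
\[
d_p(u,m)^p\ge \frac{1}{2^{n+p}}\int_M (v-u)^p\,\omega_u^n\wedge\eta.
\]

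To conclude, I need the monotonicity $d_p(u,m)\le d_p(u,v)$. Let $u_t$ and $s_t$ be the weak geodesics joining $u\to v$ and $u\to m$; both are bounded TPSH functions on $N^\circ$ with vanishing twisted complex Monge-Amp\`ere (Proposition \ref{coincide4}), and on $\partial N$ we have $s_0=u_0=u$ while $s_1=m\le v=u_1$. Applying the strong maximum principle (Lemma \ref{strong01}) to $u_t$ (as $u$) and $s_t$ (as $v$), whose Monge-Amp\`ere measures coincide, yields $u_t\ge s_t$ throughout $N$. Since $u_0=s_0$, taking right derivatives at $t=0$ gives $\dot u_0\ge \dot s_0\ge 0$ pointwise on $M$, so $|\dot u_0|^p\ge |\dot s_0|^p$; integrating against $\omega_u^n\wedge\eta$ yields $d_p(u,v)^p\ge d_p(u,m)^p$, finishing the proof.

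The main obstacle is the final monotonicity step $d_p(u,m)\le d_p(u,v)$, which I resolve by direct comparison of the two weak geodesics via the strong maximum principle for the homogeneous complex Monge-Amp\`ere on $N^\circ$. Should one prefer to work with smooth objects, the same conclusion can be reached by comparing the $\epsilon$-geodesics with common initial datum and ordered terminal data (using the concavity of the operator $F$ from Lemma \ref{comparison01} together with an $\alpha t(t-1)$ perturbation to force strict inequality) and passing to the $C^{1,\alpha}$-limit $\epsilon\to 0$, where uniform convergence of $\dot\phi^\epsilon_0$ preserves the pointwise inequality.
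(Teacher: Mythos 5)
Your proposal is correct and follows essentially the same route as the paper: identify $d_p(u,v)^p$ with $\int_M|\dot u_t|^p\omega_{u_t}^n\wedge\eta$ at $t=0,1$ via Theorem \ref{darvas1}, use the maximum principle and convexity to get $0\le\dot u_0\le v-u\le\dot u_1$ for the outer bounds, and obtain the $2^{-(n+p)}$ bound by passing through the midpoint $(u+v)/2$ together with $\omega_{(u+v)/2}^n\ge 2^{-n}\omega_u^n$. The only cosmetic difference is that you re-derive the monotonicity $d_p(u,\tfrac{u+v}{2})\le d_p(u,v)$ from the strong maximum principle applied to the two geodesics, whereas the paper delegates exactly that step to Lemma \ref{order1}, which is proved the same way.
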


\begin{proof}Let $w_t: [0, 1]\rightarrow \cH$ be the $C^{1, \bar 1}_B$ geodesic connecting $u$ and $v$. By Theorem \ref{darvas1}, we have
\begin{equation}\label{distance13}
d_p(u, v)^p=\int_M |\dot w_0|^p\omega_u^n\wedge \eta=\int_M |\dot w_1|^p\omega_v^n\wedge \eta
\end{equation}
By Lemma \ref{strong01}, we have $u\leq w_t$ given $u\leq v$. Since $w_t$ is convex in $t$, it follows that 
\begin{equation}\label{distance14}
0\leq \dot w_0\leq v-u\leq \dot w_1.
\end{equation}
It then follows that, by \eqref{distance13} and \eqref{distance14}, 
\begin{equation}\label{distance15}
 \int_M |u-v|^p\omega_v^n\wedge\eta\leq d_p(u, v)^p\leq \int_M |v-u|^p \omega_u^n\wedge \eta. 
\end{equation}
Next we use $\omega_u^n\wedge \eta\leq 2^n \omega_{(\frac{u+v}{2})}^n\wedge \eta$ to obtain that
\begin{equation*}
2^{-n}\int_M |u-v|^p\omega_u^n\wedge \eta\leq \int_M |u-v|^p \omega_{(\frac{u+v}{2})}^n\wedge\eta
\end{equation*}
We write the righthand side above as follows and apply \eqref{distance15} for $u\leq (u+v)/2$ to obtain,
\[
2^{-p}\int_M |u-v|^p \omega_{(\frac{u+v}{2})}^n\wedge\eta= \int_M \left|u-\frac{u+v}{2}\right|^p \omega_{(\frac{u+v}{2})}^n\wedge\eta\leq d_p\left(u, \frac{u+v}{2}\right)^p
\]
The lemma below implies that $d_p(u, (u+v)/2)\leq d_p(u, v)$, completing the proof. 
\end{proof}

\begin{lemma}\label{order1}Suppose $u, v, w\in \cH$ and $u\leq v\leq w$. Then we have, \[d_p(u, v)\leq d_p(u, w), d_p(v, w)\leq d_p(u, w)\]
\end{lemma}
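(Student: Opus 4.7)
The plan is to compare the one-sided derivatives of three $C^{1,\bar 1}_B$ geodesics at their common endpoint and then invoke the arc-length identity of Theorem~\ref{darvas1}, which expresses $d_p$ as the $L^p(\omega_{\gamma_t}^n\wedge\eta)$-norm of the geodesic velocity.

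First I would let $\phi_t$, $\psi_t$, $\chi_t$ denote the $C^{1,\bar 1}_B$ geodesics connecting $u\leftrightarrow w$, $u\leftrightarrow v$, and $v\leftrightarrow w$ respectively; by Proposition~\ref{coincide4} each is the unique bounded weak geodesic from \eqref{bounded01}. Applying the strong maximum principle (Lemma~\ref{strong01}) to the complexifications, the boundary order $u\le u$ and $v\le w$ forces $\psi_t\le\phi_t$, while $u\le v$ and $w\le w$ forces $\phi_t\le\chi_t$. Comparing each of $\phi_t,\psi_t,\chi_t$ against the constant weak geodesic equal to its smaller or larger endpoint yields the sandwiches $u\le\phi_t\le w$, $u\le\psi_t$, and $\chi_t\le w$ for all $t\in[0,1]$.

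For $d_p(u,v)\le d_p(u,w)$, since $\psi_0=\phi_0=u$ and $u\le\psi_t\le\phi_t$ for $t>0$, the one-sided derivatives (which exist and are bounded by Proposition~\ref{coincide4}(4)) satisfy $0\le\dot\psi_0\le\dot\phi_0$ pointwise on $M$. Both $d_p(u,v)$ and $d_p(u,w)$ are computed by Theorem~\ref{darvas1} against the \emph{same} measure $\omega_u^n\wedge\eta$, so
\[
d_p(u,v)^p=\int_M|\dot\psi_0|^p\,\omega_u^n\wedge\eta\le\int_M|\dot\phi_0|^p\,\omega_u^n\wedge\eta=d_p(u,w)^p.
\]
For $d_p(v,w)\le d_p(u,w)$ the argument is symmetric at $t=1$: from $\phi_1=\chi_1=w$ and $\phi_t\le\chi_t\le w$ for $t<1$ one reads off $0\le\dot\chi_1\le\dot\phi_1$ on $M$, and Theorem~\ref{darvas1} applied against the common measure $\omega_w^n\wedge\eta$ gives the desired inequality.

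The only delicate point is the geodesic comparison principle that produces the pointwise sandwich, and this rests squarely on Lemma~\ref{strong01}. The rest is a monotonicity check of one-sided derivatives at a \emph{shared} endpoint; this sharedness is essential, because otherwise the arc-length identity would force one to compare integrals against two different measures $\omega_{\phi_t}^n\wedge\eta\neq\omega_{\psi_t}^n\wedge\eta$, which is not obvious.
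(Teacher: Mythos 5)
Your proposal is correct and follows essentially the same route as the paper: compare the geodesics via the strong maximum principle (Lemma \ref{strong01}), deduce the pointwise ordering of the one-sided velocities at the shared endpoint, and conclude with the arc-length identity of Theorem \ref{darvas1} against the common measure. The paper carries this out explicitly only for $d_p(u,v)\leq d_p(u,w)$ at $t=0$ and says ``similarly'' for the other inequality; your explicit comparison of $\phi_t\le\chi_t$ at $t=1$ is exactly that omitted symmetric step.
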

\begin{proof}Let $\alpha_t, \beta_t$ be the $C^{1, \bar 1}_B$ geodesic segments connecting $u, v$ and $u, w$ respectively. Since $u\leq v\leq w$, by Lemma \ref{strong01} we have
$u\leq \alpha_t\leq v$ and $u\leq \beta_t\leq w$; moreover, $\alpha_t\leq \beta_t$.  Since $\alpha_0=\beta_0$, this gives that $0\leq \dot \alpha_0\leq \dot \beta_0$. Theorem \ref{darvas1} then implies that $d_p(u, v)\leq d_p(u, w)$. Similarly we can prove $d_p(v, w)\leq d_p(u, w)$. 
\end{proof}

Next we prove that the distance formula \eqref{distance10} is well-defined and agrees with the original definition \eqref{dis10}.

\begin{lemma}Given $u_0, u_1\in \cE_p(M, \xi, \omega^T)$, the limit \eqref{distance10} is finite and independent of the approximating sequences $u_0^k, u_1^k\in \cH$. 
\end{lemma}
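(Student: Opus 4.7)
My plan decomposes the statement into three steps: (1) show that $\{u_0^k\}_k$, and likewise $\{u_1^k\}_k$, is $d_p$-Cauchy in $(\cH, d_p)$; (2) conclude existence and finiteness of the limit via the triangle inequality Lemma \ref{triangle200}; and (3) prove independence from the chosen decreasing approximation.

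For step (1), I would first normalize by adding a constant so that $u_0^1 \leq 0$, hence $u_0 \leq u_0^k \leq 0$ for all $k$. The fundamental estimate (Proposition \ref{fe}) then yields the uniform bound $\sup_k E_p(u_0^k) \leq (p+1)^n E_p(u_0) < \infty$, which is exactly what is needed to legitimately invoke Proposition \ref{weakcon3} along the sequence. For $K \leq k \leq l$ I have $u_0^l \leq u_0^k \leq u_0^K$ in $\cH$, so Lemma \ref{order1} followed by Lemma \ref{dis100} yields
\begin{equation*}
d_p(u_0^l, u_0^k)^p \leq d_p(u_0^l, u_0^K)^p \leq \int_M |u_0^K - u_0^l|^p \omega_{u_0^l}^n \wedge \eta.
\end{equation*}
Applying Proposition \ref{weakcon3} with $\phi^l = u_0^K$ (constant in $l$), $\psi^l = v^l = u_0^l$, and $h(t) = |t|^p$ shows that as $l\to\infty$ the right-hand side converges to $\int_M |u_0^K - u_0|^p \omega_{u_0}^n \wedge \eta$. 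A second application of Proposition \ref{weakcon3}, now with $\phi^K = u_0^K$, $\psi^K = v^K = u_0$, sends this in turn to $0$ as $K\to\infty$. A standard $\varepsilon$-argument then produces an $N$ for which $d_p(u_0^k, u_0^l) < \varepsilon$ whenever $k, l \geq N$.

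Step (2) is immediate: the triangle inequality Lemma \ref{triangle200} gives
\begin{equation*}
|d_p(u_0^k, u_1^k) - d_p(u_0^l, u_1^l)| \leq d_p(u_0^k, u_0^l) + d_p(u_1^k, u_1^l),
\end{equation*}
so $\{d_p(u_0^k, u_1^k)\}_k$ is a Cauchy sequence in $\R$ and hence converges to a finite value.

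For step (3), I would compare two decreasing sequences $\{u_0^k\}$ and $\{\tilde u_0^k\}$ in $\cH$, both tending to $u_0$, by means of the common majorant $w^k := \max(u_0^k, \tilde u_0^k) \in \text{PSH}(M, \xi, \omega^T) \cap L^\infty$, which also decreases to $u_0$ and lies in $\cE_p$ by Proposition \ref{oder}. Using Lemma \ref{BK}, I would produce smooth approximants $w^{k,j} \in \cH$ with $w^{k,j} \searrow w^k$; since $u_0^k \leq w^k \leq w^{k,j}$, Lemma \ref{dis100} gives
\begin{equation*}
d_p(u_0^k, w^{k,j})^p \leq \int_M (w^{k,j} - u_0^k)^p \omega_{u_0^k}^n \wedge \eta.
\end{equation*}
Letting $j\to\infty$ first by bounded convergence (the measure does not depend on $j$) and then $k\to\infty$ via Proposition \ref{weakcon3} with $\phi^k = w^k$, $\psi^k = v^k = u_0^k$, the iterated limit vanishes; the same argument controls $d_p(\tilde u_0^k, w^{k,j})$. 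Choosing $j_k$ for each $k$ so both quantities are small and applying the triangle inequality, I obtain $d_p(u_0^k, \tilde u_0^k) \to 0$, and then Lemma \ref{triangle200} forces $\lim_k d_p(u_0^k, u_1^k) = \lim_k d_p(\tilde u_0^k, \tilde u_1^k)$. The main technical obstacle throughout is the careful orchestration of Proposition \ref{weakcon3} along sequences where both the integrand and the reference Monge-Amp\`ere measure are varying; each application requires verifying the monotonicity conditions $\psi^k \leq \phi^k$, $\psi^k \leq v^k$ and the membership in $\cE_p$, which are ultimately secured by the fundamental estimate.
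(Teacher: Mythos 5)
Your proof is correct and follows the same three-part skeleton as the paper (Cauchy property of a single decreasing approximating sequence, passage to the limit via the triangle inequality Lemma \ref{triangle200}, and comparison of two different approximating sequences), but both technical sub-steps are executed with different tools. For the Cauchy property, the paper bounds $d_p(u_l,u_k)^p\leq \int_M|u_l-u_k|^p\omega_{u_k}^n\wedge\eta$ and then applies the fundamental estimate (Proposition \ref{fe}) in the \emph{shifted} class $\cE_p(M,\xi,\omega_{u_l})$ to obtain the single-index bound $(p+1)^n\int_M|u-u_l|^p\omega_{u_l}^n\wedge\eta\rightarrow 0$; you instead combine the monotonicity Lemma \ref{order1} with a double application of Proposition \ref{weakcon3}, trading the shifted-base trick for an extra iterated limit. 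Your version is, if anything, more careful at the point where the paper invokes ``monotone convergence'' for an integral whose reference measure also varies with $l$. For independence of the approximating sequence, the paper interleaves the two sequences using Dini's lemma (choosing $j_k$ so that $v_0^j<u_0^k$ for $j\geq j_k$) and reuses the Cauchy estimate on the mixed pairs, whereas you introduce the common majorant $\max(u_0^k,\tilde u_0^k)$, smooth it with Lemma \ref{BK}, and control the distance of each sequence to the smoothed majorant. Both routes are legitimate: the paper's is slightly more economical (no second appeal to Lemma \ref{BK} is needed), while yours avoids the Dini interleaving and makes the symmetry between the two sequences more transparent. Your bookkeeping of the hypotheses of Proposition \ref{weakcon3} (the orderings $\psi^k\leq\phi^k$, $\psi^k\leq v^k$ and membership in $\cE_p$, secured by Proposition \ref{oder} for the majorant) is accurate in each of the three applications, so I see no gap.
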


\begin{proof}
First we show that given $u\in \cE_p(M, \xi, \omega^T)$ and a sequence $\{u_k\}_{k\in \N}\subset \cH$ is a decreasing sequence converging to $u$. Then as $l, k\rightarrow \infty$,
$d_p(u_l, u_k)\rightarrow 0.$ We can assume that $l\leq k$ and hence $u_k\leq u_l$. Lemma \ref{dis100} then implies that 
\[
d_p(u_l, u_k)^p\leq \int_M |u_l-u_k|^p\omega_{u_k}^n\wedge \eta.
\]
Clearly we have $u-u_l\leq u_k-u_l\leq 0$ and $u-u_l, u_k-u_l\in \cE_p(M, \xi, \omega_{u_l})$. Hence applying Proposition \ref{fe} for the class $\cE_p(M, \xi, \omega_{u_l})$, we obtain that
\begin{equation}\label{dis102}
d_p(u_l, u_k)^p\leq \int_M |u_l-u_k|^p\omega_{u_k}^n\wedge \eta\leq (p+1)^n\int_M |u-u_l|^p\omega_{u_l}^n\wedge \eta. 
\end{equation}
As $u_l$ decreases to $u\in \cE_p(M, \xi, \omega^T)$, the monotone convergence theorem implies that the righthand side above converges to zero as $l\rightarrow\infty$, hence $d_p(u_l, u_k)\rightarrow 0$ as $l, k\rightarrow\infty$. 
Now by Lemma \ref{triangle200}, we know that
\[
|d_p(u_0^l, u_1^l)-d_p(u_0^k, u_1^k)|\leq d_p(u_0^l, u_0^k)+d_p(u_1^l, u_1^k)\rightarrow 0, l, k\rightarrow \infty.
\]
Hence this proved that the limit \eqref{distance10} is convergent and finite. 

Next we show that the limit is independent of the choice of approximating sequences. Let $v_0^l, v_1^l$ be other approximating sequences. Certainly we can assume the sequences are strictly decreasing, by adding small constants if necessary.  Fix $k$ and consider the sequence $\{\max \{u_0^{k+1}, v_0^j\}_{j\in \N}\}$ decreases pointwise to $u_0^{k+1}$. By Dini's lemma, the convergence is uniform (for fixed $k$) and hence we can choose $j_k$ sufficiently large such that $v^j_0<u^k_0$, $j\geq j_k$. Repeating the argument we can assume $v_1^{j}<u^k_1$, for $j\geq j_k$. By triangle inequality again, we have
\[
|d_p(v_0^j, v_1^j)-d_p(u_0^k, u_1^k)|\leq d_p(v_0^j, u_0^k)+d_p(v_1^j, u_1^k), j\geq j_k
\]
By \eqref{dis102} we know that if $k$ is sufficiently large, $d_p(v_0^j, u_0^k)+d_p(v_1^j, u_1^k)$ is sufficiently small. Hence the distance $d_p(u_0, u_1)$ is independent of the choice of approximating sequence. 
\end{proof}

We choose a decreasing sequence $\{u_0^k\}_{k\in \N}, \{u_1^k\}_{k\in \N}\in \cH$ such that $u_0^k\searrow u_0, u^k_0\searrow u_1$.  We connect $u_0^k, u_1^k$ by the unique $C^{1, \bar 1}$ geodesic segment $u^k_t$. By Lemma \ref{strong01}, it follows that $u^k_t$ decreases in $k$. Hence the  limit $\lim_{k\rightarrow \infty} u^k_t
$ exists. Using Dini's lemma as above, one can show that the limit does not depends on the choice of approximating sequence. Indeed, the limit coincides with the weak geodesic segment defined above,
\[
u_t=\lim_{k\rightarrow \infty} u^k_t
\]

\begin{lemma}We have $t\rightarrow u_t$ is a $d_p$-geodesic in the sense that \[d_p(u_{t_1}, u_{t_2})=|t_1-t_2|d_p(u_0, u_1), s, t\in [0, 1].\]
\end{lemma}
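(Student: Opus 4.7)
The plan is to first establish the identity at the level of the smooth approximating geodesics $u^k_t$ and then pass to the pluripotential limit using Proposition \ref{homweakgeodesic}(1) and the definition (\ref{distance10}).

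For fixed $k$, with smooth endpoints $u^k_0, u^k_1 \in \mathcal{H}$ and $C^{1,\bar 1}$ geodesic $u^k_t$, I would prove
$$d_p(u^k_{s_1}, u^k_{s_2}) = |s_2 - s_1|\, d_p(u^k_0, u^k_1), \quad s_1, s_2 \in [0,1].$$
For the upper bound I would use the smooth $\epsilon$-geodesics $u^{k,\epsilon}_s \in \mathcal{H}$. By Proposition \ref{tangentestimate}(ii), applied with the weight $\chi(l)=|l|^p/p$ (approximated by smooth weights as in the proof of Theorem \ref{darvas1}), the $p$-speed $\|\dot u^{k,\epsilon}_s\|_{p, u^{k,\epsilon}_s}$ is constant in $s$ up to $O(\epsilon)$. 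Since $u^{k,\epsilon}_s$ is a smooth curve in $\mathcal{H}$,
$$d_p(u^{k,\epsilon}_{s_1}, u^{k,\epsilon}_{s_2}) \le \int_{s_1}^{s_2} \|\dot u^{k,\epsilon}_s\|_{p, u^{k,\epsilon}_s}\, ds = |s_2 - s_1|\, \|\dot u^{k,\epsilon}_0\|_{p, u^k_0} + O(\epsilon).$$
Letting $\epsilon\to 0$, the smooth approximation (\ref{smoothapproximation}) sends $\|\dot u^{k,\epsilon}_0\|_{p, u^k_0}\to d_p(u^k_0,u^k_1)$, while a $d_p$-version of Lemma \ref{zero01} forces $d_p(u^{k,\epsilon}_{s_i}, u^k_{s_i})\to 0$, yielding $d_p(u^k_{s_1},u^k_{s_2}) \le |s_2-s_1|\,d_p(u^k_0,u^k_1)$. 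The reverse inequality follows from the triangle inequality applied to the splitting $[0,1] = [0,s_1]\cup[s_1,s_2]\cup[s_2,1]$: the first and third terms are controlled by the upper bound just obtained, leaving $d_p(u^k_{s_1},u^k_{s_2}) \ge (s_2-s_1)\,d_p(u^k_0,u^k_1)$.

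Passing to the pluripotential limit, Proposition \ref{homweakgeodesic}(1) gives $u^k_{t_i}\searrow u_{t_i}$ for $i=1,2$, and the definition (\ref{distance10}) of $d_p$ on $\cE_p$, already shown to be independent of the chosen decreasing approximants, then yields
$$d_p(u_{t_1}, u_{t_2}) = \lim_{k\to\infty} d_p(u^k_{t_1}, u^k_{t_2}) = |t_2 - t_1|\lim_{k\to\infty} d_p(u^k_0, u^k_1) = |t_2 - t_1|\, d_p(u_0, u_1),$$
as desired.

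The main obstacle is the continuity claim $d_p(u^{k,\epsilon}_{s_i}, u^k_{s_i})\to 0$ as $\epsilon\to 0$, i.e.\ an extension of Lemma \ref{zero01} from $d_2$ to $d_p$ where the approximation is by smooth $\epsilon$-geodesics (convergent only in $C^{1,\alpha}$) rather than by a pluripotential decreasing sequence. The cleanest route is via the sharp $L^\infty$-estimate $|\dot u|\le \|u_1-u_0\|_\infty$ on weak geodesics noted in the remark after Lemma \ref{zero01}: this dominates the $d_p$-length of the weak geodesic between $u^{k,\epsilon}_{s_i}$ and $u^k_{s_i}$ by a function of $\|u^{k,\epsilon}_{s_i}-u^k_{s_i}\|_\infty$, which vanishes as $\epsilon\to 0$. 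A minor secondary subtlety is validating the triangle inequality for $d_p$ on $\cE_p$ when one endpoint is only $C^{1,\bar 1}$, which follows from a further decreasing-approximation argument using (\ref{distance10}) and Lemma \ref{triangle200}.
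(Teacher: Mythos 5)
Your proposal is correct in outline but routes the argument differently from the paper. The paper does not first prove the restriction identity for the fixed-$k$ geodesics $u^k_t$; instead it works directly at the $\cE_p$ level: for $l\in(0,1)$ it sandwiches smooth potentials $w_l^k\in\cH$ between consecutive terms of the decreasing sequence $u_l^k$ (Dini), uses Lemma \ref{tangentapp04} to show that the initial velocity $\dot v_0^k$ of the geodesic from $u_0^k$ to $w_l^k$ has $\int_M|\dot v_0^k|^p\omega_{u_0^k}^n\wedge\eta$ close to $l^p\int_M|\dot u_0^k|^p\omega_{u_0^k}^n\wedge\eta$, and concludes $d_p(u_0,u_l)=l\,d_p(u_0,u_1)$ from Theorem \ref{darvas1}; the general $t_1,t_2$ case then follows from Proposition \ref{homweakgeodesic}(2), with no separate lower-bound step. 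You instead prove $d_p(u^k_{s_1},u^k_{s_2})=|s_2-s_1|d_p(u^k_0,u^k_1)$ for each $k$ via the near-constancy of the $\epsilon$-geodesic speed (Proposition \ref{tangentestimate}(ii)) together with a triangle-inequality lower bound, and then pass to the limit in $k$. Both arguments ultimately rest on the same $\epsilon$-geodesic estimates; yours makes the "restriction rescales speed" mechanism explicit at the cost of an extra limit interchange, while the paper's avoids ever comparing $d_p$ between the non-smooth slices $u^{k,\epsilon}_{s}$ and $u^k_s$.

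Two points need shoring up. First, your final step invokes the independence statement for \eqref{distance10}, but that was established only for decreasing approximants lying in $\cH$, whereas $u^k_{t_i}$ is merely $C^{1,\bar 1}_B$; you must interpose genuinely smooth potentials $w^k_{t_i}$ with $u^k_{t_i}\leq w^k_{t_i}\leq u^{k-1}_{t_i}$ (Dini again) — this is precisely the paper's $w_l^k$ device — or appeal to Lemma \ref{approximation4}, which the paper proves afterwards. Second, for the continuity claim $d_p(u^{k,\epsilon}_{s_i},u^k_{s_i})\to 0$, your suggested route through the speed bound on the weak geodesic presupposes that the weak geodesic computes $d_p$ for $C^{1,\bar 1}_B$ endpoints (Lemma \ref{distancef2}, also proved later); it is cleaner to pass through the constant-shift geodesic $t\mapsto\phi+t\|\phi-\psi\|_{\infty}$ and Lemma \ref{dis100}, which bounds $d_p(\phi,\psi)$ by a multiple of $\|\phi-\psi\|_{\infty}$ using only material already available. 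With these repairs your argument goes through.
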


\begin{proof}
Let $\{u_0^k\}_k,\{u_1^k\}_k \in \cH$ be sequences strictly decreasing to $u_0,u_1$ respectively and $u_t^k \in \cH_{\triangle}$ the $C^{1,\bar 1}$ geodesic connecting $u_0^k,u_1^k$. By Theorem \ref{darvas1} we have
\begin{equation*}
d_p(u_0,u_1)^p=\lim_{k\rightarrow\infty}d_p(u_0^k,u_1^k)^p=\lim_{k\rightarrow\infty}\int_M |\dot{u}_0^k|^p\omega_{u_0^k}^n\wedge\eta
\end{equation*}
For $l \in (0,1)$ the strong maximum principle Lemma \ref{strong01} implies that  $u_l^k$ strictly decreases to $u_l$. Then one can choose a sequence $\{w_l^k\}_k \in \cH$  such that
\begin{enumerate}
\item $u_l^k \leq w_l^k \leq u_l^{k+1}$;
\item For the $C^{1,\bar 1}$ geodesic $v_t^k$ connecting $u_0^k$ and $w_l^k$ with $v_0^k=u_0^k, v_1^k=w_l^k$ we have
\begin{equation*}
|\int_M|\dot{v}_0^k|^p \omega_{u_0^k}^n\wedge\eta-l^p\int_M|\dot{u}_0^k|^p\omega_{u_0^k}\wedge\eta|<\frac{1}{k}
\end{equation*}
\end{enumerate}
In fact there exists a sequence $\varphi^j \in \cH$ decreasing to $u_l^k$. By Dini's Lemma $\varphi^j$ converges to $u_k^l$ uniformly. It follows from Lemma \ref{tangentapp04} that for $j$ big enough, $w_l^k=\varphi^j$ will satisfy our requirements.
Then we have
\begin{equation*}
d_p(u_0,u_l)^p=\lim_{k\rightarrow\infty}d_p(u_0^k, w_l^k)^p=\lim_{k\rightarrow\infty}\int_M||\dot{v}_0^k||\omega_{u_0^k}^n\wedge\eta=l^pd_p(u_0,u_1)^p
\end{equation*} 
Hence $d_p(u_0,u_l)=ld_p(u_0,u_1)$ for $l \in [0,1]$.

Without loss of generality we assume that $0 \leq t_1\leq t_2\leq 1$. By the Proposition \ref{homweakgeodesic} $h_t=u_{(1-t)t_2}$ is the weak geodesic connecting $u_{t_2}$ and $u_0$. It follows from the results above we have
\[
d_p(u_{t_2},u_{t_1})=(1-\frac{t_1}{t_2})d_p(u_{t_2},u_0)=(t_2-t_1)d_p(u_1,u_0)
\]
This completes the proof.
\end{proof}

\begin{lemma}\label{tangentapp04}
$u_0,u_1\in \text{PSH}(M,\xi,\omega^T)\cap L^{\infty}$. Let $\{u_1^k\}_{k\in \mathbb{N}} \in \text{PSH}(M,\xi,\omega^T)\cap L^{\infty}$ be a sequence decreasing to $u_1$ and  $u_t, u_t^k \in \text{PSH}(M,\xi,\omega^T)\cap L^{\infty}$ the weak geodesic connecting $u_0,u_1$ and $u_0, u_1^k$ respectively.  Then
\[
\lim_{k \rightarrow \infty} \int_M |\dot{u}_0^k|^p\omega_{u_0}^n\wedge\eta=\int_M |\dot{u}_0|^p\omega_{u_0}^n\wedge\eta
\]
\end{lemma}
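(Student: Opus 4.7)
The plan is to establish the pointwise (monotone) convergence $\dot{u}_0^k \searrow \dot{u}_0$ almost everywhere, and then finish with the dominated convergence theorem, since the integrands will be uniformly bounded and the integration measure $\omega_{u_0}^n\wedge\eta$ is fixed once and for all.

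First I would invoke Proposition \ref{homweakgeodesic} (applied with the constant sequence $u_0$ at the left endpoint and $u_1^k \searrow u_1$ at the right endpoint) to conclude that for every $t \in [0,1]$ the sequence $u_t^k$ decreases pointwise to $u_t$. By Proposition \ref{coincide4}, each weak geodesic $t \mapsto u_t^k$ is convex in $t$ with $u_0^k = u_0$, so the difference quotient $(u_t^k - u_0)/t$ is non-decreasing in $t$, and the right time-derivative at $t=0$ admits the infimum representation
\[
\dot{u}_0^k \;=\; \inf_{t \in (0,1]} \frac{u_t^k - u_0}{t},
\qquad
\dot{u}_0 \;=\; \inf_{t \in (0,1]} \frac{u_t - u_0}{t}.
\]
Since the strong maximum principle (Lemma \ref{strong01}), applied to the homogeneous complex Monge--Amp\`ere problem \eqref{bounded01} with boundary data $u_0$ and $u_1^k \geq u_1$, gives $u_t^k \geq u_t$, we get $\dot{u}_0^k \geq \dot{u}_0$ pointwise. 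Monotonicity in $k$ of $u_t^k$ transfers to the difference quotients and then to the infima, so $\dot{u}_0^k$ is non-increasing in $k$ and its pointwise limit $g := \lim_k \dot{u}_0^k \geq \dot{u}_0$ exists.

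To identify $g$ with $\dot{u}_0$, I would fix an arbitrary $t_0 \in (0,1]$ and use $\dot{u}_0^k \leq (u_{t_0}^k - u_0)/t_0$; letting $k \to \infty$ and appealing to the pointwise convergence $u_{t_0}^k \to u_{t_0}$ yields $g \leq (u_{t_0} - u_0)/t_0$. Taking the infimum over $t_0$ then gives $g \leq \dot{u}_0$, so $g = \dot{u}_0$ pointwise. Finally, Proposition \ref{coincide4}(4) supplies the uniform bound $|\dot{u}_0^k| \leq \|u_1^k - u_0\|_{L^\infty(M)}$, which is independent of $k$ because $\{u_1^k\}$ is a decreasing sequence of bounded functions; hence $|\dot{u}_0^k|^p$ is dominated by a constant. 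Since $\omega_{u_0}^n\wedge\eta$ is a finite Radon measure on $M$, the dominated convergence theorem delivers the claimed limit.

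I do not foresee a serious obstacle: all of the analytic input (convexity, strong maximum principle, stability of bounded weak geodesics under monotone sequences, endpoint derivative bounds) is already in place in Section 3. The only step requiring a bit of care is the translation of monotone convergence at the level of the geodesic segments $u_t^k \searrow u_t$ into monotone convergence of the one-sided time-derivatives at the boundary $t=0$, which is handled cleanly by the infimum formula coming from $t$-convexity.
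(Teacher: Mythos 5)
Your proposal is correct and follows essentially the same route as the paper's proof: both rest on Proposition \ref{homweakgeodesic} for the monotone convergence $u_t^k\searrow u_t$, the $t$-convexity of the weak geodesics to control the boundary derivatives, the uniform bound $|\dot u_0^k|\leq C$ from Proposition \ref{coincide4}, and the dominated convergence theorem. The only cosmetic difference is that you identify $\lim_k\dot u_0^k=\dot u_0$ directly via the infimum representation of the one-sided derivative, whereas the paper argues the same pointwise convergence by contradiction using the linear lower bound $u_t^k(x_0)\geq u_0(x_0)+at$.
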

\begin{proof}
Denote by $C=\max(||u_1^1-u_0||_{L^{\infty}}, ||u_1-u_0||_{L^{\infty}})$. It follows Proposition \ref{coincide4} that $||\dot{u}_0||_{L^{\infty}} \leq C, ||\dot{u}_0^k||_{L^{\infty}} \leq C$. By Proposition \ref{homweakgeodesic} the sequence $\{u_t^k\}_{k \in \mathbb{N}}$ decreases to $u_t$ hence the sequence $\{\dot{u}_0^k\}_{k\in\mathbb{N}}$ is decreasing  with $\dot{u}_0^k \geq \dot{u}_0$. 

Moreover we have $\dot{u}_0^k$ decreases to $\dot{u}_0$. If this is not true, we can find $x_0 \in M, a \in \mathbb{R}$ such that $\dot{u}_0^k > a > \dot{u}_0$. Then there exists $0<t_0<1$ such that $u_t^k(x_0) > u_0+at > u_t(x_0)$ for $t \in [0, t_0]$. It contradicts with the fact that $u_t^k$ decreases to $u_t$.

Then the Lemma follows from Lebesgue's dominated convergence theorem.\end{proof}

The following Pythagorean formula  plays an essential role in Darvas's results \cite{D1, D2} and we have the same,

\begin{thm}[Pythagorean formula]\label{Pythagorean}
Given $u_0, u_1\in \cE_p(M, \xi, \omega^T)$, we have $P(u_0, u_1)\in \cE_p(M, \xi, \omega^T)$ and 
\begin{equation}
d_p(u_0, u_1)^p=d(u_0, P(u_0, u_1))^p+d_p(u_1, P(u_0, u_1))^p. 
\end{equation}
\end{thm}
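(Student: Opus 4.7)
The strategy is to adapt Darvas's proof of the Pythagorean formula to the Sasaki setting, using three ingredients developed above: the rooftop formula (Lemma \ref{rf}), the Monge-Amp\`ere decomposition at the rooftop envelope (Lemma \ref{decomposition}), and the $d_p$-length formula (Theorem \ref{darvas1}).

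First I would reduce to the case $u_0, u_1\in\cH$. By Lemma \ref{BK}, pick decreasing sequences $\{u_0^k\},\{u_1^k\}\subset \cH$ with $u_0^k\searrow u_0$ and $u_1^k\searrow u_1$; then $P(u_0^k,u_1^k)\searrow P(u_0,u_1)$ by the defining property of the envelope, and $P(u_0,u_1)\in\cE_p$ by the preceding energy estimate for rooftops. The distance formula \eqref{distance10} guarantees that the three quantities in question pass to their $\cE_p$-limits, so it is enough to prove the identity for $u_0,u_1\in\cH$, in which case Theorem \ref{rooftop101} gives $P:=P(u_0,u_1)\in\cH_\Delta$.

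For such $u_0,u_1$, let $u_t$ be the $C^{1,\bar 1}_B$ geodesic from $u_0$ to $u_1$ and $\alpha_t$ (resp.\ $\beta_t$) the weak $C^{1,\bar 1}_B$ geodesic from $u_0$ (resp.\ $u_1$) to $P$. The rooftop formula applied to the pairs $(u_0,u_1)$ and $(u_0,P)$ yields
\[
\{\dot u_0\geq \tau\}=\{P(u_0,u_1-\tau)=u_0\},\qquad \{\dot\alpha_0\geq \tau\}=\{P(u_0,P-\tau)=u_0\}.
\]
A direct check shows $P(u_0,u_1-\tau)=P(u_0,P-\tau)$ for $\tau\leq 0$: any $v\in\text{PSH}(M,\xi,\omega^T)$ with $v\leq u_0$ and $v\leq u_1-\tau$ satisfies $v+\tau\leq \min(u_0,u_1)$, hence $v+\tau\leq P$. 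Combining this with $\dot\alpha_0\leq 0$ (from convexity in $t$ and Lemma \ref{strong01}) gives $\dot\alpha_0=\dot u_0$ on $\Lambda_{u_0}^c$ and $\dot\alpha_0=0$ on $\Lambda_{u_0}=\{P=u_0\}$. The same analysis applied to the reversed geodesic $s\mapsto u_{1-s}$ yields $\dot\beta_0=-\dot u_1$ on $\Lambda_{u_1}^c$ and $\dot\beta_0=0$ on $\Lambda_{u_1}$, writing $\dot u_1:=\dot u_t|_{t=1}$. Theorem \ref{darvas1} then gives
\[
d_p(u_0,P)^p=\int_{\Lambda_{u_0}^c}|\dot u_0|^p\omega_{u_0}^n\wedge\eta,\qquad d_p(u_1,P)^p=\int_{\Lambda_{u_1}^c}|\dot u_1|^p\omega_{u_1}^n\wedge\eta,
\]
while $d_p(u_0,u_1)^p=\int_M|\dot u_0|^p\omega_{u_0}^n\wedge\eta$, so the Pythagorean identity reduces to the conservation statement
\[
\int_{\Lambda_{u_0}}|\dot u_0|^p\omega_{u_0}^n\wedge\eta=\int_{\Lambda_{u_1}^c}|\dot u_1|^p\omega_{u_1}^n\wedge\eta.
\]

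The main obstacle is this conservation identity. I would prove the stronger statement that $t\mapsto \int_M \chi(\dot u_t)\omega_{u_t}^n\wedge\eta$ is constant for every smooth $\chi$. For a Guan-Zhang $\epsilon$-geodesic $u_t^\epsilon$, direct differentiation combined with basic integration by parts (whose stray boundary term $\chi(\dot u_t^\epsilon)\, d^c_B \dot u_t^\epsilon\wedge\omega^T\wedge\omega_{u_t^\epsilon}^{n-1}$ is a basic $(2n+1)$-form on $M^{2n+1}$, and hence identically zero since no non-zero basic top-form exists) together with the $\epsilon$-geodesic equation produces
\[
\frac{d}{dt}\int_M \chi(\dot u_t^\epsilon)\omega_{u_t^\epsilon}^n\wedge\eta=\epsilon\int_M \chi'(\dot u_t^\epsilon)(\omega^T)^n\wedge\eta=O(\epsilon).
\]
Sending $\epsilon\to 0$ using Lemma \ref{cma10} (uniform $C^1$ bound on $\dot u_t^\epsilon$) and Proposition \ref{weakcon0} (weak convergence of Monge-Amp\`ere measures) yields the conservation law for the limit geodesic $u_t$. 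Approximating $\tau\mapsto (\tau_+)^p$ by smooth $\chi$ and invoking the identifications $\Lambda_{u_0}=\{\dot u_0\geq 0\}$ and $\Lambda_{u_1}^c=\{\dot u_1>0\}$ from the rooftop formula then delivers the required equality, completing the proof.
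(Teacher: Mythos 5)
Your proposal is correct, and its skeleton matches the paper's: reduce to $u_0,u_1\in\cH$ by monotone approximation, use the rooftop formula to identify the initial velocities of the geodesics running into $P:=P(u_0,u_1)$ (your identification $\dot\alpha_0=\min(\dot u_0,0)$, $\dot\beta_0=\min(-\dot u_1,0)$ is exactly what the paper's chain of set identities encodes), and split $d_p(u_0,u_1)^p=\int_M|\dot u_0|^p\,\omega_{u_0}^n\wedge\eta$ into the contributions of $\{\dot u_0<0\}$ and $\{\dot u_0\geq 0\}$. Where you genuinely diverge is in the key identity $\int_{\{\dot u_0>0\}}|\dot u_0|^p\omega_{u_0}^n\wedge\eta=\int_{\{\dot u_1>0\}}|\dot u_1|^p\omega_{u_1}^n\wedge\eta$: the paper gets this by pure pluripotential bookkeeping (the layer-cake formula, the rooftop identity $\{\dot u_0\geq s\}=\{P(u_0,u_1-s)=u_0\}$, the support/decomposition of $\omega^n_{P(u_0,u_1-s)}\wedge\eta$ from Lemma \ref{decomposition}, and the total-mass identity), which is the content of Lemma \ref{distancef2}; you instead prove the stronger conservation law that $t\mapsto\int_M\chi(\dot u_t)\,\omega^n_{u_t}\wedge\eta$ is constant by differentiating along the Guan--Zhang $\epsilon$-geodesic and letting $\epsilon\to0$ --- the same mechanism the paper already uses in Lemma \ref{distance} and Lemma \ref{arcparameter}, here applied to the non-convex weight $\chi(\tau)=(\tau_+)^p$ after smoothing. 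Your Sasaki-specific justification of the integration by parts (the stray term coming from $d\eta=2\omega^T$ is a basic top-degree form, hence vanishes since $\xi$ is nowhere zero) is correct. Two points to tighten: the length formula you invoke for $d_p(u_0,P)$ and $d_p(u_1,P)$ is not literally Theorem \ref{darvas1} (whose endpoints lie in $\cH$) but its extension to $\cH_\Delta$ endpoints, i.e.\ Lemma \ref{distancef2}, which needs its own approximation argument; and the passage from smooth $\chi$ to $(\tau_+)^p$ should record that $\dot u_t$ is uniformly bounded, so uniform approximation of the weight on a compact interval suffices. Neither is a gap in the idea; your PDE route is arguably cleaner but leans on the smooth $\epsilon$-geodesic regularity theory, while the paper's measure-theoretic route is the one that survives directly in low regularity.
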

\begin{proof}
First we assume that $u_0,u_1\in\cH$. It follows from Theorem \ref{rooftop101} that $P(u_0,u_1) \in \cH_{\triangle}$. Let $u_t$ be the $C_B^{1,\bar1}$ geodesic connecting $u_0,u_1$.
Let $v_t$ be the weak geodesic connecting $P(u_0,u_1),u_1$. It follows from the strong maximum principle that $P(u_0,u_1) \leq v_t$ for $t \in [0,1]$. Hence we have $\dot{v}_0 \geq 0$. By Lemma \ref{distancef2}, Lemma \ref{rf}, the definition of rooftop and Lemma \ref{decomposition} we have
\begin{equation*}
\begin{split}
d_p(P(u_0,u_1),u_1)^p &=\int_M|\dot{v}_0|^p\omega_{P(u_0,u_1)}^p\wedge\eta \\
                           &=\int_{\{\dot{v}_0>0\}} |\dot{v}_0|^p\omega_{P(u_0,u_1)}^n\wedge\eta \\
                           &=p\int_0^{\infty}s^{p-1}\omega_{P(u_0,u_1)}^n\wedge\eta\{\dot{v}_0 \geq s\} ds\\
                           &=p\int_0^{\infty}s^{p-1}\omega_{P(u_0,u_1)}^n\wedge\eta\{P(P(u_0,u_1),u_1-s)=P(u_0,u_1)\}ds\\
                           &=p\int_0^{\infty}s^{p-1}\omega_{P(u_0,u_1)}^n\wedge\eta\{P(u_0,u_1-s)=P(u_0,u_1)\}ds \\
                           &=p\int_0^{\infty}s^{p-1}\omega_{u_0}^n\wedge\eta\{P(u_0,u_1-s)=P(u_0,u_1)=u_0\}ds\\
                           &=p\int_0^{\infty}s^{p-1}\omega_{u_0}^n\wedge\eta\{P(u_0,u_1-s)=u_0\}ds \\
                           &=p\int_0^{\infty}s^{p-1}\omega_{u_0}^n\wedge\eta\{\dot{u}_0 \geq s\}ds \\
                           &=\int_{\{\dot{u}_0>0\}} |\dot{u}_0|^p\omega_{u_0}^n\wedge\eta
 \end{split}
\end{equation*}
By a similar argument we also have
\begin{equation*}
d_p(u_0,P(u_0,u_1))^p =\int_{\{\dot{u}_0<0\}}|\dot{u}_0|^p\omega_{u_0}^n\wedge\eta
\end{equation*}
Now using Theorem \ref{darvas1} we have
\begin{equation*}
\begin{split}
d_p(u_0,u_1)^p &=\int_M |\dot{u}_0|^p\omega_{u_0}^n\wedge\eta\\
                          &=\int_{\{\dot{u}_0<0\}}|\dot{u}_0|^p\omega_{u_0}^n\wedge\eta+\int_{\{\dot{u}_0>0\}}|\dot{u}_0|^p\omega_{u_0}^n\wedge\eta\\
                          &=d_p(u_0,P(u_0,u_1))^p+d_p(P(u_0,u_1),u_1)^p
\end{split}
\end{equation*}
and the Pythagorean formula holds for smooth potentials $u_0,u_1\in \cH$. 

For the general case we can choose sequences $\{u_0^k\}_{k\in \mathbb{N}}, \{u_1^k\}_{k\in\mathbb{N}} \in \cH$ decreases to $u_0,u_1$ respectively. Then the sequence $P(u_0^k, u_1^k) \in \cH_{\triangle}$ decreases to $P(u_0,u_1)$ and the Pythagorean formula follows from Lemma \ref{approximation4}.
\end{proof}

\begin{lemma}\label{distancef2}
Let $u_t$ be the weak geodesic connecting $u_0,u_1 \in \cH_{\triangle}$.Then the following holds:
\[
d_p(u_0,u_1)^p=\int_M |\dot{u}_0|^p\omega_{u_0}^p\wedge\eta=\int_M|\dot{u}_1|^p\omega_{u_1}^n\wedge\eta
\]
\end{lemma}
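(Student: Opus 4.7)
The plan is to reduce the statement to the smooth case already handled by Theorem \ref{darvas1} via approximation. Using Lemma \ref{BK}, choose decreasing sequences $\{u_0^k\}, \{u_1^k\} \subset \cH$ with $u_0^k \searrow u_0$ and $u_1^k \searrow u_1$, and let $u_t^k \in \cH_\Delta$ be the $C^{1,\bar 1}_B$ geodesic joining $u_0^k$ to $u_1^k$. By Proposition \ref{homweakgeodesic}(1), $u_t^k$ decreases to the weak geodesic $u_t$ for every $t \in [0,1]$. Applying Theorem \ref{darvas1} to each smooth pair yields
\[
d_p(u_0^k, u_1^k)^p = \int_M |\dot u_0^k|^p\, \omega_{u_0^k}^n \wedge \eta = \int_M |\dot u_1^k|^p\, \omega_{u_1^k}^n \wedge \eta,
\]
and the left-hand side converges to $d_p(u_0, u_1)^p$ by the definition \eqref{distance10} of $d_p$ on $\cE_p$. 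It therefore remains to identify the limits of the two integrals on the right with $\int_M |\dot u_0|^p\,\omega_{u_0}^n \wedge \eta$ and $\int_M |\dot u_1|^p\,\omega_{u_1}^n \wedge \eta$, respectively.

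For the convergence at $t = 0$, I would collect three ingredients. First, since $u_0, u_1 \in \cH_\Delta \subset L^\infty$, Proposition \ref{coincide4}(4) provides the uniform bound $|\dot u_0^k|, |\dot u_0| \leq C$ with $C := \|u_1^1 - u_0\|_{L^\infty}$. Second, pointwise a.e.\ convergence $\dot u_0^k \to \dot u_0$ follows by combining two applications of Lemma \ref{tangentapp04}: introducing the intermediate weak geodesic $w_t^k$ connecting the fixed $u_0$ to $u_1^k$, the upper bound $\limsup_k \dot u_0^k(x) \leq \dot u_0(x)$ comes from the convexity estimate $\dot u_0^k(x) \leq (u_t^k(x) - u_0^k(x))/t$ and $u_t^k \searrow u_t$, while the matching lower bound $\liminf_k \dot u_0^k(x) \geq \dot u_0(x)$ follows from comparing with $\dot w_0^k$ (which decreases to $\dot u_0$ by Lemma \ref{tangentapp04}) and the strong maximum principle Lemma \ref{strong01} applied to $u_0^k \geq u_0$. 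Third, Proposition \ref{weakcon0} gives weak convergence $\omega_{u_0^k}^n \wedge \eta \to \omega_{u_0}^n \wedge \eta$, since both are bounded TPSH and $u_0^k$ decreases to $u_0$.

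The main obstacle is passing to the limit in $\int_M |\dot u_0^k|^p\, \omega_{u_0^k}^n \wedge \eta$, since both the integrand and the measure vary with $k$. I would resolve this by the quasi-continuity argument used in Proposition \ref{weakcon3}: for each $\epsilon > 0$, Theorem \ref{quasicontinuity} produces an open set $O_\epsilon$ of capacity less than $\epsilon$ on whose complement all relevant potentials are continuous and $\dot u_0^k \to \dot u_0$ uniformly, so Tietze extension combined with Proposition \ref{weakcon0} handles the integral over $M \setminus O_\epsilon$, while the uniform bound $|\dot u_0^k|^p \leq C^p$ together with the small capacity of $O_\epsilon$ (and hence small $\omega_{u_0^k}^n \wedge \eta$-mass) controls the remainder. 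The corresponding statement at $t = 1$ follows by reversing the geodesic $t \leftrightarrow 1 - t$ and repeating the argument; the equality $d_p(u_0,u_1)^p = \int_M |\dot u_1|^p \omega_{u_1}^n\wedge \eta$ then drops out.
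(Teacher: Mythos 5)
Your overall strategy --- approximate both endpoints by smooth decreasing sequences, apply Theorem \ref{darvas1} to each smooth pair, and pass to the limit --- runs into a genuine difficulty at exactly the point you flag as ``the main obstacle'', and the tools you invoke there do not close it. The problem is the limit of $\int_M |\dot u_0^k|^p\,\omega_{u_0^k}^n\wedge\eta$ when both the velocity and the reference measure vary with $k$. First, your pointwise lower bound $\liminf_k \dot u_0^k \geq \dot u_0$ does not follow from the comparison you propose: the maximum principle (Lemma \ref{strong01}) applied to $u_0^k \geq u_0$ gives $u_t^k \geq w_t^k$, hence $(u_t^k - u_0^k)/t \geq (w_t^k - u_0)/t - (u_0^k - u_0)/t$, and the error term $(u_0^k - u_0)/t$ blows up as $t \to 0^{+}$ wherever $u_0^k > u_0$; since $\dot u_0^k$ is precisely the limit of the left-hand side as $t \to 0^{+}$, no lower bound survives. (Lemma \ref{tangentapp04} works only because there the two geodesics share the same initial point, so $(u_t^k - u_0)/t$ is monotone in $k$; that structure is lost once the initial endpoint also moves.) Second, even granting a.e.\ convergence of $\dot u_0^k$, the quasi-continuity argument of Proposition \ref{weakcon3} relies on monotone sequences of TPSH functions, for which quasi-uniform convergence follows from the capacity Dini lemma (Proposition \ref{capacity2}); the velocities $\dot u_0^k$ are neither TPSH nor monotone in $k$, so ``uniform convergence off a set of small capacity'' is unjustified, and without it the Tietze/weak-convergence step does not go through.

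The paper's proof is organized precisely to avoid this double limit. It first proves the endpoint-swap identity $\int_M|\dot u_0|^p\omega_{u_0}^n\wedge\eta=\int_M|\dot u_1|^p\omega_{u_1}^n\wedge\eta$ for arbitrary $u_0,u_1\in\cH_\Delta$ with no limiting process at all, by combining the rooftop formula (Lemma \ref{rf}), the partition of $\omega_{P(u_0,u_1-s)}^n\wedge\eta$ into contact sets (Lemma \ref{decomposition}), the total-mass identity, and a layer-cake computation in $s$. It then approximates one endpoint at a time with a double index: with $u_0^k$ fixed and $u_1^l\searrow u_1$, only the integrand varies against the fixed measure $\omega_{u_0^k}^n\wedge\eta$, which is exactly the situation Lemma \ref{tangentapp04} covers; the swap identity then transfers the integral to the other fixed measure $\omega_{u_1}^n\wedge\eta$ before the second limit $k\to\infty$ is taken. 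To repair your argument you would need either to prove the swap identity first (as the paper does) or to supply a genuinely new justification for the convergence of $\int_M|\dot u_0^k|^p\,\omega_{u_0^k}^n\wedge\eta$.
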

\begin{proof}
$v_t=u_{1-t}$ is the weak geodesic connecting $u_1,u_0$. By Lemma \ref{rf} we have
\begin{equation*}
\begin{split}
\{P(u_0+s,u_1)<u_1\} &=M-\{P(u_0+s,u_1)=u_1\} \\
                                   &=M-\{\dot{v}_0 \geq -s\} \\
                                   &=\{\dot{u}_1 >s\}
\end{split}
\end{equation*}
Recall that $\omega_{u_1}^n\wedge\eta$ has total finite measure $\text{Vol}(M)$, hence except for a countably many $s \in \mathbb{R}$ we have $\omega_{u_1}^n\wedge\eta(\{u_0=u_1-s\})=0$ and $\omega_{u_1}^n\wedge\eta(\{\dot{u}_1 \geq s\})=\omega_{u_1}^n\wedge\eta(\{\dot{u}_1>s\})$.  For such real number $s$, it follows from Lemma \ref{decomposition} that
\begin{equation*}
\omega_{P(u_0,u_1-s)}^n\wedge\eta=\chi_{\{P(u_0,u_1-s)=u_0\}} \omega_{u_0}^n\wedge\eta+\chi_{\{P(u_0,u_1-s)=u_1-s\}} \omega_{u_1}^n\wedge\eta
\end{equation*}
and
\begin{equation*}
\text{Vol}(M)=\omega_{u_0}^n\wedge\eta(\{P(u_0,u_1-s)=u_0\})+\omega_{u_1}^n\wedge\eta(\{P(u_0,u_1-s)=u_1-s\})
\end{equation*}
It follows from Lemma \ref{rf}, the definition of rooftop envelope that
\begin{equation*}
\begin{split}
\int_{\{\dot{u}_0>0\}} |\dot{u}_0|^p\omega_{u_0}^n\wedge\eta &= p\int_0^{\infty} s^{p-1} \omega_{u_0}^n\wedge\eta(\{\dot{u}_0 \geq s\})ds \\
                                                                                                  &= p\int_0^{\infty} s^{p-1} \omega_{u_0}^n\wedge\eta(\{P(u_0,u_1-s)=u_0\})ds \\
                                                                                                  &=p\int_0^{\infty} s^{p-1}(\text{Vol}(M)-\omega_{u_1}^n\wedge\eta(\{P(u_0,u_1-s)=u_1-s\}))ds \\
                                                                                                  &=p\int_0^{\infty} s^{p-1}\omega_{u_1}^n\wedge\eta(\{P(u_0,u_1-s) <u_1-s\})ds \\
                                                                                                  &=p\int_0^{\infty} s^{p-1}\omega_{u_1}^n\wedge\eta(\{P(u_0+s,u_1) < u_1\})ds \\
                                                                                                  &=p\int_0^{\infty} s^{p-1}\omega_{u_1}^n\wedge\eta(\{\dot{u}_1> s\}) ds \\
                                                                                                  &= p\int_0^{\infty} s^{p-1}\omega_{u_1}^n\wedge\eta(\{\dot{u}_1 \geq s\}) ds \\
                                                                                                  &=\int_{\{\dot{u}_1>0\}} |\dot{u}_1|^p\omega_{u_1}^n\wedge\eta
\end{split}
\end{equation*}
A similar arguments gives that
\begin{equation*}
\int_{\{\dot{u}_0<0\}} |\dot{u}_0|^p\omega_{u_0}^n\wedge\eta=\int_{\{\dot{u}_1<0\}} |\dot{u}_1|^p\omega_{u_1}^n\wedge\eta
\end{equation*}
It follows that
\begin{equation*}
\int_M |\dot{u}_0|^p\omega_{u_0}^n\wedge\eta=\int_M |\dot{u}_1|^p\omega_{u_1}^n\wedge\eta
\end{equation*}
Now choose sequence $\{u_0^k\}_{k\in\mathbb{N}}, \{u_1^k\}_{k\in\mathbb{N}} \subset \cH$ decreasing to $u_0,u_1$ respectively. Let $u_t^{kl},u_t$ be the $C_B^{1,\bar1}$ geodesic connecting $u_0^k, u_1^l$ and $u_0,u_1$ respectively. Let $u_t^k$ be the $C_B^{1,\bar1}$ geodesic connecting $u_0^k, u_1$.
It follows from Lemma \ref{approximation4}, Lemma \ref{tangentapp04} and the above results that
\begin{equation*}
d_p(u_0^k,u_1)^p=\lim_{l\rightarrow\infty}d_p(u_0^k,u_1^l)^p=\lim_{l\rightarrow\infty}\int_M |\dot{u}_0^{kl}|^p\omega_{u_0^k}^n\wedge\eta=\int_M|\dot{u}_0^k|^p\omega_{u_0^k}^n\wedge\eta=\int_M |\dot{u}_1^k|^p\omega_{u_1}^n\wedge\eta
\end{equation*}
Then use Lemma \ref{approximation4} ,Proposition \ref{homweakgeodesic} and Lemma \ref{tangentapp04}, we have
\[
d_p(u_0,u_1)^p=\lim_{k\rightarrow\infty}d_p(u_0^k,u_1)^p=\lim_{k\rightarrow\infty}\int_M|\dot{u}_1^k|^p\omega_{u_1}^n\wedge\eta=\int_M|\dot{u}_1|^p\omega_{u_1}^n\wedge\eta
\]
This completes the proof.
\end{proof}

\begin{lemma}\label{order4}
Assume that $u, v\in \cE_p(M,\xi,\omega^T)$ with $u\leq v$.Then we have
\begin{equation*}
\max(\frac{1}{2^{n+p}}\int_M |v-u|^p\omega_u^n\wedge\eta,\int_M|u-v|^p\omega_v^n\wedge\eta) \leq d_p(u, v)^p \leq \int_M |v-u|^p \omega_u^p\wedge\eta
\end{equation*}
\end{lemma}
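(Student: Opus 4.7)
The plan is to extend Lemma \ref{dis100} from $\cH$ to $\cE_p$ by approximating $u$ and $v$ with smooth potentials, using weak convergence of Monge--Amp\`ere measures to pass to the limit on the integral sides and the definition \eqref{distance10} to pass to the limit for $d_p$.

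First I would construct decreasing sequences $u_k, v_k \in \cH$ with $u_k \searrow u$, $v_k \searrow v$, and crucially $u_k \leq v_k$ for every $k$. Using Lemma \ref{BK}, take arbitrary decreasing sequences $\tilde u_j, v_k^{(0)} \in \cH$ converging to $u, v$ respectively, and set $v_k := v_k^{(0)} + 2/k$, so that $v_k \searrow v$ strictly with $v_k - v \geq 2/k$. For each fixed $k$, the continuous functions $\max\{\tilde u_j, v_k^{(0)}\}$ decrease pointwise to the continuous function $v_k^{(0)}$ (since $u \leq v \leq v_k^{(0)}$), so by Dini's lemma the convergence is uniform; thus some $j_k$ yields $\tilde u_{j_k} \leq v_k^{(0)} + 1/k \leq v_k$. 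Choosing $j_k$ increasing in $k$, I set $u_k := \tilde u_{j_k}$ and obtain $u_k \searrow u$ with $u_k \leq v_k$ for all $k$.

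Since $u_k, v_k \in \cH$ with $u_k \leq v_k$, Lemma \ref{dis100} gives
\begin{equation*}
\max\!\left(\tfrac{1}{2^{n+p}}\int_M |v_k - u_k|^p \omega_{u_k}^n \wedge \eta,\ \int_M |v_k - u_k|^p \omega_{v_k}^n \wedge \eta\right) \leq d_p(u_k, v_k)^p \leq \int_M |v_k - u_k|^p \omega_{u_k}^n \wedge \eta.
\end{equation*}
Now $d_p(u_k, v_k) \to d_p(u, v)$ by definition \eqref{distance10}. To handle the integrals, I apply Proposition \ref{weakcon3} with weight $\chi(l) = |l|^p/p$ and test function $h(l) = |l|^p$ (so $|h|/\chi = p$), taking $\psi_k = u_k \searrow u$, $\phi_k = v_k \searrow v$, and third (measure) sequence $u_k$ or $v_k$ respectively; the hypotheses $\psi_k \leq \phi_k$ and $\psi_k \leq v_k$ both reduce to $u_k \leq v_k$ and hold. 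This yields weak convergence $|v_k - u_k|^p \omega_{u_k}^n \wedge \eta \to |v - u|^p \omega_u^n \wedge \eta$ and $|v_k - u_k|^p \omega_{v_k}^n \wedge \eta \to |v - u|^p \omega_v^n \wedge \eta$, and testing against the constant $1$ on compact $M$ gives convergence of total masses. Passing to the limit in the displayed inequality finishes the proof.

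The main obstacle is arranging the ordering $u_k \leq v_k$ along the smooth approximations, because arbitrary decreasing smooth sequences converging to $u$ and $v$ need not respect $u \leq v$, and without such ordering Lemma \ref{dis100} is unavailable. The Dini trick with the positive gap $v_k - v \geq 2/k$ handles this cleanly, and the remaining passage to the limit is then routine given Proposition \ref{weakcon3}.
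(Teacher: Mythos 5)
Your proposal is correct and follows essentially the same route as the paper: both approximate $u$ and $v$ by decreasing smooth sequences, use Dini's lemma to force the ordering $u_k\leq v_k$ (the paper interpolates new smooth $v_k$ between $\max(u_{k-1},w_{k-1})$ and $\max(u_k,w_k)$, while you reselect a subsequence of the $u$-approximants against a constant-shifted $v$-sequence — a minor variation of the same trick), then apply Lemma \ref{dis100} and pass to the limit via Proposition \ref{weakcon3} and the definition of $d_p$ on $\cE_p$. No gaps.
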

\begin{proof}
First we can choose $u_k, w_k \in \cH$ strictly decreasing to $u, v$ respectively. Then $\max(u_k, w_k) \in \text{PSH}(M,\xi,\omega^T)$ are continuous and strictly decreases to $v$. By Dini's Lemma there exists $v_k \in \cH$ such that $\max(u_{k-1},v_{k-1}) \geq v_k \geq \max(u_k, v_k)$. Then $v_k$ decreases to $v$ and $u_k \leq v_k$. It follows from Lemma \ref{dis100} that
\begin{equation*}
\max(\frac{1}{2^{n+p}}\int_M |v_k-u_k|^p\omega_{u_k}^n\wedge\eta,\int_M|u_k-v_k|^p\omega_{v_k}^n\wedge\eta) \leq d_p(u_k, v_k)^p \leq \int_M |v_k-u_k|^p \omega_{u_k}^p\wedge\eta
\end{equation*}
By the Proposition \ref{weakcon3} the required inequality follows as $k \rightarrow \infty$.
\end{proof}

\begin{lemma}\label{approximation4}
If the sequence $\{u_k\}_{k\in \mathbb{N}} , \{v_k\}_{k \in \mathbb{N}}\in \cE_p(M,\xi,\omega^T)$ decreases (increases) to $u ,v\in \cE_p(M,\xi,\omega^T)$ respectively, then $d_p(u_k, v_k) \rightarrow d_p(u, v)$ as $k \rightarrow \infty$. In particular, $d_p(u_k,u) \rightarrow 0$.
\end{lemma}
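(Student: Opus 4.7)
The plan is to reduce the statement to proving $d_p(u_k, u) \to 0$ for a single monotone sequence, and then to handle the decreasing and increasing cases separately using Lemma \ref{order4}. Before doing so, I would verify that the triangle inequality extends from $(\cH, d_p)$ (Lemma \ref{triangle200}) to $(\cE_p, d_p)$: given $u, v, w \in \cE_p$ with smooth approximating sequences $u'_k, v'_k, w'_k \in \cH$ decreasing to them, the inequality $d_p(u'_k, w'_k) \leq d_p(u'_k, v'_k) + d_p(v'_k, w'_k)$ passes to the limit via the defining formula \eqref{distance10}. Similarly, the invariance $d_p(u - C, v - C) = d_p(u, v)$ under a common constant shift descends from $\cH$, since adding a constant changes neither $\omega_\phi^n \wedge \eta$ nor the tangent vector along a geodesic. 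Once these are in hand, the bound
\[
|d_p(u_k, v_k) - d_p(u, v)| \leq d_p(u_k, u) + d_p(v_k, v)
\]
reduces the lemma to the ``in particular'' statement for a single monotone sequence.

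For the decreasing case $u_k \searrow u$, I would apply Lemma \ref{order4} to the pair $u \leq u_k$ to obtain
\[
d_p(u_k, u)^p \leq \int_M |u_k - u|^p\, \omega_u^n \wedge \eta.
\]
After a harmless translation one may assume $u_1 \leq 0$, so that $u_k, u \leq 0$ for all $k$. The integrand tends pointwise to zero and is dominated by $|u_1 - u|^p \leq 2^{p-1}(|u_1|^p + |u|^p)$. The first summand lies in $L^1(\omega_u^n \wedge \eta)$ by Proposition \ref{mixedenergy} applied with weight $\chi^p(t) = |t|^p/p$, and the second is in $L^1$ by the very definition of $u \in \cE_p$. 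The dominated convergence theorem then finishes this case.

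For the increasing case $u_k \nearrow u$, Lemma \ref{order4} instead yields
\[
d_p(u, u_k)^p \leq \int_M |u - u_k|^p\, \omega_{u_k}^n \wedge \eta,
\]
where now the reference measure varies with $k$ and a fixed $L^1$-dominating function is unavailable. The hard part is precisely this step: one must invoke Proposition \ref{weakcon3}, which I would apply with $\psi_k = u_k$, $\phi_k \equiv u$, $v_k = u_k$, and $h(t) = |t|^p$. The hypotheses $\psi_k \leq \phi_k$, $\psi_k \leq v_k$, and $\limsup_{|l| \to \infty} |h(l)|/\chi^p(l) = p < \infty$ are all immediate. The proposition then produces the weak convergence of Radon measures $|u - u_k|^p\, \omega_{u_k}^n \wedge \eta \to 0$ on the compact manifold $M$; testing against the continuous function $f \equiv 1$ yields $\int_M |u - u_k|^p\, \omega_{u_k}^n \wedge \eta \to 0$, completing the proof.
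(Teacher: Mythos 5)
Your proposal is correct and follows essentially the same route as the paper: reduce via the triangle inequality to showing $d_p(u_k,u)\to 0$ for a single monotone sequence, bound this by the integral estimate of Lemma \ref{order4}, and show that integral tends to zero. The only (harmless) deviation is that in the decreasing case you replace the paper's appeal to Proposition \ref{weakcon3} by a dominated convergence argument with the dominating function supplied by Proposition \ref{mixedenergy}, while in the increasing case — where the reference measure varies with $k$ — you invoke Proposition \ref{weakcon3} exactly as the paper does.
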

\begin{proof}
If the sequence $\{u_k\}_{k\in \mathbb{N}}$ is decreasing, using the triangle inequality and Lemma \ref{order4} we have
\begin{equation*}
\begin{split}
|d_p(u_k, v_k)-d_p(u, v)| &\leq d_p(u_k, u)+d_p(v, v_k) \\
                                     &\leq  (\int_M|u_k-u|^p\omega_u^n\wedge\eta)^{\frac{1}{p}}+(\int_M|v_k-v|^p\omega_v^n\wedge\eta)^{\frac{1}{p}}
\end{split}
\end{equation*}
and the Lemma follows from Lemma \ref{weakcon3}.

If the sequence $\{u_k\}_{k\in\mathbb{N}}$ is increasing, using the triangle inequality and Lemma \ref{order4} we have
\begin{equation*}
\begin{split}
|d_p(u_k, v_k)-d_p(u, v)| &\leq d_p(u_k,u)+d_p(v, v_k) \\
                                     &\leq  (\int_M|u_k-u|^p\omega_{u_k}^n\wedge\eta)^{\frac{1}{p}}+(\int_M|v_k-v|^p\omega_{v_k}^n\wedge\eta)^{\frac{1}{p}}
\end{split}
\end{equation*}
and the Lemma follows from Lemma \ref{weakcon3}.
\end{proof}

\begin{lemma}Suppose $u_0, u_1\in \cE_p(M, \xi, \omega^T)$. Then we have
\[
d_p\left(u_0, \frac{u_0+u_1}{2}\right)^p\leq Cd_p(u_0, u_1)^p
\]
\end{lemma}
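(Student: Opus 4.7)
The plan is to combine the Pythagorean formula (Theorem~\ref{Pythagorean}) with the two-sided comparison between $d_p$ and weighted $L^p$-integrals from Lemma~\ref{order4}. Write $v=\tfrac{u_0+u_1}{2}$ and $w=P(u_0,u_1)$; by the discussion preceding Theorem~\ref{Pythagorean} we have $w\in\cE_p(M,\xi,\omega^T)$, and since $w\leq \min(u_0,u_1)$ we get $w\leq v$.

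First I would estimate $d_p(w,v)$. Applying the upper bound from Lemma~\ref{order4} to the pair $w\leq v$ gives
\[
d_p(w,v)^p\leq \int_M (v-w)^p\,\omega_w^n\wedge\eta.
\]
Now use the elementary convexity inequality $\bigl(\tfrac{a+b}{2}\bigr)^p\leq \tfrac{a^p+b^p}{2}$ valid for $p\geq 1$ and $a,b\geq 0$, applied to $a=u_0-w$, $b=u_1-w$, to bound
\[
(v-w)^p\leq \tfrac{1}{2}\bigl[(u_0-w)^p+(u_1-w)^p\bigr].
\]
Therefore
\[
d_p(w,v)^p\leq \tfrac{1}{2}\int_M(u_0-w)^p\,\omega_w^n\wedge\eta+\tfrac{1}{2}\int_M(u_1-w)^p\,\omega_w^n\wedge\eta.
\]
Next apply the lower bound from Lemma~\ref{order4} to the pairs $w\leq u_i$, which yields $\int_M(u_i-w)^p\,\omega_w^n\wedge\eta\leq 2^{n+p}d_p(w,u_i)^p$. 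Together with the Pythagorean identity $d_p(w,u_0)^p+d_p(w,u_1)^p=d_p(u_0,u_1)^p$ from Theorem~\ref{Pythagorean}, this gives
\[
d_p(w,v)^p\leq 2^{n+p-1}\bigl[d_p(w,u_0)^p+d_p(w,u_1)^p\bigr]=2^{n+p-1}\,d_p(u_0,u_1)^p.
\]

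Finally I would combine this with the triangle inequality (which holds on $\cE_p$ by the approximation Lemma~\ref{approximation4} together with Lemma~\ref{triangle200} on $\cH$) and the Pythagorean bound $d_p(u_0,w)\leq d_p(u_0,u_1)$:
\[
d_p(u_0,v)\leq d_p(u_0,w)+d_p(w,v)\leq \bigl(1+2^{(n+p-1)/p}\bigr)d_p(u_0,u_1),
\]
yielding the claim with $C=\bigl(1+2^{(n+p-1)/p}\bigr)^p$, which depends only on $n$ and $p$.

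The only delicate point is the application of Lemma~\ref{order4} to the pairs $(w,u_i)$: one needs $w\in\cE_p$ so that the two-sided estimate is legal, which is guaranteed since $P(u_0,u_1)\in \cE_p$ for $u_0,u_1\in\cE_p$. The rest is bookkeeping: the convexity of $t\mapsto t^p$ plus the Pythagorean identity collapse the mid-point estimate to a constant times $d_p(u_0,u_1)^p$. No extra smoothness or approximation is needed beyond what has already been established.
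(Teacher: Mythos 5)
Your proof is correct, and it reaches the same constant-order conclusion by a slightly different route than the paper. The paper applies the Pythagorean formula to the pair $(u_0,\tfrac{u_0+u_1}{2})$ itself, obtaining the exact decomposition $d_p(u_0,v)^p=d_p(u_0,P(u_0,v))^p+d_p(v,P(u_0,v))^p$ with $v=\tfrac{u_0+u_1}{2}$, and then invokes the sandwich $P(u_0,u_1)\leq P(u_0,v)\leq u_0$ (resp.\ $\leq v$) together with Lemma~\ref{order1} to replace the envelope $P(u_0,v)$ by $P(u_0,u_1)$ before passing to the $L^p$-integrals via Lemma~\ref{order4}; from there the convexity of $t\mapsto t^p$ and a second application of the Pythagorean formula to $(u_0,u_1)$ finish the job. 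You instead route everything through the single envelope $w=P(u_0,u_1)$ and the triangle inequality $d_p(u_0,v)\leq d_p(u_0,w)+d_p(w,v)$, which lets you avoid both the auxiliary envelope $P(u_0,v)$ and Lemma~\ref{order1}; the price is a marginally less clean constant, since you must raise $1+2^{(n+p-1)/p}$ to the $p$-th power rather than keeping an additive bound on the $p$-th powers throughout. Two small bookkeeping points you should make explicit: the application of Lemma~\ref{order4} to the pair $(w,v)$ also requires $v=\tfrac{u_0+u_1}{2}\in\cE_p$, which follows from the convexity of $\cE_p$ (Corollary~\ref{convex1}); and the triangle inequality on $\cE_p$, while not formally stated at this point in the paper, does follow from Lemma~\ref{triangle200} together with the limit definition of $d_p$ via Lemma~\ref{approximation4}, exactly as you indicate.
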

\begin{proof}
It is obvious that $ P(u_0,u_1) \leq P(u_0,\frac{u_0+u_1}{2}) \leq u_0$ and $P(u_0,u_1) \leq P(u_0,\frac{u_0+u_1}{2}) \leq \frac{u_0+u_1}{2}$. By The Pythagorean Theorem \ref{Pythagorean} ,Lemma \ref{order1} and Lemma \ref{order4} we have
\begin{equation*}
\begin{split}
d_p(u_0,\frac{u_0+u_1}{2})^p &=d_p(u_0,P(u_0,\frac{u_0+u_1}{2}))^p+d_p(\frac{u_0+u_1}{2},P(u_0,\frac{u_0+u_1}{2}))^p \\
                                                &\leq d_p(u_0,P(u_0,u_1))^p+d_p(\frac{u_0+u_1}{2},P(u_0,u_1))^p  \\
                                                &\leq \int_M |u_0-P(u_0,u_1)|^p\omega_{P(u_0,u_1)}^n\wedge\eta+\int_M|\frac{u_0+u_1}{2}-P(u_0,u_1)|^p\omega_{P(u_0,u_1)}^n\wedge\eta \\
                                                &\leq 2(\int_M|u_0-P(u_0,u_1)|\omega_{P(u_0,u_1)}^n\wedge\eta+\int_M|u_1-P(u_0,u_1)|\omega_{P(u_0,u_1)}^n\wedge\eta) \\
                                                &\leq 2^{n+p+1}(d_p(u_0,P(u_0,u_1))^p+d_p(u_1,P(u_0,u_1))^p) \\
                                                &=2^{n+p+1}d_p(u_0,u_1)^p
\end{split}
\end{equation*}
This completes the proof.
\end{proof}

\begin{thm}\label{comparison04}For any $u_0, u_1\in \cE_p(M, \xi, \omega^T)$ we have
\begin{equation}
C^{-1}d_p(u_0, u_1)^p\leq \int_M |u_0-u_1|^p(\omega^n_{u_0}\wedge \eta+\omega^n_{u_1}\wedge \eta)\leq Cd_p(u_0, u_1)^p.
\end{equation}
\end{thm}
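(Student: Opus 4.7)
The plan is to prove both inequalities in the smooth case $u_0, u_1 \in \cH$ and then extend to $\cE_p(M, \xi, \omega^T)$ by monotone approximation. The key tools are the Pythagorean formula (Theorem \ref{Pythagorean}) applied to the rooftop $P := P(u_0, u_1) \in \cH_\Delta$; the two-sided comparison of $d_p$ with integrals for ordered pairs (Lemma \ref{order4}); the monotonicity of $d_p$ under pointwise order (Lemma \ref{order1}, extended to $\cE_p$ by the same weak-geodesic argument); and the decomposition of $\omega_P^n \wedge \eta$ on the contact sets $\Lambda_{u_0}, \Lambda_{u_1}$ (Lemma \ref{decomposition}). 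The maximum $v := \max(u_0, u_1) \in \text{PSH}(M, \xi, \omega^T) \cap L^\infty$ will play an auxiliary but essential role in the lower bound on the integral.

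The bound $d_p(u_0, u_1)^p \leq \int_M |u_0 - u_1|^p (\omega_{u_0}^n + \omega_{u_1}^n) \wedge \eta$ is immediate. By Pythagorean and the upper estimate in Lemma \ref{order4},
\[
d_p(u_0, u_1)^p = d_p(u_0, P)^p + d_p(u_1, P)^p \leq \int_M (u_0 - P)^p \omega_P^n \wedge \eta + \int_M (u_1 - P)^p \omega_P^n \wedge \eta.
\]
Expanding via Lemma \ref{decomposition} and noting that $P = u_0 \leq u_1$ on $\Lambda_{u_0}$ and $P = u_1 < u_0$ on $\Lambda_{u_1} \setminus \Lambda_{u_0}$, the right-hand side equals $\int_{\Lambda_{u_0}} |u_0 - u_1|^p \omega_{u_0}^n \wedge \eta + \int_{\Lambda_{u_1} \setminus \Lambda_{u_0}} |u_0 - u_1|^p \omega_{u_1}^n \wedge \eta$, which is at most $\int_M |u_0 - u_1|^p (\omega_{u_0}^n + \omega_{u_1}^n) \wedge \eta$.

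For the reverse inequality, split $|u_0 - u_1|^p = (u_0 - u_1)_+^p + (u_1 - u_0)_+^p$ and treat the four resulting integrals. The two ``diagonal'' pieces $\int (u_0 - u_1)_+^p \omega_{u_0}^n$ and $\int (u_1 - u_0)_+^p \omega_{u_1}^n$ sum to at most $d_p(u_0, u_1)^p$: use $d_p(u_i, P)^p \geq \int (u_i - P)^p \omega_{u_i}^n \wedge \eta$ from Lemma \ref{order4} in each Pythagorean term, together with the pointwise inequality $u_i - P \geq (u_i - u_j)_+$ (since $P \leq u_j$). For the ``cross'' pieces, rewrite $(u_1 - u_0)_+ = v - u_0$ and apply the first lower estimate of Lemma \ref{order4} to the ordered pair $u_0 \leq v$:
\[
\int_M (u_1 - u_0)_+^p \omega_{u_0}^n \wedge \eta = \int_M (v - u_0)^p \omega_{u_0}^n \wedge \eta \leq 2^{n+p} d_p(u_0, v)^p.
\]
The sandwich $P \leq u_0 \leq v$ and monotonicity give $d_p(u_0, v) \leq d_p(P, v)$, while Lemma \ref{order4} together with Lemma \ref{decomposition} yield $d_p(P, v)^p \leq \int (v - P)^p \omega_P^n \wedge \eta = \int_{\Lambda_{u_0}} (u_1 - u_0)^p \omega_{u_0}^n + \int_{\Lambda_{u_1} \setminus \Lambda_{u_0}} (u_0 - u_1)^p \omega_{u_1}^n$. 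This last expression is precisely what the first lower estimate of Lemma \ref{order4}, applied inside Pythagorean, bounds above by $2^{n+p} d_p(u_0, u_1)^p$. Chaining yields $\int (u_1 - u_0)_+^p \omega_{u_0}^n \leq 2^{2(n+p)} d_p(u_0, u_1)^p$, and the estimate for the symmetric cross integral is analogous.

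The main technical obstacle is controlling the ``cross'' integrals, where the sign of $u_0 - u_1$ on the support of the integrand is mismatched with the natural Monge--Amp\`ere measure; the sandwich $P \leq u_i \leq v$ resolves this by converting such integrals into rooftop quantities accessible to the Pythagorean bound. The passage from $\cH$ to $\cE_p$ is routine: for decreasing sequences $u_0^k, u_1^k \in \cH$ converging to $u_0, u_1$, the rooftops $P(u_0^k, u_1^k)$ and the maxima $\max(u_0^k, u_1^k)$ decrease to $P(u_0, u_1)$ and $\max(u_0, u_1)$ respectively, the distances converge by Lemma \ref{approximation4}, and the relevant Monge--Amp\`ere integrals converge by Proposition \ref{weakcon3}, so the inequalities pass to the limit.
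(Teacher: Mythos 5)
Your smooth-case argument is correct, and it takes a genuinely different route from the paper's. For the upper bound on the integral the paper runs the chain $C d_p(u_0,u_1)^p\geq d_p(u_0,\tfrac{u_0+u_1}{2})^p$ (the midpoint lemma), then applies the Pythagorean formula to the pair $(u_0,\tfrac{u_0+u_1}{2})$ and Lemma \ref{order4} to the rooftop $P(u_0,\tfrac{u_0+u_1}{2})$, and finally recombines via the pointwise inequality $|u_0-u_1|/2\leq |u_0-P|+|\tfrac{u_0+u_1}{2}-P|$; for the lower bound it uses the triangle inequality through $\max(u_0,u_1)$ and Lemma \ref{order4} for the ordered pairs $u_i\leq\max(u_0,u_1)$. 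Every step there is already available in $\cE_p$, so no approximation is needed. Your version instead splits $|u_0-u_1|^p$ into the four pieces $(u_i-u_j)_+^p\,\omega_{u_i}^n\wedge\eta$, handles the diagonal ones by $u_i-P\geq(u_i-u_j)_+$ plus Pythagorean, and handles the cross ones by the sandwich $P\leq u_i\leq\max(u_0,u_1)$ together with Lemma \ref{decomposition}. The bookkeeping all checks out (your constants $2^{n+p}$ and $2^{2(n+p)}$ are consistent with Lemma \ref{order4}), and the decomposition-based identification of $\omega_P^n\wedge\eta$ on the contact sets is an attractive, more structural way to see where the mass of $|u_0-u_1|^p$ is controlled.

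The soft spot is the final sentence. Lemma \ref{decomposition} is only proved for $u_0,u_1\in\cH_\Delta$, so you are forced to argue in $\cH$ first and pass to the limit; but the convergence of the mixed-sign integrals is not actually covered by Proposition \ref{weakcon3} as stated. Writing $w_k=\max(u_0^k,u_1^k)$, the piece $\int_M (u_0^k-u_1^k)_+^p\,\omega_{u_0^k}^n\wedge\eta=\int_M (w_k-u_1^k)^p\,\omega_{u_0^k}^n\wedge\eta$ would require applying the proposition with $\phi_k=w_k$, $\psi_k=u_1^k$, $v_k=u_0^k$, and the hypothesis $\psi_k\leq v_k$ fails since $u_1^k\leq u_0^k$ does not hold in general. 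So "the relevant Monge--Amp\`ere integrals converge by Proposition \ref{weakcon3}" is not justified for two of your four pieces. The cleanest repair is to remove the dependence on Lemma \ref{decomposition} so that the whole argument runs directly in $\cE_p$: in the lower-bound chain, replace the decomposition step by the pointwise bound $\max(u_0,u_1)-P\leq\max(u_0-P,\,u_1-P)$, whence $\int_M(\max(u_0,u_1)-P)^p\omega_P^n\wedge\eta\leq\int_M(u_0-P)^p\omega_P^n\wedge\eta+\int_M(u_1-P)^p\omega_P^n\wedge\eta\leq 2^{n+p}d_p(u_0,u_1)^p$ by Lemma \ref{order4} applied to $P\leq u_i$ and the Pythagorean formula; and for the inequality $d_p(u_0,u_1)^p\leq C\int|u_0-u_1|^p(\omega_{u_0}^n+\omega_{u_1}^n)\wedge\eta$ use the triangle inequality through $\max(u_0,u_1)$ and Lemma \ref{order4} for the ordered pairs, which requires neither the decomposition nor any limit passage. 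With that rearrangement your proof closes.
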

\begin{proof}
Using the triangle inequality, arithmetic-geometric mean inequality, and Lemma \ref{order4} we have:
\begin{equation*}
\begin{split}
d_p(u_0,u_1)^p & \leq (d_p(u_0,\max(u_0,u_1))+d_p(u_1,\max(u_0,u_1)))^p   \\
                          &\leq   2^{p-1}(d_p(u_0,\max(u_0,u_1))^p+d_p(u_1,\max(u_0,u_1))^p) \\
                          & \leq  2^{p-1}(\int_M |u_0-\max(u_0,u_1)|^p\omega_{u_0}^n\wedge\eta+\int_M|u_1-\max(u_0,u_1)|^p\omega_{u_1}^n\wedge\eta) \\
                          & = 2^{p-1}(\int_{\{u_0 <u_1\}} |u_0-u_1|^p\omega_{u_0}^n\wedge\eta+\int_{\{u_1<u_0\}} |u_1-u_0|^p\omega_{u_1}^n\wedge\eta)  \\
                          &\leq 2^{p-1}\int_M |u_0-u_1|^p(\omega_{u_0}^n\wedge\eta+\omega_{u_1}^n\wedge\eta)
\end{split}
\end{equation*}

By the previous Lemma , the Pythagorean formula and Lemma \ref{order4}, there exists a constant $C$ such that
\begin{equation*}
\begin{split}
Cd_p(u_0,u_1)^p & \geq d_p(u_0,\frac{u_0+u_1}{2})^p \\
                             & \geq  d_p(u_0,P(u_0,\frac{u_0+u_1}{2}))^p \\
                             & \geq   \int_M|u_0-P(u_0,\frac{u_0+u_1}{2})|\omega_{u_0}^n\wedge\eta
\end{split}
\end{equation*}
Similarly we also have:
\begin{equation*}
\begin{split}
Cd_p(u_0,u_1)^p &\geq d_p(u_0,\frac{u_0+u_1}{2})^p \\
                            & \geq d_p(\frac{u_0+u_1}{2}, P(u_0,\frac{u_0+u_1}{2}))^p \\
                            & \geq \int_M |\frac{u_0+u_1}{2}-P(u_0,\frac{u_0+u_1}{2})|^p\omega_{\frac{u_0+u_1}{2}}^n\wedge\eta \\
                            & \geq  \frac{1}{2^n} \int_M |\frac{u_0+u_1}{2}-P(u_0,\frac{u_0+u_1}{2})|^p\omega_{u_0}^n\wedge\eta
\end{split}
\end{equation*}
Hence by the Holder inequality we have:
\begin{equation*}
\begin{split}
(2^n+1)Cd_p(u_0,u_1)^p &\geq  \int_M(|u_0-P(u_0,\frac{u_0+u_1}{2})|^p+|\frac{u_0+u_1}{2}-P(u_0,\frac{u_0+u_1}{2})|^p)\omega^n_{u_0}\wedge\eta \\
                                         & \geq \frac{1}{2^p} \int_M|u_0-u_1|^p\omega_{u_0}^n\wedge\eta
\end{split}
\end{equation*}
By symmetry of $u_0,u_1$ we also have:
\begin{equation*}
(2^n+1)Cd_p(u_0,u_1)^p \geq \frac{1}{2^p} \int_M |u_0-u_1|\omega_{u_1}^n\wedge\eta
\end{equation*}
Adding the last two inequalities we obtain:
\begin{equation*}
2^p(2^n+1)C d_p(u_0,u_1)^p \geq \int_M |u_0-u_1|^p(\omega_{u_0}^n\wedge\eta+\omega_{u_1}^p\wedge\eta)
\end{equation*}
This completes the proof.
\end{proof}

\begin{lemma}\label{close4}
Let $\{u_k\}_{k\in\mathbb{N}} \subset \cE_p(M,\xi,\omega^T)$ be a $d_p$-bounded sequence decreasing (increasing) to $u$. Then $u \in \cE(M,\xi,\omega^T)$ and $d_p(u_k,u)\rightarrow 0$.
\end{lemma}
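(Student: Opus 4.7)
The plan is to treat the decreasing and increasing cases separately, and in both cases leverage Theorem \ref{comparison04} (the comparison of $d_p$ with the mixed Monge--Amp\`ere integral) to convert $d_p$-boundedness into an $L^p$-type bound compatible with the pluripotential machinery already developed.

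For the decreasing case $u_k \searrow u$, first normalize so that $u_k \leq 0$ (subtracting $\sup_M u_1$ if necessary) and pick a smooth reference $v_0 \in \cH$, say $v_0 \equiv 0$. By the triangle inequality for $d_p$ (consequence of Lemma \ref{triangle200} and the definition \eqref{distance10}), the sequence $d_p(u_k, v_0)$ is uniformly bounded. Theorem \ref{comparison04} then gives
\[
\int_M |u_k|^p\, \omega_{u_k}^n \wedge \eta \leq C \int_M |u_k - v_0|^p (\omega_{u_k}^n + \omega_{v_0}^n) \wedge \eta \leq C' d_p(u_k, v_0)^p \leq C'',
\]
so $\sup_k E_p(u_k) < \infty$. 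Proposition \ref{boundedenergy} applied to $\chi(t) = |t|^p/p$ then gives $u \in \cE_p(M, \xi, \omega^T)$ with $E_p(u_k) \to E_p(u)$. Finally, Lemma \ref{approximation4} (with the constant sequence $v_k = u$ in $\cE_p$) yields $d_p(u_k, u) \to 0$.

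For the increasing case $u_k \nearrow u$ a.e., the same comparison and $d_p(u_k, u_1) \leq C$ gives
\[
\int_M |u_k - u_1|^p\, \omega_{u_1}^n \wedge \eta \leq C
\]
uniformly in $k$. Monotone convergence yields $\int_M |u - u_1|^p \omega_{u_1}^n \wedge \eta \leq C$; since $\omega_{u_1}^n \wedge \eta$ has positive total mass by Proposition \ref{volume}, $u$ is finite on a set of positive measure. Hence the usc regularization $u^*$ satisfies $u^* \in \text{PSH}(M,\xi,\omega^T)$ with $u^* = u$ a.e. Since $u_1 \leq u^*$ and $u_1 \in \cE_p$, Proposition \ref{oder} gives $u^* \in \cE_p(M,\xi,\omega^T)$. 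Identifying $u$ with $u^*$, Lemma \ref{approximation4} for increasing sequences then delivers $d_p(u_k, u) \to 0$.

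The step that requires the most care is the decreasing case's passage from the bound $\int |u_k - v_0|^p \omega_{u_k}^n \wedge \eta \leq C$ to a bound on $E_p(u_k)$: this is where the decreasing assumption is used crucially (so that $u_k$ is uniformly bounded above by $u_1$ and the $L^\infty$ contribution of $v_0$ is benign). The rest of the argument is a direct assembly of the comparison principle in Theorem \ref{comparison04}, the closedness result Proposition \ref{boundedenergy}, the monotonicity Proposition \ref{oder}, and the continuity of $d_p$ along monotone sequences already in $\cE_p$ (Lemma \ref{approximation4}).
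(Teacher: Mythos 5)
Your decreasing case is essentially the paper's argument (the paper uses Lemma \ref{order4} where you use Theorem \ref{comparison04}; since $u_k\leq v_0=0$ these are interchangeable), but you skip one step that is actually a hypothesis of Proposition \ref{boundedenergy}: that proposition assumes the decreasing limit $u$ already lies in $\text{PSH}(M,\xi,\omega^T)$, i.e.\ one must first rule out $u\equiv-\infty$. The fix is already sitting in the inequality you wrote down and then discarded half of: Theorem \ref{comparison04} with reference $v_0=0$ also bounds $\int_M|u_k|^p(\omega^T)^n\wedge\eta$ uniformly, so by monotone convergence $u\in L^p(d\mu_g)$, hence $u_k\to u$ in $L^1$ and $u\in\text{PSH}(M,\xi,\omega^T)$. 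With that line inserted, the decreasing case is complete and matches the paper.

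The increasing case has a genuine gap. You compare $u_k$ against $u_1$ and obtain only $\int_M|u_k-u_1|^p\,\omega_{u_1}^n\wedge\eta\leq C$, a bound with respect to the Monge--Amp\`ere measure of a merely finite-energy potential, which can be quite degenerate relative to $d\mu_g$. From this you conclude that $u$ is ``finite on a set of positive measure, hence $u^*\in\text{PSH}$'' --- but that dichotomy is the one relevant to \emph{decreasing} limits (ruling out $\equiv-\infty$). For an increasing sequence the issue is instead whether $\{u_k\}$ is uniformly bounded above, so that the Brelot--Cartan regularization $u^*$ is a genuine $\omega^T$-psh function with $u^*=u$ a.e.; finiteness of $u$ on a set of positive $\omega_{u_1}^n\wedge\eta$-measure does not obviously prevent $\sup_M u_k\to\infty$ off the support of that measure. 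The paper avoids this by comparing against $0$: Theorem \ref{comparison04} gives $\int_M|u_k|^p(\omega^T)^n\wedge\eta\leq Cd_p(u_k,0)^p\leq C'$, whence $\sup_M u_k$ is uniformly bounded by Proposition \ref{upperbound01} and $u_k\to u$ in $L^1$ with $u\in\text{PSH}(M,\xi,\omega^T)$ by Proposition \ref{compactness001}. Once that is in place, your subsequent steps are fine --- indeed, deducing $u\in\cE_p$ from $u_1\leq u$ via the monotonicity Proposition \ref{oder} is arguably cleaner than the paper's appeal to Proposition \ref{boundedenergy} (which is stated only for decreasing sequences), and Lemma \ref{approximation4} then gives $d_p(u_k,u)\to 0$ as you say.
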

\begin{proof}
If $\{u_k\}_{k\in\mathbb{N}}$ is decreasing, we can assume that $u_k <0$. It follows from Lemma \ref{order4} that
\[
\max(\frac{1}{2^{n+p}}\int_M|u_k|^p\omega_{u_k}^n\wedge\eta, \int_M |u_k|^p(\omega^T)^n\wedge\eta) \leq d_p(u_k,0)^p
\]
are uniformly bounded. $\int_M |u_k|^p(\omega^T)^n\wedge\eta$ is uniformly bounded, the monotone convergence theorem and the dominated convergence theorem imply that $u_k \rightarrow u $ in $L_{loc}^1$ and $u \in \text{PSH}(M, \xi, \omega^T)$. $E_p(u_k)= \int_M |u_k|^p\omega_{u_k}^n\wedge\eta$ is uniformly bounded, it follows from Proposition \ref{boundedenergy} and Lemma \ref{approximation4} that $u \in \cE_p(M, \xi, \omega^T)$ and $d_p(u_k, u) \rightarrow 0$.

If $\{u_k\}_{k\in \mathbb{N}}$ is increasing, it follows from Theorem \ref{comparison04} that there exists a constant $C$ such that
\[
\int_M|u_k|^p(\omega_{u_k}^n\wedge\eta+(\omega^T)^n\wedge\eta) \leq Cd_p(u_k, 0)
\]
is uniformly bounded. By Proposition  \ref{compactness001} we have $u_k \rightarrow u$ in $L^1$ for some $u \in \text{PSH}(M,\xi,\omega^T)$.  By Proposition \ref{boundedenergy} and Lemma \ref{approximation4}  we have $u \in \cE_p(M, \xi, \omega^T)$ and $d_p(u_k, u) \rightarrow 0$.
\end{proof}

\begin{prop}\label{rooftop04}Given $u_0, u_1, v\in \cE_p(M, \xi, \omega^T)$, 
\[
d_p(P(u_0, v), P(u_1, v))\leq d_p(u_0, u_1)
\]
\end{prop}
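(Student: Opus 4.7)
The plan is to (i) approximate by smooth data using Lemma \ref{BK}, (ii) apply the Pythagorean identity (Theorem \ref{Pythagorean}) to reduce the general case to an ``ordered contraction'' of the form $d_p(P(a,v),P(b,v))\le d_p(a,b)$ for $a\le b$, and (iii) establish this ordered contraction using the volume partition formula (Lemma \ref{decomposition}) together with the rooftop/level-set analysis in the spirit of Lemma \ref{distancef2}.

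\textbf{Step 1 (Reduction to smooth data).} Pick decreasing sequences $u_0^k,u_1^k,v^k\in\cH$ with $u_i^k\searrow u_i$ and $v^k\searrow v$. Monotonicity of $P(\cdot,\cdot)$ in each argument gives $P(u_i^k,v^k)\searrow P(u_i,v)$, and Proposition \ref{oder} keeps all terms inside $\cE_p$. Lemma \ref{approximation4} then yields $d_p(u_0^k,u_1^k)\to d_p(u_0,u_1)$ and $d_p(P(u_0^k,v^k),P(u_1^k,v^k))\to d_p(P(u_0,v),P(u_1,v))$, so it suffices to prove the inequality assuming $u_0,u_1,v\in\cH$.

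\textbf{Step 2 (Pythagorean reduction to the ordered case).} Set $P_i=P(u_i,v)\in\cH_\Delta$ (Theorem \ref{rooftop101}). The associativity $P(P_0,P_1)=P(u_0,u_1,v)=P(P(u_0,u_1),v)$ is immediate from the definition of the rooftop. Applying Theorem \ref{Pythagorean} to both pairs,
\begin{align*}
d_p(u_0,u_1)^p &= d_p(u_0,P(u_0,u_1))^p+d_p(u_1,P(u_0,u_1))^p,\\
d_p(P_0,P_1)^p &= d_p(P(u_0,v),P(P(u_0,u_1),v))^p+d_p(P(u_1,v),P(P(u_0,u_1),v))^p.
\end{align*}
Since $P(u_0,u_1)\le u_0$ and $P(u_0,u_1)\le u_1$, the ordered contraction (Step 3) applied with $a=P(u_0,u_1)$, $b=u_i$ bounds each summand on the second line by the corresponding summand on the first, and the claim follows by adding the $p$-th powers.

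\textbf{Step 3 (Ordered contraction).} Suppose $a\le b$ in $\cH$ and set $\tilde a=P(a,v),\tilde b=P(b,v)$, so $\tilde a\le\tilde b$. By Lemma \ref{order4},
\[
d_p(\tilde a,\tilde b)^p\le\int_M(\tilde b-\tilde a)^p\,\omega_{\tilde a}^n\wedge\eta.
\]
Decompose via Lemma \ref{decomposition}: $\omega_{\tilde a}^n\wedge\eta=\chi_{\{\tilde a=a\}}\omega_a^n\wedge\eta+\chi_{\{\tilde a=v\}\setminus\{\tilde a=a\}}\omega_v^n\wedge\eta$. On $\{\tilde a=v\}$ the ordering $v=\tilde a\le\tilde b\le v$ forces $\tilde a=\tilde b$, killing the contribution; on $\{\tilde a=a\}$ we have $\tilde b-\tilde a=\tilde b-a\le b-a$. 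A direct application of Lemma \ref{order4} only gives contraction up to the constant $2^{n+p}$, which is not sharp. To recover the sharp constant, one replaces this static estimate by a dynamic argument: consider the $C^{1,\bar 1}_B$ geodesic $\alpha_t$ from $a$ to $b$ (increasing in $t$ by Lemma \ref{strong01}) and the path $\tilde\alpha_t:=P(\alpha_t,v)$ from $\tilde a$ to $\tilde b$ (also increasing). Using the rooftop formula (Lemma \ref{rf}) and the level-set description $\{\dot{\tilde\alpha}_0\ge s\}=\{P(\tilde a,\tilde b-s)=\tilde a\}$, decompose $d_p(\tilde a,\tilde b)^p$ by layer-cake. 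The volume partition shows that on $\{\tilde\alpha_t=v\}$ the time derivative $\dot{\tilde\alpha}_t$ vanishes, so only the contact set $\{\tilde\alpha_t=\alpha_t\}$ contributes; there the derivatives match, $\dot{\tilde\alpha}_t=\dot\alpha_t$, and the measures agree since $\omega_{\tilde\alpha_t}^n\wedge\eta=\omega_{\alpha_t}^n\wedge\eta$ on this set. This yields $d_p(\tilde a,\tilde b)\le d_p(a,b)$.

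\textbf{Main obstacle.} The crux is Step 3: the static $L^p$ bound from Lemma \ref{order4} loses the sharp constant $1$, so one must exploit the weak-geodesic structure. The key technical point is the identification of the time-derivative $\dot{\tilde\alpha}_t$ with $\dot\alpha_t$ on the contact set $\{\tilde\alpha_t=\alpha_t\}$, together with the vanishing of $\dot{\tilde\alpha}_t$ on $\{\tilde\alpha_t=v\}$; these facts, combined with the matching of measures via Lemma \ref{decomposition}, deliver the sharp contraction. This parallels the K\"ahler argument of \cite{D1,D4}, and all the transverse-K\"ahler ingredients (the rooftop formula, the volume partition, the Pythagorean identity, and the sharp two-sided estimate of $d_p$ in Lemma \ref{order4}) have been established earlier in the paper exactly for this purpose.
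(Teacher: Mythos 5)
Your Steps 1 and 2 follow the paper's reduction exactly: approximate by smooth potentials, then use associativity of the rooftop together with the Pythagorean formula (Theorem \ref{Pythagorean}) to reduce to the ordered case $a\le b$. The problem is Step 3. You run the ``dynamic'' argument along the path $\tilde\alpha_t=P(\alpha_t,v)$, but this path is not the $d_p$-geodesic joining $\tilde a=P(a,v)$ and $\tilde b=P(b,v)$, so neither Lemma \ref{distancef2} (which expresses $d_p(\tilde a,\tilde b)^p$ as $\int_M|\dot w_0|^p\,\omega_{\tilde a}^n\wedge\eta$ for the \emph{geodesic} $w_t$) nor the level-set identity of Lemma \ref{rf} applies to it; and to use its length merely as an upper bound you would still need differentiability in $t$ of the envelope $t\mapsto P(\alpha_t,v)$, which is established nowhere. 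In addition, the asserted identity $\dot{\tilde\alpha}_t=\dot\alpha_t$ on the contact set is not justified — it is stronger than what is true and stronger than what is needed.

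The paper's Step 3 repairs both points at once: for $u_0\le u_1$, let $v_t$ be the $C^{1,\bar 1}_B$ geodesic joining $P(u_0,v)$ and $P(u_1,v)$. Since $P(u_0,v)\le P(u_1,v)\le v$ and $P(u_0,v)\le u_0$, $P(u_1,v)\le u_1$, the strong maximum principle (Lemma \ref{strong01}) gives $P(u_0,v)\le v_t\le v$ and $v_t\le u_t$. The first sandwich forces $\dot v_0=0$ on $\{P(u_0,v)=v\}$; the second, via difference quotients at a contact point, gives only the one-sided bound $0\le\dot v_0\le\dot u_0$ on $\{P(u_0,v)=u_0\}$ — which is all that is required. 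Combining with the volume partition (Lemma \ref{decomposition}) and Lemma \ref{distancef2} then yields
\[
d_p(P(u_0,v),P(u_1,v))^p\le\int_{\{P(u_0,v)=u_0\}}|\dot u_0|^p\,\omega_{u_0}^n\wedge\eta\le d_p(u_0,u_1)^p.
\]
If you replace your path by this geodesic and weaken the derivative identity to the one-sided inequality, your outline becomes the paper's proof.
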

\begin{proof}
By Theorem \ref{rooftop101} and Lemma \ref{approximation4} we only have to prove the inequality for $u_0, u_1, v \in \cH_{\triangle}$. In this case $P(u_0, v), P(u_1, v) \in \cH_{\triangle}$ according to Theorem \ref{rooftop101}.

First we assume that $u_0 \leq u_1$. Let $u_t, v_t$ be the $C_B^{1,\bar1}$ geodesic connecting $u_0,u_1$ and $P(u_0,v), P(u_1, v)$ respectively. Then $P(u_0, v) \leq P(u_1,v) \leq v$  and the strong maximum principle implies that  $P(u_0, v) \leq v_t \leq v$. Hence for $x \in \{P(u_0,v)=v\}$, $v_t(x)$ is independent of $t$ and $\dot{v}_0(x)=0$. Then we have
\[
\int_{\{P(u_0, v)=v\}} |\dot{v}_0|^p\omega_v^n\wedge\eta=0.
\]
$P(u_0,v) \leq P(u_1,v), P(u_0,v)\leq u_0, P(u_1, v) \leq u_1$ and the strong maximum principle implies that $P(u_0, v) \leq v_t \leq u_t$ for $t \in [0,1]$ and $\dot{v}_0 \geq 0$. Moreover for $x \in \{P(u_0,v)=u_0\}$ we have
\[
\dot{v}_0(x) =\lim_{t\rightarrow 0+} \frac{v_t(x)-v_0(x)}{t} \leq \lim_{t\rightarrow 0+}\frac{u_t(x)-u_0(x)}{t}=\dot{u}_0(x).
\]
Then it follows from Lemma \ref{distancef2}, Lemma \ref{decomposition} that
\begin{equation*}
\begin{split}
d_p(P(u_0,v), P(u_1, v))^p &= \int_M |\dot{v}_0|\omega_{P(u_0,v)}^n\wedge\eta \\
                                           &\leq \int_{\{P(u_0,v)=u_0\}}|\dot{v}_0|^p\omega_{u_0}^n\wedge\eta+\int_{\{P(u_0, v)=v\}} |\dot{v}_0|^p\omega_v^n\wedge\eta \\
                                           &\leq \int_{\{P(u_0,v)=u_0\}} |\dot{u}_0|^p\omega_{u_0}^n\wedge\eta  \\
                                           &\leq \int_M |\dot{u}_0|^p\omega_{u_0}^n\wedge\eta \\
                                           &=d_p(u_0, u_1)^p.
\end{split}
\end{equation*}
For the general case, using the Pythagoreans formula we have
\begin{equation*}
\begin{split}
d_p(P(u_0,v), P(u_1, v))^p &=d_p(P(u_0,v),P(u_0,u_1,v))^p +d_p(P(u_1, v), P(u_0, u_1, v))^p \\
                                           &=d_p(P(u_0,v), P(P(u_0,u_1), v))^p+ d_p(P(u_1,v), P(P(u_0,u_1), v))^p \\
                                           &\leq d_p(u_0, P(u_0,u_1))^p+ d_p(u_1, P(u_0,u_1))^p    \\
                                           &=d_p(u_0, u_1)^p.
\end{split}
\end{equation*}
This completes the proof.
\end{proof}

\begin{prop}$(\cE_p(M, \xi, \omega^T), d_p)$ is a complete metric space. 
\end{prop}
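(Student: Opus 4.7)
The plan is to mirror Darvas's argument in the K\"ahler setting: given a $d_p$-Cauchy sequence $\{u_k\}\subset\cE_p$, produce a candidate limit by sandwiching the sequence between a decreasing and an increasing construction of rooftop envelopes, then pass to the limit twice using the monotone completeness statement Lemma \ref{close4}. After extracting a subsequence I may assume $d_p(u_k,u_{k+1})\leq 2^{-k}$. For each $k$ set
\[
\phi_k^l := P(u_k,u_{k+1},\ldots,u_{k+l}) = P(\phi_k^{l-1},u_{k+l}),\qquad \phi_k^0=u_k,
\]
which decreases in $l$ and stays inside $\cE_p$ by iterated use of the $\cE_p$-rooftop lemma proved just before Corollary \ref{convex1}.

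The main estimate is a contraction bound for $d_p(\phi_k^{l-1},\phi_k^l)$. Since $\phi_k^{l-1}\leq u_{k+l-1}$, the identity $\phi_k^{l-1}=P(\phi_k^{l-1},u_{k+l-1})$ holds, and Proposition \ref{rooftop04} with fixed potential $v=\phi_k^{l-1}$ then yields
\[
d_p(\phi_k^{l-1},\phi_k^l)=d_p\bigl(P(\phi_k^{l-1},u_{k+l-1}),\,P(\phi_k^{l-1},u_{k+l})\bigr)\leq d_p(u_{k+l-1},u_{k+l})\leq 2^{-(k+l-1)}.
\]
Summing this in $l$ via the triangle inequality shows that $\{\phi_k^l\}_l$ is $d_p$-Cauchy with $d_p(\phi_k^l,u_k)\leq 2^{-(k-1)}$, so in particular $d_p$-bounded. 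Lemma \ref{close4} applied to this decreasing sequence produces a limit $\phi_k\in\cE_p$ with $d_p(\phi_k,u_k)\leq 2^{-(k-1)}$.

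Monotonicity of $P$ in its arguments gives $\phi_k^l\leq \phi_{k+1}^{l-1}$, and sending $l\to\infty$ yields $\phi_k\leq \phi_{k+1}$. Combined with $d_p(\phi_k,u_k)\to 0$ and the Cauchyness of $\{u_k\}$, the increasing sequence $\{\phi_k\}$ is $d_p$-bounded, so a second application of Lemma \ref{close4} produces $\phi_k\nearrow \phi\in\cE_p$ with $d_p(\phi_k,\phi)\to 0$. The triangle inequality
\[
d_p(u_k,\phi)\leq d_p(u_k,\phi_k)+d_p(\phi_k,\phi)\to 0
\]
finishes the argument. The only non-routine input is the rooftop Lipschitz contraction of Proposition \ref{rooftop04}; the most delicate verification is that the finite rooftops $\phi_k^l$ really stay in $\cE_p$, which is guaranteed by the iterated $\cE_p$-rooftop lemma. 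After that, the proof is pure bookkeeping around the two monotone completeness statements in Lemma \ref{close4}.
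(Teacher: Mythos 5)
Your completeness argument is, step for step, the one the paper gives: the same double-indexed rooftop envelopes $P(u_k,\dots,u_{k+l})$, the same contraction estimate from Proposition \ref{rooftop04} (the paper phrases it as $d_p(P(u_k^l,u_{k+l}),P(u_k^l,u_{k+l+1}))\leq d_p(u_{k+l},u_{k+l+1})$, which is your inequality with the roles of the fixed and varying arguments swapped, and both hinge on the observation that the envelope absorbs the smaller potential), the same two applications of Lemma \ref{close4} to the decreasing family in $l$ and then to the increasing family in $k$, and the same final triangle inequality. The index bookkeeping ($d_p(\phi_k,u_k)\leq 2^{-(k-1)}$) matches as well, and your justification that the finite rooftops remain in $\cE_p$ via the iterated rooftop lemma is exactly what the paper relies on.

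The one thing you have not addressed is that the proposition asserts $(\cE_p,d_p)$ is a complete \emph{metric space}, and the paper spends its first paragraph verifying the metric axioms — in particular non-degeneracy, which is not automatic for a distance defined by a limit of approximating smooth potentials. The paper's route is: if $d_p(w_1,w_2)=0$, the Pythagorean formula (Theorem \ref{Pythagorean}) forces $d_p(w_1,P(w_1,w_2))=d_p(w_2,P(w_1,w_2))=0$; Lemma \ref{order4} then gives $w_1=P(w_1,w_2)=w_2$ almost everywhere with respect to $\omega^n_{P(w_1,w_2)}\wedge\eta$; and the domination principle (Lemma \ref{domination2}) upgrades this to equality everywhere. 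Without this step the completeness argument proves convergence in a pseudometric only, so you should either supply it or point to where it has been established.
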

\begin{proof}
First we show that $(\cE_p(M,\xi,\omega^T), d_p))$ is a metric space. The symmetry of $d_p$ is obvious and the triangle inequality follows from Lemma \ref{triangle200}. We only have to check the non-degeneracy of $d_p$. Suppose $w_1,w_2 \in \cE(M,\xi,\omega^T)$ and $d_p(w_1,w_2)=0$. It follows from the Pythagorean formula that $d_p(w_1,P(w_1,w_2))=0$ and $d_p(P(w_1,w_2),w_2)=0$. Then Lemma \ref{order4} implies that  $w_1=P(w_1,w_2)=w_2$ with respect to the measure $\omega_{P(w_1,w_2)}^n\wedge\eta$. Then the domination principle Lemma \ref{domination2} implies that $w_1=P(w_1,w_2)=w_2$. Hence $(\cE_p(M, \xi, \omega^T),d_p)$ is a metric space.

Then we show that the metric space $(\cE_p(M, \xi, \omega^T), d_p)$ is complete. Suppose $\{u_k\}_{k\in\mathbb{N}} \subset \cE_p(M, \xi, \omega^T)$ is a $d_p$ Cauchy sequence. We will prove that there exists $u \in \cE_p(M, \xi, \omega^T)$ such that $d_p(u_k, u) \rightarrow 0$.

Without loss of generality we can assume that
\begin{equation*}
d_p(u_k,u_{k+1}) \leq \frac{1}{2^k}
\end{equation*}
for $k \in \mathbb{N}$. Denote by $u_k^l=P(u_k, u_{k+1},..., u_{k+l})$ for $k, l \in \mathbb{N}$ and $u_k^0=u_k$. It follows from the definition of rooftop envelope and  Proposition \ref{rooftop04}  that
\begin{equation*}
d_p(u_k^l,u_k^{l+1})=d_p(P(u_k^l, u_{k+l}), P(u_k^l,u_{k+l+1})) \leq d_p(u_{k+l}, u_{k+l+1}) \leq \frac{1}{2^{k+l}}
\end{equation*}
and the sequence $\{u_k^l\}_{l \in \mathbb{N}} \subset \cE_p(M, \xi, \omega^T)$ is $d_p$ bounded and decreasing. According to Lemma \ref{close4} $\tilde{u}_k =\lim\limits_{l\rightarrow\infty} u_k^l \in \cE_p(M, \xi, \omega^T)$ and $d_p(u_k^l, \tilde{u}_k) \rightarrow 0$ as $l \rightarrow \infty$. Moreover $u_k^{l+1} \leq u_{k+1}^l$ implies that $\tilde{u}_k \leq \tilde{u}_{k+1}$ and $\{\tilde{u}_k\}_{k\in\mathbb{N}}$ is a increasing sequence in $\cE_p(M, \xi, \omega^T)$.

It follows from Lemma \ref{approximation4}, the definition of rooftop envelope and Proposition \ref{rooftop04} that
\begin{equation*}
\begin{split}
d_p(\tilde{u}_k, \tilde{u}_{k+1}) &=\lim_{l\rightarrow \infty} d_p(u_k^{l+1}, u_{k+1}^l) \\
                                                 &=\lim_{l\rightarrow\infty}   d_p(P(u_{k+1}^l, u_k), P(u_{k+1}^l, u_{k+1})) \\
                                                 &\leq \lim_{l\rightarrow\infty} d_p(u_k, u_{k+1}) \\
                                                 &\leq \frac{1}{2^k}
\end{split}
\end{equation*}
and the sequence $\{\tilde{u}_k\}_{k\in\mathbb{N}} \subset \cE_p(M, \xi, \omega^T)$ is $d_p$-bounded and increasing. By Lemma \ref{close4} $u=\lim\limits_{k \rightarrow\infty} \tilde{u}_k \in \cE_p(M, \xi, \omega^T)$ and $\lim\limits_{k\rightarrow\infty}d_p(\tilde{u}_k, u)=0$.
Moreover by Proposition \ref{rooftop04} we have
\begin{equation*}
\begin{split}
d_p(u_k^l, u_k) =d_p(P(u_k,u_{k+1}^{l-1}),P(u_k,u_k)) \leq d_p(u_{k+1}^{l-1}, u_k) \leq d_p(u_{k+1}^{l-1},u_{k+1}) +d_p(u_k,u_{k+1})
\end{split}
\end{equation*}
and
\begin{equation*}
d_p(u_k^l,u_k) \leq d_p(u_{k+l}^0,u_{k+l})+\sum_{j=1}^{l}d_p(u_{k+j-1},u_{k+j})=\sum_{j=1}^ld_p(u_{k+j-1},u_{k+j})
\end{equation*}
It follows from Lemma \ref{approximation4} that 
\begin{equation*}
d_p(\tilde{u}_k,u_k) \leq \sum_{j=1}^{\infty}\frac{1}{2^{k+j-1}}=\frac{1}{2^{k-1}}
\end{equation*}
By the triangle inequality
\begin{equation*}
d_p(u_k,u) \leq d_p(\tilde{u}_k,u_k)+d_p(\tilde{u}_k,u)
\end{equation*}
we have $d_p(u_k ,u) \rightarrow 0$. This completes the proof.
\end{proof}

\section{Sasaki-extremal metric}

We give a brief discussion of existence of Sasaki-extremal metric and properness of modified $\cK$-energy. Calabi's extremal metric was extended to Sasaki setting by Boyer-Galicki-Simanca \cite{BGS1}. A  Sasaki metric is called Sasaki-extremal if its transverse K\"ahler metric is extremal in the sense of Calabi \cite{Ca1}. As in K\"ahler setting, given a priori estimates \cite{he182} and the pluripotential theory developed in the paper, we have the following,

\begin{thm}\label{extremal}A compact Sasaki manifold $(M, \xi, \eta, g)$ admits a Sasaki-extremal metric in the transverse K\"ahler class $[\omega^T]$ if and only if the modified $\cK$-energy is reduced proper. 
\end{thm}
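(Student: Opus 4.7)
The plan is to adapt the strategy used in \cite{he182} for the constant scalar curvature case (which itself follows the Chen-Cheng blueprint in the K\"ahler setting), adding the ingredients needed to accommodate the extremal vector field. Throughout, the setup is the following: fix a maximal torus $T \subset \Aut(\xi, J)$ and let $V \in \mathfrak{t}$ be the transverse extremal vector field, characterized by the orthogonality of the Futaki invariant to $\mathfrak{t}$. The modified $\cK$-energy $\cK_V$ on the $T$-invariant space $\cH^T$ has the Sasaki-extremal equation as its Euler-Lagrange equation, and is $T^{\mathbb{C}}$-invariant (in the sense of the identity component of $\Aut_0(\xi, J)$ restricted to transverse biholomorphisms commuting with $V$).

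For the easy direction (existence implies reduced properness), I would first extend $\cK_V$ to $\cE_1^T(M, \xi, \omega^T)$ using Theorem \ref{pluri01} together with the convexity of the Mabuchi $\cK$-energy along finite energy geodesics in Sasaki setting (due to \cite{JZ, VC}), noting that the $V$-correction term is a linear functional in $\phi$ and hence affine along weak geodesics. A Sasaki-extremal metric then realizes $\cK_V$ as a global minimizer modulo the reduced automorphism group. Reduced properness then follows from convexity combined with the effective $d_1$-estimates of Theorem \ref{pluri01}, by the same argument as in the K\"ahler case \cite{BDL2, DR}, once one quotients out by the complexified torus action.

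For the hard direction (properness implies existence), I would run a continuity method for the extremal equation in the $T$-invariant class, combining the a priori estimates for scalar curvature type equations from \cite{he182} with a treatment of the $V$-term as a controlled lower order perturbation (which is legitimate since $V$ commutes with $T$ and acts infinitesimally in directions tangent to the foliation by transverse holomorphic symmetries). The reduced properness hypothesis, via Theorem \ref{pluri01}, furnishes a $d_1$-bound on any minimizing sequence modulo $T^{\mathbb{C}}$, which by the fundamental estimate (Proposition \ref{fe}) and Proposition \ref{mixedenergy} yields $\cE_1$-compactness of the sequence after normalization. Higher regularity then propagates through the Chen-Cheng-type a priori estimates adapted to Sasaki geometry.

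The main obstacle will be the reduction modulo the compact torus and passage to a limit in $\cE_1^T$: one must show that a $\cK_V$-minimizing sequence can be normalized by $T^{\mathbb{C}}$-translation so that it stays $d_1$-bounded, and that the limit still lies in $\cE_1^T$ and achieves the infimum. This requires a careful analysis of the $T^{\mathbb{C}}$-action on $(\cE_1^T, d_1)$, mirroring \cite{DR} in the K\"ahler case, and relies essentially on the Pythagorean formula (Theorem \ref{Pythagorean}) and the domination principle (Lemma \ref{domination2}) to identify the limit. Once this reduction is in place, the remaining steps --- convexity, compactness, and closing the continuity method --- follow by combining the geometric pluripotential theory developed in this paper with the PDE estimates of \cite{he182}.
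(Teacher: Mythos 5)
Your overall architecture matches the paper's: the paper also proves the hard direction by running the modified Chen continuity path
\[
t(R_\phi-\underline{R}-\eta_\phi(V))+(1-t)(\Lambda_{\omega_\phi}\omega^T-n)=0
\]
in the $K$-invariant class, proving openness as in \cite{he18}, and closing the path at $t=1$ by using reduced properness to choose $\sigma_i\in G_0$ with $d_1(0,\sigma_i[\phi_{t_i}])$ bounded, after which the a priori estimates of \cite{he182} give smooth convergence to a Sasaki-extremal structure. So in broad strokes your proposal is the intended argument.

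There are, however, two points where your write-up either omits the load-bearing step or takes a heavier route than necessary. First, the Chen--Cheng-type estimates (\cite{he182}[Theorems 2, 3]) require as input a bound on the \emph{entropy} $H(\phi_t)$, and a $d_1$-bound alone does not control the entropy. In the paper this bound is extracted from the variational characterization of the continuity path: $\phi_t$ minimizes $t\cK_V+(1-t)\mathbb{J}$, and since $\cK_V=H+\mathbb{J}_{-Ric}$ with $\cK_V$ bounded below and the $\mathbb{J}$-type terms controlled by $d_1(0,\phi_t)$, one deduces $H(\phi_t)\leq C((1-t)^{-1}+1)$. Your phrase ``higher regularity then propagates through the Chen--Cheng-type a priori estimates'' cannot launch without this entropy control, so you should make this step explicit. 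Second, your proposed passage to the limit via $\cE_1$-compactness of a minimizing sequence, with identification of the weak limit through the Pythagorean formula and the domination principle, is the variational (weak-minimizer) route of \cite{BDL2, DR}; the paper avoids it entirely. Because the normalized potentials $\psi_i=\sigma_i[\phi_{t_i}]$ satisfy an explicit scalar curvature type system with uniformly controlled data, the a priori estimates yield uniform $C^k$ bounds and hence smooth subsequential convergence directly --- no weak limit in $\cE_1$ ever needs to be upgraded. Your route could be made to work, but it would require the additional regularity theory for weak minimizers of $\cK_V$, which is not developed in this paper.
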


We recall some basic notions \cite{Fut, M2, Ca1, FM, BGS1}. 
We use the group $\text{Aut}_0(\xi, J)$ to denote the subgroup of diffeomorphism group of $M$ which preserves both $\xi$ and transverse holomorphic structure. Its Lie algebra is the Lie algebra of all \emph{Hamiltonian holomorphic vector fields} in the sense of \cite{FOW}[Definition 4.4].

First one can define Sasaki-Futaki invariant as follows, given $X\in \mathfrak{aut}$, the Lie algebra of $\text{Aut}_0(\xi, J)$, 
\begin{equation}\label{fut01}
\cF_X(\omega^T)=\int_M X(f) \omega_T^n\wedge \eta, 
\end{equation}
where $f$ is the potential of transverse scalar curvature, 
\[
\Delta f=R^T-\underline {R}. 
\]
The first step is certainly to verify that \eqref{fut01} does not depend on a particular choice of transverse K\"ahler form in $[\omega^T]$ (see \cite{BGS1}[Proposition 5.1]).  We are interested in the reduced part $\mathfrak{h}_0$ of $\mathfrak{aut}$, which consists of \emph{Hamiltonian holomorphic vector fields} such that $\eta(Y)$ has non empty zero. When $(M, \xi, \eta, g)$ is a Sasaki-extremal metric, then similar as in Calabi's decomposition, we have \cite{BGS1}[Theorem 4.8] the decomposition
\[
\mathfrak{h}=\mathfrak{a}\oplus \mathfrak{h}_0,
\]
where $\mathfrak{a}$ consists of parallel vector fields of the transverse K\"ahler metric $g^T$. Moreover the reduced part $\mathfrak{h}_0$ has the decomposition
\[
\mathfrak{h}_0=\mathfrak{z}_0\oplus J\mathfrak{z}_0\oplus(\oplus_{\l>0}\mathfrak{h}^\l),
\]
where $\mathfrak{z}_0=\text{aut}(\xi, \eta, g)/\{\xi\}$ and 
\[
\mathfrak{h}^\l=\{Y\in \mathfrak{h}: \cL_{X} Y=\l Y, X=(\bar\p R)^{\#},\}
\] 
where $X:=(\bar\p R)^{\#}$ is the dual vector and it is the extremal vector field in $\mathfrak{h}_0$. In general, we can define Futaki-Mabuchi bilinear form \cite{FM} on $\mathfrak{h}_0$ as in K\"ahler setting (in Sasaki setting this is well-defined on $\mathfrak{aut}$ since every Hamiltonian vector field has a potential, simply given by $\eta(Y)$; for example, $\xi$ has potential $1$). Given $Y, Z\in \mathfrak{aut}$, define
\begin{equation}\label{fm01}
B(Y, Z)=\int_M \eta(Y) \eta(Z) (\omega^T)^n\wedge \eta. 
\end{equation}
It is straightforward to check that \eqref{fm01} remains unchanged if $\eta\rightarrow \eta+d^c_B\phi$ for $\phi\in \cH$. If we restrict us on the \emph{real Hamiltonian holomorphic vector fields} such that $\eta(Y)$ is real, then there exists a unique vector field $V$ such that
\begin{equation}\label{fut03}
\cF_{\text{Re}(Y)}=B(\text{Re}(Y), V)
\end{equation}
We call such $V$ and its corresponding $X=V-\sqrt{-1}JV$ \emph{the extremal vector field.} As in K\"ahler setting, for $JV$-invariant metrics in $\cH$, we define the modified $\cK$-energy \cite{Guan, Simanca} as
\begin{equation}
\delta \cK_V=-\int_M \delta \phi (R_\phi-\underline{R}-\eta_\phi(V)) \omega^n_\phi\wedge \eta. 
\end{equation}
Let
$\text{Aut}_0(\xi, J, V)$  be the subgroup of $\text{Aut}_0(\xi, J)$ which commutes with the flow of $JV$. 
\begin{prop}The $\cK_V$ energy is invariant under the action of $\text{Aut}_0(\xi, J, V)$
\end{prop}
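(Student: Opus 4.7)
The plan is to combine a direct variational computation with the defining property of the extremal vector field $V$. For $g \in \text{Aut}_0(\xi, J, V)$, since $g$ preserves $\xi, J$ and the basic K\"ahler class $[\omega^T]$, there is an induced action $\phi \mapsto \sigma_g(\phi) \in \cH$ characterized (up to an additive constant) by $\omega_{\sigma_g\phi} = g^*\omega_\phi$. Three transformation rules are fundamental: (i) $g$ is a transverse biholomorphic isometry, hence $R^T_{\sigma_g\phi} = R^T_\phi \circ g$; (ii) $g$ commutes with the flow of $JV$ and preserves $J$, hence $g_*V = V$ and $\eta_{\sigma_g\phi}(V) = \eta_\phi(V)\circ g$; (iii) because basic forms on $M$ have top degree $2n$, the wedge $\omega_\phi^n\wedge d^c_B\phi$ vanishes and so $\omega_\phi^n\wedge\eta_\phi = \omega_\phi^n\wedge\eta$, giving $g^*(\omega_\phi^n\wedge\eta) = \omega_{\sigma_g\phi}^n\wedge\eta$.

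With these rules in place, I would differentiate $\cK_V$ along an arbitrary smooth path $\phi_t\in\cH$ and apply the substitution $x\mapsto g^{-1}(x)$ to obtain
\[
\frac{d}{dt}\cK_V(\sigma_g\phi_t) \;=\; -\int_M \dot\phi_t\,(R^T_{\phi_t}-\underline{R}-\eta_{\phi_t}(V))\,\omega^n_{\phi_t}\wedge\eta \;=\; \frac{d}{dt}\cK_V(\phi_t),
\]
so that $\cK_V(\sigma_g\phi)-\cK_V(\sigma_g\phi_0) = \cK_V(\phi)-\cK_V(\phi_0)$ for every $\phi,\phi_0\in\cH$. Invariance thus reduces to showing $\cK_V(\sigma_g\phi_0)=\cK_V(\phi_0)$ at a single reference $\phi_0$.

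To close this loop, I would use that $\text{Aut}_0(\xi,J,V)$ is the identity component, so there is a smooth path $\{g_s\}$ with $g_0=e$, $g_1=g$, generated by a time-dependent $H_s$ in its Lie algebra. Setting $\gamma_s = \sigma_{g_s}(\phi_0)$, one has $\dot\gamma_s = \eta_{\gamma_s}(H_s)$ (the Hamiltonian/Killing potential of $H_s$ with respect to the deformed structure), and
\[
\cK_V(\sigma_g\phi_0)-\cK_V(\phi_0) \;=\; -\int_0^1\!\!\int_M \eta_{\gamma_s}(H_s)\bigl(R^T_{\gamma_s}-\underline{R}-\eta_{\gamma_s}(V)\bigr)\omega_{\gamma_s}^n\wedge\eta\, ds.
\]
By \eqref{fut01} and \eqref{fm01}, after integration by parts against $\Delta f = R^T-\underline{R}$, the inner integral equals $\cF_{H_s}(\omega_{\gamma_s}^T)-B_{\gamma_s}(H_s,V)$. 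Both functionals depend only on the basic K\"ahler class, and since $H_s$ commutes with $JV$ (hence with $V$) and $V\in\mathfrak{h}_0$, the defining property \eqref{fut03} of the extremal vector field forces $\cF_{H_s}=B(H_s,V)$ identically in $s$; hence the integrand vanishes.

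The main obstacle is precisely this last identification: verifying that the generator $H_s$ of a one-parameter subgroup of $\text{Aut}_0(\xi,J,V)$ indeed satisfies the hypotheses under which \eqref{fut03} applies, and that its Hamiltonian potential is normalized consistently for both $\cF$ and $B$. The Sasaki Calabi-type decomposition $\mathfrak{h} = \mathfrak{a}\oplus\mathfrak{h}_0$ recalled before \eqref{fm01} is the tool: the parallel summand $\mathfrak{a}$ contributes zero to both $\cF$ and $B(\cdot,V)$, and commuting with $V\in\mathfrak{h}_0$ confines the essential part of $H_s$ to $\mathfrak{h}_0$, paralleling the K\"ahler case treated by Mabuchi and Simanca.
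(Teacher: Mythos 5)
Your proposal is correct and follows essentially the same route as the paper: differentiate $\cK_V$ along a one-parameter subgroup of $\text{Aut}_0(\xi,J,V)$, use the pullback invariance of the measure $\omega_\phi^n\wedge\eta_\phi=\omega_\phi^n\wedge\eta$ to identify the derivative with $\cF_{H_s}-B(H_s,V)$, and conclude by the defining identity \eqref{fut03} of the extremal vector field. Your preliminary cocycle reduction and the closing discussion of $\mathfrak{h}=\mathfrak{a}\oplus\mathfrak{h}_0$ are harmless but unnecessary, since \eqref{fut03} already holds for every real Hamiltonian holomorphic vector field in $\mathfrak{aut}$.
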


\begin{proof}The proof is similar to K\"ahler setting \cite{he18}[Lemma 2.1] and it follows in a tautologic way from Futaki-invariant and definition of extremal vector field through Futaki-Mabuchi bilinear form. We fix a background transverse K\"ahler structure $\omega^T$ such that it is $JV$ invariant. 
For $\sigma\in \text{Aut}_0(\xi, J, V)$, let $\sigma_t$ be one parameter subgroup generated by the flow of $Y_\R:=\text{Re}(Y)$ for some $Y\in \mathfrak{aut}$. Since $Y$ commutes with $V$, hence $\sigma_t^* \omega_0$ is invariant with respect to $JV$ if $\omega_0\in [\omega^T]$ is invariant. We compute
\[
\begin{split}
\frac{d}{dt}\cK(\sigma_t^*\omega_0)=&-\int_M\sigma_t^*(\eta_0(\text{Re}(Y)) (R_0-\underline R-\eta_0(V))\omega_0^n\wedge \eta_0)\\
=&-\int_M \eta_0(Y_\R) (R_0-\underline{R})\omega_0^n\wedge \eta_0+\int_M \eta_0(Y_\R) \eta_0 (V)\omega_0^n\wedge \eta_0
\end{split}
\]
The righthand side is zero by \eqref{fut03}. 
\end{proof}
We  define the distance $d_1$ modulo the group action $G_0:=\text{Aut}_0(\xi, J, V)$. Fix a compact subgroup $K$ of $G_0$ such that $K$ contains the flow of $JV$ (and $\xi$ of course). 
Denote \[\cH^K_0=\{\phi\in \cH_0, \phi\; \text{is invariant under the flow of}\; K\}\]
Note that $G_0$ acts on $\cH_0$ through $\omega_\phi\rightarrow \sigma^*\omega_\phi=\omega^T+\sqrt{-1}\p_B\bar\p_B \sigma[\phi]$. Given any $\phi, \psi\in \cH_0$, we can consider the distance modulo $G_0$ as follows \cite{CPZ}
\[
d_{1, G_0}(\phi, \psi)=\inf_{\sigma_1, \sigma_2\in G_0} d_1(\sigma_1[\phi], \sigma_2[\psi])=\inf_{\sigma\in G_0}d_1(\phi, \sigma[\psi]). 
\]
\begin{defn}We say $\cK_V$ is reduced proper for $K$-invariant metrics  with respect to  $d_{1, G_0}$ if the following conditions hold
\begin{enumerate}
\item $\cK_V$ is bounded below over $\cH^K$. 
\item There exists constant $C, D>0$ such that for $\phi\in \cH^K$
\[
\cK_V(\phi)\geq C d_{1, G_0}(0, \phi)-D.
\]
\end{enumerate}
\end{defn}

To prove Theorem \ref{extremal}, we proceed exactly as in \cite{he18}, to consider the modified Chen's continuity path \cite{chen15}, for a $K$-invariant transverse K\"ahler metric $\omega^T$,
\begin{equation}\label{se001}
t(R_\phi-\underline{R}-\eta_\phi(V))+(1-t)(\Lambda_{\omega_\phi}\omega^T-n)=0
\end{equation}
Given a priori  estimates as in \cite{he182} and the pluripotential theory on Sasaki manifolds developed in this paper, we can then follow \cite{he18, he182} to prove Theorem \ref{extremal}. Since the argument is almost identical, we only sketch the process and skip the details.

\begin{enumerate}
\item The openness of \eqref{se001} is proved similarly \cite{he18}[Theorem 3.4]; note that we assume transverse K\"ahler metrics and potentials are $K$-invariant.

\item For $0<t<1$, $\cK_V$ bounded below over $\cH^K$ implies that the distance $d(0, \phi_t)$ is uniformly bounded by a constant in the order $C((1-t)^{-1}+1)$, where $\phi_t$ is the solution of \eqref{se001} at $t$. This together with the fact that $\phi_t$ minimizes $t\cK_V+(1-t)\mathbb{J}$, gives the uniform upper bound of entropy of $H(\phi_t)$ (depending on $(1-t)^{-1}$). Hence estimates 
in \cite{he182}[Theorem 2] applies to get the solution for any $t<1$.

\item Choose an increasing sequence $t_i\rightarrow 1$, first using the properness assumption we can assume that there are $\sigma_i\in G$ such that $\psi_i:=\sigma_i[\phi_{t_i}]$ ($\omega_{\psi_i}=\sigma_i^{*}\omega_{\phi_{t_i}}$) satisfies that $d(0, \psi_i)$ is uniformly bounded above. Then $\psi_i$ satisfies a scalar curvature type equation 
\[
\begin{split}
&\omega_{\psi_i}^n=e^{F_i}(\omega^T)^n\\
&\Delta_{\psi_i} F_i=h_i+\text{tr}_{\psi_i}(Ric(\omega^T)-\frac{1-t_i}{t_i}\omega_i)
\end{split}
\]
where $h_i$ is uniformly bounded and $\omega_i=\sigma_i^{*}(\omega^T)$. One can use \cite{he182}[Theorem 3] and arguments as in \cite{he18}[Theorem 3.5] to conclude the convergence of $\psi_i, F_i$ to a smooth Sasaki-extremal structure. 
\end{enumerate}

\section{Appendix}
\subsection{Approximation through Type-I deformation and Regularity of rooftop envelop}
Using Type-I deformation, we can obtain
the following approximation of irregular Sasaki structure $(M, \xi, \eta, g)$, which would be important for us; see \cite{Ruk}  and in particular \cite{BG}[Theorem 7.1.10] for the approximation. 
Suppose $\xi$ is irregular, then the Reeb flow generates an isometry in $\text{Aut}(M, \xi, \eta, g)$. Let $T^k\subset \text{Aut}(M, \xi, \eta, g)$ ($k\geq 2$) be the torus generated by $\xi$ and denote $\mathfrak{t}$ to be its Lie algebra. We can then choose $\rho_i\rightarrow 0, \rho_i\in \mathfrak{t}$ such that $\xi_i=\xi+\rho_i$ is quasiregular. Define
\begin{equation}\label{approx}
\eta_i=\frac{\eta}{1+\eta(\rho_i)}, \Phi_i=\Phi-\frac{1}{1+\eta(\rho_i)}\Phi\rho_i\otimes \eta, \omega_i^T=\frac{1}{2}d\eta_i, g_i=\eta_i\otimes \eta_i+\omega_i^T(\mathbb{I}\otimes \Phi_i),
\end{equation}
where $\Phi$ is the $(1, 1)$ tensor field defined on the contact bundle $\cD=\text{Ker}(\eta)$.  
We recall the following,

\begin{thm}[Approximation of irregular Sasaki structure]\label{type101}Let $(M, \xi, \eta, g)$ be an irregular Sasaki structure on a compact manifold $M$. Then we can choose $\rho_i\rightarrow 0$ such that $\xi_i$ is quasiregular and \eqref{approx} defines a quasi-regular Sasaki structure which is invariant under the action of $T^k$, the torus generated by $\xi$ in $\text{Aut}(M, \xi, \eta, g)$.
\end{thm}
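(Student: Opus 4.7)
The plan is to work entirely inside the torus $T^k \subset \mathrm{Aut}(M,\xi,\eta,g)$ whose existence is guaranteed by compactness of $M$: the closure of the one-parameter subgroup generated by $\xi$ in the isometry group $\mathrm{Isom}(M,g)$ is a compact abelian Lie group, hence a torus $T^k$ (with $k \geq 2$ precisely because $\xi$ is irregular), and by construction $\xi \in \mathfrak{t}$. Since every element of $T^k$ preserves $\eta$ (it is in the closure of $e^{t\xi}$, which preserves $\eta$), every element of $\mathfrak{t}$ is a Killing field preserving $\eta$, so in particular $\mathfrak{t} \subset \mathrm{aut}(\xi,\eta,g)$.

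First I would establish density of quasi-regular directions near $\xi$. Fix a basis $e_1,\dots,e_k$ of $\mathfrak{t}$ that generates the lattice $\ker(\exp\colon \mathfrak{t} \to T^k)$. Write $\xi = \sum a_j e_j$; irregularity means that the $a_j$ are not all rational multiples of a common number. A vector $\zeta = \sum b_j e_j \in \mathfrak{t}$ generates a closed one-parameter subgroup in $T^k$ (equivalently, all orbits of $\zeta$ on $M$ are closed) if and only if $(b_1,\dots,b_k) \in \mathbb{Q}^k$ up to a common scalar, i.e.\ the $b_j$ are pairwise rationally proportional. Such vectors are obviously dense in $\mathfrak{t}$, so we may select $\rho_i \in \mathfrak{t}$ with $\rho_i \to 0$ and $\xi_i := \xi + \rho_i$ generating a closed circle action on $M$. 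This is the key step, and it is the only place where the irregularity hypothesis is used; everything else is formal.

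Next I would verify that $\xi_i$ actually defines a Sasaki structure via the Type-I deformation formulas \eqref{approx}. The only condition required by Definition \ref{type-01} is $\eta(\xi_i) > 0$. Since $\eta(\xi)=1$ and $\eta$ is smooth on the compact manifold $M$, we have $\eta(\xi_i) = 1 + \eta(\rho_i) \geq 1 - C\|\rho_i\| > 0$ for $i$ large. Applying Definition \ref{type-01} with $\xi_0 = \xi$ and $\xi = \xi_i$ produces precisely the tensors $(\eta_i, \Phi_i, g_i)$ displayed in \eqref{approx}; the general theory of Type-I deformations (recalled in the paper) gives that $(M, \xi_i, \eta_i, g_i, \Phi_i)$ is again a Sasaki structure, and since $\xi_i$ generates a circle action it is quasi-regular.

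Finally I would check $T^k$-invariance. The ingredients in \eqref{approx} are built from $\eta$, $\Phi$, and $\rho_i$, all of which are $T^k$-invariant: $\eta$ and $\Phi$ are preserved by every element of $\mathrm{Aut}(M,\xi,\eta,g) \supset T^k$, and $\rho_i \in \mathfrak{t}$ is fixed by the adjoint action of the abelian group $T^k$. Hence $(\eta_i, \Phi_i, g_i)$ is $T^k$-invariant. The smoothness and convergence $\rho_i \to 0$ transfer to $\eta_i \to \eta$, $\Phi_i \to \Phi$, $g_i \to g$ smoothly, completing the construction. I do not anticipate a serious obstacle beyond the density statement in the first step, which is classical Kronecker approximation on the torus $\mathfrak{t}/\ker(\exp)$.
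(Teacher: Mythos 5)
Your proposal is correct and follows the standard argument (torus closure of the Reeb flow, Kronecker density of rational directions in $\mathfrak{t}$, Type-I deformation for small $\rho_i$, and $T^k$-invariance from the abelian adjoint action); the paper itself offers no proof, deferring to \cite{Ruk} and \cite{BG}[Theorem 7.1.10], and your argument is essentially the one found there. The only point worth making explicit is that $\xi_i$ is nowhere vanishing (since $\eta(\xi_i)=1+\eta(\rho_i)>0$), so the circle action it generates is locally free and the deformed structure is genuinely quasi-regular rather than merely having closed orbits.
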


\begin{lemma}\label{type1}Let $(M, \xi, \eta, g)$ be a Sasaki structure on a compact manifold $M$. Consider a torus $T\subset \text{Aut}(M, \xi, \eta, g)$ and $\xi_i\in \mathfrak{t}$. 
Choose $\xi_i=\xi+\rho_i$ for $\rho_i$ sufficiently small. Consider two Sasaki structures $(\xi, \eta, \Phi, g)\leftrightarrow (\xi_i, \eta_i, \Phi_i, g_j)$ via Type-I deformation. Then we have the following.
Suppose $u$ is $T$ invariant and $u\in \text{PSH}(M, \xi, \omega^T)$ with $|d \Phi du|\leq C_0$. Then for $\rho_i$ sufficiently small, there exists positive constant $\epsilon_i\rightarrow 0$ (as $\rho_i\rightarrow 0$) such that,
\begin{equation}\label{approx100}
(1-\epsilon_i) u\in \text{PSH}(M, \xi_i, \omega_i^T)
\end{equation}
Similarly, suppose $|d \Phi du|\leq C_0$ and $u\in \text{PSH}(M, \xi_i, \omega_i^T)$, then there exists positive constant $\epsilon_i \rightarrow 0$ as $i\rightarrow \infty$, such that
\begin{equation}\label{approx101}
(1-\epsilon_i) u\in \text{PSH}(M, \xi, \omega^T)
\end{equation}
\end{lemma}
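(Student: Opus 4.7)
The plan is to reformulate the TPSH condition for each Sasaki structure as pointwise Hermitian positivity on the common contact distribution $\cD$, exploiting that Type-I deformation leaves $\cD=\ker\eta=\ker\eta_i$ and $\Phi|_{\cD}=\Phi_i|_{\cD}$ unchanged. The two structures then differ only by corrections of order $|\rho_i|$, and the statement reduces to absorbing these corrections into the scaling factor $1-\epsilon_i$.

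First I would treat smooth $u$. Set $f_i=(1+\eta(\rho_i))^{-1}$ and $r_i=\eta(\rho_i)$, so that $\omega_i^T|_{\cD}=f_i\,\omega^T|_{\cD}$, with $f_i\to 1$ uniformly as $\rho_i\to 0$. Since $u$ is $T$-invariant, it is basic for both Reeb flows, and the 1-forms $d^c_Bu$ and $(d^c_B)_iu$ agree on $\cD$ (being determined there by $du|_{\cD}$ and $\Phi|_{\cD}$); they differ only by a multiple of $\eta$, with the normalizations $(d^c_Bu)(\xi)=(d^c_B)_iu(\xi_i)=0$ forcing
\begin{equation*}
(d^c_B)_iu=d^c_Bu-\phi_i\,\eta,\qquad \phi_i=\frac{(d^c_Bu)(\rho_i^{\cD})}{1+r_i},
\end{equation*}
where $\rho_i^{\cD}$ is the $\cD$-component of $\rho_i$. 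Differentiating and using $\eta|_{\cD}=0$, $d\eta|_{\cD}=2\omega^T|_{\cD}$ gives $d((d^c_B)_iu)|_{\cD}=dd^c_Bu|_{\cD}-2\phi_i\,\omega^T|_{\cD}$, so that for any scalar $\epsilon_i$,
\begin{equation*}
\begin{split}
\bigl(\omega_i^T+(1-\epsilon_i)d(d^c_B)_iu\bigr)|_{\cD}&=(1-\epsilon_i)\bigl(\omega^T+dd^c_Bu\bigr)|_{\cD}\\
&\quad+\bigl[f_i-(1-\epsilon_i)-2(1-\epsilon_i)\phi_i\bigr]\omega^T|_{\cD}.
\end{split}
\end{equation*}
The first term on the right is nonnegative by the TPSH hypothesis on $u$. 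For the second: $|d\Phi du|\le C_0$ bounds the transverse Laplacian of $u$, and compactness together with elliptic regularity then gives $|du|\le C$, so $\sup_M|\phi_i|=O(|\rho_i|)$ and $\sup_M|1-f_i|=O(|\rho_i|)$ both tend to zero. The choice $\epsilon_i:=\sup_M[|1-f_i|+2|\phi_i|]/(1-2\sup_M|\phi_i|)\to 0$ makes the bracketed scalar pointwise nonnegative, proving \eqref{approx100} for smooth $u$. The reverse statement \eqref{approx101} follows by the symmetric argument exchanging the roles of the two Sasaki structures.

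For general $u\in\text{PSH}(M,\xi,\omega^T)$ with $|d\Phi du|\le C_0$, the plan is to approximate $u$ by a decreasing sequence $u_j\in\cH$ of $T$-invariant smooth TPSH functions via Lemma \ref{BK} (the averaging over the compact torus $T$ preserves the decreasing property since $T$ acts by isometries of both Sasaki structures), apply the smooth computation to each $u_j$, and pass to the limit as positive $(1,1)$-currents. The main obstacle is securing a uniform bound on $|d\Phi du_j|$ independent of $j$, since this controls the constants $\epsilon_i$: this would be handled by transferring the problem to the K\"ahler cone via the holomorphic charts of Lemma \ref{chart}, where the bound $|d\Phi du|\le C_0$ translates into a uniform bound on $\sqrt{-1}\partial\bar\partial u$ that is preserved, up to an $O(k^{-1})$ perturbation of $\omega^T$, by the Blocki-Kolodziej regularization used in Lemma \ref{BK}; the perturbation is harmless because it can be absorbed into $\epsilon_i$.
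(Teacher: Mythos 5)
Your computation for the case of bounded transverse Hessian is essentially the paper's proof: both exploit that Type-I deformation fixes $\cD=\ker\eta=\ker\eta_i$ and $\Phi|_{\cD}=\Phi_i|_{\cD}$, expand $\omega_i^T+\tfrac12 d\Phi_i du$ as $(1-\epsilon_i)(\omega^T+\tfrac12 d\Phi du)$ plus a scalar multiple of $\omega^T|_{\cD}$ of size $O(|\rho_i|)(1+\sup|du|)$, and absorb the error into $\epsilon_i$; your version, which restricts to $\cD$ from the outset so that the $\eta\wedge d(\cdot)$ term never needs to be estimated, is if anything slightly cleaner than the paper's.

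The final paragraph, however, is both unnecessary and the one place where your argument is shaky. The hypothesis $|d\Phi du|\le C_0$ already forces $\Delta_B u\in L^\infty$, hence $u\in W^{2,p}$ for all $p<\infty$ and $u\in C^{1,\alpha}$, so the identity you derived on $\cD$ holds with $L^\infty$ coefficients (equivalently, as an identity of currents tested against forms supported in $\cD$) and the positivity argument runs pointwise a.e.\ with no smoothing at all --- this is exactly how the paper treats it, and there is no ``more general'' case to handle. Your proposed fallback, that the Blocki--Kolodziej regularization of Lemma \ref{BK} preserves a uniform upper bound on $\sqrt{-1}\p\bp u$, is not something that regularization provides: it controls the loss of positivity (lower bounds on the complex Hessian) but gives no upper bound on $\p\bp u_k$ independent of $k$. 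Had the approximation step actually been needed, this would be a genuine gap; as it stands you should simply delete the paragraph and note that the hypothesis supplies the regularity needed to justify the distributional computation directly.
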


\begin{proof}Since $u$ is $T^k$-invariant, hence $u$ is a basic function with respect to both $\xi$ and $\xi_i$. 
We write
\[
\omega_i^T+\sqrt{-1}\p^i_B\bar\p^i_B u
=\omega^T_i+\frac{1}{2} d\Phi_i d u.\]
Using \eqref{approx},  we compute
\begin{equation}\label{approx1005}
\begin{split}
\omega^T_i+\frac{1}{2} d\Phi_i d u=&\frac{\omega^T}{1+\eta(\rho_i)}+\eta\wedge d\left(\frac{1-du(\Phi \rho_i)}{1+\eta(\rho_i)}\right)+\frac{1}{2}d\Phi du+2\omega^T \frac{du(\Phi \rho_i)}{1+\eta(\rho_i)}\\
=&\frac{1+2du(\Phi \rho_i)}{1+\eta(\rho_i)}\omega^T+\frac{1}{2}d\Phi du+\eta\wedge d\left(\frac{1-du(\Phi \rho_i)}{1+\eta(\rho_i)}\right)\\
=&\omega^T+\frac{1}{2}d\Phi du+\left(\frac{1+2du(\Phi \rho_i)}{1+\eta(\rho_i)}-1\right)\omega^T+\eta\wedge d\left(\frac{1-du(\Phi \rho_i)}{1+\eta(\rho_i)}\right)
\end{split}
\end{equation}
If  $|d\Phi d u|\leq C_0$, then \eqref{approx1005} implies that $|d\Phi_i d u|\leq C_1$ (vice versa). Moreover, when $\rho_i\rightarrow 0$, \[\frac{1+2du(\Phi \rho_i)}{1+\eta(\rho_i)}\rightarrow 1,\;\;\; d\left(\frac{1-du(\Phi \rho_i)}{1+\eta(\rho_i)}\right)\rightarrow 0.\] We can then choose $\epsilon_i\rightarrow 0$ as $\rho_i\rightarrow 0$, such that
\[
\omega^T_i+\frac{1}{2} d\Phi_i d (u(1-\epsilon_i))\geq 0. 
\]
This proves \eqref{approx100}. 
Note that given the relation of $\Phi$ and $\Phi_i$, then  $|d \Phi du|\leq C_0$ implies that  $|d\Phi_i d u|$ is uniformly bounded (we suppose $\rho_i$ is uniformly small in smooth topology). Interchanging $\xi$ and $\xi_i$, this proves \eqref{approx101}. 
\end{proof}

\begin{rmk}
Note that the complex structure on the cone remains unchanged under Type-I deformation \cite{HeSun}[Lemma 2.2]. The transverse holomorphic structure is changed since the foliation is changed, due to the change of Reeb vector foliation; on the other hand, the contact bundle $\cD$ remains unchanged. Note that $(\cD, \Phi)$ and $(\cD, \Phi_i)$ can be identified to transverse holomorphic tangent bundle $T^{1, 0}(\cF_\xi)$ and $T^{1, 0}(\cF_{\xi_i})$ (the foliations are different).  Since  the term $\eta\wedge d\left(\frac{1-du(\Phi \rho_i)}{1+\eta(\rho_i)}\right)$ vanishes on $\cD$ and $\left(\frac{1+2du(\Phi \rho_i)}{1+\eta(\rho_i)}-1\right)\omega^T$ involves with only $du$, hence the above statement holds if we only assume that $|du|$ is uniformly bounded. Since we shall not need this, we skip the argument. However, it seems that assumption like $|du|\leq C$ is necessary and we are not able to extend this to $\text{PSH}(M, \xi, \omega^T)$. 
\end{rmk}

As above we fix a torus $T\subset \text{Aut}(N, \xi, \eta, g)$ and consider $\rho_i\in \mathfrak{t}$ sufficiently small. Let $\xi_i=\xi+\rho_i$ and let $(\xi_i, \eta_i, g_i, \Phi_i)$ be the Type-I deformation of $(\xi, \eta, g, \Phi)$. 
\begin{lemma}\label{measure100}Let $\rho_i\rightarrow 0$. Suppose a sequence of $T$-invariant functions $u_i\in \text{PSH}(M, \xi_i, \omega_i^T)$ with $|d\Phi d u_i|_{\omega^T}\leq C_0$ converges to $u\in \text{PSH}(M, \xi, \omega^T)$. Then $|d\Phi du|_{\omega^T}\leq C_0$ and we have the following weak convergence of the measure
\[
(\omega_i^T+\frac{1}{2}d\Phi_i d u_i)^n\wedge \eta_i\rightarrow (\omega^T+\frac{1}{2}d\Phi d u)^n\wedge \eta
\]
\end{lemma}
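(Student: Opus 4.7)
The plan is to reduce the statement to Bedford--Taylor weak continuity in the transverse direction, once the extra $\eta$-factors produced by the Type-I deformation are isolated and shown to vanish identically after wedging with $\eta_i$.

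\textbf{Step 1 (algebraic reduction).} Applying the computation \eqref{approx1005} from Lemma \ref{type1} to $u_i$ gives
\[
\omega_i^T+\tfrac12 d\Phi_i du_i \;=\; f_i\,\omega^T+\tfrac12 d\Phi du_i \;+\;\eta\wedge dg_i,
\]
where $f_i := \frac{1+2du_i(\Phi\rho_i)}{1+\eta(\rho_i)}$ and $g_i := \frac{1-du_i(\Phi\rho_i)}{1+\eta(\rho_i)}$. Since $\eta\wedge\eta=0$, the wedge product $(\eta\wedge dg_i)\wedge(\eta\wedge dg_i)$ vanishes and so does $\eta\wedge dg_i\wedge \eta_i$ (using $\eta_i = \eta/(1+\eta(\rho_i))$). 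Expanding the $n$-th power of $\omega_i^T+\tfrac12 d\Phi_i du_i$ and wedging with $\eta_i$ therefore kills every term involving $\eta\wedge dg_i$, producing the clean identity
\[
(\omega_i^T+\tfrac12 d\Phi_i du_i)^n\wedge \eta_i \;=\; \frac{1}{1+\eta(\rho_i)}\bigl(f_i\,\omega^T+\tfrac12 d\Phi du_i\bigr)^n\wedge \eta.
\]

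\textbf{Step 2 (regularity upgrade).} The pointwise bound $|d\Phi du_i|_{\omega^T}\le C_0$ gives a uniform bound on the transverse complex Hessian of the basic functions $u_i$ in each foliation chart, hence uniform $C^{1,1}$-bounds on $M$. Combined with the hypothesis $u_i\to u$, Arzel\`a--Ascoli upgrades the convergence to $u_i\to u$ in $C^{1,\alpha}$ for every $\alpha<1$. In particular $du_i\to du$ uniformly on $M$, and the limit $u$ satisfies $|d\Phi du|_{\omega^T}\le C_0$.

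\textbf{Step 3 (passage to the limit).} Since $\rho_i\to 0$ smoothly and $|du_i|$ is uniformly bounded, both $\eta(\rho_i)\to 0$ and $f_i\to 1$ uniformly on $M$. By Step 1 it therefore suffices to establish the weak convergence
\[
\bigl(f_i\,\omega^T+\tfrac12 d\Phi du_i\bigr)^n\wedge \eta \;\longrightarrow\; \bigl(\omega^T+\tfrac12 d\Phi du\bigr)^n\wedge \eta.
\]
Testing against a smooth function supported in a foliation chart $W_\alpha$ and using the product decomposition $W_\alpha\simeq (-\delta,\delta)\times V_\alpha$ together with the basic nature of $u_i, u, f_i$, the problem reduces on $V_\alpha\subset\C^n$ to the weak convergence
\[
\bigl(f_i\,\omega^T_\alpha+\sqrt{-1}\,\p\bp u_i\bigr)^n \;\longrightarrow\; \bigl(\omega^T_\alpha+\sqrt{-1}\,\p\bp u\bigr)^n.
\]
With $u_i\to u$ uniformly on $V_\alpha$ and $f_i\omega^T_\alpha\to \omega^T_\alpha$ uniformly, this is an instance of Bedford--Taylor weak continuity of the complex Monge--Amp\`ere operator on uniformly converging bounded plurisubharmonic functions; the perturbation of the background form is absorbed by the multilinearity of the Monge--Amp\`ere expansion.

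The main technical point is the algebraic cancellation of Step 1: it is only because $\eta\wedge\eta=0$ (and hence $\eta\wedge dg_i\wedge \eta_i=0$) that the Type-I correction $\eta\wedge dg_i$ disappears after wedging with $\eta_i$. Without this cancellation one would need to control the first-derivative correction $dg_i$ independently, which is not directly supplied by the hypothesis $|d\Phi du_i|_{\omega^T}\le C_0$. Once Step 1 is in place, the rest is $C^{1,\alpha}$-compactness and standard Bedford--Taylor theory applied chart by chart.
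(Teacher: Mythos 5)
Your proposal is correct and follows essentially the same route as the paper: reduce via the Type-I computation \eqref{approx1005} to the fixed transverse structure $(\xi,\eta,\Phi,\omega^T)$, and then conclude by Bedford--Taylor weak continuity of the Monge--Amp\`ere operator applied chart by chart, using the uniform transverse Hessian bound to upgrade $u_i\to u$ to $C^{1,\alpha}$ convergence. Your Step 1 is in fact slightly sharper than the paper's corresponding assertion: the paper claims the correction terms in \eqref{approx1005} are small in $L^\infty$, but the piece $\eta\wedge d\bigl(\tfrac{1-du_i(\Phi\rho_i)}{1+\eta(\rho_i)}\bigr)$ involves full second derivatives of $u_i$ (not controlled by $|d\Phi du_i|\leq C_0$ alone) and is genuinely handled only by the exact cancellation against $\eta_i$ that you make explicit.
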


\begin{proof}By \eqref{approx1005} and $|d\Phi d u_i|_{\omega^T}\leq C_0$,  $\omega_i^T+\frac{1}{2}d\Phi_i d u_i$ and  $\omega^T+\frac{1}{2}d\Phi d u_i$ differ by a term with small $L^\infty$ norm, hence we only need to prove that
\[(\omega^T+\frac{1}{2}d\Phi d u_i)^n\wedge \eta_i\rightarrow  (\omega^T+\frac{1}{2}d\Phi d u)^n\wedge \eta.\]
Note that $\eta_i=\eta/(1+\eta(\rho_i))$ converges smoothly to $\eta$, then the above follows from the weak convergence of $(\omega^T+\frac{1}{2}d\Phi d u_i)^n\wedge \eta$.
\end{proof}

Next we  give a proof of Theorem \ref{rooftop101} in Sasaki setting, regarding the regularity of envelop construction. 
\begin{thm}Given $f\in C^\infty_B(M)$, then 
we have the following estimate
\[
\|P(f)\|_{C^{1, \bar 1}}\leq C(M, \omega^T, g, \|f\|_{C^{1, \bar 1}}).
\]
Moreover, if 
 $u_1, \cdots, u_k\in \cH_\Delta$, where we use the notation
\[
\cH_\Delta=\{u\in \text{PSH}(M, \xi, \omega^T): \|u\|_{C^{1, \bar 1}}<\infty\}
\]
then $P(u_1, \cdots, u_k)\in \cH_\Delta$.
\end{thm}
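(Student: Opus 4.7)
The plan is to follow the standard Monge--Amp\`ere regularization approach to envelopes (as in the K\"ahler work of Berman, Tosatti, Chu--Tosatti--Weinkove, Chen--He, etc.) and to reduce the irregular case to the quasiregular/K\"ahler-orbifold case via the Type-I deformation machinery developed in the paper (Theorem \ref{type101}, Lemmas \ref{type1} and \ref{measure100}).

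Concretely, for smooth basic $f$ and each $\beta > 0$, I would solve the transverse Monge--Amp\`ere equation
\[
\omega_{u_\beta}^n \wedge \eta \;=\; e^{\beta (u_\beta - f)}\, (\omega^T)^n \wedge \eta
\]
for a basic potential $u_\beta \in \cH$. In the quasiregular case this descends to a Monge--Amp\`ere equation on the K\"ahler orbifold $Z = M/\cF_\xi$ and is solvable by Yau's theorem in its orbifold version; the irregular case is treated by first approximating $(M, \xi, \eta, g)$ by quasiregular Sasaki structures via Theorem \ref{type101}, solving on each, and passing to the limit using Lemma \ref{measure100}. The two key uniform estimates are: (i) $\|u_\beta - f\|_{C^0} \leq C\beta^{-1}$, by the maximum principle on $u_\beta - f$ together with the trace bound $\mathrm{tr}_{\omega_{u_\beta}}\omega^T \leq C(\|f\|_{C^{1,\bar 1}})$ at an extremum; and (ii) the transverse Laplacian bound $\sup_M (n + \Delta u_\beta) \leq C(M, \omega^T, g, \|f\|_{C^{1,\bar 1}})$, obtained by the Aubin--Yau maximum principle applied to $\log(n + \Delta u_\beta) - A u_\beta$ in foliation charts, where the local computation is identical to the K\"ahler one since only the transverse geometry enters. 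The "bad" term $\beta\,\p_B\bar\p_B(u_\beta - f)$ arising from differentiating the right-hand side is absorbed using (i), exactly as in the K\"ahler setting. With uniform $L^\infty$ and Laplacian control, $u_\beta \to u_\infty$ in $C^{1,\alpha}$ along a subsequence with $u_\infty \in \cH_\Delta$ and $u_\infty \leq f$. A maximum principle argument applied to $v - u_\beta$ for any competitor $v \in \mathrm{PSH}(M, \xi, \omega^T)$ with $v \leq f$ forces $v \leq u_\infty + C\beta^{-1}$, so $u_\infty = P(f)$.

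For the rooftop case $P(u_1,\ldots,u_k)$ with $u_i \in \cH_\Delta$, I would approximate. Using Lemma \ref{BK} together with Dini's theorem in foliation charts, I can construct smooth basic $u_i^j \in \cH$ decreasing to $u_i$ with uniform $C^{1,\bar 1}$ bounds, then regularize $\min_i u_i^j$ to a smooth basic function $f_j$ with $\|f_j\|_{C^{1,\bar 1}} \leq C$ independent of $j$ (via standard mollification in the transverse directions of foliation charts glued by a basic partition of unity, followed by averaging over the torus closure of the Reeb flow in $\mathrm{Aut}(\xi, \eta, g)$ to enforce the basic property). The first part then gives $P(f_j) \in \cH_\Delta$ with uniform $C^{1,\bar 1}$ bounds; monotone convergence together with stability of envelopes under decreasing data yields $P(f_j) \searrow P(u_1,\ldots,u_k)$, so the limit lies in $\cH_\Delta$.

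The main obstacle will be establishing (ii) uniformly in $\beta$ \emph{and} simultaneously uniformly in the Type-I deformation parameter when $\xi$ is irregular: the transverse bisectional-curvature quantities controlling the constant $A$ in the Aubin--Yau trick must be under control, and the rapidly growing right-hand side $e^{\beta(u_\beta-f)}$ must be differentiated without destroying the estimate. This is exactly where Lemma \ref{type1} is essential, since it says that $C^{1,\bar 1}$ control transfers between $(\xi, \eta, \omega^T)$ and the quasiregular Type-I deformations $(\xi_i, \eta_i, \omega_i^T)$ up to a loss that tends to zero. A secondary subtlety is producing smooth basic regularizations $f_j$ with controlled $C^{1,\bar 1}$ norm in the irregular setting, but averaging over the Reeb-generated torus handles this cleanly.
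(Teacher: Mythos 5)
Your treatment of the first statement is essentially the paper's own proof: the authors also regularize via the transverse Monge--Amp\`ere equation $\omega_{u_\beta}^n\wedge\eta=e^{\beta(u_\beta-f)}(\omega^T)^n\wedge\eta$ and import the Berman--Demailly/Darvas estimates, observing that all quantities are basic so the computation lives entirely in the transverse K\"ahler structure. The Type-I deformation detour you build in is unnecessary here and creates the uniformity problem you yourself flag: the equation is transversally elliptic for basic data and is solved directly in foliation charts by the continuity method, with no need to pass to quasiregular approximations (the paper reserves Type-I deformation for a different statement, namely that $\omega^n_{P(u_0,u_1)}\wedge\eta$ vanishes on the non-contact set).

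The rooftop part of your argument has a genuine gap. Writing $\min(f_1,f_2)=\tfrac{f_1+f_2}{2}-\tfrac{|f_1-f_2|}{2}$, the distributional $dd^c$ of $|f_1-f_2|$ carries a \emph{positive} singular measure on the corner set $\{f_1=f_2\}$, so $dd^c\min(f_1,f_2)$ acquires a \emph{negative} singular part: the minimum of two $C^{1,\bar 1}$ functions satisfies only the one-sided viscosity bound $\Delta h\leq\max(\Delta f_1,\Delta f_2)$ and is not itself $C^{1,\bar 1}$. Consequently no mollification $f_j$ of $\min_i u_i^j$ can have $\|f_j\|_{C^{1,\bar 1}}$ bounded independently of the mollification scale (the mollified negative singular part is of size $\epsilon^{-1}$ near the corner), and Lemma \ref{BK} does not supply decreasing smooth approximants with uniform second-order bounds either --- it only gives monotone $C^0$ approximation. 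Applying the first statement verbatim to such $f_j$ therefore yields constants that blow up with $j$. To salvage your route you would have to rerun the $\beta\to\infty$ maximum-principle argument using only the viscosity upper bound on $\Delta f$, which is exactly the refinement the paper alludes to in the remark following its proof but does not carry out. The paper's actual argument for the second statement is different and sidesteps this: it reduces to $k=2$, connects $u_0,u_1$ by the $C^{1,\bar 1}_B$ geodesic $u_t$, whose Laplacian bound (Lemma \ref{cma10}, via \cite{BD,he12}) depends only on the endpoints up to second order, invokes the rooftop formula $P(u_0,u_1)=\inf_{t\in[0,1]}u_t$ from Lemma \ref{rf}, and then applies the Kiselman-type regularity result \cite{DR1}[Proposition 4.4] in foliation charts to conclude that the infimum retains the Laplacian bound.
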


\begin{proof}
The first result was proved by Berman-Demailly \cite{BD} in K\"ahler setting. 
For the first statement, we follow \cite{D4}[Theorem A.7] and it is a direct adaption to Sasaki setting.  Consider the following complex Monge-Ampere equation on Sasaki manifolds,
\[
\omega_{u_\beta}^n\wedge \eta= e^{\beta(u_\beta-f)}\omega_T^n\wedge\eta.
\]
Since all quantities are basic and only transverse K\"ahler structure is involved, then the argument as in K\"ahler setting has a direct adaption; see \cite{D4}[Theorem A.7] and we skip the details. For the second statement, first note that we only need to show that $u_0, u_1\in \cH_\Delta$, then $P(u_0, u_1)\in \cH_\Delta$. Let $u_t$ be the geodesic segment connecting $u_0, u_1$, then by Lemma \ref{cma10}, we know that $u_t\in \cH_\Delta$ (see \cite{BD} and \cite{he12} for K\"ahler setting). Now we have already known $P(u_0, u_1)=\inf_{t\in [0, 1]} u_t$, then by \cite{DR1}[Proposition 4.4] (applied to each foliation charts), $\Delta u_t$ is uniformly bounded. This shows that $P(u_0, u_1)\in \cH_\Delta$. 
\end{proof}

More generally, one can obtain results as in \cite{DR1} that $P(f_1, \cdots, f_n)\in C^{1, \bar 1}_B$ given $f_1, \cdots, f_n\in C^{1, \bar 1}_B$. The point is that given two functions $f_1, f_2$, $h=\min \{f_1, f_2\}$ satisfies $\Delta h\leq \max\{\Delta f_1, \Delta f_2\}$ in viscosity sense, writing $h=\frac{f_1+f_2}{2}-\frac{|f_1-f_2|}{2}$. The argument as in \cite{D4}[Theorem A.7] applies using the maximum principle in viscosity sense. Since we do not need this, we shall skip the details.

\subsection{Complex Monge-Ampere operator and intrinsic capacity on compact Sasaki manifolds}\label{CMA001}
We discuss briefly the Bedford-Taylor theory on Sasaki manifolds. For details for complex Monge-Ampere operator, see Bedford-Taylor \cite{BT1}.
We also extend intrinsic Monge-Ampere capacity to Sasaki setting, see  \cite{GZ01} for K\"ahler setting.

Given a Sasaki structure, there is a splitting of tangent bundle $TM=L\xi\otimes \cD$, where $\cD=\text{Ker}(\eta)$, with $\Phi: \cD\rightarrow \cD$ inducing a splitting $\cD\otimes \C=\cD^{1, 0}\oplus \cD^{0, 1}$. Hence 
the subbundle $\Lambda^{2p}(\cD^*)$ of $\Lambda^{2p}M$  is well-defined and $\Phi$ induces a splitting to give bidegree of forms in $\Lambda^{2p}(\cD^*)$. Note that we have the following, 
\[\Lambda^{2p}(\cD^*)=\{\theta: \theta\in \Lambda^{2p}M, \iota_\xi \theta=0\}.\]
We do not assume that $\theta\in \Lambda^{2p}(\cD^*)$ is basic. That is, the coefficients of $\theta$ might not be invariant under the Reeb flow. A simple observation shows that if $\theta\in \Lambda^{2p}(\cD^*)$, then $\theta$ is basic if it is closed, $d\theta=0$  (since $\iota_\xi \theta=0$). Hence a closed $2p$-form in $\Lambda^{2p}(\cD^*)$ is basic and can be regarded as a \emph{transverse} closed $2p$-form, defined as in \cite{VC}. In general $d\Lambda^{2p}(\cD^*)$ is not in $\Lambda^{2p+1}(\cD^*)$.

Next we give a very brief discussion of \emph{transverse} positive closed currents of bidegree of $(p, p)$ on $M$, $0\leq p\leq n$; see \cite{VC} for similar treatment. 
We  simply treat them as closed differential forms of bidegree $(p, p)$ in $\Lambda^{2p}(\cD^*)$ with measurable coefficients which are invariant under the Reeb flow. Its total variation is controlled by 
\[
\|T\|:=\int_M T\wedge (\omega^T)^{n-p}\wedge \eta. 
\]
Given $\phi\in \text{PSH}(M, \xi, \omega^T)$, we write $\phi\in L^1(T)$ if $\phi$ is integrable with respect to the measure $T\wedge (\omega^T)^{n-p}\wedge \eta$. In this case, the current $\phi T$ is well-defined and we write
\[
\begin{split}
&\omega_\phi\wedge T:=\omega^T\wedge T+dd^c_B(\phi T)\\
&\omega_\phi\wedge T\wedge (\omega^T)^{n-p-1}\wedge \eta= T\wedge (\omega^T)^{n-p}\wedge \eta+dd^c_B(\phi T)\wedge (\omega^T)^{n-p-1}\wedge \eta.
\end{split}
\]
The positivity is a local notion and we simply think $T$ as a positive closed $(p, p)$-form on each foliation chart. Hence $\omega_\phi\wedge T$ is also a transverse closed positive $(p+1, p+1)$ form. Note that we think transverse positive closed currents of bidegree of $(p, p)$-type as a linear functional on $\Lambda^{n-p, n-p}(\cD^*)$, hence the test forms are of bidegree $(n-p, n-p)$. A main point is that test forms are not restricted to basic forms. In other words, given such a current $T$ and $\gamma\in \Lambda^{n-p, n-p}(\cD^*)$, we have the following paring,
\[
\gamma\rightarrow \int_M \gamma\wedge T\wedge \eta. 
\]
When $\phi\in \text{PSH}(M, \xi, \omega^T)\cap L^\infty$, it follows that $\phi\in L^1(T)$ for any transverse positive closed current $T$ of bidegree $(p, p)$ and hence one can define inductively $\omega_\phi^k\wedge (\omega^T)^{n-k}$; in particular, this leads to the definition of transverse complex Monge-Ampere operator $\omega_\phi^n$ of bidegree $(n, n)$. 
Moreover, the cocycle condition on transverse holomorphic structure ensures that $\omega_\phi^k\wedge (\omega^T)^{n-k}$  is well-defined on $M$. 
In particular $\omega_\phi^n\wedge \eta$ defines a positive Borel measure on $M$.

It is more convenient to consider this construction locally in foliations charts $W_\alpha=(-\delta, \delta)\times V_\alpha$. 
By taking test forms $\gamma\in \Lambda^{n-p, n-p}(\cD^*)$ with compact support, we can consider $T\wedge \eta$ on a foliation chart for a transverse positive closed $(p, p)$ current $T$. In particular this give a local description of the complex Monge-Ampere measures $\omega_\phi^k\wedge (\omega^T)^{n-k}\wedge \eta$.
By taking test functions $f$ supported in a foliation chart, the measure $\omega_\phi^k\wedge (\omega^T)^{n-k}\wedge \eta$ for each $k$ is regarded as the product measure $\omega_\phi^k\wedge (\omega^T)^{n-k}\wedge dx$ on $W_\alpha$, where $\xi=\p_x$ is the Reeb direction. Note that $\omega_\phi^k\wedge (\omega^T)^{n-k}$ is defined on $V_\alpha$ as the usual way in K\"ahler setting, and the cocycle condition on transverse holomorphic structure ensures that $\omega_\phi^k\wedge (\omega^T)^{n-k}$ is well-defined as a transverse positive closed current of bidegree $(n, n)$. On each foliation chart,  we have $\omega_\phi^k\wedge (\omega^T)^{n-k}\wedge\eta=\omega_\phi^k\wedge (\omega^T)^{n-k}\wedge dx$ as a product measure. This coincides with the local description given by van Coevering \cite{VC}[Section 2]. 

Moreover, when $u, v\in \text{PSH}(M, \xi, \omega^T)\cap L^\infty$, $du\wedge d^c_B v\wedge T$ can also be defined, where $T$ is a transverse closed positive current of bidegree $(n-1, n-1)$. 
By the polarization formula we only need to define $du\wedge d^c_B u\wedge T$. By adding a positive constant if necessary, we assume $u\geq 0$. Then we define
\begin{equation}
du\wedge d^c_B u\wedge T:=\frac{1}{2} dd^c_B (u^2)\wedge T- udd^c_B u\wedge T.
\end{equation}
 In particular, $du\wedge d^c_B u\wedge T$ is positive if $T$ is a transverse closed positive current of bidegree $(n-1, n-1)$. We can then define $du\wedge d^c_B u\wedge T\wedge \eta$ as a positive 
Borel measure.  Using the polarization formula, we have the following Cauchy-Schwartz inequality, for $u, v\in \text{PSH}(M, \xi, \omega^T)\cap L^\infty$,
\begin{equation}\label{cs01}
|\int_M du\wedge d^c_B v\wedge T\wedge \eta|^2\leq \left(\int_M du\wedge d^c_B u \wedge T\wedge \eta\right)\left(\int_M dv\wedge d^c_B v \wedge T\wedge \eta\right)
\end{equation}
We also record the following Stokes' theorem in Sasaki setting, and its proof follows the Bedford-Taylor theory as in K\"ahler setting via approximation (Lemma \ref{BK}); see \cite{VC}[Theorem 2.3.1, Proposition 2.3.2]. 
\begin{lemma}Let $u, v, \phi\in \text{PSH}(M, \xi, \omega^T)\cap L^\infty$, then for each $0\leq k\leq n-1$, we have
\begin{equation}
\begin{split}
\int_M u dd^c_B v\wedge \omega_\phi^{k}\wedge (\omega^T)^{n-k-1}\wedge \eta=&\int_M v dd^c_B u \wedge \omega_\phi^{k}\wedge (\omega^T)^{n-k-1}\wedge \eta\\
=&-\int_M d u\wedge d^c_B v \wedge \omega_\phi^{k}\wedge (\omega^T)^{n-k-1}\wedge \eta
\end{split}
\end{equation}
\end{lemma}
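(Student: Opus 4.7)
\smallskip

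\noindent\textbf{Proof proposal.}
The plan is to proceed in two stages: establish the identity for smooth basic potentials by ordinary Stokes' theorem on the closed manifold $M$, and then pass to the general bounded case via the approximation in Lemma~\ref{BK} together with the Bedford--Taylor type weak convergence in Proposition~\ref{weakcon0}. Fix $T=\omega_\phi^k\wedge (\omega^T)^{n-k-1}$, a transverse closed positive $(n-1,n-1)$-current. When $u,v,\phi\in\cH$ are smooth, I will apply Stokes to the $(2n)$-form $d(u\, d^c_B v\wedge T\wedge \eta)$ on the closed manifold $M$. Expanding,
\begin{equation*}
d(u\, d^c_B v\wedge T\wedge \eta)=du\wedge d^c_B v\wedge T\wedge \eta+u\,dd^c_B v\wedge T\wedge \eta-u\,d^c_B v\wedge d(T\wedge \eta),
\end{equation*}
where the sign is due to $d^c_B v\wedge T$ being of odd degree. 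Using $dT=0$ (as $T$ is basic and closed in the smooth case) and $d\eta=2\omega^T$, the boundary term becomes $2u\,d^c_B v\wedge T\wedge \omega^T$.

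The key observation is that $d^c_B v\wedge T\wedge \omega^T$ is a \emph{basic} form of total degree $2n+1$, and since the transverse complex dimension is $n$, basic forms of total degree exceeding $2n$ vanish identically. Therefore this unwanted term drops out, and Stokes gives the second equality
\begin{equation*}
\int_M u\,dd^c_B v\wedge T\wedge \eta=-\int_M du\wedge d^c_B v\wedge T\wedge \eta.
\end{equation*}
For the first equality I will invoke a bidegree count: writing $du\wedge d^c_Bv=\tfrac{\sqrt{-1}}{2}(\partial_B u\wedge\bar\partial_B v+\partial_B v\wedge\bar\partial_B u)+A(u,v)$, where $A(u,v)$ is a sum of terms of basic bidegrees $(2,0)$ and $(0,2)$, wedging with $T$ of bidegree $(n-1,n-1)$ pushes $A(u,v)\wedge T$ to basic bidegrees $(n+1,n-1)$ and $(n-1,n+1)$, hence zero. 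The remaining symmetric part is invariant under $u\leftrightarrow v$, so $du\wedge d^c_B v\wedge T=dv\wedge d^c_B u\wedge T$, and applying Stokes as above to $d(v\,d^c_B u\wedge T\wedge \eta)$ then yields the symmetric identity.

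For the general case $u,v,\phi\in \text{PSH}(M,\xi,\omega^T)\cap L^\infty$, I will choose smooth decreasing sequences $u_j\searrow u$, $v_j\searrow v$, $\phi_j\searrow \phi$ in $\cH$ via Lemma~\ref{BK}. Writing $T_j=\omega_{\phi_j}^k\wedge(\omega^T)^{n-k-1}$, Proposition~\ref{weakcon0} gives $T_j\wedge dd^c_B v_j\wedge\eta\rightharpoonup T\wedge dd^c_B v\wedge \eta$ and $T_j\wedge\eta\rightharpoonup T\wedge\eta$ weakly as measures; combined with uniform $L^\infty$-bounds on $u_j,v_j$ and continuity of $u$ against weakly convergent Radon measures with bounded total mass, the terms $\int_M u_j\,dd^c_B v_j\wedge T_j\wedge \eta$ and $\int_M v_j\,dd^c_B u_j\wedge T_j\wedge \eta$ pass to their limits.

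The main technical obstacle will be showing that $\int_M du_j\wedge d^c_B v_j\wedge T_j\wedge \eta\to \int_M du\wedge d^c_B v\wedge T\wedge \eta$, since the gradient-type current $du\wedge d^c_B v$ is not handled directly by Proposition~\ref{weakcon0}. Here I will use the standard Bedford--Taylor device: the Cauchy--Schwarz estimate \eqref{cs01} together with the polarization identity
\begin{equation*}
2\,du\wedge d^c_B v=dd^c_B(uv)-u\,dd^c_B v-v\,dd^c_B u+(\text{terms involving }d(u-v)\wedge d^c_B(u-v)),
\end{equation*}
which re-expresses the gradient term purely through complex Monge--Amp\`ere-type measures of bounded TPSH functions, to which Proposition~\ref{weakcon0} applies after localizing in foliation charts $W_\alpha=(-\delta,\delta)\times V_\alpha$ and writing $T_j\wedge\eta=T_j\wedge dx$ as a product measure along the Reeb direction (exactly as in the proof of Proposition~\ref{measure1}). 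This reduces matters to the classical Bedford--Taylor convergence on each $V_\alpha\subset\C^n$, completing the approximation.
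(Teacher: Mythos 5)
Your proposal is correct and is essentially the argument the paper intends: the paper gives no written proof of this lemma beyond the remark that it ``follows the Bedford--Taylor theory as in K\"ahler setting via approximation (Lemma \ref{BK}); see \cite{VC}[Theorem 2.3.1, Proposition 2.3.2]'', and your scheme --- Stokes for smooth basic potentials with the $d\eta$-term killed by the degree count on basic forms, symmetry in $u,v$ by transverse bidegree reasons, then monotone approximation localized to foliation charts --- is precisely that route, worked out in detail. The one step you should tighten is the limit $\int_M u_j\,dd^c_B v_j\wedge T_j\wedge\eta\to\int_M u\,dd^c_B v\wedge T\wedge\eta$: since $u$ is only bounded and upper semicontinuous, ``continuity of $u$ against weakly convergent Radon measures'' is not a valid justification; what is needed is the Bedford--Taylor monotone convergence theorem for the mixed currents $u_j\,\omega_{v_j}\wedge\omega_{\phi_j}^{k}\wedge(\omega^T)^{n-k-1}$ (resting on quasicontinuity, cf.\ Theorem \ref{quasicontinuity} and the product-measure reduction used in Proposition \ref{weakcon0}), which after your localization to $W_\alpha=(-\delta,\delta)\times V_\alpha$ is exactly the classical statement in $\C^n$ --- so the gap is one of citation rather than of substance.
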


We record a basic inequality in Sasaki setting, usually referred to Chern-Levine-Nirenberg inequality, 

\begin{prop}[Chern-Levine-Nirenberg inequalities]\label{CLN} Let $T$ be a positive closed current
of bidegree $(p, p)$ on M and $\phi\in \text{PSH}(M, \xi, \omega^T)\cap L^\infty$. Then $\|\omega_\phi\wedge T\|=\|T\|$. Moreover, if $\psi\in \text{PSH}(M, \xi, \omega^T)\cap L^1(T)$, then $\psi\in L^1(\omega_\phi\wedge T)$ and 
\begin{equation}\label{cln01}
\|\psi\|_{L^1(T\wedge \omega_\phi)}\leq \|\psi\|_{L^1(T)}+(2\max\{\sup \psi, 0\}+\sup \phi-\inf \phi) \|T\|.
\end{equation}
\end{prop}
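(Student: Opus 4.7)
The strategy is to mirror the classical Chern--Levine--Nirenberg argument in the K\"ahler setting, with the twist that all forms are bigraded with respect to the transverse complex structure and the volume form $T\wedge (\omega^T)^{n-p}\wedge\eta$ picks up the contact form $\eta$. The two identities $d\eta=2\omega^T$ (so $d\omega^T=0$) and $\iota_\xi T=\iota_\xi \omega^T=\iota_\xi d^c_B\phi=0$ will do essentially all the work; the rest is a careful approximation via Lemma \ref{BK} to pass from smooth potentials to bounded TPSH ones. I will split the proof into the mass identity and the weighted estimate.

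For the first statement $\|\omega_\phi\wedge T\|=\|T\|$, I expand $\omega_\phi\wedge T=\omega^T\wedge T+dd^c_B\phi\wedge T$ and show the contribution of $dd^c_B\phi\wedge T$ to the mass vanishes. Setting $\alpha:=d^c_B\phi\wedge T\wedge(\omega^T)^{n-p-1}$ and $\beta:=\eta$, I compute
\[
d\bigl(\alpha\wedge\eta\bigr)=dd^c_B\phi\wedge T\wedge(\omega^T)^{n-p-1}\wedge\eta\;-\;2\,d^c_B\phi\wedge T\wedge(\omega^T)^{n-p},
\]
using $dT=d\omega^T=0$ and $d\eta=2\omega^T$. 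The second term is a $(2n{+}1)$-form annihilated by $\iota_\xi$ (since $T$, $\omega^T$ and $d^c_B\phi$ are all $\iota_\xi$-annihilated), hence it vanishes identically on $M$ for dimensional reasons. Stokes' theorem on the closed manifold $M$ then gives $\int_M dd^c_B\phi\wedge T\wedge(\omega^T)^{n-p-1}\wedge\eta=0$, which is the claim when $\phi$ is smooth. For general $\phi\in\text{PSH}(M,\xi,\omega^T)\cap L^\infty$ I approximate by a decreasing sequence $\phi_k\in\cH$ supplied by Lemma \ref{BK}; weak convergence of $\omega_{\phi_k}\wedge T\wedge(\omega^T)^{n-p-1}\wedge \eta$ to $\omega_{\phi}\wedge T\wedge(\omega^T)^{n-p-1}\wedge\eta$ (a transverse version of Proposition \ref{weakcon0}, proved by cutting with a partition of unity subordinate to foliation charts and invoking classical Bedford--Taylor continuity on each $V_\alpha$) then yields $\|\omega_\phi\wedge T\|=\|T\|$.

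For the weighted bound I first normalize: put $c:=\max\{\sup\psi,0\}\geq0$ and $\tilde\psi:=\psi-c\leq 0$, noting $\omega_{\tilde\psi}=\omega_\psi$. Because the desired inequality is linear in the additive constant, it suffices to control $\|\tilde\psi\|_{L^1(T\wedge\omega_\phi)}$ by $\|\tilde\psi\|_{L^1(T)}+(\sup\phi-\inf\phi)\|T\|$ and then add $c\|T\|$ twice. Writing $\omega_\phi=\omega^T+dd^c_B\phi$ and using the mass identity from the first step,
\[
-\int_M \tilde\psi\,\omega_\phi\wedge T\wedge(\omega^T)^{n-p-1}\wedge\eta
\;=\;\|\tilde\psi\|_{L^1(T)}\;-\;\int_M \tilde\psi\, dd^c_B\phi\wedge T\wedge(\omega^T)^{n-p-1}\wedge\eta.
\]
The integration-by-parts identity (Sasaki Stokes, again using $d\eta=2\omega^T$ to kill boundary pieces and $\iota_\xi$-annihilation to discard the $(2n{+}1)$-degree junk) lets me swap derivatives and rewrite the last integral as $-\int_M \phi(\omega_\psi-\omega^T)\wedge T\wedge(\omega^T)^{n-p-1}\wedge\eta$. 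Since $\omega_\psi\wedge T$ and $\omega^T\wedge T$ are positive currents with total mass $\|T\|$, the crude bound by $\sup\phi$ on one term and $\inf\phi$ on the other yields an upper bound of $(\sup\phi-\inf\phi)\|T\|$, and the final inequality follows.

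The genuine obstacle is not the formal calculation but the integration by parts for \emph{currents}: $\tilde\psi$ need only be bounded TPSH, and the measure $\omega_\psi\wedge T$ is defined via Bedford--Taylor regularization. The plan is to approximate both $\phi$ and $\tilde\psi$ by smooth decreasing sequences $\phi_k,\tilde\psi_k\in\cH$ via Lemma \ref{BK}, carry out the Stokes manipulation for the smooth pair, and then pass to the limit using the transverse weak continuity of Monge--Amp\`ere operators (Proposition \ref{weakcon0}) together with the upper semicontinuity of the integrand in the presence of Radon-measure mass bounds. For $\psi\in L^1(T)$ but unbounded, I work with the canonical cutoffs $\psi_h:=\max\{\psi,-h\}\in L^\infty$, apply the bounded case, and let $h\to\infty$, invoking monotone convergence on the left-hand side and the already-established identity $\int_M \psi_h T\wedge(\omega^T)^{n-p}\wedge\eta\to\int_M \psi\, T\wedge(\omega^T)^{n-p}\wedge\eta$ on the right; finiteness of this limit is exactly the hypothesis $\psi\in L^1(T)$, so we conclude $\psi\in L^1(\omega_\phi\wedge T)$ with the claimed quantitative bound.
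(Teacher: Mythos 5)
Your argument is correct and follows essentially the same route as the paper's: Stokes' theorem (with the observation that the $d\eta$ contribution is a top-degree form killed by $\iota_\xi$, hence zero) gives the mass identity, and the weighted bound comes from moving $dd^c_B$ off $\phi$ onto $\psi$ and exploiting positivity. The only cosmetic difference is in the second step: you split $dd^c_B\tilde\psi=\omega_{\tilde\psi}-\omega^T$ and bound the two mass-$\|T\|$ measures by $\sup\phi$ and $-\inf\phi$ (which forces you to first establish the mass identity for $\omega_{\tilde\psi}\wedge T$ via cutoffs), whereas the paper normalizes $\phi\geq 0$, $\psi\leq 0$ and uses $dd^c_B(-\psi)\leq\omega^T$ wedged against the positive current $\phi T$ directly; both yield the same constant $2\max\{\sup\psi,0\}+\sup\phi-\inf\phi$.
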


\begin{proof}By Stokes' theorem, we have $\int_M dd^c_B (\phi T)\wedge (\omega^T)^{n-p-1}\wedge \eta=0$, hence
\[
\|\omega_\phi\wedge T\|=\int_M \omega^T\wedge T\wedge (\omega^{T})^{n-p-1}\wedge \eta=\|T\|. 
\]
To prove \eqref{cln01}, we first assume $\psi\leq 0, \phi\geq 0$. By assumption, $\psi\in L^1(T)$, then
\[
\|\psi\|_{L^1(T\wedge \omega_\phi)}:=\int_M -\psi T\wedge \omega_\phi \wedge (\omega^T)^{n-p-1}\wedge \eta =\|\psi\|_{L^1(T)}+\int_M -\psi dd^c_B(\phi T)\wedge (\omega^T)^{n-p-1}\wedge \eta
\]
By Stokes' theorem we compute
\[
\begin{split}
\int_M -\psi dd^c_B(\phi T)\wedge (\omega^T)^{n-p-1}\wedge \eta=&\int_M dd^c_B(-\psi) \wedge \phi T\wedge (\omega^T)^{n-p-1}\wedge \eta\\
\leq &\int_M   \phi T\wedge (\omega^T)^{n-p}\wedge \eta\\
\leq &\sup_M \phi \int_M T\wedge (\omega^T)^{n-p}\wedge \eta=(\sup_M\phi) \|T\|. 
\end{split}
\]
Now suppose $\sup \psi>0$. 
Replacing $\phi$ by $\phi-\inf\phi$, we compute
\[
\|\psi\|_{L^1(T\wedge \omega_\phi)}\leq \int_M (2\sup \psi-\psi)T\wedge \omega_\phi \wedge (\omega^T)^{n-p-1}\wedge \eta
\]
The same argument as above leads to \eqref{cln01} for the general case. 
\end{proof}
For a Borel subset $E$ on a Sasaki manifold $(M,\xi,\omega^T)$, we define the capacity as
\begin{equation*}
\text{cap}_{\omega^T}(E):=\sup\{\int_E \omega_{\varphi}^n \wedge \eta: \varphi \in \text{PSH}(M,\xi,\omega^T), 0 \leq \varphi \leq 1 \}
\end{equation*}
It is obvious that $\text{cap}_{\omega^T}(\cup_{k=1}^{\infty}E_k)\leq\sum\limits_{k=1}^{\infty}\text{cap}_{\omega^T}(E_k)$ for a sequence of Borel sets $E_k$.
We have the following,

\begin{prop}\label{capacity0}Let $\phi\in \text{PSH}(M, \xi, \omega^T)$ with $0\leq \phi\leq 1$ and $\psi\in \text{PSH}(M, \xi, \omega^T)$ such that $\psi\leq 0$. Then
\begin{equation}\label{capacity001}
\int_M -\psi \omega_\phi^n\wedge \eta\leq \int_M (-\psi)(\omega^T)^n\wedge \eta+n \int_M (\omega^T)^n\wedge \eta
\end{equation}
\end{prop}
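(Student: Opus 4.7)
The plan is to prove this via a standard telescoping/binomial expansion of $\omega_\phi^n = (\omega^T + dd^c_B\phi)^n$ combined with integration by parts, mirroring the familiar Kähler argument but being careful about the role of $\eta$. First I would reduce to the case $\psi \in L^\infty$ by replacing $\psi$ with its canonical cutoffs $\psi_j = \max\{\psi,-j\}$ and using monotone convergence (together with Proposition \ref{weakcon0} on weak convergence of complex Monge-Ampère measures) at the end. With $\psi$ bounded, all measures below are well-defined in the Bedford-Taylor sense developed earlier.

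Next, write the telescoping identity
\[
\omega_\phi^n - (\omega^T)^n \;=\; \sum_{k=0}^{n-1} dd^c_B \phi \wedge \omega_\phi^k \wedge (\omega^T)^{n-k-1},
\]
multiply by $-\psi$, and wedge with $\eta$. Integration by parts then moves $dd^c_B$ from $\phi$ to $\psi$: the key structural fact here is that if $\alpha$ is a basic exact $2n$-form then $\int_M \alpha \wedge \eta = 0$, because any basic form of degree $2n+1$ on $M$ must vanish (the transverse dimension is $2n$), so one can apply the Stokes identity $\int_M d\beta\wedge\eta = -\int_M \beta\wedge d\eta$ and use that $\beta\wedge\omega^T = 0$ for $\beta$ basic of degree $2n-1$. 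This gives
\[
\int_M -\psi\, dd^c_B\phi \wedge T \wedge \eta \;=\; \int_M -\phi\, dd^c_B\psi \wedge T \wedge \eta
\]
for $T = \omega_\phi^k \wedge (\omega^T)^{n-k-1}$.

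Now substitute $dd^c_B\psi = \omega_\psi - \omega^T$. Since $\phi \geq 0$ and $\omega_\psi \wedge T \wedge \eta \geq 0$ as a measure, the term $-\phi\,\omega_\psi \wedge T \wedge \eta$ is nonpositive and can be dropped, leaving
\[
\int_M -\phi\, dd^c_B\psi \wedge T \wedge \eta \;\leq\; \int_M \phi\, \omega^T \wedge T \wedge \eta \;\leq\; \int_M \omega^T \wedge T \wedge \eta,
\]
using $\phi \leq 1$ in the last step. Finally, the basic-cohomology invariance (applied $k$ times to turn $\omega_\phi$ into $\omega^T$, again using the Stokes identity above) gives
\[
\int_M \omega_\phi^k \wedge (\omega^T)^{n-k} \wedge \eta \;=\; \int_M (\omega^T)^n \wedge \eta \;=\; \mathrm{Vol}(M),
\]
so summing the $n$ terms yields the desired inequality.

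The only real obstacle is rigorously justifying the integration by parts in the Sasaki setting when $\phi,\psi$ are merely bounded TPSH, since the forms $\omega_\phi^k$ are only Bedford-Taylor currents. The approach is to use the approximation of Lemma \ref{BK}: pick smooth decreasing sequences $\phi_j \searrow \phi$, $\psi_j \searrow \psi$ in $\cH$, prove the identity/inequality for these, and pass to the limit using Proposition \ref{weakcon0}. Once the bounded case is done, the passage to general $\psi \leq 0$ via canonical cutoffs is straightforward by monotone convergence on both sides.
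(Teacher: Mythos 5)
Your proposal is correct and follows essentially the same route as the paper's proof: integrate by parts to move $dd^c_B$ from $\phi$ onto $\psi$, use $-dd^c_B\psi\leq\omega^T$ (i.e.\ $\omega_\psi\geq 0$) together with $0\leq\phi\leq 1$, invoke the volume identity $\int_M\omega_\phi^k\wedge(\omega^T)^{n-k}\wedge\eta=\mathrm{Vol}(M)$, and reduce to bounded $\psi$ via canonical cutoffs. The only difference is organizational — you telescope all $n$ terms at once while the paper peels off one factor of $\omega_\phi$ per inductive step — and your justification of the Sasaki Stokes identity via $\beta\wedge\omega^T=0$ for basic $\beta$ of degree $2n-1$ matches the mechanism the paper relies on.
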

\begin{proof}We only need to prove \eqref{capacity001} for canonical cutoffs $\psi_k=\max\{\psi, -k\}$ ($-\psi_k$ increases to $-\psi$ and we can apply monotone convergence theorem). We have the following \[
\begin{split}
\int_M -\psi_k\omega_\phi^n\wedge \eta=&\int_M -\psi_k\omega_\phi^{n-1}\wedge (\omega^T+\sqrt{-1}\p_B\bar\p_B \phi)\wedge \eta\\
=&\int_M -\psi_k\omega_\phi^{n-1}\wedge \omega^T\wedge \eta+\int_M -\psi_k \omega_\phi^{n-1}\wedge\sqrt{-1}\p_B\bar\p_B \phi\wedge \eta\\
=&\int_M -\psi_k\omega_\phi^{n-1}\wedge \omega^T\wedge \eta+\int_M \phi  \omega_\phi^{n-1}\wedge (-\sqrt{-1}\p_B\bar\p_B \psi_k)\wedge \eta\\
\leq& \int_M -\psi_k\omega_\phi^{n-1}\wedge \omega^T\wedge \eta+\int_M (\omega_\phi)^{n-1}\wedge \omega^T\wedge \eta\\
\leq& \int_M -\psi_k\omega_\phi^{n-1}\wedge \omega^T\wedge \eta+\int_M (\omega^T)^{n}\wedge \eta
\end{split}
\]
We can then proceed inductively to obtain \eqref{capacity001}. Note that the argument above is a special case of \eqref{cln01}. 
\end{proof}

\begin{prop}\label{capacity1}
Suppose that $u \in \text{PSH}(M,\xi,\omega^T)$ and $u\leq0$. Then for $t>0$ we have
\begin{equation*}
\text{cap}_{\omega^T}(\{u <-t\}) \leq \frac{1}{t}(\int_M (-u) (\omega^T)^n\wedge\eta+n\int_M(\omega^T)^n\wedge\eta)
\end{equation*}
\end{prop}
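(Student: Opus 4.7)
The plan is to combine the definition of capacity with the Chern--Levine--Nirenberg-type estimate already established in Proposition \ref{capacity0}. Fix any $\varphi\in\text{PSH}(M,\xi,\omega^T)$ with $0\leq\varphi\leq 1$; by definition of $\text{cap}_{\omega^T}$, it suffices to bound $\int_{\{u<-t\}}\omega_\varphi^n\wedge\eta$ by the right hand side, and then take the supremum over such $\varphi$.

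The key observation is that on the set $\{u<-t\}$ we have $-u/t>1$, so the indicator function of $\{u<-t\}$ is pointwise dominated by $-u/t$. Therefore
\[
\int_{\{u<-t\}} \omega_\varphi^n\wedge\eta \;\leq\; \frac{1}{t}\int_M (-u)\,\omega_\varphi^n\wedge\eta.
\]
This reduces matters to estimating $\int_M(-u)\omega_\varphi^n\wedge\eta$ uniformly in $\varphi$.

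Now I would apply Proposition \ref{capacity0} directly, with $\phi=\varphi$ (satisfying $0\leq\phi\leq1$) and $\psi=u$ (satisfying $\psi\leq0$), to obtain
\[
\int_M (-u)\,\omega_\varphi^n\wedge\eta \;\leq\; \int_M(-u)(\omega^T)^n\wedge\eta + n\int_M(\omega^T)^n\wedge\eta.
\]
Combining the two inequalities yields the required bound for this specific $\varphi$, and passing to the supremum over all admissible $\varphi$ completes the proof.

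No step is a real obstacle here, since Proposition \ref{capacity0} has already done the work of integrating by parts $n$ times against $\varphi$ to convert $\omega_\varphi^n\wedge\eta$ into $(\omega^T)^n\wedge\eta$ plus an error controlled by the volume. The only subtlety worth double-checking is that the monotone cutoff argument used in Proposition \ref{capacity0} remains valid when $u$ is unbounded and possibly equal to $-\infty$ on a pluripolar set, but the Borel set $\{u<-t\}$ has well-defined $\omega_\varphi^n\wedge\eta$-measure since $\omega_\varphi^n\wedge\eta$ is a finite Borel measure on $M$, and the integral $\int_M(-u)\omega_\varphi^n\wedge\eta$ is handled exactly as in the proof of Proposition \ref{capacity0} by applying it to the canonical cutoffs $u_k=\max\{u,-k\}$ and letting $k\to\infty$ via monotone convergence.
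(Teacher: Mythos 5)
Your proposal is correct and follows exactly the paper's own argument: bound the indicator of $\{u<-t\}$ by $-u/t$, apply Proposition \ref{capacity0} with $\phi=\varphi$ and $\psi=u$, and take the supremum over admissible $\varphi$. The extra remark about reducing to canonical cutoffs is consistent with how Proposition \ref{capacity0} itself is proved in the paper.
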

\begin{proof}This is a direct consequence of Proposition \ref{capacity0}. Denote $K_t=\{u<-t\}$, then
\[
\begin{split}
\int_{K_t}\omega_\phi^n\wedge \eta\leq& \frac{1}{t}\int_M -\psi \omega_\phi^n\wedge \eta\\
\leq& \frac{1}{t}\left(\int_M -\psi (\omega^T)^n\wedge \eta+n \int_M (\omega^T)^n\wedge \eta\right)
\end{split}
\]
\end{proof}

\begin{prop}\label{capacity2}
Suppose that $u_k, u \in \text{PSH}(M,\xi,\omega^T) \cap L^{\infty}$ and $u_k$ decreases to $u$. Then for $\delta>0$ we have
\begin{equation*}
\text{cap}_{\omega^T}(\{u_k>u+\delta\}) \rightarrow 0, k\rightarrow \infty.
\end{equation*}
\end{prop}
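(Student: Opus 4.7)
The natural strategy is to combine the quasi-continuity result (Theorem \ref{quasicontinuity}, already invoked in the proof of Proposition \ref{weakcon3}) with Dini's theorem, exactly as in the K\"ahler setting. The point is that capacity is monotone under inclusion, so it suffices to trap $\{u_k > u + \delta\}$ inside a fixed open set of small capacity for all $k$ sufficiently large.

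\textbf{Steps.} First, apply Theorem \ref{quasicontinuity} to the countable family $\{u, u_1, u_2, \ldots\} \subset \text{PSH}(M,\xi,\omega^T)\cap L^\infty$: given $\epsilon > 0$, we obtain an open set $O_\epsilon \subset M$ with $\text{cap}_{\omega^T}(O_\epsilon) < \epsilon$ such that the restrictions of $u$ and of each $u_k$ to $M \setminus O_\epsilon$ are continuous. Second, since $M \setminus O_\epsilon$ is compact and the continuous functions $u_k$ decrease pointwise to the continuous function $u$ on this set, Dini's theorem gives $u_k \to u$ uniformly on $M \setminus O_\epsilon$. Hence there exists $N = N(\epsilon, \delta)$ such that $u_k - u < \delta$ on $M \setminus O_\epsilon$ for all $k \geq N$. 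Third, this inclusion
\[
\{u_k > u + \delta\} \subset O_\epsilon \quad (k \geq N)
\]
and the monotonicity of $\text{cap}_{\omega^T}$ yield
\[
\text{cap}_{\omega^T}(\{u_k > u + \delta\}) \leq \text{cap}_{\omega^T}(O_\epsilon) < \epsilon
\]
for $k \geq N$. Since $\epsilon > 0$ was arbitrary, this proves $\text{cap}_{\omega^T}(\{u_k > u + \delta\}) \to 0$.

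\textbf{Main obstacle.} The only substantive ingredient is quasi-continuity of bounded TPSH functions on Sasaki manifolds with respect to $\text{cap}_{\omega^T}$; everything else (Dini, monotonicity of capacity) is formal. Quasi-continuity in the K\"ahler case is a standard consequence of Bedford--Taylor theory, and its Sasaki analogue is already cited as Theorem \ref{quasicontinuity} in the paper (proved locally on foliation charts by the usual Choquet-type argument, since on each $V_\alpha$ one is reduced to the classical K\"ahler/Euclidean quasi-continuity statement and the exceptional open sets can be patched together using a finite cover of $M$ by foliation charts). Given this ingredient, the argument above is immediate.
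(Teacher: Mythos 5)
There is a genuine circularity problem here. In this paper Theorem \ref{quasicontinuity} is \emph{not} an independent ingredient: its proof explicitly invokes Proposition \ref{capacity2} (the exceptional set is built as $O_\epsilon=\bigcup_j\{u_{k_j}>u+\tfrac1j\}$, and the smallness of $\text{cap}_{\omega^T}(\{u_{k_j}>u+\tfrac1j\})$ is exactly what Proposition \ref{capacity2} supplies). This is also the standard order of deduction in the K\"ahler model \cite{GZ01}: one first proves the capacity convergence for decreasing sequences by a direct estimate, and only then derives quasi-continuity from it. So an argument that derives Proposition \ref{capacity2} from Theorem \ref{quasicontinuity} plus Dini is circular as the paper is structured. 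Your ``main obstacle'' paragraph gestures at an independent proof of quasi-continuity via local Bedford--Taylor quasi-continuity on foliation charts, but the patching step is precisely where the real work is hidden: local quasi-continuity controls the \emph{local relative} Bedford--Taylor capacity of the exceptional sets $O_\alpha\subset V_\alpha$, and to conclude that $\bigcup_\alpha O_\alpha$ has small \emph{global} capacity $\text{cap}_{\omega^T}$ you need a comparison between the local relative capacities on charts and $\text{cap}_{\omega^T}$. That comparison is true but is itself a nontrivial Chern--Levine--Nirenberg-type estimate which the paper never establishes, so as written the proposal has a gap it does not close.

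For contrast, the paper's own proof avoids quasi-continuity entirely. It bounds
$\text{cap}_{\omega^T}(\{u_k>u+\delta\})\le \delta^{-1}\sup_{0\le\phi\le1}\int_M(u_k-u)\,\omega_\phi^n\wedge\eta$
and then, by repeated integration by parts and Cauchy--Schwarz (transferring one factor of $\omega_\phi$ to $\omega^T+\omega_u$ at each step), obtains
$\int_M(u_k-u)\,\omega_\phi^n\wedge\eta\le(\sqrt{2c_0})^n\bigl(\int_M(u_k-u)(\omega^T+\omega_u)^n\wedge\eta\bigr)^{1/2^n}$,
a bound independent of $\phi$ whose right-hand side tends to zero by dominated convergence. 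If you want to salvage your route, you must either supply the local-to-global capacity comparison needed for an independent proof of quasi-continuity, or simply adopt the direct estimate above.
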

\begin{proof}This proceeds exactly the same as in \cite{GZ01}[Proposition 3.7]. We sketch the argument briefly. We assume $\text{Vol}(M)=1$ for simplicity. Fix $\delta>0$ and $\phi\in \text{PSH}(M, \xi, \omega^T)$ such that $0\leq \phi\leq 1$. We have
\[
\int_{\{u_k>u+\delta\}} \omega_\phi^n\wedge \eta\leq \delta^{-1}\int_M (u_k-u) \omega_\phi^n\wedge \eta
\]
By Stokes' theorem, we write
\begin{equation*}
\begin{split}
\int_M (u_k-u) \omega_\phi^n\wedge \eta=&\int_M (u_k-u) \wedge \omega^T\wedge \omega_\phi^{n-1}\wedge \eta+\int_M (u_k-u) \wedge dd^c_B \phi\wedge \omega_\phi^{n-1}\wedge \eta\\
=&\int_M (u_k-u) \wedge \omega^T\wedge \omega_\phi^{n-1}\wedge \eta-\int_M d(u_k-u) \wedge d^c_B \phi\wedge \omega_\phi^{n-1}\wedge \eta
\end{split}
\end{equation*}
By the Cauchy-Schwartz inequality, setting $f_k=u_k-u\geq 0$,
\[
|\int_M d(u_k-u) \wedge d^c_B \phi\wedge \omega_\phi^{n-1}\wedge \eta|^2\leq \int_M d f_k\wedge d^c_B f_k\wedge \wedge \omega_\phi^{n-1}\wedge \eta \int_M d \phi\wedge d^c_B \phi\wedge \wedge \omega_\phi^{n-1}\wedge \eta
\]
We compute
\[
\int_M d \phi\wedge d^c_B \phi\wedge \wedge \omega_\phi^{n-1}\wedge \eta=\int_M \phi (-dd^c_B\phi)\wedge \omega_\phi^{n-1}\wedge \eta\leq \int_M \phi \omega^T\wedge  \omega_\phi^{n-1}\wedge \eta\leq 1
\]
Similarly, we compute
\[
\int_M d f_k\wedge d^c_B f_k\wedge \wedge \omega_\phi^{n-1}\wedge \eta=\int_M f_k (dd^c_B u-dd^c_B u_k)\wedge \omega_\phi^{n-1}\wedge \eta\leq \int_M f_k \omega_u\wedge \omega_\phi^{n-1}\wedge \eta.
\]
Combining all these together this gives
\[
\int_M (u_k-u) \omega_\phi^n\wedge \eta\leq \int_M (u_k-u) \wedge \omega^T\wedge \omega_\phi^{n-1}\wedge \eta+ (\int_M (u_k-u) \omega_u\wedge \omega_\phi^{n-1}\wedge \eta)^{1/2}. 
\]
Suppose  $u_k-u\leq c_0$ for a fixed positive constant $c_0\geq 1$. Then we have
\[
\int_M (u_k-u) \omega_\phi^n\wedge \eta\leq \sqrt{c_0} (\int_M (u_k-u) \wedge \omega^T\wedge \omega_\phi^{n-1}\wedge \eta)^{1/2}+ (\int_M (u_k-u) \omega_u\wedge \omega_\phi^{n-1}\wedge \eta)^{1/2}. 
\]
Hence we have
\[
\int_M (u_k-u) \omega_\phi^n\wedge \eta\leq \sqrt{2c_0} (\int_M (u_k-u) \wedge (\omega^T+\omega_u)\wedge \omega_\phi^{n-1}\wedge \eta)^{1/2}
\]
We can proceed inductively by replacing $\omega_\phi$ by $\omega^T+\omega_u$ to obtain
\[
\int_M (u_k-u) \omega_\phi^n\wedge \eta\leq (\sqrt{2c_0})^n (\int_M (u_k-u) \wedge (\omega^T+\omega_u)^{n}\wedge \eta)^{1/2^n}
\]
The dominated convergence theorem implies the righthand side goes to zero, independent of $\phi$. This completes the proof. 
\end{proof}

As a consequence, we have the following, 
\begin{thm}\label{quasicontinuity}
Let $\varphi \in \text{PSH}(M,\xi,\omega^T)$, then for any $\epsilon>0$ there exists an open subset $O_{\epsilon} \subset M$ such that $\text{cap}_{\omega^T}(O_{\epsilon}) < \epsilon$ and $\varphi$ is continuous on $M-O_{\epsilon}$.
\end{thm}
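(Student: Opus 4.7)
The plan is to reduce to the bounded case by truncation and then apply the convergence-in-capacity results already established. Throughout I will use upper semicontinuity of $\varphi$ crucially to ensure the exceptional set is open.

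\textbf{Step 1 (bounded case).} First assume $\varphi \in \text{PSH}(M,\xi,\omega^T) \cap L^{\infty}$. By Lemma \ref{BK} choose a sequence $\varphi_k \in \cH$ decreasing to $\varphi$. Each difference $\varphi_k - \varphi$ is lower semicontinuous (since $\varphi_k$ is continuous and $\varphi$ is usc), so for every $\delta > 0$ the level set $\{\varphi_k > \varphi + \delta\}$ is open. By Proposition \ref{capacity2}, $\text{cap}_{\omega^T}(\{\varphi_k > \varphi + \delta\}) \to 0$ as $k \to \infty$. Hence for each $j \geq 1$ I can choose $k_j$ with
\[
\text{cap}_{\omega^T}\bigl(\{\varphi_{k_j} > \varphi + 2^{-j}\}\bigr) < \frac{\epsilon}{2^{j+1}}.
\]
Set $O_\epsilon^{(1)} := \bigcup_{j \geq 1}\{\varphi_{k_j} > \varphi + 2^{-j}\}$. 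Then $O_\epsilon^{(1)}$ is open and, by the countable subadditivity of $\text{cap}_{\omega^T}$ (which follows at once from its definition as a supremum of measures), $\text{cap}_{\omega^T}(O_\epsilon^{(1)}) < \epsilon/2$. On $M \setminus O_\epsilon^{(1)}$ one has $\varphi \leq \varphi_{k_j} \leq \varphi + 2^{-j}$ for every $j$, so $\varphi_{k_j} \to \varphi$ uniformly on $M \setminus O_\epsilon^{(1)}$; since each $\varphi_{k_j}$ is continuous, the uniform limit $\varphi$ is continuous there.

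\textbf{Step 2 (general case).} Now let $\varphi \in \text{PSH}(M,\xi,\omega^T)$ be arbitrary. By Proposition \ref{upperbound01} we may, after subtracting a constant, assume $\varphi \leq 0$. For $h > 0$ the canonical cutoff $\varphi^{h} := \max(\varphi, -h) \in \text{PSH}(M,\xi,\omega^T) \cap L^{\infty}$. Since $\varphi$ is usc, the superlevel set $\{\varphi < -h\}$ is open, and Proposition \ref{capacity1} gives
\[
\text{cap}_{\omega^T}\bigl(\{\varphi < -h\}\bigr) \leq \frac{1}{h}\Bigl(\int_M (-\varphi)(\omega^T)^n \wedge \eta + n\int_M (\omega^T)^n \wedge \eta\Bigr),
\]
which tends to $0$ as $h \to \infty$. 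Pick $h$ so that this capacity is $< \epsilon/2$, and apply Step 1 to the bounded potential $\varphi^{h}$ to obtain an open set $U$ with $\text{cap}_{\omega^T}(U) < \epsilon/2$ on whose complement $\varphi^{h}$ is continuous. Define
\[
O_\epsilon := U \cup \{\varphi < -h\}.
\]
Then $O_\epsilon$ is open and $\text{cap}_{\omega^T}(O_\epsilon) < \epsilon$. On $M \setminus O_\epsilon$ one has $\varphi \geq -h$, hence $\varphi = \varphi^{h}$ there, so $\varphi$ is continuous on $M \setminus O_\epsilon$.

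\textbf{Expected main obstacle.} The conceptual work has all been absorbed into Proposition \ref{capacity2} (convergence in capacity of a decreasing smooth approximation), whose proof uses the Sasaki analogue of the Chern--Levine--Nirenberg/Cauchy--Schwarz chain in Proposition \ref{CLN} and the Bedford--Taylor Stokes' theorem on basic currents; and into Lemma \ref{BK} (existence of the smooth decreasing approximation, where the K\"ahler cone trick is essential in the irregular case). Once these are granted, the argument above is essentially formal. The only point that requires a small amount of care is observing that the exceptional sets $\{\varphi_{k_j} > \varphi + 2^{-j}\}$ and $\{\varphi < -h\}$ are genuinely open rather than merely Borel, and this is exactly where upper semicontinuity of $\varphi$ is used; without it one would only obtain a $G_\delta$-type exceptional set.
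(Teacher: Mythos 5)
Your proof is correct and follows essentially the same route as the paper: truncate using Proposition \ref{capacity1} to control the capacity of $\{\varphi<-h\}$, approximate the bounded cutoff by a decreasing smooth sequence via Lemma \ref{BK}, extract a subsequence with summable capacities of the exceptional sets via Proposition \ref{capacity2}, and conclude by uniform convergence on the complement. If anything, your version is slightly more careful than the paper's in applying Proposition \ref{capacity2} only to the bounded cutoff and in checking that the exceptional sets are genuinely open.
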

\begin{proof}
By Proposition \ref{capacity1} there exists $t_0>0$ such that $\text{cap}_{\omega^t}(O_0) <\frac{\epsilon}{2}$ for the open subset $O_0=\{u<-t_0\}$. Take the cutoff $u_{t_0}=\max\{u,-t_0\} \in \text{PSH}(M,\xi,\omega^T)$, then there exists a sequence $u_k \in \cH$ decreasing to $u$. By Proposition \ref{capacity2} we can choose a subsequence $u_{k_j}$ such that
$\text{cap}_{\omega^T}(O_j) < \frac{\epsilon}{2^{j+1}}$ for the open subset $O_j=\{u_{k_j}>u+\frac{1}{j}\}$. Then for the open subset $O_{\epsilon}=\cup_{j=0}^{\infty}O_j$ we have $\text{cap}_{\omega^T}(O)<\epsilon$. Moreover $u_{K_j}$ converges uniformly to $u$ on $M-O_{\epsilon}$, hence $u$ is continuous on $M-O_{\epsilon}$.
\end{proof}

\begin{rmk}The discussions above are taken from K\"ahler setting \cite{GZ01}[Section 3]. Note that in \eqref{cln01} it is necessary to replace $\sup \psi$ by $\max\{\sup \psi, 0\}$  (similarly one needs to replace $\sup_X\psi$ by $\max\{\sup_X \psi, 0\}$ in \cite{GZ01}[Proposition 3.1])\end{rmk}

We also need the following uniqueness in Sasaki setting , see \cite{GZ}[Theorem 3.3].

\begin{thm}Suppose $u, v\in \cE_1(M, \xi, \omega^T)$ such that
\[
\omega_u^n\wedge \eta=\omega_v^n\wedge \eta,
\]
then $u-v=\text{const}$.
\end{thm}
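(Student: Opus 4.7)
The plan is to employ the Aubin--Yau functional
\[
\mathcal{J}(u, v) := \int_M (u - v)\bigl(\omega_v^n - \omega_u^n\bigr) \wedge \eta,
\]
which is well-defined and finite on $\cE_1 \times \cE_1$ by Proposition \ref{mixedenergy} (applied with $p=1$). Since the hypothesis $\omega_u^n \wedge \eta = \omega_v^n \wedge \eta$ gives $\mathcal{J}(u, v) = 0$ trivially, it suffices to prove $\mathcal{J}(u, v) \geq 0$ on $\cE_1 \times \cE_1$ with equality precisely when $u - v$ is constant.

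First I would treat the bounded case. For $u, v \in \text{PSH}(M, \xi, \omega^T) \cap L^\infty$, expand $\omega_v^n - \omega_u^n = dd^c_B(v - u) \wedge T$ where $T = \sum_{k=0}^{n-1} \omega_u^k \wedge \omega_v^{n-1-k}$ is a positive closed transverse current. Stokes' theorem, valid in Sasaki setting via foliation charts and smooth approximation from Lemma \ref{BK}, yields
\[
\mathcal{J}(u, v) = \int_M d(u - v) \wedge d^c_B(u - v) \wedge T \wedge \eta \geq 0.
\]
Extension to $u, v \in \cE_1$ proceeds by applying this to the canonical cutoffs $u_l = \max(u, -l)$, $v_l = \max(v, -l)$, and passing to the limit using Proposition \ref{cte} for convergence of energies and Proposition \ref{weakcon3} for weak convergence of mixed Monge--Amp\`ere measures.

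The heart of the proof is the rigidity statement. From $\mathcal{J}(u, v) = 0$ and the nonnegativity of each term in the expansion, I obtain
\[
\int_M d(u - v) \wedge d^c_B(u - v) \wedge \omega_u^k \wedge \omega_v^{n-1-k} \wedge \eta = 0 \quad (0 \leq k \leq n-1).
\]
Using convexity of $\cE_1$ (Corollary \ref{convex1}), I would set $u^{(\epsilon)} = (1-\epsilon) u + \epsilon \rho$ and $v^{(\epsilon)} = (1-\epsilon) v + \epsilon \rho$ for a smooth $\rho \in \cH$, so that $\omega_{u^{(\epsilon)}}, \omega_{v^{(\epsilon)}} \geq \epsilon \omega^T$. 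Multilinearity of the mixed Monge--Amp\`ere operator then gives
\[
\epsilon^{n-1} \int_M d(u - v) \wedge d^c_B(u - v) \wedge (\omega^T)^{n-1} \wedge \eta \leq \frac{1}{(1-\epsilon)^2}\, \mathcal{J}(u^{(\epsilon)}, v^{(\epsilon)}),
\]
and the right side expands in terms of $\mathcal{J}(u, v) = 0$ plus mixed cross-terms which vanish by the identities above. Since $u - v$ is basic and $M$ is connected, the resulting vanishing $\int d(u-v) \wedge d^c_B(u-v) \wedge (\omega^T)^{n-1} \wedge \eta = 0$ forces $d(u-v) = 0$, hence $u - v$ is constant.

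The main obstacle is the technical control in the unbounded regime of $\cE_1$: the cutoff Monge--Amp\`ere measures $\omega_{u_l}^n \wedge \eta$ and $\omega_{v_l}^n \wedge \eta$ are generally unequal even when $\omega_u^n \wedge \eta = \omega_v^n \wedge \eta$, so one cannot directly reduce to the bounded case on cutoffs. The resolution requires combining the generalized Bedford--Taylor identity (Proposition \ref{GBTI}) to localize the discrepancy to $\{u \leq -l\} \cup \{v \leq -l\}$, the fundamental estimate (Proposition \ref{fe}) and the domination principle (Lemma \ref{domination2}) to bound the tail contributions, and the finite energy condition $E_1(u), E_1(v) < \infty$ to make the passage to the limit $l \to \infty$ rigorous.
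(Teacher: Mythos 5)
Your overall strategy is the same as the paper's: both proofs reduce the statement to showing that $I(u,v)=\int_M(u-v)(\omega_v^n-\omega_u^n)\wedge\eta$ vanishes (which is immediate from the hypothesis) and that this forces $\int_M d(u-v)\wedge d^c_B(u-v)\wedge(\omega^T)^{n-1}\wedge\eta=0$, whence $u-v$ is constant. Your bounded-case integration by parts is correct. The gap is in the rigidity step. Writing $f=u-v$, what you actually extract from $I(u,v)=0$ is
\[
\int_M df\wedge d^c_B f\wedge\omega_u^k\wedge\omega_v^{n-1-k}\wedge\eta=0,\qquad 0\le k\le n-1,
\]
but when you expand $\mathcal{J}(u^{(\epsilon)},v^{(\epsilon)})$ multilinearly the surviving terms are of the form
\[
\int_M df\wedge d^c_B f\wedge\omega_u^{a}\wedge\omega_v^{b}\wedge\omega_\rho^{e}\wedge\eta,\qquad a+b+e=n-1,\ e\ge1,
\]
and these do \emph{not} vanish by the identities above: positivity of $df\wedge d^c_Bf$ only lets you dominate smaller positive currents by larger ones, and $\omega_\rho^{e}$ is not dominated by any $\omega_u^k\wedge\omega_v^{n-1-k}$ precisely on the degeneracy locus of $\omega_u,\omega_v$, which is the whole difficulty. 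Moreover the $\epsilon$-bookkeeping runs against you: your left-hand side carries $\epsilon^{n-1}$ while the leading uncontrolled cross-term on the right carries only $\epsilon^{1}$, so after dividing by $\epsilon^{n-1}$ the right-hand side diverges as $\epsilon\to0$ (already for $n=2$ you only obtain $\|df\|^2_{(\omega^T)^{n-1}}\le C\int_M df\wedge d^c_Bf\wedge\omega_\rho\wedge\eta$, which is not known to vanish). The inequality becomes vacuous rather than forcing the conclusion.

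The missing ingredient is the iterated Cauchy--Schwarz estimate that the paper uses (following Guedj--Zeriahi and BBEGZ): with $h=(u+v)/2$ and a suitable normalization one writes $\omega^T=\omega_h-dd^c_Bh$, integrates by parts and applies Cauchy--Schwarz to obtain
\[
\int_M df\wedge d^c_B f\wedge \omega^T\wedge T\wedge\eta\le 3\left(\int_M df\wedge d^c_B f\wedge\omega_h\wedge T\wedge\eta\right)^{1/2},\qquad T=\omega_h^l\wedge(\omega^T)^{n-l-2},
\]
and iterates to replace every factor of $\omega_h$ by $\omega^T$ at the cost of the exponent $1/2^{n-1}$; this yields $\|df\|^2_{(\omega^T)^{n-1}}\le C\,I(u,v)^{1/2^{n-1}}$, which is exactly the content of \eqref{i06} in Lemma \ref{bbegz01} and could be invoked directly with $\psi=0$ there. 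With that substitution your argument closes. Two smaller points: the passage to unbounded potentials is lighter than you anticipate --- one applies the two key inequalities to the canonical cutoffs $u_j,v_j$ and uses Proposition \ref{weakcon3} to send the right-hand side $-\int_M\frac{f_j}{2}(\omega_{u_j}^n-\omega_{v_j}^n)\wedge\eta$ to $0$ --- but one does need the preliminary fact (the paper's first step) that $\nabla u\in L^2(d\mu_g)$ for $u\in\cE_1(M,\xi,\omega^T)$, so that the conclusion $\int_M df\wedge d^c_Bf\wedge(\omega^T)^{n-1}\wedge\eta=0$ legitimately implies $f$ is constant.
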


\begin{proof}This follows exactly as in \cite{GZ}[Theorem 3.3] and we sketch the argument. The first step is that for $u\in \cE_1(M, \xi, \omega^T)$ and its canonical cutoffs $u_j=\max\{u, -j\}$, then $\nabla u_j\in L^2(d\mu_g)$ and has uniformly bounded $L^2$ norm (see \cite{GZ}[Proposition 3.2]). We can assume that $u\leq 0$ and hence $u_j\leq 0$. Then for $\phi\in \text{PSH}(M, \xi, \omega^T)\cap L^\infty$ such that $\phi\leq 0$, we know that, for any basic positive closed of $(n-1, n-1)$ type. 
\[
\int_M (-\phi)\omega\wedge T=\int_M (-\phi) (\omega_\phi-dd^c_B\phi)\wedge T=\int_M (-\phi)\omega_\phi\wedge T+\int_M d\phi\wedge d^c_B \phi\wedge T\leq \int_M (-\phi)\omega_\phi\wedge T
\]
An inductive argument applies to $T=\omega_\phi^k\wedge (\omega^T)^{n-k-1}$, we get that
\begin{equation}\label{el2}
0\leq \int_M d\phi\wedge d^c_B \phi\wedge T\leq \int_M (-\phi)\omega_\phi^n\wedge \eta. 
\end{equation}
Taking $\phi=u_j$ in \eqref{el2} and noting that the righthand side is uniformly bounded, we get $\nabla u_j$ is uniformly bounded in $L^2(d\mu_g)$, hence $\nabla u\in L^2(d\mu_g)$. 

We assume that $u, v\leq -1$ and $\text{Vol}(M)=1$. Set $f=(u-v)/2$ and $h=(u+v)/2$. We need to establish that $\nabla f=0$ by showing that $\int_M df\wedge d^c_B f\wedge (\omega^T)^{n-1}\wedge \eta=0$. If we assume $u, v$ are bounded, then we have
\begin{equation}\label{uni01}
 \int_M df\wedge d^c_B f\wedge \omega^{n-1}_h\wedge \eta\leq \sum \int_M  df\wedge d^c_B f\wedge \omega_u^k\wedge \omega^{n-1-k}_v\wedge \eta=-\int_M \frac{f}{2}(\omega_u^n-\omega_v^n)\wedge \eta,
\end{equation}
where we use the fact that $dd^c_B f=(\omega_u-\omega_v)/2$. 
We shall also establish the following a priori bound, when $u, v$ are bounded, 
\begin{equation}\label{uni02}
\int_M df\wedge d^c_B f\wedge (\omega^T)^{n-1}\wedge \eta \leq 3^n \left( \int_M df\wedge d^c_B f\wedge \omega^{n-1}_h\wedge \eta\right)^{1/2^{n-1}}. 
\end{equation}
We apply \eqref{uni01} and \eqref{uni02} to the canonical cutoffs $u_j, v_j$  (writing $f_j, h_j$ correspondingly and using Proposition \ref{weakcon3}), \[
 \lim \int_M df_j\wedge d^c_B f_j\wedge (\omega^T)^{n-1}\wedge \eta=0
\]
We can then conclude that
\[
\int_M df\wedge d^c_B f\wedge (\omega^T)^{n-1}\wedge \eta=0.
\]
This implies that $u-v$ is a constant. To establish \eqref{uni02}, we need several observations as follows. 
First observe that for $l=n-2, \cdots, 0$,
\[
\int_M (-h)\omega^{2+l}_h\wedge (\omega^T)^{n-2-l}\wedge \eta\leq \int_M (-h)(\omega^T)^n\wedge \eta\leq 1,
\]
where the last inequality follows from $-h\leq 1$ and the normalization of the volume. 
We can then apply the following inequality inductively for $T=\omega_h^l\wedge (\omega^T)^{n-l-1}$ such that
\begin{equation}\label{uni03}
\int_M df\wedge d^c_B f\wedge \omega^T\wedge T\wedge \eta\leq 3 \left(\int_Mdf\wedge d^c_B f\wedge \omega_h\wedge T\wedge \eta\right)^{1/2},
\end{equation}
which proves \eqref{uni02}. Now we establish \eqref{uni03}. We write
\[
df\wedge d^c_B f\wedge \omega^T=df\wedge d^c_B f\wedge \omega_h-df\wedge d^c_B f\wedge dd^c_B h
\]
hence we obtain, integrating by parts, 
\[
\int_M df\wedge d^c_B f\wedge \omega^T\wedge T\wedge \eta=\int_M df\wedge d^c_B f\wedge \omega_h\wedge T\wedge \eta+\int_M df\wedge d^c_Bh\wedge \frac{\omega_u-\omega_v}{2}\wedge T\wedge \eta
\]
By Cauchy-Schwartz inequality, we have
\[
|\int_M df\wedge d^c_Bh\wedge\omega_u\wedge T\wedge \eta|^2\leq 4 \int_M df\wedge d^c_Bf\wedge\omega_h\wedge T\wedge \eta \int_M dh\wedge d^c_Bh\wedge\omega_h\wedge T\wedge \eta
\]
We can get a similar control 
\[
|\int_M df\wedge d^c_Bh\wedge\omega_v\wedge T\wedge \eta|^2\leq 4 \int_M df\wedge d^c_Bf\wedge\omega_h\wedge T\wedge \eta \int_M dh\wedge d^c_Bh\wedge\omega_h\wedge T\wedge \eta
\]
Clearly we have the following ($h\leq 0, S=\omega_h^l\wedge (\omega^T)^{n-l-2}$)
\[
\int_M dh\wedge d^c_Bh\wedge\omega_h\wedge S\wedge \eta\leq \int_M (-h) \omega_h^2\wedge S\wedge \eta\leq 1. 
\]
Combining these estimate altogether we conclude that,
\[
\int_M df\wedge d^c_B f\wedge \omega^T\wedge S\wedge \eta\leq \int_M df\wedge d^c_B f\wedge \omega_h\wedge T\wedge \eta+2\left( \int_M df\wedge d^c_Bf\wedge\omega_h\wedge T\wedge \eta\right)^{1/2}
\]
The last observation is that 
\[
\int_M df\wedge d^c_B f\wedge \omega_h\wedge S\wedge \eta=\frac{1}{4}\int_M (u-v)(\omega_v-\omega_u)\wedge \omega_h\wedge S\wedge \eta\leq \int_M (-h) \omega_h^2\wedge S\wedge \eta\leq 1. 
\]
This completes the proof of \eqref{uni03} by combining two inequalities above. 
\end{proof}

\subsection{Functionals in finite energy class $\cE_1$ and compactness}
We discuss briefly well-known functionals in K\"ahler geometry and their properties over finite energy class $\cE_1$, see \cite{D4}[Section 3.8].
The energy functionals include Monge-Ampere energy $\mathbb{I}$ and Aubin's $I$-functional on $\cE_1$, see \cite{Aubin, BBGZ10, BBGZ13, BBEGZ, D4} for K\"ahler setting. These results have a direct adaption in Sasaki setting. 
Recall Aubin's $I$-functional in Sasaki setting, for $u, v\in \cH$
\begin{equation}\label{aubin01}
 I(u, v):=I(\omega_u, \omega_v)=\frac{1}{n!}\int_M (v-u) (\omega_u^n-\omega_v^n)\wedge \eta.
\end{equation}
We also recall the $J$-functional
\begin{equation}
J(u, v):=J(\omega_u, \omega_v)=\frac{1}{n!}\int_M (v-u) \omega_u^n\wedge \eta-\mathbb{I}_{\omega_u}(v),
\end{equation}
where the $\mathbb{I}_{\omega_u}(v)$-functional is given by
\begin{equation}
\mathbb{I}_{\omega_u}(v)=\frac{1}{(n+1)!}\int_M (v-u)\sum_{k=0}^n \omega_u^k\wedge \omega_v^{n-k}\wedge \eta.
\end{equation} 
We define the $\mathbb{I}$-functional (with the base $\omega^T$) on $\cH$,
\begin{equation}\label{maenergy}
\mathbb{I}_{\omega^T}(u)=\frac{1}{(n+1)!}\int_M u\sum_{k=0}^n \omega_u^k\wedge \omega^{n-k}_T\wedge \eta. 
\end{equation}
The $\mathbb{I}$-functional is also called the Monge-Amp\`ere energy, since if $t\rightarrow v_t\in \cH$ is smooth, then we have (as in K\"ahler setting), 
\begin{equation}\label{maderivative}
\frac{d}{dt}\mathbb{I}(v_t)=\frac{1}{n!}\int_M \dot v_t \omega_{v_t}^n \wedge \eta
\end{equation}
We mention that $I$ is symmetric with respect to $u, v$ but $J$ is not. $I, J$ are both defined on the metric level, independent of the choice of normalization of potentials $u, v$; while $\mathbb{I}_{\omega_u}(v)$ depends on the normalization of $u, v$.  
When $u, v$ are bounded, then Bedford-Taylor theory allows to integrate by parts and the $I$-functional takes the formula
\begin{equation}\label{i04}
I(\omega_u, \omega_v)=\frac{1}{(n+1)!}\sum_{j=0}^{n-1} \int_M d(u-v)\wedge d^c_B(u-v)\wedge \omega_u^j\wedge \omega_v^{n-1-j}\wedge \eta
\end{equation}
Hence it is nonnegative.

We need more information about $\mathbb{I}$-functional, see \cite{D4}[Section 3.7] for K\"ahler setting. These properties in Sasaki setting follow in a rather straightforward way given pluripotential theory extended to Sasaki setting. We include these facts here for completeness. 

\begin{prop}
Given $u, v\in \text{PSH}(M, \xi, \omega^T)\cap L^\infty$, the following cocycle condition holds
\begin{equation}\label{cocycle}
\mathbb{I}(u)-\mathbb{I}(v)=\frac{1}{(n+1)!}\sum_{k=0}^n\int_M (u-v)\omega_u^k\wedge \omega_v^{n-k}\wedge \eta=\mathbb{I}_{\omega_u}(v).
\end{equation}
Moreover, we have $\mathbb{I}(u)$ is concave in $u$ in the sense that,
\begin{equation}\label{maenergy01}
\frac{1}{n!}\int_M (u-v)\omega_u^n\wedge \eta\leq \mathbb{I}(u)-\mathbb{I}(v)\leq \frac{1}{n!}\int_M (u-v)\omega_v^n\wedge \eta.
\end{equation}
As a direct consequence, if $u, v\in \text{PSH}(M, \xi, \omega^T)\cap L^\infty$ such that $u\geq v$. Then $\mathbb{I}(u)\geq \mathbb{I}(v)$. 
\end{prop}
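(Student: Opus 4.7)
The plan is to first establish the cocycle formula for smooth potentials by linear interpolation, then derive the concavity bounds via integration by parts, and finally pass to bounded potentials by the smooth approximation from Lemma \ref{BK} together with Bedford-Taylor weak convergence (Proposition \ref{weakcon0}).

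\textbf{Step 1 (Smooth case, cocycle identity).} Suppose first $u,v\in\cH$. Consider the affine segment $v_t=(1-t)v+tu\in\cH$ for $t\in[0,1]$, for which $\omega_{v_t}=(1-t)\omega_v+t\omega_u$. By the derivative formula \eqref{maderivative},
\[
\mathbb{I}(u)-\mathbb{I}(v)=\int_0^1\frac{d}{dt}\mathbb{I}(v_t)\,dt=\frac{1}{n!}\int_0^1\int_M(u-v)\omega_{v_t}^n\wedge\eta\,dt.
\]
Expanding $\omega_{v_t}^n$ binomially and using the beta integral $\int_0^1\binom{n}{k}t^k(1-t)^{n-k}\,dt=\frac{1}{n+1}$ yields
\[
\mathbb{I}(u)-\mathbb{I}(v)=\frac{1}{(n+1)!}\sum_{k=0}^n\int_M(u-v)\,\omega_u^k\wedge\omega_v^{n-k}\wedge\eta,
\]
which is exactly \eqref{cocycle} in the smooth case.

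\textbf{Step 2 (Smooth case, concavity bounds).} For $0\le k\le n$, write telescopically
\[
\int_M(u-v)\omega_u^k\wedge\omega_v^{n-k}\wedge\eta-\int_M(u-v)\omega_v^n\wedge\eta=\sum_{j=0}^{k-1}\int_M(u-v)\,dd^c_B(u-v)\wedge\omega_u^j\wedge\omega_v^{n-j-1}\wedge\eta,
\]
using $\omega_u-\omega_v=dd^c_B(u-v)$. Integrating by parts on each term and using that the transverse forms are closed gives
\[
\int_M(u-v)\,dd^c_B(u-v)\wedge\omega_u^j\wedge\omega_v^{n-j-1}\wedge\eta=-\int_M d(u-v)\wedge d^c_B(u-v)\wedge\omega_u^j\wedge\omega_v^{n-j-1}\wedge\eta\le 0,
\]
since the integrand is a transverse positive current. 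Thus
\[
\int_M(u-v)\,\omega_u^k\wedge\omega_v^{n-k}\wedge\eta\;\le\;\int_M(u-v)\,\omega_v^n\wedge\eta,
\]
and a symmetric telescoping with roles reversed gives the lower bound $\int_M(u-v)\omega_u^n\wedge\eta$. Summing over $k=0,\dots,n$, dividing by $(n+1)!$ and invoking Step 1 yields \eqref{maenergy01} in the smooth case.

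\textbf{Step 3 (Passage to bounded TPSH).} For $u,v\in\text{PSH}(M,\xi,\omega^T)\cap L^\infty$, apply Lemma \ref{BK} to obtain decreasing smooth sequences $u_j\searrow u$ and $v_j\searrow v$ in $\cH$. Both sequences are uniformly bounded (after subtracting a constant), and Proposition \ref{weakcon0} gives the weak convergence of all mixed Monge-Amp\`ere measures $\omega_{u_j}^k\wedge\omega_{v_j}^{n-k}\wedge\eta\rightharpoonup\omega_u^k\wedge\omega_v^{n-k}\wedge\eta$. Since $u_j-v_j$ is uniformly bounded and converges $\omega_u^k\wedge\omega_v^{n-k}\wedge\eta$-a.e.\ to $u-v$, a standard argument combining weak convergence of measures with the quasi-continuity result (Theorem \ref{quasicontinuity}) lets us pass to the limit in each term of both \eqref{cocycle} and \eqref{maenergy01}. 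Applying the same procedure to the definition \eqref{maenergy} shows $\mathbb{I}(u_j)\to\mathbb{I}(u)$.

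\textbf{Step 4 (Monotonicity consequence).} If $u\geq v$, then $u-v\ge 0$ pointwise, so $\int_M(u-v)\omega_v^n\wedge\eta\ge 0$; however we actually need $\mathbb{I}(u)-\mathbb{I}(v)\ge 0$, which follows immediately from the \emph{lower} bound in \eqref{maenergy01} combined with $u-v\ge 0$, giving $\mathbb{I}(u)-\mathbb{I}(v)\ge\frac{1}{n!}\int_M(u-v)\omega_u^n\wedge\eta\ge 0$.

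The only delicate point is the weak-convergence passage in Step 3: one must verify that $(u_j-v_j)\omega_{u_j}^k\wedge\omega_{v_j}^{n-k}\wedge\eta$ converges weakly to $(u-v)\omega_u^k\wedge\omega_v^{n-k}\wedge\eta$ when the ``test function'' $u-v$ is itself only the limit of a decreasing uniformly bounded sequence of quasi-continuous functions. This is standard in the K\"ahler pluripotential theory and carries over verbatim to the Sasaki setting via foliation charts, once the Bedford-Taylor convergence of Proposition \ref{weakcon0} and the quasi-continuity of TPSH functions (Theorem \ref{quasicontinuity}) are in hand.
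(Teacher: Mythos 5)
Your proposal is correct and follows essentially the same route as the paper: the affine path $t\mapsto (1-t)v+tu$ with the derivative formula \eqref{maderivative} for the cocycle identity on $\cH$, integration by parts term-by-term (the paper phrases your telescoping as an induction, but the computation is identical) for \eqref{maenergy01}, and approximation via Lemma \ref{BK} plus Bedford--Taylor convergence for the passage to bounded potentials. The one delicate point you flag in Step 3 is treated with the same level of detail in the paper itself, which likewise defers to the standard K\"ahler argument.
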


\begin{proof}This follows almost identical as in \cite{D4}[Proposition 3.8], given the pluripotential theory established in Sasaki setting in the paper. We sketch the argument. 
When $u, v\in \cH$, this follows exactly the same as in K\"ahler setting, by taking $h_t=(1-t)u+tv$ and then use \eqref{maderivative} to compute directly. When $u, v\in \text{PSH}(M, \xi, \omega^T)\cap L^\infty$, we then use $u_k, v_k\in \cH$ decreasing to $u, v$ (Lemma \ref{BK}) respectively. Using Bedford-Taylor's theorem in Sasaki setting \cite{VC}[Theorem 2.3.1] we proceed exactly as in K\"ahler setting to conclude that $\mathbb{I}(u_k)\rightarrow \mathbb{I}(u)$ etc. For the estimate \eqref{maenergy01}, we compute
\[
\begin{split}
\int_M (u-v)\omega^k_u\wedge \omega_v^{n-k}\wedge \eta=&\int_M (u-v)\omega^{k-1}_u\wedge \omega_v^{n-k+1}\wedge \eta\\&+\int_M (u-v)\sqrt{-1}\p\bar\p (u-v)\wedge \omega_u^{k-1}\wedge \omega_v^{n-k}\wedge \eta\\
=&\int_M (u-v)\omega^{k-1}_u\wedge \omega_v^{n-k+1}\wedge \eta\\
&-\int_M \sqrt{-1}\p(u-v)\wedge\bar\p (u-v)\wedge \omega_u^{k-1}\wedge \omega_v^{n-k}\wedge \eta\\
\leq & \int_M (u-v)\omega^{k-1}_u\wedge \omega_v^{n-k+1}\wedge \eta
\end{split}
\]
Using the estimate inductively for the terms in \eqref{cocycle} leads to \eqref{maenergy01}. Clearly $\mathbb{I}(u)$ is concave in $u$ given \eqref{maenergy01}. 
\end{proof}
The monotonicity property allows to define $\mathbb{I}(u)$ for $u\in \text{PSH}(M, \xi, \omega^T)$ through the limit process, using the canonical cutoffs $u_k=\max\{u, -k\}$
\[
\mathbb{I}(u)=\lim_{k\rightarrow \infty}\mathbb{I}(\max\{u, -k\}).
\]
Though the above limit is well-defined, it may equal $-\infty$. It turns out $\mathbb{I}(u)$ is finite exactly on $\cE_1(M, \xi, \omega^T)$. 
We record some further properties of $\mathbb{I}(u)$ for $u\in \cE_1(M, \xi, \omega^T)$. The proofs are almost identical and we shall skip the details, see \cite{D4}[Proposition 3.40, 3.42, 3.43; Lemma 3.41]. 

\begin{prop}\label{darvascontinuity01}Let $u\in \text{PSH}(M, \xi, \omega^T)$. Then $-\infty<\mathbb{I}(u)$ if and only if $u\in \cE_1(M, \xi, \omega^T)$. Moreover, 
\begin{equation}
|\mathbb{I}(u_0)-\mathbb{I}(u_1)|\leq d_1(u_0, u_1), u_0, u_1\in \cE_1(M, \xi, \omega^T). 
\end{equation}
\end{prop}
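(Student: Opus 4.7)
The plan is to mirror Darvas's argument \cite{D4}[Proposition 3.40] in the Sasaki setting, leveraging the pluripotential machinery developed above. For the characterization, first assume $u\in \cE_1(M,\xi,\omega^T)$ and normalize so that $u\leq 0$. Let $u_k=\max\{u,-k\}\in \text{PSH}(M,\xi,\omega^T)\cap L^\infty$ be the canonical cutoffs. Applying the cocycle identity \eqref{cocycle} with $v=0$,
\[
\mathbb{I}(u_k)-\mathbb{I}(0)=\frac{1}{(n+1)!}\sum_{j=0}^n\int_M u_k\,\omega_{u_k}^j\wedge(\omega^T)^{n-j}\wedge\eta .
\]
Each mixed term is controlled in absolute value by $\int_M |u_k|\,\omega_{u_k}^j\wedge(\omega^T)^{n-j}\wedge\eta$, which the fundamental estimate (Proposition \ref{fe}) together with Proposition \ref{mixedenergy} bounds by a dimensional constant times $E_1(u_k)$; by Proposition \ref{cte} this stays uniformly bounded and in fact converges to $E_1(u)<\infty$. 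Since $\mathbb{I}(u_k)$ is decreasing in $k$ by the monotonicity of $\mathbb{I}$ in \eqref{maenergy01}, the limit $\mathbb{I}(u):=\lim_k\mathbb{I}(u_k)$ is finite.

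Conversely, assume $u\in\text{PSH}(M,\xi,\omega^T)$ with $u\leq0$ and $\mathbb{I}(u)>-\infty$. The lower estimate of \eqref{maenergy01} applied to $u_k$ and $0$ gives
\[
\frac{1}{n!}\int_M |u_k|\,\omega_{u_k}^n\wedge\eta=-\frac{1}{n!}\int_M u_k\,\omega_{u_k}^n\wedge\eta\leq \mathbb{I}(0)-\mathbb{I}(u_k).
\]
Because $\mathbb{I}(u_k)$ decreases to $\mathbb{I}(u)>-\infty$, the right-hand side is uniformly bounded; Proposition \ref{cte} then places $u\in\cE_1(M,\xi,\omega^T)$.

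For the Lipschitz estimate, start with $u_0,u_1\in\cH$ and take the $C^{1,\bar 1}_B$ geodesic $t\mapsto u_t$ connecting them (Guan--Zhang). Approximating by smooth $\epsilon$-geodesics $u_t^\epsilon$ and invoking \eqref{maderivative},
\[
\Bigl|\frac{d}{dt}\mathbb{I}(u_t^\epsilon)\Bigr|\leq \frac{1}{n!}\int_M |\dot u_t^\epsilon|\,\omega_{u_t^\epsilon}^n\wedge\eta.
\]
Lemma \ref{cma10} provides uniform control on $\dot u_t^\epsilon$ and $\Delta\phi_t^\epsilon$, while $\omega_{u_t^\epsilon}^n\wedge\eta$ converges weakly to $\omega_{u_t}^n\wedge\eta$; letting $\epsilon\to 0$ we obtain $|\frac{d}{dt}\mathbb{I}(u_t)|\leq C_n\|\dot u_t\|_{1,u_t}$. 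By Lemma \ref{arcparameter}, $\|\dot u_t\|_{1,u_t}$ is constant along the geodesic and equals $d_1(u_0,u_1)$ (the dimensional constant being absorbed into the conventions of $\mathbb{I}$), so integration over $[0,1]$ yields $|\mathbb{I}(u_0)-\mathbb{I}(u_1)|\leq d_1(u_0,u_1)$.

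For general $u_0,u_1\in\cE_1(M,\xi,\omega^T)$, use Lemma \ref{BK} to pick decreasing sequences $u_0^k,u_1^k\in\cH$ with $u_i^k\searrow u_i$. By Lemma \ref{approximation4}, $d_1(u_0^k,u_1^k)\to d_1(u_0,u_1)$, and by Proposition \ref{weakcon3} applied to every mixed term of the cocycle expansion \eqref{cocycle} one shows $\mathbb{I}(u_i^k)\to\mathbb{I}(u_i)$; passing to the limit in the smooth-case inequality finishes the proof. The main obstacle is precisely this convergence $\mathbb{I}(u_i^k)\to\mathbb{I}(u_i)$: one must reconcile the definition of $\mathbb{I}(u_i)$ obtained from canonical cutoffs with the limit along smooth approximants, and this requires checking that the finite-energy hypothesis $u_i\in\cE_1$ furnishes uniform domination—via the fundamental estimate and Proposition \ref{mixedenergy}—to simultaneously justify the weak-convergence step in all $n+1$ mixed Monge--Amp\`ere terms.
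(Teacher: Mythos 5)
Your overall strategy is the intended one (the paper itself defers to Darvas's argument and skips the details), and the forward implication, the smooth-endpoint Lipschitz bound via $\epsilon$-geodesics, and the final approximation step are sound in outline. However, the converse implication as written contains a genuine error: the inequality
\[
\frac{1}{n!}\int_M|u_k|\,\omega_{u_k}^n\wedge\eta\leq \mathbb{I}(0)-\mathbb{I}(u_k)
\]
is the \emph{reverse} of what the concavity estimate \eqref{maenergy01} provides. Taking $u=u_k$, $v=0$ in \eqref{maenergy01} gives $\frac{1}{n!}\int_M u_k\,\omega_{u_k}^n\wedge\eta\leq\mathbb{I}(u_k)-\mathbb{I}(0)$, i.e. $\int_M|u_k|\,\omega_{u_k}^n\wedge\eta\geq n!\,(\mathbb{I}(0)-\mathbb{I}(u_k))$, a \emph{lower} bound on $E_1(u_k)$ which is useless here; the other half of \eqref{maenergy01} only controls $\int_M|u_k|(\omega^T)^n\wedge\eta$. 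The step is easily repaired, but not by \eqref{maenergy01}: use the cocycle expansion \eqref{cocycle} with $v=0$ and observe that every mixed term $\int_M u_k\,\omega_{u_k}^j\wedge(\omega^T)^{n-j}\wedge\eta$ is non-positive while the $j=n$ term equals $-\int_M|u_k|\,\omega_{u_k}^n\wedge\eta$; discarding the remaining terms yields $\int_M|u_k|\,\omega_{u_k}^n\wedge\eta\leq(n+1)!\,(\mathbb{I}(0)-\mathbb{I}(u_k))$, which is uniformly bounded when $\mathbb{I}(u)>-\infty$, and Proposition \ref{cte} then gives $u\in\cE_1(M,\xi,\omega^T)$.

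Two smaller points. In the forward direction, Proposition \ref{mixedenergy} controls $\int_M\chi(u)\,\omega_v^n\wedge\eta$ with a full power of $\omega_v$, not the mixed measures $\omega_{u_k}^j\wedge(\omega^T)^{n-j}\wedge\eta$; what you actually need is the intermediate inequality established inside the proof of the fundamental estimate (Proposition \ref{fe}), namely $\int_M\chi(u)\,\omega_v^{j+1}\wedge\omega_u^{n-j-1}\wedge\eta\leq(p+1)\int_M\chi(u)\,\omega_v^j\wedge\omega_u^{n-j}\wedge\eta$, applied with $v=0$. Second, with the paper's normalizations ($\mathbb{I}$ carries the factor $1/(n+1)!$ while $\|\cdot\|_{1,u}$ is normalized by $\text{Vol}(M)$), the derivative bound reads $|\tfrac{d}{dt}\mathbb{I}(u_t^\epsilon)|\leq\tfrac{\text{Vol}(M)}{n!}\|\dot u_t^\epsilon\|_{1,u_t^\epsilon}$, so obtaining the clean constant $1$ in the statement requires fixing the volume normalization explicitly rather than ``absorbing'' it; this is cosmetic but should be stated.
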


\begin{prop}Suppose $u_0, u_1\in \cE_1(M, \xi, \omega^T)$ and $t\rightarrow u_t$ is the finite energy geodesic connecting $u_0, u_1$. Then $t\rightarrow \mathbb{I}(u_t)$ is linear in $t$. We also have the following distance formula,
\[
d_1(u_0, u_1)=\mathbb{I}(u_0)+\mathbb{I}(u_1)-2\mathbb{I}(P(u_0, u_1))
\]
In particular, $d_1(u_0, u_1)=\mathbb{I}(u_0)-\mathbb{I}(u_1)$ if $u_0\geq u_1$. 
\end{prop}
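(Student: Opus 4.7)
The plan has three stages: (i) affinity of $\mathbb{I}$ along smooth $C^{1,\bar 1}_B$ geodesics between points of $\cH$, (ii) extension to finite energy geodesics by a decreasing approximation, and (iii) the distance formula from the Pythagorean identity together with the monotone case $u_0\geq u_1$.

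For stage (i), I would fix $u_0,u_1\in\cH$ and work with Guan-Zhang's $\epsilon$-geodesics $u_t^\epsilon$ of \eqref{cma02}. Differentiating the first derivative formula \eqref{maderivative} once more in $t$ and integrating by parts against the transverse K\"ahler class gives
\[
\frac{d^2}{dt^2}\mathbb{I}(u_t^\epsilon)=\frac{1}{n!}\int_M\bigl(\ddot u_t^\epsilon-|\nabla\dot u_t^\epsilon|^2_{\omega_{u_t^\epsilon}}\bigr)\omega_{u_t^\epsilon}^n\wedge\eta=\frac{\epsilon}{n!}\int_M(\omega^T)^n\wedge\eta,
\]
the last equality being the $\epsilon$-geodesic equation itself. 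Letting $\epsilon\to 0$, the uniform $C^2_w$ bounds of Lemma \ref{cma10} together with the Bedford-Taylor convergence $\omega_{u_t^\epsilon}^n\wedge\eta\to\omega_{u_t}^n\wedge\eta$ (Proposition \ref{weakcon0}) show that $\mathbb{I}(u_t^\epsilon)\to\mathbb{I}(u_t)$ uniformly in $t$, so $t\mapsto\mathbb{I}(u_t)$ is affine. For stage (ii), given $u_0,u_1\in\cE_1$ I approximate by decreasing sequences $u_0^k,u_1^k\in\cH$; by Proposition \ref{homweakgeodesic} the $C^{1,\bar 1}_B$ geodesics $u_t^k$ decrease to the finite energy geodesic $u_t$, and Lemma \ref{approximation4} yields $d_1(u_t^k,u_t)\to 0$. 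The $d_1$-Lipschitz continuity of $\mathbb{I}$ from Proposition \ref{darvascontinuity01} then transfers affinity to $t\mapsto\mathbb{I}(u_t)$ in the limit.

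For stage (iii), I first prove the monotone case: when $u_0\geq u_1$, Lemma \ref{strong01} applied to the constant subgeodesics $u_0,u_1$ forces $u_1\leq u_t\leq u_0$, whence $u_t$ decreases in $t$ and $\dot u_0\leq 0$. Choosing approximations that respect the ordering (by replacing $u_0^k$ with $\max\{u_0^k,u_1^k\}$ and extracting a smooth decreasing sequence via Dini's lemma, exactly as in Lemma \ref{order4}), Theorem \ref{darvas1} gives $d_1(u_0^k,u_1^k)=\int_M|\dot u_0^k|\,\omega_{u_0^k}^n\wedge\eta$, while the affinity of $\mathbb{I}$ combined with \eqref{maderivative} gives $\mathbb{I}(u_0^k)-\mathbb{I}(u_1^k)=-\tfrac{1}{n!}\int_M\dot u_0^k\,\omega_{u_0^k}^n\wedge\eta$; since $\dot u_0^k\leq 0$, these two quantities agree up to the $\text{Vol}(M)$ normalization in the Finsler norm. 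Sending $k\to\infty$ via Lemma \ref{approximation4} and Proposition \ref{darvascontinuity01} yields $d_1(u_0,u_1)=\mathbb{I}(u_0)-\mathbb{I}(u_1)$ for $u_0\geq u_1$. For arbitrary $u_0,u_1\in\cE_1$, the Pythagorean identity (Theorem \ref{Pythagorean}) specialized to $p=1$ gives $d_1(u_0,u_1)=d_1(u_0,P(u_0,u_1))+d_1(u_1,P(u_0,u_1))$; since $P(u_0,u_1)\leq u_0$ and $P(u_0,u_1)\leq u_1$, applying the monotone case to each summand yields the desired formula, and the \emph{in particular} case is immediate from $P(u_0,u_1)=u_1$ when $u_0\geq u_1$.

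The hard part will be stage (i)---namely, passing the affine second-derivative control through the $\epsilon\to 0$ limit into the $C^{1,\bar 1}_B$ setting, and then propagating affinity to the finite energy level. Both steps rely on the Bedford-Taylor type convergence from Proposition \ref{weakcon0} and on the $d_1$-Lipschitz control of $\mathbb{I}$ from Proposition \ref{darvascontinuity01}; once these are in hand, the remaining algebraic manipulations via the Pythagorean formula and the monotone case are a bookkeeping consequence of the pluripotential machinery developed in Sections 3 and 4.
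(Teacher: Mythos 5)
Your argument is correct and is essentially the intended one: the paper gives no written proof of this proposition, deferring to Darvas \cite{D4}[Propositions 3.40--3.43], and your three stages (affinity of $\mathbb{I}$ along $\epsilon$-geodesics via the second-derivative computation, passage to $\cE_1$ by decreasing approximation together with the $d_1$-Lipschitz continuity of $\mathbb{I}$ from Proposition \ref{darvascontinuity01}, then the monotone case combined with the Pythagorean formula of Theorem \ref{Pythagorean}) reproduce exactly that route. The only slip is cosmetic: the convergence $\mathbb{I}(u_t^\epsilon)\rightarrow \mathbb{I}(u_t)$ is better justified by the $L^\infty$-Lipschitz bound $|\mathbb{I}(u)-\mathbb{I}(v)|\leq \sup_M|u-v|\cdot \text{Vol}(M)/n!$, which follows at once from \eqref{maenergy01}, rather than by Proposition \ref{weakcon0}, since the latter is stated only for decreasing sequences while the $\epsilon$-geodesics converge to the weak geodesic uniformly (in fact increasingly) as $\epsilon\downarrow 0$.
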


We have the following (see \cite{D4}[Lemma 3.47])
\begin{lemma}\label{i02}Suppose $u, u^j, v, v^j\in \cE_1(M, \xi, \omega^T)$ and $u^j\searrow u$ and $v^j\searrow v$. Then the following hold:
\begin{equation}\label{i01}
I(u, v)=I(u, \max{\{u, v\}})+I(\max{\{u, v\}}, v)
\end{equation}
Moreover, $\lim_{j\rightarrow \infty} I(u^j, v^j)=I(u, v)$. 
\end{lemma}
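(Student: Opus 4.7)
The plan is to prove the two assertions in order, with the decomposition formula \eqref{i01} being the main tool used to establish the convergence statement.

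For \eqref{i01}, set $w := \max\{u,v\}$ and expand
\[
I(u,w) + I(w,v) = \frac{1}{n!}\int_M (w-u)(\omega_u^n - \omega_w^n)\wedge \eta + \frac{1}{n!}\int_M (v-w)(\omega_w^n - \omega_v^n)\wedge \eta.
\]
Observe that $(w-u)+(v-w)=v-u$ pointwise, that $w-u$ is supported on $\{u<v\}$ and that $v-w$ is supported on $\{u>v\}$. Split each integral according to the partition $M = \{u>v\}\cup\{u<v\}\cup\{u=v\}$ and note that the $\{u=v\}$ set gives no contribution since its integrands vanish there. On the remaining sets, Proposition \ref{GBTI} (applied with $u,v$ and with their roles reversed) gives $\chi_{\{u>v\}}\omega_w^n\wedge\eta = \chi_{\{u>v\}}\omega_u^n\wedge\eta$ and $\chi_{\{u<v\}}\omega_w^n\wedge\eta = \chi_{\{u<v\}}\omega_v^n\wedge\eta$. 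Substituting these identities causes the $\omega_w^n\wedge\eta$-terms to match the appropriate $\omega_u^n\wedge\eta$ or $\omega_v^n\wedge\eta$ term, collapsing the sum to $\tfrac{1}{n!}\int_M (v-u)(\omega_u^n - \omega_v^n)\wedge\eta = I(u,v)$.

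For the convergence statement, set $w^j := \max\{u^j,v^j\}$ and $w := \max\{u,v\}$; then $w^j \searrow w$, all members lie in $\cE_1(M,\xi,\omega^T)$ by Proposition \ref{mp01} (monotonicity), and $u^j \leq w^j$, $v^j \leq w^j$. Applying \eqref{i01} at each level,
\[
I(u^j,v^j) = I(u^j,w^j) + I(w^j,v^j),
\]
so by part one applied in the limit it suffices to show $I(u^j,w^j)\to I(u,w)$ and $I(w^j,v^j)\to I(w,v)$. For the first of these, write
\[
I(u^j,w^j) = \frac{1}{n!}\int_M (w^j-u^j)\,\omega_{u^j}^n\wedge\eta - \frac{1}{n!}\int_M (w^j-u^j)\,\omega_{w^j}^n\wedge\eta,
\]
and invoke Proposition \ref{weakcon3} with weight $\chi(l)=|l|$ (so $\cE_\chi = \cE_1$), the continuous test function $h(l)=l$ (which satisfies $\limsup |h|/\chi = 1$), and with the choices $(\psi_k,\phi_k,v_k)=(u^j,w^j,u^j)$ and $(u^j,w^j,w^j)$ respectively; the hypothesis $\psi_k\leq \phi_k$ and $\psi_k\leq v_k$ is verified by $u^j \leq w^j$ in both cases. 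Proposition \ref{weakcon3} yields weak convergence of $(w^j-u^j)\omega_{u^j}^n\wedge\eta$ and $(w^j-u^j)\omega_{w^j}^n\wedge\eta$ to $(w-u)\omega_u^n\wedge\eta$ and $(w-u)\omega_w^n\wedge\eta$ respectively; testing these weak convergences against the constant function $1$ (legitimate because $M$ is compact) produces $I(u^j,w^j)\to I(u,w)$. The same argument, applied to the symmetric pairing via $I(w^j,v^j)=I(v^j,w^j)$, yields $I(w^j,v^j)\to I(w,v)$. Combined with \eqref{i01} applied to the limits, we conclude $I(u^j,v^j)\to I(u,v)$.

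The principal obstacle is arranging the domination hypothesis $\psi_k \leq \phi_k$ and $\psi_k \leq v_k$ in Proposition \ref{weakcon3}, which is why the decomposition through $w^j=\max\{u^j,v^j\}$ is essential: directly applying Proposition \ref{weakcon3} to $(v^j-u^j)\omega_{u^j}^n\wedge\eta$ would fail because $v^j-u^j$ is not signed. The splitting in \eqref{i01} conveniently rewrites $I$ as a sum of two expressions in which the integrand factors $w^j-u^j$ and $w^j-v^j$ are nonnegative and the dominated-by-the-base-potential condition of Proposition \ref{weakcon3} is met.
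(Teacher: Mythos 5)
Your proof is correct and follows essentially the same route as the paper's: the identity \eqref{i01} is obtained from the generalized Bedford--Taylor identity (Proposition \ref{GBTI}) by localizing to $\{u>v\}$ and $\{v>u\}$, and the convergence statement is obtained by decomposing $I(u^j,v^j)$ through $\max\{u^j,v^j\}$ precisely so that the domination hypothesis of Proposition \ref{weakcon3} is satisfied. The only nitpick is that membership of $\max\{u^j,v^j\}$ in $\cE_1$ should be cited to Proposition \ref{oder} (monotonicity for $\cE_\chi$) rather than Proposition \ref{mp01}, which only gives membership in $\cE$.
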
 
\begin{proof}By Proposition \ref{GBTI}, we have \[\chi_{\{v>u\}}\omega^n_{\max \{u, v\}}\wedge \eta=\chi_{\{v>u\}}\omega^n_v\wedge \eta.\]
Hence it follows that
\[
I(u, \max{\{u, v\}})=\frac{1}{(n+1)!}\int_{\{v>u\}} (u-v)(\omega_v^n-\omega_u^n)\wedge \eta
\]
Interchange $u\leftrightarrow v$, we get $I(v, \max{\{u, v\}})=\int_{\{u>v\}} (u-v)(\omega_v^n-\omega_u^n)\wedge \eta$. This proves \eqref{i01}. We write
\[
I(u^j, v^j)=I(u^j, \max{\{u^j, v^j\}})+I(v^j, \max{\{u^j, v^j\}})
\]
Since $u^j, v^j\leq \max\{u^j, v^j\}$, we can apply Proposition \ref{weakcon3} to conclude $I(u^j, \max{\{u^j, v^j\}})\rightarrow I(u, \max{\{u, v\}})$ and $I(v^j, \max{\{u^j, v^j\}})\rightarrow I(v, \max{\{u, v\}})$, using the formula \eqref{aubin01}. This completes the proof. 
\end{proof}

We have the following well-known inequalities,
\begin{prop}\label{i00}For $u, v\in \text{PSH}(M, \xi, \omega^T)\cap L^\infty$, we have
\[
\frac{1}{n+1}I(u, v)\leq J(u, v)\leq \frac{n}{n+1} I(u, v)
\]
Moreover, $J(u, v)$ is convex in $v$ since $\mathbb{I}_{\omega^T}(v)$ is concave in $v$. 
\end{prop}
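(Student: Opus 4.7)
The plan is to reduce every quantity in sight to non-negative transverse Dirichlet-type integrals
\[
\alpha_k := \int_M d(u-v)\wedge d^c_B(u-v)\wedge \omega_u^k\wedge \omega_v^{n-1-k}\wedge \eta\geq 0,\qquad k=0,1,\dots,n-1,
\]
and then read off both inequalities from an elementary comparison of coefficients. All integrations by parts used below are legitimate for bounded transverse $\omega^T$-plurisubharmonic functions via the Bedford-Taylor theory in Sasaki setting developed in Section~3 (in particular the Stokes identity recorded in the Appendix).

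First I would treat $I(u,v)$. Using the telescoping identity $\omega_u^n-\omega_v^n=dd^c_B(u-v)\wedge\sum_{k=0}^{n-1}\omega_u^k\wedge\omega_v^{n-1-k}$ and integrating by parts twice on each term,
\[
I(u,v)=\frac{1}{n!}\int_M(v-u)(\omega_u^n-\omega_v^n)\wedge\eta=\frac{1}{n!}\sum_{k=0}^{n-1}\alpha_k.
\]
For $\mathbb{I}_{\omega_u}(v)$ I would expand $\sum_{k=0}^{n}\omega_u^k\wedge\omega_v^{n-k}$ about $\omega_u$, using $\omega_v^{n-k}-\omega_u^{n-k}=dd^c_B(v-u)\wedge\sum_{l=0}^{n-k-1}\omega_u^{n-k-1-l}\wedge\omega_v^{l}$ and collecting the double sum by the exponent $m=n-1-l$; after the obvious bookkeeping this gives
\[
\sum_{k=0}^{n}\omega_u^k\wedge\omega_v^{n-k}=(n+1)\omega_u^n+dd^c_B(v-u)\wedge\sum_{m=0}^{n-1}(m+1)\,\omega_u^m\wedge\omega_v^{n-1-m}.
\]
Multiplying by $(v-u)$, integrating against $\eta$, and once more integrating by parts on the $dd^c_B(v-u)$ terms yields
\[
\mathbb{I}_{\omega_u}(v)=\frac{1}{n!}\int_M(v-u)\,\omega_u^n\wedge\eta-\frac{1}{(n+1)!}\sum_{k=0}^{n-1}(k+1)\,\alpha_k.
\]

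Substituting into the definition of $J(u,v)$ collapses everything to
\[
J(u,v)=\frac{1}{n!}\int_M(v-u)\,\omega_u^n\wedge\eta-\mathbb{I}_{\omega_u}(v)=\frac{1}{(n+1)!}\sum_{k=0}^{n-1}(k+1)\,\alpha_k,
\]
while the identity for $I(u,v)$ rewrites as $I(u,v)=\frac{1}{(n+1)!}\sum_{k=0}^{n-1}(n+1)\alpha_k$. Since $1\leq k+1\leq n$ for $0\leq k\leq n-1$ and every $\alpha_k\geq 0$, the double inequality $\tfrac{1}{n+1}I(u,v)\leq J(u,v)\leq\tfrac{n}{n+1}I(u,v)$ is immediate.

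For the convexity assertion, the map $v\mapsto \tfrac{1}{n!}\int_M(v-u)\omega_u^n\wedge\eta$ is affine in $v$, and by the cocycle identity \eqref{cocycle} one has $\mathbb{I}_{\omega_u}(v)=\mathbb{I}_{\omega^T}(v)-\mathbb{I}_{\omega^T}(u)$ modulo constants, so concavity of $\mathbb{I}_{\omega^T}$ in $v$ (which is the content of \eqref{maenergy01}) translates into convexity of $J(u,\cdot)$. The only technical point to verify is the legitimacy of iterated integration by parts: since $u,v\in\mathrm{PSH}(M,\xi,\omega^T)\cap L^\infty$, this is justified by the Stokes-type identity for bounded TPSH functions established in the Appendix, so no difficulty arises. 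The main (minor) obstacle is keeping the combinatorial expansion of $\sum_k\omega_u^k\wedge\omega_v^{n-k}$ correct; once that identity is in hand the coefficients $(k+1)$ and $(n+1)$ fall out mechanically and the proposition is a one-line comparison.
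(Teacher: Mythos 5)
Your proof is correct, and it takes a somewhat different route from the paper's. The paper disposes of the proposition in two lines: it cites the classical computation for smooth potentials (\cite{PG}[Proposition 4.2.1]) and then invokes a decreasing approximation by elements of $\cH$ (Lemma \ref{BK}) together with the convergence machinery to pass to bounded TPSH functions. You instead carry out the underlying computation directly at the level of $\text{PSH}(M,\xi,\omega^T)\cap L^\infty$ using the Bedford--Taylor calculus: your telescoping of $\omega_u^n-\omega_v^n$ and your expansion of $\sum_{k}\omega_u^k\wedge\omega_v^{n-k}$ are both correct (I checked the reindexing giving the coefficients $m+1$), and they yield the clean identities $I(u,v)=\tfrac{1}{n!}\sum_k\alpha_k$ and $J(u,v)=\tfrac{1}{(n+1)!}\sum_k(k+1)\alpha_k$, from which the double inequality is an immediate coefficient comparison; the convexity claim follows as you say from the cocycle identity and concavity of $\mathbb{I}_{\omega^T}$. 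What your approach buys is the explicit positive representation of $J$ (hence $J\geq 0$ as a byproduct) and the avoidance of the approximation step; what it costs is that you must justify repeated integration by parts against the mixed currents $\omega_u^k\wedge\omega_v^{n-1-k}$ for differences of bounded TPSH functions --- the Stokes lemma in the Appendix is stated only for $\omega_\phi^k\wedge(\omega^T)^{n-k-1}$, so you are implicitly using the general Bedford--Taylor Stokes theorem for closed positive transverse currents, which is standard but worth stating. One small remark: your normalization $I(u,v)=\tfrac{1}{n!}\sum_k\alpha_k$ is the one consistent with the definition \eqref{aubin01}; the factor $\tfrac{1}{(n+1)!}$ appearing in the paper's display \eqref{i04} looks like a typo, and in any case the ratio $J/I$ is unaffected.
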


\begin{proof} This is well-known, by direct computation \cite{PG}[Proposition 4.2.1] for $u, v\in \cH$. A direct approximation argument using Lemma \ref{BK} shows that this can be generalized to for $u, v\in \text{PSH}(M, \xi, \omega^T)\cap L^\infty$. 
\end{proof}

The functionals ($I, J, \mathbb{I}$) are well-defined for $u, v\in \cE_1(M, \xi, \omega^T)$  (see Proposition \eqref{boundedenergy}). Note that Proposition \ref{maenergy01} and
 Proposition \ref{i00} both hold in $\cE_1(M, \xi, \omega^T)$ (see \cite{BBGZ10, BBGZ13} for K\"ahler setting). 
This follows by an approximation argument applying Proposition \ref{weakcon3}. 
 Next we prove the following, as a direct adaption of \cite{BBEGZ}[Theorem 1.8],
\begin{lemma}\label{bbegz01}There exists a positive $C=C(n)$ such that for $u, v, w\in \cE_1(M, \xi, \omega^T)$, then
\begin{equation}\label{i03}
I(u, v)\leq C(I(u, w)+I(v, w))
\end{equation}
\end{lemma}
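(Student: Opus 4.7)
The plan is to adapt the Kähler argument of \cite{BBEGZ}[Theorem 1.8] to the Sasaki setting, leveraging the pluripotential theory established in this section. First, by Lemma \ref{i02} (continuity of $I$ under decreasing approximations) and the smooth approximations from Lemma \ref{BK}, it suffices to prove \eqref{i03} for $u,v,w\in\cH$. Indeed, taking decreasing sequences $u^j\searrow u$, $v^j\searrow v$, $w^j\searrow w$ in $\cH$, Lemma \ref{i02} yields $I(u^j,v^j)\to I(u,v)$, $I(u^j,w^j)\to I(u,w)$, $I(v^j,w^j)\to I(v,w)$, so the inequality passes to the limit with the same constant $C(n)$.

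For smooth potentials, the algebraic identities $(v-u)=(v-w)+(w-u)$ and $(\omega_u^n-\omega_v^n)=(\omega_u^n-\omega_w^n)-(\omega_v^n-\omega_w^n)$, substituted into the defining formula \eqref{aubin01}, produce the decomposition
\[
I(u,v) = I(u,w) + I(w,v) + R_1 + R_2,
\]
where the cross terms are $R_1 = \frac{1}{n!}\int_M(v-w)(\omega_u^n-\omega_w^n)\wedge\eta$ and $R_2 = \frac{1}{n!}\int_M(w-u)(\omega_w^n-\omega_v^n)\wedge\eta$. The problem thus reduces to bounding $|R_1|$ and $|R_2|$ by $C(n)[I(u,w)+I(w,v)]$; by the symmetry $u\leftrightarrow v$ it is enough to handle $R_1$.

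For $R_1$, I would use the Stokes-type expansion $\omega_u^n-\omega_w^n = dd^c_B(u-w)\wedge\sum_{j=0}^{n-1}\omega_u^j\wedge\omega_w^{n-1-j}$ together with integration by parts for basic forms to write
\[
R_1 = -\frac{1}{n!}\sum_{j=0}^{n-1}\int_M d_B(v-w)\wedge d^c_B(u-w)\wedge\omega_u^j\wedge\omega_w^{n-1-j}\wedge\eta.
\]
Applying the Cauchy-Schwartz inequality \eqref{cs01} with $T=\omega_u^j\wedge\omega_w^{n-1-j}$, each summand is controlled by the geometric mean of
\[
A_j := \int_M d_B(u-w)\wedge d^c_B(u-w)\wedge\omega_u^j\wedge\omega_w^{n-1-j}\wedge\eta, \quad B_j := \int_M d_B(v-w)\wedge d^c_B(v-w)\wedge\omega_u^j\wedge\omega_w^{n-1-j}\wedge\eta.
\]
The factor $A_j$ is precisely a summand of the integration-by-parts expression \eqref{i04} for $I(u,w)$, hence $A_j \leq (n+1)!\, I(u,w)$. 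The factor $B_j$ is mixed (it involves $\omega_u$ rather than the "natural" $\omega_v$); to control it by $I(w,v)$, expand $\omega_u^j=(\omega_w+dd^c_B(u-w))^j$ by the binomial formula and iterate the integration-by-parts plus Cauchy-Schwartz procedure to transfer the resulting $dd^c_B(u-w)$ factors off. The process terminates after finitely many steps, producing a combinatorial sum whose every term is a product of geometric means of quantities of the form $\int d_B(u-w)\wedge d^c_B(u-w)\wedge\omega_w^a\wedge\omega_u^b\wedge\omega_v^c\wedge\eta$ and analogous quantities with $(v-w)$, each of which is bounded by $C(n)I(u,w)$ or $C(n)I(v,w)$ after further expansion of $\omega_u^b, \omega_v^c$ in the same manner. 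The elementary AM-GM inequality $2\sqrt{ab}\le a+b$ converts every geometric mean into a linear combination of $I(u,w)+I(w,v)$.

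The main obstacle is the combinatorial bookkeeping required in the iterated expansion: one must verify that the total constant arising from the binomial expansions, the Cauchy-Schwartz applications, and the AM-GM steps remains bounded by a function of $n$ alone, and that every residual mixed integral can be recognized as a term of \eqref{i04} for either $I(u,w)$ or $I(w,v)$. The Sasaki-specific inputs---namely, Stokes' theorem and integration by parts for basic forms paired with transverse positive closed currents (as recorded before Proposition \ref{CLN}), and the Cauchy-Schwartz inequality \eqref{cs01}---substitute cleanly for the Kähler ingredients in the argument of \cite{BBEGZ}, so no additional geometric input beyond the transverse Kähler structure is required.
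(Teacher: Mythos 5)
Your reduction to smooth (or bounded) potentials via Lemma \ref{BK} and Lemma \ref{i02} is fine, and your decomposition $I(u,v)=I(u,w)+I(w,v)+R_1+R_2$ with the cross terms handled by integration by parts and \eqref{cs01} starts out parallel to the paper. The genuine gap is in your treatment of the mixed factor
\[
B_j=\int_M d_B(v-w)\wedge d^c_B(v-w)\wedge\omega_u^j\wedge\omega_w^{n-1-j}\wedge\eta .
\]
You assert that after expanding $\omega_u^j=(\omega_w+dd^c_B(u-w))^j$ and iterating Cauchy--Schwartz, every residual term is bounded by $C(n)I(u,w)$ or $C(n)I(v,w)$, so that AM-GM closes the argument linearly. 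This is not achievable: the energy of $v-w$ measured against a background containing $\omega_u$ is \emph{not} linearly controlled by $I(u,w)+I(v,w)$. What the iteration actually produces (this is the content of \cite{BBEGZ}[Lemma 1.9], reproduced in the paper as \eqref{i06}) is an interpolation-type bound with fractional exponents,
\[
\|d(v-w)\|^2_{\psi}\leq C\, I(v,w)^{1/2^{n-1}}\left(I(v,\psi)^{1-1/2^{n-1}}+I(w,\psi)^{1-1/2^{n-1}}\right),
\]
and when $\psi$ is taken to absorb the $\omega_u$ factors (e.g.\ $\psi=\tfrac{u+w}{2}$, using $\omega_u,\omega_w\leq 2\omega_\psi$), the right-hand side unavoidably involves $I(u,v)$ itself through Proposition \ref{i00}. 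One then ends up with an inequality of the shape $I(u,v)\leq C\bigl(I(u,w)^{1/2^{n-1}}+I(v,w)^{1/2^{n-1}}\bigr)\bigl(I(u,v)^{1-1/2^{n-1}}+\cdots\bigr)$, and the final step of the paper's proof --- assuming without loss of generality $I(u,v)\geq\max\{I(u,w),I(v,w)\}$ and extracting the $2^{n-1}$-th root to absorb $I(u,v)$ from the right --- is essential and absent from your plan. So the obstacle is not ``combinatorial bookkeeping'': it is that the quantity you are trying to bound linearly reappears on the right-hand side with a positive power, and without the fractional-power absorption the argument does not close.

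For comparison, the paper avoids your cross-term decomposition altogether: it works with the seminorm $\|d(u-v)\|_\psi$, uses the two-sided comparison \eqref{i05} between $I(u,v)$ and $\|d(u-v)\|^2_{(u+v)/2}$, applies the triangle inequality for $\|d(\cdot)\|_{(u+v)/2}$ to split $u-v=(u-w)+(w-v)$, and then invokes \eqref{i06} plus Proposition \ref{i00} and the absorption step. You should either follow that route or, if you keep your decomposition, prove the analogue of \eqref{i06} and add the absorption argument; in either case the constant you obtain is $C(n)$ but the proof is genuinely nonlinear in the $I$-functionals.
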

\begin{proof}
With Lemma \ref{i02}, we only need to argue \eqref{i03} holds for bounded potentials, with $u, v, w$ replaced by canonical cutoffs $u_k, v_k, w_k$. The proof follows exactly as in  \cite{BBEGZ}[Theorem 1.8, Lemma 1.9]. 
and we include the proof for completeness. 
For $u, v, \psi\in \text{PSH}(M, \xi, \omega^T)\cap L^\infty$, set
\[
\|d(u-v)\|_\psi:=\left(\int_M d(u-v)\wedge d^c_B(u-v)\wedge \omega_\psi^{n-1}\wedge \eta\right)^{\frac{1}{2}}
\]
Using \eqref{i04}, it is straightforward to see that
\begin{equation}
\label{i05}
\|d(u-v)\|_{\frac{u+v}{2}}^2\leq I(u, v)\leq 2^{n-1}\|d(u-v)\|_{\frac{u+v}{2}}^2. 
\end{equation}
We need the following, there exists a constant $C=C(n)$ for $u, v, \psi\in \text{PSH}{M, \xi, \omega^T}\cap L^\infty$, we have the following (see \cite{BBEGZ}[Lemma 1.9]),
\begin{equation}\label{i06}
\|d(u-v)\|^2_\psi\leq C I(u, v)^{1/2^{n-1}}\left(I(u, \psi)^{1-1/2^{n-1}}+I(v, \psi)^{1-1/2^{n-1}}\right)
\end{equation}
With \eqref{i06} we prove \eqref{i03}. Taking $\phi=\frac{u+v}{2}$, the triangle inequality gives,
\[
\|d(u-v)\|_\phi\leq \|d(u-w)\|_\phi+\|d(v-w)\|_\phi.
\]
Using \eqref{i05} and \eqref{i06} we have
\[
\begin{split}
I(u, v)\leq 2^{n-1} \|d(u-v)\|_{\phi}^2\leq&  C (\|d(u-w)\|_\phi^2+\|d(v-w)\|_\phi^2)\\
\leq& CI(u, w)^{1/2^{n-1}}\left(I(u, \phi)^{1-1/2^{n-1}}+I(w, \phi)^{1-1/2^{n-1}}\right)\\
&+CI(v, w)^{1/2^{n-1}}\left(I(v, \phi)^{1-1/2^{n-1}}+I(w, \phi)^{1-1/2^{n-1}}\right)
\end{split}
\]
By Proposition \ref{i00}, we have
\[
I(u, \phi)\leq nI(u, v), I(v, \phi)\leq nI(v, u), I(w, \phi)\leq n (I(w, u)+I(w, v))
\]
It follows that
\[
I(u, v)\leq C(I(u, w)^{\frac{1}{2^{n-1}}}+I(v, w)^{\frac{1}{2^{n-1}}}) (I(u, v)^{1-1/2^{n-1}}+I(u, w)^{1-1/2^{n-1}}+I(v, w)^{1-1/2^{n-1}})
\]
We assume $I(u, v)\geq \max\{I(u, w), I(v, w)\}$ (otherwise we are done). Hence it follows
\[
I(u, v)^{1/2^{n-1}}\leq C (I(u, w)^{\frac{1}{2^{n-1}}}+I(v, w)^{\frac{1}{2^{n-1}}})
\]
This is sufficient to prove that
\[
I(u, v)\leq C (I(u, w)+I(v, w))
\]
Now we establish \eqref{i06} (see \cite{BBEGZ}[Lemma 1.9]). First observe that
\[
\|d(u-v)\|_\psi\leq \|d(u-\psi)\|_\psi+\|d(v-\psi)\|_{\psi}\leq I(u, \psi)^{1/2}+I(v, \psi)^{1/2}
\]
Hence we have 
\[
\|d(u-v)\|_\psi^2\leq 2(I(u, \psi)+I(v, \psi))
\]
Hence if $I(u, v)\geq I(u, \psi)+I(v, \psi)$, clearly we have
\begin{equation}
\|d(u-v)\|_\psi^2\leq 2(I(u, \psi)+I(v, \psi))\leq C I(u, v)^{1/2^{n-1}}\left( I(u, \psi)^{1-\frac{1}{2^{n-1}}}+I(v, \psi)^{1-\frac{1}{2^{n-1}}}\right)
\end{equation}
Now we suppose $I(u, v)\leq  I(u, \psi)+I(v, \psi)$. 
Taking $\phi=\frac{u+v}{2}$, we consider 
\[
b_p:=\int_M d(u-v)\wedge d^c_B(u-v)\wedge \omega^p_\psi\wedge \omega^{n-p-1}_\phi\wedge \eta. 
\]
By \eqref{i05}, $b_0\leq I(u, v)$ and $b_{n-1}=\|d(u-v)\|_{\psi}^2$. We claim that, $p=0, \cdot, n-2$,
\begin{equation}\label{i07}
b_{p+1}\leq b_p+4\sqrt{b_p I(\psi, \phi)}
\end{equation}
We compute
\[
\begin{split}
b_{p+1}-b_p=& \int_M d(u-v)\wedge d^c_B(u-v)\wedge dd^c_B (\psi-\phi)\omega^p_\psi\wedge \omega^{n-p-2}_\phi\wedge \eta\\
=&-\int_M d(u-v)\wedge dd^c_B(u-v)\wedge d^c_B (\psi-\phi)\omega^p_\psi\wedge \omega^{n-p-2}_\phi\wedge \eta\\
=&-\int_M d(u-v)\wedge (\omega_u-\omega_v)\wedge d^c_B (\psi-\phi)\omega^p_\psi\wedge \omega^{n-p-2}_\phi\wedge \eta
\end{split}
\]
Using Cauchy-Schwarz inequality, we compute
\[
\begin{split}
&\left|\int_M d(u-v)\wedge \omega_u\wedge d (\psi-\phi)\omega^p_\psi\wedge \omega^{n-p-2}_\phi\wedge \eta\right|\leq \left(\int_M d(u-v)\wedge d^c_B(u-v)\wedge \omega_u \wedge\omega^p_\psi\wedge \omega^{n-p-2}_\phi\wedge \eta\right)^{1/2}\\
&\;\;\;\;\quad\times  \left(\int_M d(\psi-\phi)\wedge d^c_B(\psi-\phi)\wedge \omega_u \wedge\omega^p_\psi\wedge \omega^{n-p-2}_\phi\wedge \eta\right)^{1/2}\leq 2 \sqrt{b_p I(\psi, \phi)},
\end{split}
\]
where we have used that $\omega_u\leq 2\omega_\phi$ and \eqref{i04}. 
We can get the same estimate for 
\[
\left|\int_M d(u-v)\wedge \omega_v\wedge d (\psi-\phi)\omega^p_\psi\wedge \omega^{n-p-2}_\phi\wedge \eta\right|.
\]
This establishes \eqref{i07}. By Proposition \ref{i00}, we know that
\[
I(\psi, \phi)\leq (n+1)J(\psi, \phi)\leq  \frac{n}{2}(I(\psi, u)+I(\psi, v))
\]
Denote $a=(I(\psi, u)+I(\psi, v))$. We write \eqref{i07} as
\[
b_{p+1}\leq b_p+4\sqrt{b_p a}, p=0, \cdots, n-2
\]
Note that $b_0=I(u, v)\leq a$, hence it is evident that $b_p\leq C a$. Hence it follows that, for $p=0, \cdots, n-2$, \[b_{p+1}\leq C \sqrt{b_p a}\]
A direct computation gives that,
\[
b_{n-1}\leq C b_0^{1/2^{n-1}} a^{1-\frac{1}{2^{n-1}}}
\]
This completes the proof. 
\end{proof}

More generally, we have the following \cite{D4}[Proposition 3.48]
\begin{prop}\label{darvascontinuity02}Suppose $C>0$ and $\phi, \psi, u, v\in \cE_1(M, \xi, \omega^T)$ satisfies
\[
-C\leq \mathbb{I}(\phi), \mathbb{I}(\psi), \mathbb{I}(u), \mathbb{I}(v), \sup_M \phi, \sup_M \psi, \sup_M u, \sup_M v\leq C
\] 
Then there exists a continuous function $f_C:\R^{+}\rightarrow \R^{+}$ depending only on $C$ with $f_C(0)=0$ such that
\begin{equation}
\begin{split}
&\left|\int_M \phi(\omega_u^n-\omega_v^n)\wedge \eta\right|\leq f_C(I(u, v))\\
&\left|\int_M (u-v)(\omega_\phi^n-\omega_\psi^n)\wedge \eta\right|\leq f_C(I(u, v))
\end{split}
\end{equation}
\end{prop}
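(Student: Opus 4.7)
The plan is to adapt \cite{D4}[Proposition 3.48] to Sasaki geometry by combining integration-by-parts with iterated Cauchy--Schwarz against mixed transverse Monge--Amp\`ere currents, in the spirit of the uniqueness proof and Lemma \ref{bbegz01}. First, I would reduce to the smooth case: Lemma \ref{BK} produces decreasing sequences in $\cH$ converging to each of $\phi,\psi,u,v$, with the hypotheses preserved in the limit by Proposition \ref{cte} and Proposition \ref{darvascontinuity01}; both sides converge by Proposition \ref{weakcon3} and Lemma \ref{i02}. It then suffices to establish the estimates for $\phi,\psi,u,v\in\cH$ with constants depending only on $C$.

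For the first inequality, use the telescoping identity $\omega_u^n-\omega_v^n=dd^c_B(u-v)\wedge\sum_{k=0}^{n-1}T_k$ with $T_k=\omega_u^k\wedge\omega_v^{n-1-k}$. Stokes' theorem and the Cauchy--Schwarz inequality \eqref{cs01} give $|\int_M\phi(\omega_u^n-\omega_v^n)\wedge\eta|\leq\sum_{k=0}^{n-1}A_k^{1/2}B_k^{1/2}$, where $A_k=\int d\phi\wedge d^c_B\phi\wedge T_k\wedge\eta$ and $B_k=\int d(u-v)\wedge d^c_B(u-v)\wedge T_k\wedge\eta$. By \eqref{i04}, $\sum_k B_k\leq(n+1)!\,I(u,v)$. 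The factor $A_k$ is bounded by a constant depending only on $C$: integrate by parts to write $A_k=-\int\phi(\omega_\phi-\omega^T)\wedge T_k\wedge\eta$ and apply the inductive Cauchy--Schwarz argument \eqref{uni03} from the uniqueness proof, with potentials shifted by their (bounded) suprema and the negative tails controlled via \eqref{maenergy01} and $|\mathbb{I}|\leq C$. Summing and applying Cauchy--Schwarz over $k$ yields the first estimate with $f_C^{(1)}(t)=C'(C)\,t^{1/2}$.

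For the second inequality, an analogous integration-by-parts gives
\[
\int_M(u-v)(\omega_\phi^n-\omega_\psi^n)\wedge\eta=-\sum_{k=0}^{n-1}\int_M d(u-v)\wedge d^c_B(\phi-\psi)\wedge\omega_\phi^k\wedge\omega_\psi^{n-1-k}\wedge\eta,
\]
bounded via Cauchy--Schwarz by $\sum_k(\tilde A_k\tilde B_k)^{1/2}$. The Dirichlet energy $\tilde B_k$ of $\phi-\psi$ against the $(\omega_\phi,\omega_\psi)$-weight is again uniformly bounded by the previous method. The Dirichlet energy $\tilde A_k$ of $u-v$ is taken against an \emph{unrelated} weight; to extract a power of $I(u,v)$ from it, I would iteratively swap each $\omega_\phi$ (resp.\ $\omega_\psi$) in the weight for $\omega_u$ (resp.\ $\omega_v$) via the Cauchy--Schwarz trick \eqref{i07} from the proof of Lemma \ref{bbegz01}, each swap costing a factor of $I(\phi,u)^{1/2}$ or $I(\psi,v)^{1/2}$, all uniformly $\mathbb{I},\sup$-controlled. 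After $n-1$ such iterations, $\tilde A_k\leq C''(C)\,I(u,v)^{1/2^{n-1}}$, giving $f_C^{(2)}(t)=C'''(C)\,t^{1/2^n}$.

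The main obstacle is this weight-swap in the second inequality: the iterated Cauchy--Schwarz of \eqref{i07} must be executed with care so that all auxiliary Dirichlet-type quantities remain uniformly controlled by $C$ alone, which uses that every intermediate quantity of the form $\int(-\psi_\cdot)\omega_\cdot\wedge S\wedge\eta$ is $\mathbb{I}$-bounded via \eqref{maenergy01}. Taking $f_C$ to be the monotone upper envelope of $f_C^{(1)}+f_C^{(2)}$ furnishes a continuous function with $f_C(0)=0$, finishing the proof modulo the stated reduction.
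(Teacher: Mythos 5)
Your proposal is correct and follows exactly the route the paper intends: the paper's own "proof" of Proposition \ref{darvascontinuity02} consists of a single sentence deferring to \cite{D4}[Proposition 3.48] and to the philosophy of Lemma \ref{bbegz01}, and your argument (telescoping plus Cauchy--Schwarz for the first bound, and the iterated weight-swap of \eqref{i06}--\eqref{i07} to extract the power $I(u,v)^{1/2^{n-1}}$ for the second) is precisely the content of that reference. You in fact supply more detail than the paper does; the only loose point is the citation of \eqref{uni03} for bounding $A_k$, where what is really needed is the standard mixed-energy estimate $\int(-\phi)\,\omega_\phi\wedge\omega_u^k\wedge\omega_v^{n-1-k}\wedge\eta\leq C(n)\bigl(E_1(\phi)+E_1(u)+E_1(v)\bigr)$ in the spirit of Proposition \ref{mixedenergy}, but this does not affect the validity of the argument.
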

\begin{proof}The proof is similar in philosophy as Lemma \ref{bbegz01} and follows almost identically as in K\"ahler setting, see \cite{D4}[Proposition 3.48]. Hence we skip the details. 
\end{proof}

As a consequence, we have the following \cite{D4}[Theorem 3.46]

\begin{thm}\label{d101}Suppose $u_k, u\in \cE_1(M, \xi, \omega^T)$. The the following hold:
\begin{enumerate}
\item $d_1(u_k, u)\rightarrow 0$ if and only if $\int_M |u_k-u|\omega^n_T\wedge \eta\rightarrow 0$ and $\mathbb{I}(u_k)\rightarrow \mathbb{I}(u)$.
\item If $d_1(u_k, u)\rightarrow 0$, then $\omega^n_{u_k}\wedge \eta\rightarrow \omega_u^n\wedge \eta$ weakly and $\int_M |u_k-u|\omega^n_v\wedge \eta\rightarrow 0$ for $v\in \cE_1(M, \xi, \omega^T)$. 
\end{enumerate}
\end{thm}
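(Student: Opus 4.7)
The proof will adapt T. Darvas's argument from the K\"ahler case (\cite{D4}[Theorem 3.46]) to the Sasaki framework, relying on the pluripotential machinery developed above. The principal tools are Proposition \ref{darvascontinuity01} ($\mathbb{I}$ is $1$-Lipschitz with respect to $d_1$), Theorem \ref{comparison04} (the equivalence $d_p(u_0,u_1)^p\asymp\int_M|u_0-u_1|^p(\omega_{u_0}^n+\omega_{u_1}^n)\wedge\eta$), Proposition \ref{darvascontinuity02} (continuity of mixed-energy integrals), the Pythagorean identity $d_1(u_0,u_1)=\mathbb{I}(u_0)+\mathbb{I}(u_1)-2\mathbb{I}(P(u_0,u_1))$ (from Theorem \ref{Pythagorean} and the distance formula), the $L^1$-compactness of Proposition \ref{compactness001}, and the domination principle Lemma \ref{domination2}.

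For the forward implication of (1), I first obtain $\mathbb{I}(u_k)\to\mathbb{I}(u)$ directly from Proposition \ref{darvascontinuity01}. For the $L^1$-convergence against $(\omega^T)^n\wedge\eta$, Theorem \ref{comparison04} with $p=1$ yields $\int_M|u_k-u|\omega_u^n\wedge\eta\to 0$ and $\int_M|u_k-u|\omega_{u_k}^n\wedge\eta\to 0$. The $d_1$-boundedness of $\{u_k\}$ forces uniform bounds on $\mathbb{I}(u_k)$ and (after normalization) on $\sup_M u_k$ via Proposition \ref{upperbound01}, so by Proposition \ref{compactness001} every subsequence admits a further $L^1((\omega^T)^n\wedge\eta)$-convergent sub-subsequence with limit $v\in\text{PSH}(M,\xi,\omega^T)$. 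To identify $v=u$ for every such cluster point, I invoke the Pythagorean formula, which forces $\mathbb{I}(P(u_k,u))\to\mathbb{I}(u)$; concavity of $\mathbb{I}$ applied to $P(u_k,u)\leq u$ then yields $\int_M(u-P(u_k,u))\omega_u^n\wedge\eta\to 0$. A Hartogs-type upper-envelope argument on the $L^1$-convergent sub-subsequence combined with the domination principle (Lemma \ref{domination2}) then forces $v=u$.

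For the reverse direction of (1), assuming $\int|u_k-u|(\omega^T)^n\wedge\eta\to 0$ and $\mathbb{I}(u_k)\to\mathbb{I}(u)$, the plan is again to use the Pythagorean formula: it suffices to show $\mathbb{I}(P(u_k,u))\to\mathbb{I}(u)$. The $L^1((\omega^T)^n\wedge\eta)$ hypothesis together with subsequence-plus-Hartogs arguments gives $P(u_k,u)\to u$ in $L^1((\omega^T)^n\wedge\eta)$, and the $\mathbb{I}$-convergence of $u_k$ combined with Proposition \ref{boundedenergy} and Proposition \ref{weakcon3} yields the energy convergence $\mathbb{I}(P(u_k,u))\to\mathbb{I}(u)$. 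Part (2) then follows from part (1) and the same toolkit: the weak convergence $\omega_{u_k}^n\wedge\eta\to\omega_u^n\wedge\eta$ is obtained by testing against basic functions via Proposition \ref{darvascontinuity02} and extending to arbitrary smooth test functions through the product-measure description on foliation charts (Proposition \ref{weakcon0}); and $\int|u_k-u|\omega_v^n\wedge\eta\to 0$ for fixed $v\in\cE_1(M,\xi,\omega^T)$ follows by approximating $v$ through its canonical cutoffs and combining Proposition \ref{darvascontinuity02} with the $d_1$-convergence already established.

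The main obstacle is the passage, in the forward direction of (1), from $L^1$-convergence against the mixed measures $\omega_u^n\wedge\eta,\omega_{u_k}^n\wedge\eta$ (which Theorem \ref{comparison04} delivers for free) to $L^1$-convergence against the reference measure $(\omega^T)^n\wedge\eta$, since these two families of measures are in general mutually singular on $\cE_1(M,\xi,\omega^T)$. Bridging the gap requires the compactness-plus-identification argument sketched above, combining Proposition \ref{compactness001}, the Pythagorean envelope decomposition, and the domination principle (Lemma \ref{domination2}). This is a standard subtlety in the K\"ahler analog and is resolved identically in the Sasaki setting once the product-measure description $\omega_u^n\wedge\eta=\omega_u^n\wedge dx$ on foliation charts (Proposition \ref{weakcon0}) is in force.
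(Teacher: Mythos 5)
Your overall architecture (Lipschitz continuity of $\mathbb{I}$, Theorem \ref{comparison04}, Proposition \ref{darvascontinuity02}, the Pythagorean identity) contains the right ingredients, but the two steps on which part (1) actually hinges are not carried out, and as sketched they do not go through. In the forward direction, your detour through $L^1$-compactness (Proposition \ref{compactness001}) leaves the identification of the cluster point unresolved: from $\int_M|u_k-u|\,\omega_u^n\wedge\eta\to 0$ a Hartogs-type upper-envelope argument along an a.e.-convergent sub-subsequence yields, for a cluster point $v$, only the inequality $v\geq u$ almost everywhere with respect to $\omega_u^n\wedge\eta$. The domination principle (Lemma \ref{domination2}) requires the inequality to hold a.e.\ with respect to the Monge--Amp\`ere measure of the \emph{larger} potential, so it cannot be invoked here; and even if one could upgrade to $v\geq u$ everywhere, that combined with $\mathbb{I}(v)\geq\mathbb{I}(u)$ (which is the inequality the envelope argument produces) still does not force $v=u$. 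The paper avoids compactness entirely: from $d_1(u_k,u)\to 0$ one gets $I(u_k,u)\to 0$, and then the second inequality of Proposition \ref{darvascontinuity02} (with $\psi=u$, $\phi=0$) applied to both $u_k-u$ and $\max(u_k,u)-u$, together with the decomposition $|u_k-u|=2\max(u_k,u)-u_k-u$ and Lemma \ref{i02}, gives $\int_M|u_k-u|(\omega^T)^n\wedge\eta\to 0$ directly. This is also exactly the mechanism that proves part (2).

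In the reverse direction the gap is sharper. You reduce to $\mathbb{I}(P(u_k,u))\to\mathbb{I}(u)$, which is a correct reduction via the Pythagorean formula, but the claimed proof of this convergence fails: $L^1((\omega^T)^n\wedge\eta)$-convergence of $u_k$ to $u$ does not by itself give $L^1$-convergence of the envelopes $P(u_k,u)$ to $u$ (controlling how far the envelope drops below $\min(u_k,u)$ is essentially equivalent to the $d_1$-convergence you are trying to prove), and Propositions \ref{boundedenergy} and \ref{weakcon3} apply only to monotone sequences, which $\{P(u_k,u)\}_k$ is not. The paper's route is different and should replace this step: first establish $\int_M(u_k-u)\,\omega_u^n\wedge\eta\to 0$ (the upper bound comes from the decreasing envelopes $(\sup_{l\geq k}u_l)^*\searrow u$, the lower bound from the concavity inequality \eqref{maenergy01} together with $\mathbb{I}(u_k)\to\mathbb{I}(u)$); then the quantitative estimate
\begin{equation*}
I(u,u_k)\leq (n+1)\Bigl(\mathbb{I}(u_k)-\mathbb{I}(u)-\int_M(u-u_k)\,\omega_u^n\wedge\eta\Bigr)
\end{equation*}
gives $I(u,u_k)\to 0$, after which Proposition \ref{darvascontinuity02} and Lemma \ref{i02} yield $\int_M|u_k-u|\,\omega_u^n\wedge\eta\to 0$ and $\int_M|u_k-u|\,\omega_{u_k}^n\wedge\eta\to 0$, hence $d_1(u_k,u)\to 0$ by Theorem \ref{comparison04}. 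You should rework both halves of (1) along these lines.
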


\begin{proof}If $d_1(u_k, u)\rightarrow 0$, then Proposition \ref{darvascontinuity01} and Proposition \ref{darvascontinuity02} implies (1) and (2). For the reverse direction in (1), it follows almost identically as in K\"ahler setting, see \cite{D4}[Proposition 3.52], using Proposition \ref{darvascontinuity02} and approximation argument. We sketch the process. First we have
\[
\int_M u_k \omega_u^n\wedge \eta\rightarrow \int_M u \omega_u^n\wedge \eta
\] 
And then one argues that
\[
I(u, u_k)\leq (n+1)\left(\mathbb{I}(u_k)-\mathbb{I}(u)-\int_M (u-u_k)\omega_u^n\wedge \eta\right)
\]
Hence this shows that $I(u, u_k)\rightarrow 0$. Using Proposition \ref{darvascontinuity02} and Lemma \ref{i02}, one can then show
\[
\int_M |u_k-u|\omega_u^n\wedge \eta, \int_M |u_k-u|\omega_{u_k}^n\wedge \eta \rightarrow 0, k\rightarrow \infty. 
\]
This gives the desired convergence $d_1(u_k, u)\rightarrow 0$. 
\end{proof}

As an application of  results established above, we have the following compactness result in Sasaki setting, following \cite{D4}[Theorem 4.45]. 

\begin{thm}Let $u_j\in \cE_1(M, \xi, \omega^T)$ be a $d_1$-bounded sequence for which the entropy
\[\sup_jH(u_j)<\infty.\] Then $\{u_j\}$ contains a $d_1$-convergence sequence. 
\end{thm}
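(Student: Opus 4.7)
The plan is to adapt the K\"ahler-setting compactness theorem of Berman--Boucksom--Eyssidieux--Guedj--Zeriahi (cf.\ \cite{BBEGZ}, \cite{D4}[Theorem 4.45]) to the Sasaki setting, leveraging the pluripotential machinery developed above. The strategy splits into three stages: produce an $L^1$-convergent subsequence, identify the candidate limit as an element of $\cE_1(M,\xi,\omega^T)$, and finally upgrade $L^1$-convergence to $d_1$-convergence using the entropy hypothesis.

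First, I would normalize. The bound of Theorem \ref{comparison04} combined with Proposition \ref{upperbound01} implies that $\sup_M u_j$ is uniformly bounded once $d_1(u_j,0)\leq C$; after subtracting constants (which preserves all relevant quantities) we may assume $\sup_M u_j=0$. Proposition \ref{compactness001} then yields a subsequence, still denoted $\{u_j\}$, with $u_j\to u$ in $L^1(d\mu_g)$ for some $u\in \mathrm{PSH}(M,\xi,\omega^T)$. I then construct the decreasing dominator
\[
\phi_k:=\bigl(\sup_{l\geq k}u_l\bigr)^{\ast}\in \mathrm{PSH}(M,\xi,\omega^T),
\]
which, by the usual Hartogs-type argument applied in each foliation chart $W_\alpha=(-\delta,\delta)\times V_\alpha$ (all $u_j$ being basic), satisfies $\phi_k\searrow u$ almost everywhere. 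Since $\phi_k\geq u_l$ for $l\geq k$, the monotonicity inequality \eqref{maenergy01} gives $\mathbb{I}(\phi_k)\geq \mathbb{I}(u_k)$; and Proposition \ref{darvascontinuity01} together with the $d_1$-bound forces $|\mathbb{I}(u_k)|\leq C$. Hence $\mathbb{I}(\phi_k)$ is uniformly bounded, so $\phi_k\in \cE_1(M,\xi,\omega^T)$ by Proposition \ref{boundedenergy}, and Lemma \ref{close4} delivers $d_1(\phi_k,u)\to 0$; in particular $u\in \cE_1(M,\xi,\omega^T)$.

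The main obstacle is the remaining $d_1$-convergence $d_1(u_{j_k},u)\to 0$ along a further subsequence. By the triangle inequality this reduces to $d_1(u_{j_k},\phi_k)\to 0$; since $u_{j_k}\leq\phi_k$, Lemma \ref{order4} together with Theorem \ref{comparison04} further reduces it to showing
\[
\int_M(\phi_k-u_{j_k})\bigl(\omega_{u_{j_k}}^n+\omega_{\phi_k}^n\bigr)\wedge\eta\longrightarrow 0.
\]
The integral against $\omega_{\phi_k}^n\wedge\eta$ is handled by Proposition \ref{weakcon3}, exploiting the monotone convergence $\phi_k\searrow u$ inside $\cE_1$. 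The integral against $\omega_{u_{j_k}}^n\wedge\eta$ is the crux: here the entropy bound enters essentially through the Young--Jensen inequality
\[
\int_M f\,\omega_{u_{j_k}}^n\wedge\eta\;\leq\; H(u_{j_k})+\log\!\int_M e^f\,(\omega^T)^n\wedge\eta,\quad f\in C^\infty_B(M),\ f\geq 0,
\]
which I would obtain from the corresponding K\"ahler statement applied in each foliation chart $W_\alpha=(-\delta,\delta)\times V_\alpha$ to the basic potential $u_{j_k}$, using the product decomposition $\omega_u^n\wedge\eta=\omega_u^n\wedge dx$. This inequality forces the family $\{\omega_{u_{j_k}}^n\wedge\eta\}$ to be uniformly absolutely continuous with respect to $(\omega^T)^n\wedge\eta$. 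Since $\phi_k-u_{j_k}\geq 0$ and converges to $0$ in $L^1((\omega^T)^n\wedge\eta)$ (both $\phi_k$ and $u_{j_k}$ converging to $u$ in $L^1$), uniform absolute continuity yields the desired integral convergence. The hard part is this entropy-to-uniform-integrability passage, since one has to be careful that the $L^1$-limits are taken against a fixed reference measure while the Monge--Amp\`ere measures vary. Once it is in place, assembling the pieces via Theorem \ref{comparison04} and the triangle inequality completes the proof.
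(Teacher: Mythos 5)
Your overall architecture is sound and genuinely different from the paper's: the paper passes through the continuity criterion of Theorem \ref{d101} (showing $\mathbb{I}(u_j)\to\mathbb{I}(u)$ via the concavity estimate \eqref{maenergy01}), whereas you build the decreasing dominator $\phi_k=(\sup_{l\ge k}u_l)^{*}$ and reduce everything to the one-sided estimate of Lemma \ref{order4}. That reduction is legitimate, and the dominator step ($\phi_k$ is $d_1$-bounded by the fundamental estimate, hence Lemma \ref{close4} applies) is fine. Both routes, however, funnel into the same crux: showing $\int_M g_k\,\omega_{u_k}^n\wedge\eta\to 0$ for a sequence $g_k\ge 0$ tending to $0$ in $L^1((\omega^T)^n\wedge\eta)$ but \emph{unbounded} (your $g_k=\phi_k-u_k$, the paper's $g_k=|u_k-u|$), and this is where your argument has a genuine gap.

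Uniform absolute continuity of $\{\omega_{u_k}^n\wedge\eta\}$ with respect to $\nu:=(\omega^T)^n\wedge\eta$ does \emph{not} imply $\int_M g_k\,\omega_{u_k}^n\wedge\eta\to 0$ when the $g_k$ are unbounded: splitting $\int g_k\,d\mu_k=\int_{\{g_k\le t\}}+\int_{\{g_k>t\}}$, absolute continuity controls $\mu_k(\{g_k>t\})$ but says nothing about $\int_{\{g_k>t\}}g_k\,d\mu_k$, where all the mass of $g_k$ can hide. The missing ingredient — which the paper invokes explicitly — is Zeriahi's uniform Skoda integrability theorem: since every $u\in\cE_1(M,\xi,\omega^T)$ has zero Lelong numbers and the family is compact in $L^1$, one gets $\int_M e^{p|u_j|}(\omega^T)^n\wedge\eta\le C(p)$ for every $p\ge 1$ (applied in foliation charts), hence $\int_M e^{pg_k}\,d\nu\le C(p)$ since $0\le g_k\le|u_k|$. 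Only with this can the Donsker--Varadhan/Young inequality be run quantitatively: for fixed $s,t>0$,
\begin{equation*}
s\int_{\{g_k>t\}}g_k\,d\mu_k\;\le\;H(u_k)+\log\Bigl(1+\bigl(\textstyle\int e^{2sg_k}d\nu\bigr)^{1/2}\,\nu(\{g_k>t\})^{1/2}\Bigr),
\end{equation*}
and $\nu(\{g_k>t\})\to 0$ by the $L^1(\nu)$-convergence, giving $\limsup_k\int g_k\,d\mu_k\le t+H/s$; then let $t\to 0$, $s\to\infty$. Without the exponential integrability the $\log$ term is uncontrolled and the argument collapses. A secondary (repairable) slip: you propose to treat the term $\int(\phi_k-u_k)\omega_{\phi_k}^n\wedge\eta$ by Proposition \ref{weakcon3}, but that proposition requires monotone sequences and $u_k$ is merely $L^1$-convergent, not monotone; fortunately Lemma \ref{order4} already bounds $d_1(u_k,\phi_k)$ by the integral against $\omega_{u_k}^n\wedge\eta$ alone, so this term should simply be dropped.
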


\begin{proof}We sketch the proof for completeness; for details see \cite{D4}[Theorem 4.45]. First $d_1$ bounded implies that $\mathbb{I}$ and $\sup u$ are both bounded. Together with Proposition \ref{compactness001}, this implies that $d_1$ bounded set is precompact in $L^1$. That is, there exists $u\in \cE_1(M, \xi, \omega^T)$ such that after passing by subsequence,
\[
\int_M |u_k-u|(\omega^T)^n\wedge \eta\rightarrow 0. 
\]
Moreover, we have (see \cite{D4}[Proposition 4.14, Corollary 4.15])
\[
 \limsup \mathbb{I}(u_k)\leq \mathbb{I}(u).
\]
Since all elements in $\cE_1(M, \xi, \omega^T)$ have zero Lelong number, we apply Zeriahi's uniform version  of the famous Skoda integrability
theorem \cite{Zeriahi} (we apply Zeriahi's theorem in each foliation chart) to obtain: for any $p\geq 1$, there exists  $C=C(p)$ such that
\[
\int_M e^{-p u_j} (\omega^T)^n\wedge \eta\leq C.
\]
Since $\sup u_j\leq C$, we have 
\[
\int_M e^{p |u_j|} (\omega^T)^n\wedge \eta\leq C.
\]
Now we need to use the assumption that $H(u_j)$ is uniformly bounded above. We proceed as in the proof of \cite{D4}[Theorem 4.45] to conclude
\begin{equation*}\label{compact05}
\int_M |u_j-u|\omega_{u_j}^n\wedge \eta\rightarrow 0.
\end{equation*}
By Proposition \ref{maenergy01} (which holds for $\cE_1$) we can then conclude that $\liminf \mathbb{I}(u_j)\geq \mathbb{I}(u)$. This gives $\lim \mathbb{I}(u_j)=\mathbb{I}(u)$. Hence $d_1(u_j, u)\rightarrow 0$, as a consequence of Theorem \ref{d101}.  
\end{proof}

Finally we have the extension of $\cK$-energy, see  \cite{BDL}[Theorem 1.2] for K\"ahler setting. 

\begin{thm}The $\cK$-energy can be extended to a functional $\cK: \cE_1\rightarrow \R\cup\{+\infty\}$.
Such a $\cK$-energy in $\cE^1$ is the greatest $d_1$-lsc extension of $\cK$-energy on $\cH$. Moreover, $\cK$-energy is convex along the finite energy geodesics of $\cE^1$.  
\end{thm}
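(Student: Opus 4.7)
The plan is to imitate the Berman--Darvas--Lu approach \cite{BDL} in the Sasaki setting, relying on a Chen--Tian style decomposition of the $\cK$-energy together with the pluripotential machinery developed above. On $\cH$ the $\cK$-energy admits the decomposition
\[
\cK(\phi) \;=\; H(\phi) \;+\; \cK^{\mathrm{pp}}(\phi),
\]
where $H(\phi)=\int_M \log\!\big(\omega_\phi^n\wedge\eta/\omega^n_T\wedge\eta\big)\,\omega_\phi^n\wedge\eta$ is the transverse entropy and $\cK^{\mathrm{pp}}$ is built out of the functionals $\mathbb{I}$, $J$ and the integrals $\int_M \phi\,\rho\wedge\omega_\phi^k\wedge(\omega^T)^{n-k-1}\wedge\eta$ involving transverse Ricci curvature (see \cite{he182, BGS1}). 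In foliation charts this is formally the standard Chen--Tian formula, so the derivation on $\cH$ is a direct adaptation.

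First I would extend each term separately. The entropy $H$ is defined on the whole of $\cE_1$ by the formula $H(u)=\int_M f\log f\,(\omega^T)^n\wedge\eta$ whenever $\omega_u^n\wedge\eta = f(\omega^T)^n\wedge\eta$ is absolutely continuous (and $+\infty$ otherwise); because $x\log x$ is bounded below and convex, $H$ is $d_1$-lower semicontinuous via Theorem \ref{d101} and Fatou's lemma (weak convergence of the measures together with $L^1$ convergence of the densities on a subsequence). For $\cK^{\mathrm{pp}}$, every summand is of the type treated in Proposition \ref{darvascontinuity01}, Proposition \ref{darvascontinuity02} and Theorem \ref{d101}, so the formula makes sense on $\cE_1$ and is in fact $d_1$-continuous. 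Hence $\cK(u):=H(u)+\cK^{\mathrm{pp}}(u)$ defines a $d_1$-lsc extension $\cK:\cE_1\to\R\cup\{+\infty\}$ that restricts to the classical $\cK$-energy on $\cH$.

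Next I would show that this extension is the greatest $d_1$-lsc extension of $\cK|_{\cH}$. Given any other $d_1$-lsc extension $\tilde\cK$ with $\tilde\cK\leq\cK$ on $\cH$, and given $u\in\cE_1$, choose via Lemma \ref{BK} a decreasing sequence $u_k\in\cH$ with $u_k\searrow u$; then $d_1(u_k,u)\to 0$ by Lemma \ref{approximation4}, the pluripotential piece converges by continuity, and by standard lower semicontinuity of entropy under decreasing limits one gets $\liminf_k \cK(u_k)\geq \cK(u)$. Since $\tilde\cK$ is $d_1$-lsc one also has $\tilde\cK(u)\leq \liminf_k \tilde\cK(u_k)\leq \liminf_k\cK(u_k)$. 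The reverse inequality $\cK(u)\leq\liminf_k\cK(u_k)$ will follow from the $d_1$-lsc of $\cK$ we have already established; comparing the two bounds shows $\tilde\cK(u)\leq \cK(u)$.

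For convexity along finite energy geodesics, I would first invoke the existing convexity along $C^{1,\bar1}_B$ geodesics in the smooth class, which is proved in \cite{JZ, VC} (the Sasaki analog of Berman--Berndtsson and Chen--Li--Pa\u{u}n). Given $u_0,u_1\in\cE_1$, take decreasing sequences $u_0^k,u_1^k\in\cH$ converging to $u_0,u_1$; let $t\mapsto u_t^k$ be the $C^{1,\bar1}_B$ geodesic connecting $u_0^k,u_1^k$ and $t\mapsto u_t$ the finite energy geodesic of Theorem \ref{pluri01}. By Lemma \ref{strong01}, $u_t^k\searrow u_t$ for every $t$, and by Lemma \ref{approximation4} we have $d_1(u_t^k,u_t)\to 0$. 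The convexity $\cK(u_t^k)\leq (1-t)\cK(u_0^k)+t\cK(u_1^k)$ holds for each $k$, so by $d_1$-lsc of $\cK$ applied on the left and $d_1$-continuity of $\cK^{\mathrm{pp}}$ plus monotone behaviour of $H$ under decreasing approximation on the right, we obtain $\cK(u_t)\leq (1-t)\cK(u_0)+t\cK(u_1)$.

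The main obstacle I expect is the entropy piece in the convexity argument: passing to the limit in $\cK(u_0^k),\cK(u_1^k)$ requires that $H(u_j^k)\to H(u_j)$, not merely $\liminf$. In the K\"ahler case this is resolved by the fact that $H$ is continuous along decreasing sequences in the finite-energy class (via monotone convergence applied after a Legendre transform trick); the Sasaki version works identically because in foliation charts both sides reduce to the K\"ahler situation and the global measure $\omega_u^n\wedge\eta$ disintegrates as a product along the Reeb direction, as exploited throughout Section~3. This, together with the $d_1$-continuity of $\cK^{\mathrm{pp}}$ on $\cE_1$, closes the argument.
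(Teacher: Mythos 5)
Your overall strategy is the same as the paper's: decompose $\cK=H+\cK^{\mathrm{pp}}$, extend the pluripotential part $d_1$-continuously and the entropy part $d_1$-lower semicontinuously, and then deduce maximality and convexity by approximation from $\cH$. However, there is a genuine gap that appears twice, in both the ``greatest lsc extension'' step and the convexity step, and it is the same missing ingredient each time: you never establish the existence of \emph{recovery sequences}, i.e.\ sequences $u_k\in\cH$ with $d_1(u_k,u)\to 0$ \emph{and} $\cK(u_k)\to\cK(u)$ (equivalently $H(u_k)\to H(u)$). In the maximality argument you derive $\tilde\cK(u)\leq\liminf_k\cK(u_k)$ and separately $\liminf_k\cK(u_k)\geq\cK(u)$ from lower semicontinuity, and then claim that ``comparing the two bounds'' gives $\tilde\cK(u)\leq\cK(u)$; but from $A\leq B$ and $B\geq C$ one cannot conclude $A\leq C$. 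What is actually needed is the \emph{upper} bound $\limsup_k\cK(u_k)\leq\cK(u)$ along some approximating sequence, which is precisely the recovery-sequence property. An arbitrary decreasing sequence from Lemma \ref{BK} does not have it: weak convergence of $\omega_{u_k}^n\wedge\eta$ to $\omega_u^n\wedge\eta$ controls the entropy only from below, and the densities of smooth approximants can oscillate so that $H(u_k)$ stays far above $H(u)$. The paper handles exactly this point by invoking \cite{he182}[Lemma 5.4], which you have no substitute for.

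The same issue undermines your convexity argument. You pass to the limit in $\cK(u_t^k)\leq(1-t)\cK(u_0^k)+t\cK(u_1^k)$ and justify convergence of the right-hand side by asserting that ``$H$ is continuous along decreasing sequences in the finite-energy class.'' This is not a true statement (nor is it what \cite{BDL} proves); relative entropy is only lower semicontinuous under weak convergence of measures, and decreasing approximation does not force $\limsup_k H(u_j^k)\leq H(u_j)$. The correct route, as in \cite{BDL}[Theorem 4.7], is to approximate the endpoints by recovery sequences furnished by the approximation lemma just mentioned (these need not be decreasing), verify $d_1(u_t^k,u_t)\to 0$ for the connecting geodesics via the endpoint stability of geodesics, and then combine $d_1$-lower semicontinuity on the left with genuine convergence on the right. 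A smaller point: your justification of the lower semicontinuity of $H$ via ``$L^1$ convergence of the densities on a subsequence'' is also not right --- weak convergence of measures gives no such density convergence --- although the conclusion itself is standard via the dual variational formula for relative entropy, so that step is repairable.
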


\begin{proof}As in K\"ahler setting \cite{chen00}, we can write the $\cK$-energy as the following,
\begin{equation*}
\cK(\phi)=H(\phi)+\mathbb{J}_{\omega^T, -Ric}(\phi)
\end{equation*}
where $H(\phi)$ is the entropy part and $\mathbb{J}$ is the entropy part, taking the formula respectively, 
\[
\begin{split}
H(\phi)=&\int_M \log \frac{\omega_\phi^n\wedge \eta}{\omega^n_T\wedge \eta} dv_\phi\\
\mathbb{J}_{-Ric}(\phi)=&\frac{n\underline{R}}{(n+1)!}\int_M \phi \sum_{k=0}^n \omega^k_T\wedge \omega_\phi^{n-k}\wedge \eta-\frac{1}{n!}\int_M \phi \sum_{k=0}^{n-1}Ric\wedge \omega^k_T\wedge \omega_\phi^{n-1-k}\wedge \eta
\end{split}
\]
As a direct consequence of this formula, $\cK(\phi)$ is well-defined for $\phi\in \cH_\Delta$. More importantly, for $\phi_0, \phi_1\in \cH$ and $\phi_t\in \cH_\Delta$ being the geodesic connecting $\phi_0, \phi_1$, $\cK(\phi_t)$ is convex with respect to $t\in [0, 1]$. 

Now we extend $H(\phi)$ and $\mathbb{J}_{-Ric}$ to $\cE_1$ separately. As in \cite{BDL}, the extension of $\mathbb{J}_{-Ric}$ to $\cE_1$ is $d_1$-continuous, while since $d_1(u_k, u)\rightarrow 0$ implies that $\omega_{u_k}^n\wedge \eta\rightarrow \omega_u^n\wedge \eta$ weakly (Theorem \ref{d101}), this implies that the extension of $\phi\rightarrow H(\phi)$ to $\cE_1$ is $d_1$ lsc. Moreover, by \cite{he182}[Lemma 5.4], the extension of $\cK$ is the greatest lsc extension. In the end, the convexity of the extended $\cK$-energy along the finite energy geodesic segments follows exactly as in \cite{BDL}[Theorem 4.7]. 
\end{proof}

\end{document}